\newtheorem{step}{Step}
\newtheorem{theorem}{Theorem}[section]
\newtheorem{defi}{Definition}[section]
\newtheorem{proposition}{Proposition}[section]
\newtheorem{lemma}{Lemma}[section]
\def \rr {\mathbb{R}}
\def \rn {\mathbb{R}^n}
\def \nn {\mathbb{N}}
\def \rn {\mathbb{R}^n}
\def \snmoinsun {\mathbb{S}^{n-1}}
\def \eps {\epsilon}
\def \crit {2^\star}
\def \Pl {{\mathcal P}_\lambda}
\def \Mmx {M\setminus\{x_0\}}
\def \ua {u_\alpha}
\def \tua {\tilde{u}_\alpha}
\def \bua {\bar{u}_\alpha}
\def \hua {\hat{u}_\alpha}
\def \ga {g_\alpha}
\def \va {v_\alpha}
\def \da {\delta_\alpha}
\def \tga {\tilde{g}_\alpha}
\def \bga {\bar{g}_\alpha}
\def \xa {x_\alpha}
\def \ma {\mu_\alpha}
\def \ra {r_\alpha}
\def \wa {w_\alpha}
\def \ea {\epsilon_\alpha}
\def \tpl {\tilde{\psi}_\lambda}
\def \phia {\varphi_\alpha}
\title[Localization of bubbling for high order nonlinear equations]{Localization of bubbling for high order nonlinear equations}
\author{Fr\'ed\'eric Robert}
\address{Fr\'ed\'eric Robert, Institut \'Elie Cartan, Universit\'e de Lorraine, BP 70239, F-54506 Vand{\oe}uvre-l\`es-Nancy, France}
\email{frederic.robert@univ-lorraine.fr}
\date{December 3rd, 2024}
\subjclass[2020]{Primary 35J35, Secondary 35J60, 35B44, 35J08, 58J05}
\begin{document}
\begin{abstract} We analyze the asymptotic pointwise behavior of families of solutions to the high-order critical equation
$$P_\alpha\ua=\Delta_g^k\ua +\hbox{lot}=|\ua|^{\crit-2-\ea}\ua\hbox{ in }M$$
that behave like
$$\ua=u_0+B_\alpha+o(1)\hbox{ in }H_k^2(M)$$
where $B=(B_\alpha)_\alpha$ is a Bubble, also called a Peak. We give obstructions for such a concentration to occur: depending on the dimension, they involve the mass of the associated Green's function or the difference between $P_\alpha$ and the conformally invariant GJMS operator. The bulk of this analysis is the proof of the pointwise control
\begin{equation*}
|\ua(x)|\leq C\Vert u_0\Vert_\infty^{(\crit-1)^2}+C\left(\frac{\ma^{2}}{\ma^{2 }+d_g(x,\xa)^{2 }}\right)^{\frac{n-2k}{2}}\hbox{ for all }x\in M\hbox{ and }\alpha\in\nn,
\end{equation*}
where $|\ua(\xa)|=\max_M|u_\alpha|\to +\infty$ and $\ma:=|\ua(\xa)|^{-\frac{2}{n-2k}}$. The key to obtain this estimate is a sharp control of the Green's function for elliptic operators involving a Hardy potential.
\end{abstract}
\maketitle
\centerline{\it Pour Mathilde et Daniel}\medskip\noindent

\tableofcontents
\section{Introduction and statement of the main results}
Let $(M,g)$ be a smooth compact Riemannian manifold of dimension $n\geq 3$ without boundary, and let $k\in\nn$ be such that $2\leq 2k<n$. We consider families of function $(\ua)_{\alpha>0}\in C^{2k}(M)$ that are solutions to 
\begin{equation}\label{eq:ua}
P_\alpha u_\alpha=|u_\alpha|^{\crit-2-\ea}u_\alpha\hbox{ in }M,
\end{equation}
where $\crit:=\frac{2n}{n-2k}$, $(\ea)_\alpha\in [0,\crit-2)$ is such that $\lim_{\alpha\to +\infty}\ea=0$ and $(P_\alpha)_{\alpha>0}$ is a family of general elliptic symmetric differential operators of the type
$$P:=\Delta_g^k+\sum_{i=0}^{k-1} (-1)^i\nabla^i(A^{(i)} \nabla^i)\hbox{ for }\alpha>0$$
where $\Delta_g:=-\hbox{div}_g\nabla$ is the Riemannian Laplacian with minus-sign convention and for all $i=0,...,k-1$, $ A^{(i)}\in C^{i}_{\chi}(M,\Lambda_{S}^{(0,2i)}(M))$ is a $(0,2i)-$tensor field of class $C^i$ on $M$ such that for any $i$ , $A^{(i)}(T,S)=A^{(i)}(S,T)$ for any $(i,0)-$tensors $S$ and $T$. In coordinates, we mean that $\nabla^i(A^{(i)} \nabla^i)=\nabla^{q_i\cdots q_1}(A_{p_1\cdots p_i,q_1\cdots q_i}\nabla^{p_1\cdots p_i})$.

\begin{defi}\label{def:scc} We say that $(P_\alpha)_{\alpha>0}\to P_0$ is of type (SCC) if $(P_\alpha)_\alpha$ and $P_0$  are differential operators such that $P_\alpha:=\Delta_g^k+\sum_{i=0}^{k-1} (-1)^i\nabla^i(A^{(i)}_\alpha \nabla^i)$ for all $\alpha>0$ and $P_0:=\Delta_g^k+\sum_{i=0}^{k-1} (-1)^i\nabla^i(A^{(i)}_0 \nabla^i)$ are as above and there exists $\theta\in (0,1)$ such that \par
\noindent$\bullet$ for all $i=0,...,k-1$, $(A^{(i)}_\alpha)_{\alpha>0}, A^{(i)}_0\in C^{i,\theta}$ and $\lim_{\alpha\to 0}A_\alpha^{(i)}=A_0^{(i)}$ in $C^{i, \theta}$. \par
\noindent$\bullet$  $(P_\alpha)_{\alpha}$ is uniformly coercive in the sense that there exists $c>0$ such that
$$\int_M u P_\alpha u\, dv_g=\int_M (\Delta_g^{\frac{k}{2}}u)^2\, dv_g+\sum_{i=0}^{k-1}\int_M A^{(i)}_\alpha(\nabla^iu,\nabla^i u)\, dv_g\geq c\Vert u\Vert_{H_k^2}^2$$
for all $u\in H_k^2(M)$, where $H_k^2(M)$ is the completion of $C^\infty(M)$ for the norm $u\mapsto\Vert u\Vert_{H_k^2}:= \sum_{i=0}^{k}\Vert\nabla^iu\Vert_2$. When $k=2l+1$ is odd, we have set $(\Delta_g^{\frac{k}{2}}u)^2=|\nabla \Delta_g^l u|^2_g$.
\end{defi}

\smallskip\noindent Equation \eqref{eq:ua} arises in conformal geometry: indeed, it relates Branson's $Q-$curvatures of two metrics in the same conformal class, see \cite{branson}. The model for such operators is the conformal Graham-Jenne-Mason-Sparling (GJMS) operator  
\begin{equation}\label{def:gjms}
P_g^k=\Delta_g^k+\sum_{i=0}^{k-1} (-1)^i\nabla^i(A_{g}^{(i)} \nabla^i),
\end{equation}
that enjoys invariance under conformal changes of metrics. Namely, for $\omega\in C^\infty(M)$, $\omega>0$, taking $\tilde{g}:=\omega^{\frac{4}{n-2k}}g$ a metric conformal to $g$, we have that
\begin{equation}\label{invar:gjms}
P_{\tilde{g}}^k\varphi=\omega^{1-\crit}P_g^k(\omega\varphi)\hbox{ for all }\varphi\in C^\infty(M).
\end{equation}
The GJMS operator is a fundamental operator in conformal geometry: we refer to Paneitz \cite{paneitz}, Branson \cite{branson}, Graham-Jenne-Mason-Sparling  \cite{gjms}, Fefferman-Graham \cite{FG} for more considerations on the subject.  Families of solutions $(\ua)_\alpha\in C^{2k}(M)$ to \eqref{eq:ua} might blow-up and develop singularities along bubbles as in Definitions \ref{def:std:bubble} and \ref{def:bubble}. Our first results shows that there is no blow-up in high dimensions when the limiting operator $P_0$ is "below" the GJMS operator, while the nonexistence persists in small dimensions when the weak limit is positive.

\begin{theorem}\label{th:teaser:2}[Standard bubble] Let $(M,g)$ be a compact Riemannian manifold of dimension $n$ without boundary and let $k\in\nn$ be such that $2\leq 2k<n$. Let $u_0\in C^{2k}(M)$ be a function (possibly sign-changing), $x_0\in M$ be a point, and $(P_\alpha)_{\alpha>0}\to P_0$ be of type (SCC). Assume that we are in one of the following situations:
\begin{itemize}
\item $2k<n<2k+4$ and $  u_0(x_0)>0$;
\item $n=2k+4$, $Tr_{x_0}(A^{(k-1)}_0-A_{g}^{(k-1)}) < \kappa_k   u_0(x_0)$ (see \eqref{def:Tr:A} and \eqref{def:kappa});
\item $n> 2k+4$ and $Tr_{x_0}(A_0^{(k-1)}-A_{g}^{(k-1)})<0$,
\end{itemize}
where $A_{g}^{(k-1)}$ is the coefficient of the term of order $2k-2$ in the GJMS operator \eqref{def:gjms}.
Then there is \underline{no} family $(\ua)_{\alpha>0}\in C^{2k}(M)$ such that 
$$P_\alpha \ua=|u_\alpha|^{\crit-2-\ea}u_\alpha\hbox{ in }M,\; (\ea)_\alpha\in [0,\crit-2)$$
and 
\begin{equation}
\ua=u_0+  B_\alpha^++o(1)\hbox{ in }H_k^2(M)\hbox{ as }\alpha\to 0\label{eq:B}
\end{equation}
where $(B_\alpha^+)_\alpha$ is a standard bubble as in \eqref{def:pos:bubble} centered at  $\xa\to x_0\in M$ and $\ea\to 0$ in $\rr$ as $\alpha\to 0$. 
\end{theorem}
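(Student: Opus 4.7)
The plan is to derive a contradiction from a Pohozaev-type identity applied on a small geodesic ball $B_\delta(\xa)$, using as input the sharp pointwise estimate announced in the abstract. First I would establish the pointwise bound
\[
|\ua(x)| \leq C\Vert u_0\Vert_\infty^{(\crit-1)^2} + C\left(\frac{\ma^{2}}{\ma^{2}+d_g(x,\xa)^{2}}\right)^{\frac{n-2k}{2}},
\]
together with companion bounds on the derivatives $\nabla^j\ua$ for $1\le j\le 2k-1$, obtained by elliptic regularity from the pointwise estimate. This reduces the problem to a regime in which $\ua-u_0$ is uniformly close, in a scale-invariant sense, to the standard bubble profile both inside and outside a small geodesic ball centered at $\xa$.

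Next I would write a Pohozaev-type identity for $P_\alpha$ on $B_\delta(\xa)$, obtained by multiplying \eqref{eq:ua} by $\mathbf{X}\cdot\nabla\ua+\tfrac{n-2k}{2}\ua$, where $\mathbf{X}$ is the radial vector field $r\partial_r$ in geodesic normal coordinates at $\xa$, and integrating by parts $k$ times. It has the schematic shape
\[
\int_{B_\delta(\xa)}\mathcal{I}_\alpha(\ua)\,dv_g=\int_{\partial B_\delta(\xa)}\mathcal{B}_\alpha(\ua)\,d\sigma,
\]
where the interior integrand depends only on the coefficients $A^{(i)}_\alpha$ and their derivatives (the conformally invariant GJMS piece being killed by \eqref{invar:gjms}), and the boundary integrand involves $\ua$ and its derivatives of order up to $2k-1$ on $\partial B_\delta(\xa)$.

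I would then substitute $\ua=u_0+B_\alpha^++o(1)$ and use the pointwise control to evaluate both sides as $\alpha\to 0$, so that $\ma\to 0$. The boundary term is governed by $u_0$ and its derivatives at $x_0$ coupled with a bubble tail of size $\ma^{n-2k}$. After the GJMS cancellation, the interior term splits into two competing contributions: one proportional to $u_0(x_0)$, arising from the interaction between the regular part and the bubble through the subcritical nonlinearity, of order $\ma^{(n-2k)/2}$; and one proportional to $Tr_{x_0}(A^{(k-1)}_0-A^{(k-1)}_g)$, coming from the deviation of $P_0$ from the GJMS operator at top order, of order $\ma^{2}$. The lower-order coefficients $A^{(i)}_\alpha$ with $i<k-1$ produce remainders of order $\ma^{2(k-i)}$, which are strictly smaller thanks to the explicit decay rate of the bubble. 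After dividing by the appropriate power of $\ma$ and letting $\alpha\to 0$, the identity collapses to $c_n u_0(x_0)+c'_n\,Tr_{x_0}(A^{(k-1)}_0-A^{(k-1)}_g)=0$ in the borderline dimension $n=2k+4$, and to the first (respectively second) term alone when $2k<n<2k+4$ (respectively $n>2k+4$), with $c_n,c'_n>0$ computable. Each hypothesis of the theorem then forces the left-hand side to be strictly positive, a contradiction.

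The genuinely hard point is the precise identification of the universal constants, in particular of $\kappa_k$ in the borderline dimension $n=2k+4$: there the two contributions are of the same order, and only a careful computation of the bubble interaction integrals against the explicit top-order coefficient of the GJMS operator yields the exact balance between $u_0(x_0)$ and $Tr_{x_0}(A^{(k-1)}_0-A^{(k-1)}_g)$. A secondary technical burden is that the boundary terms of the Pohozaev identity must be shown to be negligible compared to the leading interior contribution as $\alpha\to 0$, uniformly over a suitable range of $\delta$; this uses the full strength of the pointwise estimate together with its derivative counterparts up to order $2k-1$, which is why the high-order analogue of this argument is significantly more delicate than the classical $k=1$ case.
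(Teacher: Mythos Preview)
Your overall strategy (sharp pointwise control, then a Pohozaev--Pucci--Serrin identity, then exploiting the GJMS conformal invariance to isolate $A_0^{(k-1)}-A_g^{(k-1)}$) is exactly the paper's. But there is one genuine gap and two significant differences of implementation that matter.

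\textbf{The gap: the $\ea$ term.} You never mention the contribution of the subcritical parameter $\ea$ to the identity. In the paper the Pohozaev balance is not ``$=0$'' but rather
\[
\frac{n}{(\crit)^2}\Big(\int_{\rn}|U|^{\crit}\,dX\Big)\,\ea+o(\ea)=III_\alpha-II_\alpha-IV_\alpha,
\]
and the contradiction comes from $\ea\ge 0$ forcing the right-hand side to be $\ge 0$, while each hypothesis of the theorem makes its limit strictly negative. Your formulation ``the identity collapses to $c_n u_0(x_0)+c'_n\,Tr_{x_0}(\ldots)=0$'' is only correct when $\ea\equiv 0$; for $\ea>0$ the equality fails and your argument as written does not yield a contradiction. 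This is not a cosmetic point: the sign constraint that drives the whole theorem is precisely $\ea\ge 0$.

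\textbf{The radius and the origin of the $u_0(x_0)$ term.} You apply the identity on $B_\delta(\xa)$ with $\delta$ fixed and claim the boundary terms are negligible, with $u_0(x_0)$ arising from an interior ``interaction'' term. In the paper the identity is applied on $B_{\sqrt{\ma}}(0)$ when $u_0\not\equiv 0$, and the $u_0(x_0)$ contribution comes \emph{from the boundary term} $IV_\alpha$: at scale $\sqrt{\ma}$ the rescaled solution converges to $u_0(x_0)+K_0|X|^{2k-n}$ in $C^{2k}_{loc}(\rn\setminus\{0\})$ (this is Proposition~\ref{prop:cv:1}), and the boundary integral picks out exactly $\frac{n-2k}{2}\big(\int_{\rn}|U|^{\crit-2}U\,dX\big)u_0(x_0)\,\ma^{(n-2k)/2}$. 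With a fixed $\delta$ the boundary term does \emph{not} vanish (it converges to the boundary expression for $u_0$, which is $O(1)$) and would have to be cancelled by subtracting the Pohozaev identity for $u_0$ itself; this is doable but you have not set it up.

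\textbf{The conformal step.} You invoke \eqref{invar:gjms} to ``kill the GJMS piece'', but the paper actually performs a Lee--Parker conformal change to a metric $g_{\xa}$ with $\hbox{Ric}_{g_{\xa}}(\xa)=0$ and works with $\hat u_\alpha:=(\varphi_{\xa}^{-1}\ua)\circ\tilde{\hbox{exp}}_{\xa}$. This is what makes the curvature term $III_\alpha$ equal to $\ma^2\,\hbox{Weyl}_g\otimes B+o(\ma^2)$ (hence vanish for a radial bubble) and what reduces $\hat A_\alpha^{(k-1)}(0)$ to exactly $(A_0^{(k-1)}-A_g^{(k-1)})_{x_0}$. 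Without this normalisation the Ricci piece of the metric expansion contaminates the order-$\ma^2$ balance.
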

Theorem \ref{th:teaser:2} is a consequence of the more general Theorem \ref{th:general} of Section \ref{sec:gene}. In the above theorem, the trace of a $(0,2p)-$tensor $A$ is 
\begin{equation}\label{def:Tr:A}
Tr_{x_0}(A):= g^{i_1i_2}\cdots g^{i_{2p-1}i_{2p}}A^S_{i_1i_2\cdots i_{2p}}\hbox{ where } A^S_{i_1...i_{2p}}:=\frac{1}{|\mathcal{S}_{2p}|}\sum_{\sigma\in \mathcal{S}_{2p}}A_{i_{\sigma(1)}\cdots i_{\sigma(2p)}}.
\end{equation}
When $p=1$, one recovers the usual trace of a $(0,2)-$tensor.

\smallskip\noindent When $k=1$ and $\ua>0$ for all $\alpha>0$, Theorem \ref{th:teaser:2} is a consequence of Druet \cite{druet:jdg}. The obstructions to single blow-up above is almost optimal. When the strict inequalities are reversed, Robert-V\'etois \cite{RV} ($k=1$, $\kappa_1=2$) and Bakri-Casteras \cite{bakri-casteras} ($k=2$, $\kappa_2=2\sqrt{30}$) proved the existence of sign-changing blowing-up solutions (under some technical additional assumptions though). When $\ua>0$, there are several blow-up existence results when the hypotheses of Theorem \ref{th:teaser:2} are not satisfied, see for instance Micheletti-Pistoia-V\'etois \cite{mpv} and Robert-V\'etois \cite{RV.imrn}. Other profiles are in Premoselli \cite{premo-jgea} and Premoselli-V\'etois \cites{premo-vetois-jmpa,premo-vetois-aim,premo-vetois-eigen}. When $k=2$, $n>8$, $u_0\equiv 0$ and $ A_\alpha^{(1)}\equiv A_{g}^{(1)}$, Pistoia-Vaira \cite{pv} proved also the existence of families of solutions to \eqref{eq:ua} that blow-up like \eqref{eq:B} for suitable families $(P_\alpha)$.

\medskip\noindent Our next result concerns general bubbles that can be sign-changing. We also assume that $u_0\equiv 0$, which allows to get a more precise statement involving the mass of the limiting operator $P_0$. For any coercive symmetric operator $P_0$ of order $2k$, let $G_0$ be its Green's function. When $n=2k+1$, see Subsection \ref{subsec:mass}, for all $x_0\in M$, there exists $m_{P_0}(x_0)\in\rr$ (refered to as the mass) such that for $C_{n,k}$ as in \eqref{def:Cnk} 
\begin{equation}\label{def:mass:intro}
G_0(x,x_0)=\frac{C_{n,k}}{d_g(x,x_0)^{n-2k}}+m_{P_0}(x_0)+o(1)\hbox{ as }x\to x_0,\, x\in M-\{x_0\}.
\end{equation}
\begin{theorem}\label{th:teaser:3}[$u_0\equiv 0$] Let $(M,g)$ be a compact Riemannian manifold of dimension $n$ without boundary and let $k\in\nn$ be such that $2\leq 2k<n$. Let $x_0\in M$ be a point,  $(P_\alpha)_{\alpha>0}\to P_0$ be of type (SCC). Assume that we are in one of the following situations:
\begin{itemize}
\item $n=2k+1$ and $m_{P_0}(x_0)>0$;
\item $n\geq 2k+2$, $\hbox{Weyl}_g(x_0)=0$, and $(A_0^{(k-1)}-A_{g}^{(k-1)})(x_0)<0$ in the sense  that  $(A_0^{(k-1)}-A_{g}^{(k-1)})(x_0)(S,S)<0$ for all $(k-1,0)-$tensor $S\neq 0$ such that $S_{i_1...i_{k-1}}$ is independent of the order of the indice,
\end{itemize}
where $A_{g}^{(k-1)}$ is the coefficient of the term of order $2k-2$ in the GJMS operator \eqref{def:gjms}. Then there is \underline{no} family $(\ua)_{\alpha>0}\in C^{2k}(M)$ such that 
$$P_\alpha\ua=|u_\alpha|^{\crit-2-\ea}u_\alpha\hbox{ in }M,\; (\ea)_\alpha\in [0,\crit-2)\to 0$$
and 
$$\ua=B_\alpha+o(1)\hbox{ in }H_k^2(M)\hbox{ as }\alpha\to 0$$
for a Bubble $B=(B_\alpha)_\alpha$ as in Definition \ref{def:bubble} with parameter $\xa\to x_0\in M$ and $\int_{\rn}|U|^{\crit-2}U\, dx\neq 0$.
\end{theorem}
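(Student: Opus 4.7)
The plan is to argue by contradiction: assume such a family $(\ua)_\alpha$ exists, and extract from the equation a quantitative identity incompatible with the sign hypotheses on $m_{P_0}(x_0)$ or on $(A_0^{(k-1)}-A_{g}^{(k-1)})(x_0)$. The main input is the pointwise control quoted in the abstract, which with $u_0\equiv 0$ reduces to $|\ua(x)|\leq C(\ma^{2}/(\ma^{2}+d_g(x,\xa)^{2}))^{(n-2k)/2}$ for all $x\in M$. Combined with the Green's representation
$$\ua(x)=\int_M G_\alpha(x,y)|\ua|^{\crit-2-\ea}\ua(y)\, dv_g(y)$$
and the concentration of the right-hand side around $\xa$, I would first derive the away-from-blow-up expansion
$$\ma^{-\frac{n-2k}{2}}\ua\longrightarrow c_0\, G_0(\cdot,x_0)\hbox{ in }C^{2k}_{\text{loc}}(\Mmx),\quad c_0:=\int_{\rn}|U|^{\crit-2}U\, dx\neq 0,$$
which serves as the starting point; the non-vanishing of $c_0$ is precisely the hypothesis of the theorem.

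The core of the argument is a Pohozaev-type identity for the high-order operator $P_\alpha$ on a geodesic ball $B_\delta(\xa)$, tested against the conformally adapted multiplier $X\cdot\nabla\ua+\frac{n-2k}{2}\ua$ with $X$ the radial vector field centered at $\xa$. Schematically the identity takes the form
$$\int_{B_\delta(\xa)}\Big[\ea\cdot(\hbox{nonlinear term})+\sum_{i=0}^{k-1}\hbox{corr}_i(A_\alpha^{(i)}-A_{g}^{(i)})+\hbox{geom}(\Delta_g^k)\Big]\, dv_g=\int_{\partial B_\delta(\xa)}\mathcal{B}_\alpha\, d\sigma_g,$$
where the principal critical nonlinearity cancels against the conformally invariant GJMS part, the $\ea$-term is negligible at the relevant scale, the correction $\hbox{corr}_i$ scales like $\ma^{2(k-i)}$ so is dominated at leading order by $i=k-1$, and the geometric remainder produced by commuting the multiplier with $\Delta_g^k$ is controlled by $|\hbox{Weyl}_g(\xa)|^2\ma^{4}$.

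For the boundary integral, substituting $\ua\simeq c_0\ma^{(n-2k)/2} G_0(\cdot,x_0)$ and expanding $G_0$ at $x_0$ gives, in dimension $n=2k+1$, $\int_{\partial B_\delta}\mathcal{B}_\alpha\, d\sigma_g=K_n c_0^2\ma^{n-2k} m_{P_0}(x_0)+o(\ma^{n-2k})$ for an explicit constant $K_n>0$ independent of $\delta$. Matching with the interior contributions, which are of order $o(\ma^{n-2k})$, forces $m_{P_0}(x_0)\leq 0$, contradicting the first hypothesis. For $n\geq 2k+2$, the mass is no longer the leading datum: under $\hbox{Weyl}_g(x_0)=0$ the geometric boundary contributions vanish at leading order, and the dominant balance reduces to $\ma^{2}\cdot(A_0^{(k-1)}-A_{g}^{(k-1)})(x_0)(\tilde S,\tilde S)=o(\ma^2)$ for a nonzero fully symmetric $(k-1,0)-$tensor $\tilde S$ built from the integrated $(k-1)$-derivatives of $U$ (symmetric because $U$ is radial), contradicting the negativity assumption of the second case.

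The main obstacle is the derivation and systematic bookkeeping of the Pohozaev identity for the order-$2k$ operator $P_\alpha$ on a curved background: the identity produces a proliferation of boundary terms mixing derivatives of orders $0$ to $2k-1$; the conformally invariant GJMS contribution must be isolated so that it cancels exactly against the critical nonlinearity; and the residual lower-order contributions must be identified with the symmetrized tensor $(A_0^{(k-1)}-A_{g}^{(k-1)})(x_0)$ restricted to fully symmetric $(k-1,0)-$tensors, in order to match exactly the hypothesis of the theorem. Controlling the $\ea$-term and showing that it is of strictly lower order than $\ma^{n-2k}$ (resp.\ $\ma^2$), together with disentangling the Weyl and scalar-curvature corrections arising from the commutator of $\Delta_g^k$ with the multiplier, are the key technical points underpinning the argument.
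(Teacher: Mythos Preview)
Your overall strategy---contradiction via a Pohozaev--Pucci--Serrin identity on a small ball around $\xa$, boundary controlled by the Green's function expansion, interior by the tensor difference $A_0^{(k-1)}-A_g^{(k-1)}$---is exactly what the paper does (it reduces Theorem~\ref{th:teaser:3} to Theorem~\ref{th:no:unull}, proved in Section~\ref{sec:appli}). But your handling of the $\ea$-term is a genuine gap. You assert it is ``negligible at the relevant scale'' and plan to show it is ``of strictly lower order than $\ma^{n-2k}$ (resp.\ $\ma^2$)''. This is false and cannot be shown: the Pohozaev identity yields precisely
\[
\lim_{\alpha\to 0}\frac{\ea}{\ma}=-\crit\,\frac{\bigl(\int_{\rn}|U|^{\crit-2}U\bigr)^2}{\int_{\rn}|U|^{\crit}}\,m_{P_0}(x_0)\quad(n=2k+1),
\]
and the analogous $\ea/\ma^2$ limit for $n\geq 2k+2$. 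So $\ea$ is of the \emph{same} order as the geometric term, not lower. The contradiction does not come from matching a nonzero boundary term against an $o(\cdot)$ interior; it comes from the \emph{sign constraint} $\ea\geq 0$, which forces the right-hand side to be $\geq 0$, incompatible with $m_{P_0}(x_0)>0$ (resp.\ with the strict negativity of $(A_0^{(k-1)}-A_g^{(k-1)})(x_0)$). Without invoking $\ea\geq 0$ you cannot close the argument.

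Two smaller corrections. First, the commutator of $\Delta_g^k$ with the multiplier produces a term of order $\ma^2$ linear in the Weyl tensor (the quantity $\hbox{Weyl}_g\otimes B$ of Definition~\ref{def:tensor:B}), not $|\hbox{Weyl}_g|^2\ma^4$; under $\hbox{Weyl}_g(x_0)=0$ it vanishes, so your conclusion survives, but the mechanism you describe is wrong. Second, the $(k-1)$-tensor entering the $A$-term is $\nabla^{k-1}_\xi U(X)$ integrated over $\rn$, and it is fully symmetric because partial derivatives commute---\emph{not} because $U$ is radial, which it is not for a general Bubble. The negativity hypothesis on symmetric tensors then makes the \emph{integrand} pointwise negative, hence the integral negative; your formulation as a single fixed $\tilde S$ obscures this. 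Also note the borderline case $n=2k+2$ requires a separate treatment (the scale is $\ma^2\ln(1/\ma)$ and the integral collapses onto the sphere), which your sketch does not address.
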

This theorem is a consequence of the more general Theorem \ref{th:no:unull}. When $k=1$, this result is a particular case of Premoselli-Robert \cite{PR}. 
These two theorems rely on a sharp pointwise control of the blow-up:
\begin{theorem}\label{th:estim-co:intro} Let $(M,g)$ be a compact Riemannian manifold of dimension $n$ without boundary and let $k\in\nn$ be such that $2\leq 2k<n$. We consider a family of operators $(P_\alpha)_{\alpha>0}\to P_0$ of type (SCC) and a family of functions $(\ua)_\alpha\in C^{2k}(M)$ such that \eqref{eq:ua} holds for all $\alpha>0$.
We assume that there exists $u_0\in C^{2k}(M)$ (possibly sign-changing) and a Bubble $B=(B_\alpha)_\alpha$ with parameters $(\xa)_\alpha\in M$ and  $(\ma)_\alpha\in\rr_{>0}$ as in Definition \ref{def:bubble} such that
$$\ua=u_0+B_\alpha+o(1)\hbox{ in }H_k^2(M)\hbox{ as }\alpha\to 0.$$
Then there exists $C>0$ such that
\begin{equation}
|\ua(x)|\leq C\Vert u_0\Vert_\infty^{(\crit-1)^2}+C\left(\frac{\ma^{2}}{\ma^{2 }+d_g(x,\xa)^{2 }}\right)^{\frac{n-2k}{2}}\label{est:opti} \end{equation}
for all $x\in M$ and $\alpha>0$.
\end{theorem}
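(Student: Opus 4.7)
The plan is to argue by contradiction via a rescaling-iteration scheme in the spirit of Druet--Hebey--Robert, adapted to the high-order setting by replacing maximum-principle comparison with sharp Green's function bounds. Define
$$\Phi_\alpha(x):=\Vert u_0\Vert_\infty^{(\crit-1)^2}+\left(\frac{\ma^{2}}{\ma^{2}+d_g(x,\xa)^{2}}\right)^{(n-2k)/2},$$
and suppose for contradiction that $N_\alpha:=\sup_{M}|\ua|/\Phi_\alpha\to+\infty$, attained at some $\tilde{x}_\alpha\in M$. The objective is to rule out the existence of such $\tilde{x}_\alpha$.

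First, I would localize $\tilde{x}_\alpha$. Away from $x_0$, the family $\ua$ converges to $u_0$ in $C^{2k}$ by $H_k^2$-convergence and elliptic regularity for $P_\alpha$, forcing $\tilde{x}_\alpha\to x_0$. On the bubble scale, the rescaled family $\tua(y):=\ma^{(n-2k)/2}\ua(\exp_{\xa}(\ma y))$ converges in $C^{2k}_{\textrm{loc}}$ to the Euclidean bubble profile, so \eqref{est:opti} already holds on any ball $d_g(x,\xa)\leq R\ma$. Consequently any point where $|\ua(x)|/\Phi_\alpha(x)$ diverges must satisfy $d_g(\tilde{x}_\alpha,\xa)/\ma\to+\infty$ and $d_g(\tilde{x}_\alpha,x_0)\to 0$, that is, lie in the intermediate far-field region of the bubble.

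Next I would invoke the Green's function representation
$$\ua(x)=\int_M G_\alpha(x,y)|\ua(y)|^{\crit-2-\ea}\ua(y)\,dv_g(y),$$
where $G_\alpha$ is the Green's function of $P_\alpha$, satisfying $|G_\alpha(x,y)|\lesssim d_g(x,y)^{-(n-2k)}$ uniformly in $\alpha$ by the coercivity in (SCC) and the convergence $P_\alpha\to P_0$. From $H_k^2$-concentration, Moser-type iteration, and the scaling of $B_\alpha$, one extracts a preliminary rough decay
$$|\ua(y)|\lesssim \Vert u_0\Vert_\infty+\ma^{(n-2k)/2}d_g(y,\xa)^{-(n-2k)+\theta_\alpha},\quad \theta_\alpha\to 0.$$
Setting $V_\alpha(y):=|\ua(y)|^{\crit-2-\ea}$, this rough decay implies $V_\alpha(y)\lesssim |B_\alpha(y)|^{\crit-2}$ up to lower-order errors, which is precisely the critically-scaling Hardy potential of the bubble for $P_\alpha$. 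Using the sharp pointwise control of the Green's function of $P_\alpha+V_\alpha$ announced in the introduction, a finite-step bootstrap converts the rough decay into the optimal profile $\ma^{(n-2k)/2}(\ma^2+d_g(\cdot,\xa)^2)^{-(n-2k)/2}$, while the contribution from $u_0$ through two successive applications of the nonlinear coefficient $|u|^{\crit-1}$ produces the term $\Vert u_0\Vert_\infty^{(\crit-1)^2}$.

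The main obstacle is this last step, the uniform pointwise Green's function estimate for $P_\alpha+V_\alpha$ with $V_\alpha$ critically scaling. In the second-order case $k=1$, the comparison principle delivers such a bound almost directly; for $k\geq 2$ no such tool is available, and one must analyze the polyharmonic kernel perturbed by the Hardy potential $V_\alpha$ head-on. The analysis has to stratify the distance regimes $d_g(x,\xa)\ll\ma$, $d_g(x,\xa)\sim\ma$, and $d_g(x,\xa)\gg\ma$, exploit the uniform coercivity of $(P_\alpha)$ together with the strict positivity of the Hardy constant for $\Delta_g^k$, and be uniform in $\alpha$. Once in hand, this estimate closes the bootstrap and yields $|\ua(\tilde{x}_\alpha)|\lesssim\Phi_\alpha(\tilde{x}_\alpha)$, contradicting $N_\alpha\to+\infty$.
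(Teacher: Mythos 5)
Your outline follows essentially the same route as the paper: a preliminary weak decay estimate on $|\ua|^{\crit-2-\ea}$ obtained by rescaling, the rewriting of the equation as $(P_\alpha-V_\alpha)v_\alpha=f_\alpha$ with $v_\alpha=\ua-u_0$ and $V_\alpha$ a small Hardy potential outside $B_{R\ma}(\xa)$, the sharp pointwise bounds on the Green's function of the Hardy-perturbed operator (Theorem \ref{th:Green:pointwise:BIS}, which the paper likewise establishes separately), and one final pass through the Green's representation for $P_\alpha$ itself to remove the $\nu$-loss and produce the $\Vert u_0\Vert_\infty^{(\crit-1)^2}$ term. The contradiction framing via $N_\alpha$ and the mention of Moser iteration are cosmetic variations; the substance coincides with the paper's proof.
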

Pointwise controls like \eqref{est:opti} have already been obtained for $k=1$, that is when $P_\alpha:=\Delta_g+h_\alpha$ where $(h_\alpha)_\alpha\in C^0(M)$ is a potential. They have their origins in Atkinson-Peletier \cite{AP}, Z.-C.Han \cite{zchan}, Hebey-Vaugon \cite{HV}. Still for $k=1$, the control for arbitrary high energy is in Druet-Hebey-Robert \cite{DHR}: we refer to Druet-Hebey \cite{DH} (see \cite{hebey.eth} for an exposition in book form). Recently, Premoselli \cite{premoselli} has developed a promising approach by being able to "neglict" the critical nonlinearity in a suitable sense. In the case of unbounded energy, the method of simple blowup points has been initiated by Schoen \cite{schoen1988} to get controls like \eqref{est:opti}. Such methods have turned to be paramount to get compactness and stability results for second-order equations: see Schoen-Zhang \cite{sz}, Li-Zhu \cite{LiZhu}, Li-Zhang \cite{LiZha3}, Druet \cites{druet:jdg,druet:imrn}, Marques \cite{Mar} and Khuri-Marques-Schoen \cite{kms}. 

\smallskip\noindent When $k=1$, the methods used in the references above strongly rely on the specificities of second-order operators, namely the Harnack inequality, Hopf's maximum principle or positivity preserving properties. These results do not hold in full generality for higher-order problems, that is for $k>1$. We refer to Gazzola-Grunau-Sweers \cite{GGS} for a general exposition about the lack of comparison principle for higher-order problems. 

\medskip\noindent For $k= 2$, there has been several attempts to get controls in the spirit of \eqref{est:opti}. Early estimates are in Hebey \cite{hebey:2003} and Felli-Hebey-Robert \cite{FHR}. In \cite{lx}, Li-Xiong  have obtained a beautiful compactness results via a sharp pointwise control: here, the operator $P_\alpha=P^2_{g}$ is the geometric Paneitz operator \cite{paneitz} (that is the GJMS operator for $k=2$), and the solutions are geometric, that is $u_\alpha>0$. In \cites{HRW,HR}, Hebey, Wen and the author have obtained a series of compactness results for specific operators $P_\alpha$ and positive solutions $u_\alpha>0$. These methods extend to the case $k\geq 3$.  Let us note that Gursky-Malchiodi \cite{GM} have proved a remarkable comparison principle for the geometric Paneitz operator $P^2_{g}$ on a compact Riemannian manifold. More recently, Carletti \cite{carletti} has developed a sharp pointwise asymptotic analysis relying on Premoselli's \cite{premoselli} to prove the attainability of best constants in high-order Sobolev inequalities.

\medskip\noindent In the present work, we get a general control that is independent of any geometric hypothesis, that is for general elliptic operators $(P_\alpha)$ and functions $(u_\alpha)$ that might change sign. The main point is to transform the nonlinear equation
$$P_\alpha u_\alpha=|u_\alpha|^{\crit-2}u_\alpha\hbox{ in }M$$
into a linear equation
$$(P_\alpha -V_\alpha)u_\alpha=0\hbox{ in }M,\hbox{ where }V_\alpha:=|u_\alpha|^{\crit-2}.$$
It turns out that $(V_\alpha)$ behaves like a Hardy potential. We use Green's representation formula to express $u_\alpha$ via the Green's function for $P_\alpha -V_\alpha$. In order to get  \eqref{est:opti}, we need pointwise controls for this Green's function that are the object of Theorem \ref{th:Green:pointwise:BIS}. When $k=1$, and with more informations on the Hardy potential, some related estimates are in the joint work \cite{GR.CalcVar} of Ghoussoub and the author: they rely on second-order operator properties. In order to bypass these restrictions, we prove and use the general regularity Lemma \ref{lem:main} that is a pointwise control valid for "small" Hardy potentials and any order $k\geq 1$. The conclusions of Theorem \ref{th:estim-co:intro} is a consequences of these pointwise estimates. Theorem \ref{th:estim-co:intro} allows to get the precise behavior of $\ua$ at any scale (see Propositions \ref{prop:cv:1} and \ref{prop:cv:2}). Theorems  \ref{th:teaser:2} and \ref{th:teaser:3} are consequences of this behavior and the high-order Pohozaev-Pucci-Serrin identity. 

\medskip\noindent This paper is organized as follows. In Section \ref{sec:bubble}, we define the bubbles and we give a few of their properties. Section \ref{sec:gene} is devoted to generalizations or alternative versions of the theorems above: Theorems \ref{th:teaser:2} and \ref{th:teaser:3} are direct consequence of Theorems \ref{th:general} and \ref{th:no:unull}. First properties of solutions to \eqref{eq:ua} that behave like a single bubble are proved in Section \ref{sec:prelim}. The proof of the main Theorem \ref{th:estim-co:intro} is performed in Section \ref{sec:strong} assuming some properties of Green's functions. The core of the paper is the proof of the pointwise estimates of Green's function with Hardy potential that are proved in Sections \ref{sec:green1} to \ref{sec:lemma}. In Sections \ref{sec:11}, we apply Theorem \ref{th:estim-co:intro} to obtain a sharp control of solutions to \eqref{eq:ua}. We prove a Pohozaev-Pucci-Serrin identity in Section \ref{sec:poho} to which we apply Theorem \ref{th:estim-co:intro} to prove Theorems \ref{th:general} and \ref{th:no:unull} in Section \ref{sec:appli}. In the Appendices, we perform an expansion of the powers of the Laplacian, we prove a Hardy inequality on compact manifolds and we state some regularity theorems that are consequences of Agmon-Douglis-Nirenberg \cite{ADN}. We also prove the existence and uniqueness of the Green's function for arbitrary high order and smooth coefficients.

\smallskip{\it Notations:} In the sequel, $C(a,b,...)$ will denote a constant depending on $(M,g)$, $a,b,...$. The value can change from one line to another, and even in the same line. Given two sequence $(a_\alpha)_\alpha$ and $(b_\alpha)_\alpha$, we will write $a_\alpha\asymp b_\alpha$ if there existe $c_1,c_2>0$ such that $c_1 a_\alpha\leq b_\alpha\leq c_2 a_\alpha$ for all $\alpha>0$.

\section{The notion of bubble and the product of the Weyl tensor and a bubble}\label{sec:bubble}
In the sequel, we fix $r_g\in (0, i_g(M))$ where $i_g(M)>0$ is the injectivity radius of $(M,g)$. The basic model of bubble is the standard bubble:
\begin{defi}\label{def:std:bubble} We say that a family $B=(B_\alpha^+)_\alpha\in H_k^2(M)$ is a \underline{standard bubble} centered at  $(\xa)_\alpha\in M$ with radius $(\ma)_\alpha\in \rr_{>0}$ if there exists a cutoff function $\eta$ such that
\begin{equation}\label{def:pos:bubble}
B_\alpha^+(x):=\eta(d_g(x,\xa))\left(\frac{\ma}{\ma^2+a_{n,k} d_g(x,\xa)^2}\right)^{\frac{n-2k}{2}}+R_\alpha(x)\hbox{ for all }x\in M
\end{equation}
where $\eta(t)=1$ if $t\leq r_g/2$ and $\eta(t)=0$ if $t\geq r_g$, and $\lim_{\alpha\to 0}R_\alpha=0$ in $H_k^2(M)$.
\end{defi}
Standard bubbles are modeled on
\begin{equation}\label{def:sol:pos}
X\mapsto U_{\mu, x_0}(X):=\left(\frac{\mu}{\mu^2+a_{n,k}|X-X_0|^2}\right)^{\frac{n-2k}{2}}\hbox{ for }X\in\rn,
\end{equation}
where $\mu>0$ and $X_0\in\rn$ are parameters and $a_{n,k}:=\left(\Pi_{j=-k}^{k-1}(n+2j)\right)^{-\frac{1}{k}}$. They are the only positive solutions $U\in C^{2k}(\rn)$ to $\Delta_\xi^k U=U^{\crit-1}$ on $\rn$ where $\xi$ is the Euclidean metric on $\rn$ (see Wei-Xu \cite{weixu}). In order to model bubbles on sign-changing solutions to $\Delta_\xi^k U=|U|^{\crit-2}U$ on $\rn$, we need another notion that was introduced in Premoselli-Robert \cite{PR}.
\begin{defi}[Exponential chart]\label{def:exp:chart} A smooth exponential chart $\tilde{\hbox{exp}}$ around $p_0\in M$ is a function 
$$\begin{array}{cccc}
\tilde{\hbox{exp}}_p: & \rn & \to &  M\\
&(X^1,...,X^n) &\mapsto &\hbox{exp}_p(\sum_i X^i E_i(p))\end{array}$$
where $\hbox{exp}_p: T_pM\to M$ is the usual exponential map and for all $p\in B_{r_g}(p_0)\subset M$, $(E_i(p))_{i=1,...,n}$ is an orthonormal basis of $T_pM$ and for all $i$, $E_i$ is a continuous vector field. If $\tilde{\tilde{exp}}$ is another smooth exponential chart around $p_0$, then for all $p\in B_{r_g}(p_0)$, there is an isometry $L_p: \rn\to\rn$ such that $\tilde{\tilde{exp}}_p= \tilde{exp}_p\circ L_p$. Moreover, $p\mapsto L_p$ is continuous.\end{defi}
In the sequel, we let $D_k^2(\rn)$ be the completion of $C^\infty_c(\rn)$ for the norm $u\mapsto \Vert u\Vert_{D_k^2}:=\Vert \Delta_\xi^\frac{k}{2}u\Vert_2$.

\begin{defi}\label{def:bubble} We say that a family $B=(B_\alpha)_\alpha\in H_k^2(M)$ is a \underline{Bubble} centered at  $(\xa)_\alpha\in M$ with radius $(\ma)_\alpha\in \rr_{>0}$ if there exists $U\in D_{k}^2(\rn)$, $U\not\equiv 0$, and an exponential chart $\tilde{\hbox{exp}}$ around $x_0:=\lim_{\alpha\to 0}\xa$ and a cutoff function $\eta$ such that
\begin{equation}\label{exp:bubble}
B_\alpha(x)=\eta(d_g(x,\xa))\frac{1}{\ma^{\frac{n-2k}{2}}}U\left(\frac{\tilde{\hbox{exp}}_{\xa}^{-1}(x)}{\ma}\right)+R_\alpha(x)\hbox{ for all }x\in M,
\end{equation}
where $\eta(t)=1$ if $t\leq r_g/2$ and $\eta(t)=0$ if $t\geq r_g$, and $\lim_{\alpha\to 0}R_\alpha=0$ in $H_k^2(M)$.
\end{defi}
Note that the choice of $U$ in Definition \ref{def:bubble} depends on the exponential chart:
\begin{proposition}\label{prop:unique:bubble} Given $(\xa)_\alpha\in M$ and $(\ma)_\alpha\in \rr_{>0}$, let $B=(B_\alpha)_\alpha$ be a bubble as in Definition \eqref{def:bubble} with a chart $\tilde{exp}$ and $U\in D_k^2(\rn)$, $U\not\equiv 0$. Then, for another exponential chart $\tilde{\tilde{\hbox{exp}}}$ and $V\in D_k^2(\rn)$, we have that
$$\left\{B_\alpha(x)=\eta(d_g(x,\xa))\frac{1}{\ma^{\frac{n-2k}{2}}}V\left(\frac{\tilde{\tilde{\hbox{exp}}}_{\xa}^{-1}(x)}{\ma}\right)+o(1)\right\}\Leftrightarrow V=U\circ L_{x_0},$$
where $L_{x_0}\in Isom(\rn)$ is as in Definition \ref{def:exp:chart}.
\end{proposition}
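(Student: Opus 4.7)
The plan is to rescale the two representations of $B_\alpha$ to $\rn$ and then invoke uniqueness of limits in $D_k^2(\rn)$. By Definition \ref{def:exp:chart}, there is a continuous family of isometries $p\mapsto L_p\in O(n)$ satisfying $\tilde{\tilde{\hbox{exp}}}_p=\tilde{\hbox{exp}}_p\circ L_p$, so in particular $L_{\xa}\to L_{x_0}$. I would introduce the rescaled profiles
$$\Phi_\alpha(X):=\ma^{\frac{n-2k}{2}}B_\alpha(\tilde{\hbox{exp}}_{\xa}(\ma X)),\qquad \Psi_\alpha(X):=\ma^{\frac{n-2k}{2}}B_\alpha(\tilde{\tilde{\hbox{exp}}}_{\xa}(\ma X)),\quad X\in\rn,$$
which are linked by the clean identity $\Psi_\alpha=\Phi_\alpha\circ L_{\xa}$ on account of the relation $\tilde{\tilde{\hbox{exp}}}_{\xa}(\ma X)=\tilde{\hbox{exp}}_{\xa}(\ma L_{\xa}X)$.

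The crucial step to establish first is the following rescaling principle: \emph{a family $B=(B_\alpha)_\alpha\in H_k^2(M)$ admits the decomposition \eqref{exp:bubble} in the chart $\tilde{\hbox{exp}}$ with profile $U\in D_k^2(\rn)$ if and only if $\Phi_\alpha\to U$ in $D_k^2(\rn)$.} Plugging \eqref{exp:bubble} into the definition of $\Phi_\alpha$ gives $\Phi_\alpha(X)=\eta(\ma|X|)U(X)+\ma^{(n-2k)/2}R_\alpha(\tilde{\hbox{exp}}_{\xa}(\ma X))$, valid for $|X|<r_g/\ma$. The rescaled remainder goes to zero in $D_k^2(\rn)$ because $R_\alpha\to 0$ in $H_k^2(M)$ and the top-order Dirichlet norm is scale-invariant; the term $(\eta(\ma|\cdot|)-1)U$ goes to zero in $D_k^2(\rn)$ by dominated convergence on the leading derivative together with Hardy-type bounds on the cross terms $\ma^j\nabla^j\eta(\ma|\cdot|)\cdot\nabla^{k-j}U$, which are supported in an annulus $|X|\asymp 1/\ma$ and controlled by the finiteness of $\int_{\rn}|X|^{-2j}|\nabla^{k-j}U|^2\,dX$ for $U\in D_k^2(\rn)$. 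The reverse direction of the principle is read off the same identity.

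Granted this correspondence, the proposition follows from uniqueness of limits. If both decompositions hold, then $\Phi_\alpha\to U$ and $\Psi_\alpha\to V$ in $D_k^2(\rn)$. Because precomposition by an element of $O(n)$ is an isometry of $D_k^2(\rn)$, the map $O(n)\ni L\mapsto U\circ L\in D_k^2(\rn)$ is continuous: one checks it first for $U\in C_c^\infty(\rn)$ by dominated convergence, then extends by density. Passing to the limit in $\Psi_\alpha=\Phi_\alpha\circ L_{\xa}$ therefore yields $V=U\circ L_{x_0}$, which gives the direction $(\Rightarrow)$. Conversely, if $V=U\circ L_{x_0}$, the same argument shows $\Psi_\alpha\to V$ in $D_k^2(\rn)$, and the rescaling principle then produces the $\tilde{\tilde{\hbox{exp}}}$-decomposition of $B$ with profile $V$.

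The main obstacle will be proving the rescaling principle, specifically the uniform control of the lower-order derivatives of the cutoff after rescaling for general $k\geq 1$. This is a standard but delicate Hardy--Sobolev computation on $D_k^2(\rn)$, and is essentially the only place where $k>1$ causes genuine complications beyond the second-order setting of \cite{PR}.
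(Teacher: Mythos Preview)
The paper states this proposition without proof, so there is no in-text argument to compare against; your approach is sound and would work.

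Two remarks. First, your rescaling principle has a small domain issue as written: $\Phi_\alpha(X)=\ma^{(n-2k)/2}B_\alpha(\tilde{\hbox{exp}}_{\xa}(\ma X))$ is only defined for $|X|<i_g(M)/\ma$, since the exponential chart does not cover $M$, so ``$\Phi_\alpha\to U$ in $D_k^2(\rn)$'' is not literally well-posed. The clean fix is to bypass $\Phi_\alpha,\Psi_\alpha$ and subtract the two decompositions directly on $M$: the difference of the two main terms is a bubble (in the chart $\tilde{\hbox{exp}}$) with profile $W_\alpha:=U-V\circ L_{\xa}^{-1}\in D_k^2(\rn)$, and it equals the difference of remainders, hence tends to $0$ in $H_k^2(M)$.

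Second, for the implication $(\Rightarrow)$ the Hardy--Rellich machinery is avoidable. By \eqref{sobo:ineq:M} the bubble with profile $W_\alpha$ tends to $0$ in $L^{\crit}(M)$; the $L^{\crit}$ norm is exactly scale-invariant under $x=\tilde{\hbox{exp}}_{\xa}(\ma X)$, so one obtains $\Vert\eta(\ma|\cdot|)W_\alpha\Vert_{L^{\crit}(\rn)}\to 0$, and your continuity argument $W_\alpha\to U-V\circ L_{x_0}^{-1}$ in $D_k^2\hookrightarrow L^{\crit}$ then forces $V=U\circ L_{x_0}$. For $(\Leftarrow)$, however, you must show the new remainder (a bubble with profile $U-U\circ L_{x_0}L_{\xa}^{-1}\to 0$ in $D_k^2$) tends to $0$ in the full $H_k^2(M)$ norm, and here the uniform bound $\Vert\hbox{bubble}[W]\Vert_{H_k^2(M)}\leq C\Vert W\Vert_{D_k^2(\rn)}$ genuinely requires the Leibniz/Hardy control of the cutoff cross-terms you describe. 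Your identification of this as the only real technical obstacle is accurate.
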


 \subsection{The product $\hbox{Weyl}_g\otimes B$}
 The following definition is a direct generalization of the product defined in Premoselli-Robert \cite{PR} for $k=1$:
\begin{defi}\label{def:tensor:B} Let $B=(B_\alpha)_\alpha$ be a bubble. We define the tensor product of the Weyl tensor and the bubble as, when this makes sense, 
\begin{equation*}
\hbox{Weyl}_g\otimes B:=\frac{k}{3}(Weyl_g(x_0))_{ip jq}\int_{\rn}X^p X^p \partial_{ij}\Delta_{\xi}^{k-1}U \left(\frac{n-2k}{2}U+X^l\partial_l U\right)\, dX
\end{equation*}
where $B$ is written as in \eqref{exp:bubble} and the coordinates of the Weyl tensor are taken with respect to the chart $\tilde{exp}_{x_0}$.  This expression is independent of the decomposition in \ref{exp:bubble}.
\end{defi}
The independence is a direct consequence of Proposition \eqref{prop:unique:bubble}.

\begin{proposition} Let $B$ be a bubble centered at $(\xa)_\alpha\in M$ and with radius $(\ma)_\alpha\in \rr_{>0}$. Assume that $|X|^2|\nabla^{2k} U|\cdot(|U|+|X|\cdot|\nabla U|)\in L^1(\rn)$ where $B$ is written as in \eqref{exp:bubble} so that $\hbox{Weyl}_g\otimes B$ makes sense. Then $\hbox{Weyl}_g\otimes B=0$ in the following (non-exhaustive) situations:
\begin{itemize}
\item $\hbox{Weyl}_g(x_0)=0$ where $x_0:=\lim_{\alpha\to 0}\xa$,
\item The bubble is radial in the sense that $U$ in Definition \ref{def:bubble} is radial,
\end{itemize}
\end{proposition}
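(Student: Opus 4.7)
The first bullet is immediate: if $\hbox{Weyl}_g(x_0)=0$, every component in the contraction $(Weyl_g(x_0))_{ipjq}(\cdot)^{ipjq}$ vanishes, so the integral is irrelevant.

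For the second bullet (radial $U$), the plan is to exploit the Weyl tensor's antisymmetries and trace-freeness against the rotational invariance of the integrand. First I would observe that $\Delta_\xi^{k-1}U$ is again radial (iterated Laplacians preserve radial symmetry), so there is a scalar function $f=f(r)$ with $\Delta_\xi^{k-1}U(X)=f(|X|)$, and the standard Hessian identity for radial functions gives
\begin{equation*}
\partial_{ij}\Delta_\xi^{k-1}U(X)=f''(r)\frac{X^iX^j}{r^2}+\frac{f'(r)}{r}\Bigl(\delta_{ij}-\frac{X^iX^j}{r^2}\Bigr),\qquad r=|X|.
\end{equation*}
Likewise, $\tfrac{n-2k}{2}U+X^l\partial_lU$ is a radial function $\phi(r)$ (the homogeneity operator maps radial to radial). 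Substituting these into the definition of $\hbox{Weyl}_g\otimes B$ reduces the integrand to two types: a piece proportional to $X^pX^qX^iX^j\,F(r)$ and a piece proportional to $\delta_{ij}X^pX^q\,G(r)$ (the integrability hypothesis in the Proposition ensures absolute convergence).

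Next, I would invoke the classical isotropy of Euclidean integrals of powers of $X$ weighted by radial functions, namely
\begin{equation*}
\int_{\rn}X^pX^qX^iX^j F(r)\,dX=C_1\bigl(\delta_{pq}\delta_{ij}+\delta_{pi}\delta_{qj}+\delta_{pj}\delta_{qi}\bigr),\quad \int_{\rn}X^pX^q G(r)\,dX=C_2\delta_{pq},
\end{equation*}
so that $\hbox{Weyl}_g\otimes B$ becomes a finite linear combination of the four scalar contractions $(Weyl_g)_{ipjq}\delta_{pq}\delta_{ij}$, $(Weyl_g)_{ipjq}\delta_{pi}\delta_{qj}$, $(Weyl_g)_{ipjq}\delta_{pj}\delta_{qi}$, and $(Weyl_g)_{ipjq}\delta_{ij}\delta_{pq}$ (the last two coming from the $\delta_{ij}$-piece). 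Each of these vanishes by elementary symmetry arguments: trace-freeness $g^{pq}(Weyl_g)_{ipjq}=0$ kills the $\delta_{pq}\delta_{ij}$ contraction; antisymmetry in the first pair kills the $\delta_{pi}\delta_{qj}$ term; and combining antisymmetry of the last pair with trace-freeness handles $\delta_{pj}\delta_{qi}$.

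The whole argument is essentially tensor algebra; the only delicate ingredient is verifying that all four scalar contractions above do vanish, which I expect to be the one place where the full algebraic structure of the Weyl tensor (not just antisymmetries, but also the total trace-freeness) is needed. I would present this as a short lemma on Weyl contractions before assembling the final conclusion $\hbox{Weyl}_g\otimes B=0$.
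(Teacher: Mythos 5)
Your proposal is correct and follows essentially the same route as the paper: radiality of $\Delta_\xi^{k-1}U$ and of the scaling term, the radial Hessian identity, the isotropy formulas for $\int X^pX^q(\cdots)$ and $\int X^pX^qX^iX^j(\cdots)$ (the paper cites formula (2.46) of Mazumdar--V\'etois for exactly these), and finally the vanishing of the resulting contractions $(Weyl_g)_{ipip}$, $(Weyl_g)_{iijj}$, $(Weyl_g)_{ijji}$ by the antisymmetries and trace-freeness of the Weyl tensor. Your explicit verification of the three Weyl contractions is slightly more detailed than the paper, which simply invokes ``the symmetries of the Weyl tensor.''
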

\begin{proof} The proof goes as in Premoselli-Robert \cite{PR}. The first point is trivial. If $B$ is radial, then let us choose an exponential chart $\varphi_\alpha:=\tilde{\hbox{exp}}_{\xa}$ and a radial function $U\in D_k^2(\rn)$ as in Definition \ref{def:bubble}. Since $\Delta_{\xi}^{k-1}U$ is also radial, there exists $U_1, U_2$ some radial functions such that
 $$\hbox{Weyl}_g\otimes B=(Weyl_g)_{ip jq}\int_{\rn}X^p X^q (\partial_{ij}U_1) U_2\, dX.$$
Since $U_1$ is radial, we have that
$$\partial_{ij} U_1=\delta_{ij}\frac{U_1^\prime(r)}{r}+\frac{x_i x_j}{r} \left(\frac{U_1^\prime(r)}{r}\right)^\prime$$
so that with a change of variable, we have that
 $$\hbox{Weyl}_g\otimes B=(Weyl_g)_{ip jq}\left(A\delta_{ij}\int_{\snmoinsun}\sigma^p\sigma^q\, d\sigma+B\int_{\snmoinsun}\sigma^i\sigma^j\sigma^p\sigma^q\, d\sigma\right).$$
Since (see formula (2.46) in Mazumdar-V\'etois \cite{mv}) we have that
$$\int_{\snmoinsun}\sigma^p\sigma^q\, d\sigma=\frac{\omega_{n-1}\delta_{pq}}{n}\hbox{ and }\int_{\snmoinsun}\sigma^i\sigma^j\sigma^p\sigma^q\, d\sigma=C_n \left(\delta_{pq}\delta_{ij}+\delta_{p i}\delta_{q j}+\delta_{p j}\delta_{q i}\right)$$
for some $C_n>0$, we get that for some $A',B',C'\in\rr$
$$\hbox{Weyl}_g\otimes B=A'(Weyl_g)_{ip ip}+ B'(Weyl_g)_{ii jj}+ C'(Weyl_g)_{ij ji}=0$$
due to the symmetries of the Weyl tensor.
\end{proof}
Conversely, situations where $\hbox{Weyl}_g\otimes B\neq 0$ are in Premoselli-Robert \cite{PR}.

\section{More general results}\label{sec:gene}
\subsection{Generalization of Theorem \ref{th:estim-co:intro}}
Theorem \ref{th:estim-co:intro} is a consequence of the more general theorem :
\begin{theorem}\label{th:main} Let $(M,g)$ be a compact Riemannian manifold of dimension $n$ without boundary. Let $k\in\nn$ be such that $2\leq 2k<n$. We consider a family of operators $P_\alpha\to P_0$ of type (SCC), a family of potentials $(V_\alpha)_\alpha\in L^1(M)$, a family of functions $(v_\alpha)_\alpha\in C^{2k}(M)$ and $(f_\alpha)_\alpha\in L^\infty(M)$ such that
$$P_\alpha v_\alpha= V_\alpha v_\alpha+f_\alpha\hbox{ in }M\hbox{ for all }\alpha\in\nn.$$
We assume that there exists $(\xa)_\alpha\in M$ and $(\ma)_\alpha\in \rr_{>0}$ such that $\lim_{\alpha\to +\infty}\ma =0$ and that for all $R>0$, there exists $\varpi(R)>0$ such that
\begin{equation}\label{cpct:ua:resc}
\Vert \ma^{\frac{n-2k}{2}} v_\alpha(\tilde{\hbox{exp}}_{\xa}(\ma\cdot))\Vert_{C^{2k-1}(B_R(0))}\leq \varpi(R)\hbox{ for all }\alpha>0,
\end{equation}
where $\tilde{\hbox{exp}}_p$ denotes an exponential map at $p\in M$ as in Definition \ref{def:exp:chart}. We assume that for all $\lambda>0$, there exists $R_\lambda>0$ such that
\begin{equation}\label{est:weak:V}
d_g(x,\xa)^{2k}|V_\alpha(x)|\leq \lambda\hbox{ for all }x\in M\setminus B_{R_\lambda\ma}(\xa).
\end{equation}
Then for all $\nu\in (0,1)$, there exists $C_\nu>0$ independent of $\alpha$  such that
\begin{equation}\label{est:co:eps}
|v_\alpha(x)|\leq C_\nu \frac{\Vert f_\alpha\Vert_\infty}{\ma^{ \nu}+d_g(x,\xa)^{\nu}}+C_\nu\left(\frac{\ma^{1-\nu}}{\ma^{2-\nu}+d_g(x,\xa)^{2-\nu}}\right)^{\frac{n-2k}{2}}
\end{equation}
 for all $x\in M$ and $\alpha\in\nn$.
\end{theorem}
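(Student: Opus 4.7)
The approach is to represent $v_\alpha$ through the Green's function of the auxiliary linear operator $L_\alpha := P_\alpha - V_\alpha$ and then feed in the sharp pointwise bounds on this Green's function coming from Theorem \ref{th:Green:pointwise:BIS}. First I would verify that $L_\alpha$ is coercive on $H_k^2(M)$. The (SCC) coercivity handles the leading operator, while the decay \eqref{est:weak:V} makes $V_\alpha$ a genuine Hardy potential: on $M\setminus B_{R_\lambda \ma}(\xa)$ it is bounded by $\lambda/d_g(\cdot,\xa)^{2k}$ with $\lambda$ freely small, which is absorbed by the Hardy inequality on compact manifolds (proved in an appendix). The localized piece inside $B_{R_\lambda \ma}(\xa)$ is controlled via Sobolev embedding together with the $C^{2k-1}$-compactness \eqref{cpct:ua:resc} of the rescaled profile. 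Once coercivity is in place, the linear theory (Agmon-Douglis-Nirenberg) produces a Green's function $G_\alpha$ for $L_\alpha$ and, since $L_\alpha v_\alpha = f_\alpha$, yields the representation
\[
v_\alpha(x)=\int_M G_\alpha(x,y)f_\alpha(y)\,dv_g(y).
\]

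The next step is to insert the pointwise control on $G_\alpha$ from Theorem \ref{th:Green:pointwise:BIS}, which is where the Hardy-potential theory developed in Sections \ref{sec:green1}--\ref{sec:lemma} pays off. I would split the domain of integration into the regions $\{d_g(y,\xa)\leq \ma\}$, $\{\ma\leq d_g(y,\xa)\leq d_g(x,\xa)/2\}$, and $\{d_g(y,\xa)\geq d_g(x,\xa)/2\}$ (further subdividing the last region into a near-$x$ and a far-from-$x$ piece) and estimate each contribution separately. The piece where $|x-y|$ is small produces the first term in \eqref{est:co:eps}, driven by the standard fundamental-solution decay of $G_\alpha$ together with the $L^\infty$-bound on $f_\alpha$. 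The pieces where $y$ concentrates near $\xa$ produce the bubble term, because the Green's function inherits a bubble-shaped profile from the deep well of $V_\alpha$ at $\xa$. The exponent loss $\nu$ reflects the borderline integrability encountered when converting the Green's function bound into a bubble-shaped pointwise estimate; paying an arbitrarily small $\nu>0$ is exactly what is needed to close the integrals and is exactly what Theorem \ref{th:Green:pointwise:BIS} provides.

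The hard part, as foreshadowed in the introduction, is \emph{not} this last computation but the Green's function estimate itself, because for $k\geq 2$ there is no comparison principle and standard sub/super-solution methods are unavailable. The strategy of the paper is to prove Theorem \ref{th:Green:pointwise:BIS} through the regularity Lemma \ref{lem:main}, which controls solutions when the Hardy potential has small coefficient, combined with a localization into concentric annuli on which the effective Hardy constant can be made arbitrarily small via \eqref{est:weak:V}. Granting Theorem \ref{th:Green:pointwise:BIS}, the proof of Theorem \ref{th:main} reduces to the representation-formula/region-splitting argument outlined above, with the inner region $B_{R\ma}(\xa)$ handled directly from the $C^{2k-1}$-bound \eqref{cpct:ua:resc}, which already matches the second term of \eqref{est:co:eps} on that set and therefore needs no Green's function at all.
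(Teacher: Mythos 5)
There is a genuine gap, and it sits exactly at the point you pass over most quickly: the operator $L_\alpha:=P_\alpha-V_\alpha$ with the \emph{untruncated} potential is not usable. The hypothesis \eqref{est:weak:V} only controls $V_\alpha$ \emph{outside} $B_{R_\lambda\ma}(\xa)$; inside that ball $V_\alpha$ is merely $L^1$ and, in the intended application ($V_\alpha=|\ua|^{\crit-2}$ near a bubble), it is of size $\ma^{-2k}$ there, so $P_\alpha-V_\alpha$ need not be coercive (the linearization at a bubble has negative directions), and your suggestion to absorb the inner piece ``via Sobolev embedding together with \eqref{cpct:ua:resc}'' does not produce a smallness that the Hardy inequality could exploit. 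More importantly, Theorem \ref{th:Green:pointwise:BIS} requires $d_g(x,x_0)^{2k}|V(x)|\leq\lambda<\lambda_\gamma$ on \emph{all} of $M$, which fails inside $B_{R_\lambda\ma}(\xa)$; so neither the existence of your $G_\alpha$ nor its pointwise bounds are available, and the global representation $v_\alpha=\int_M G_\alpha(\cdot,y)f_\alpha(y)\,dv_g$ cannot be written down. The paper's proof (Step \ref{step:nu:proof.3.1}) fixes this by replacing $V_\alpha$ with the truncated potential $\eta_{R,\alpha}V_\alpha$, which vanishes in $B_{R\ma}(\xa)$ and satisfies the uniform smallness bound everywhere; the price is that $(P_\alpha-\eta_{R,\alpha}V_\alpha)v_\alpha=f_\alpha$ only holds on $M\setminus B_{2R\ma}(\xa)$, so Green's representation must be applied on that exterior domain and one inherits a boundary integral over $\partial B_{2R\ma}(\xa)$ involving $\nabla^l_y\hat G_\alpha$ and $\nabla^m v_\alpha$ for $l+m<2k$.

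This also invalidates your account of where the bubble term in \eqref{est:co:eps} comes from. It does not arise from a region of the interior integral where ``$y$ concentrates near $\xa$'': integrating the (truncated) Green's function against the bounded datum $f_\alpha$ can only yield the first term $C_\nu\Vert f_\alpha\Vert_\infty d_g(z,\xa)^{-\gamma}$ — there is no mechanism by which a factor $\ma^{\frac{n-2k}{2}}$ could appear from $\Vert f_\alpha\Vert_\infty$ alone, and the Green's function of the truncated operator carries no bubble profile. The term $\bigl(\ma^{1-\nu}/(\ma^{2-\nu}+d_g(x,\xa)^{2-\nu})\bigr)^{\frac{n-2k}{2}}$ is produced entirely by the boundary term: the bounds $|\nabla^m v_\alpha|\leq C\ma^{-\frac{n-2k}{2}-m}$ on $\partial B_{2R\ma}(\xa)$ from \eqref{cpct:ua:resc}, combined with the derivative estimates $|\nabla^l_y\hat G_\alpha(z,y)|\leq C\ma^{-\gamma-l}d_g(z,\xa)^{2k-n+\gamma}$ from Theorem \ref{th:Green:pointwise:BIS}, integrated over a sphere of measure $\asymp\ma^{n-1}$. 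Your treatment of the inner region $B_{R\ma}(\xa)$ directly from \eqref{cpct:ua:resc} is correct and matches the paper, but the exterior estimate as you propose it would fail.
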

Theorem \ref{th:main} is proved in Step \ref{step:nu:proof.3.1} of Section \ref{sec:strong}.

\subsection{Generalizations of Theorems \ref{th:teaser:2} and \ref{th:teaser:3}}
These theorems are direct consequences of the following more general results:
\begin{theorem}\label{th:general} Let $(M,g)$ be a compact Riemannian manifold of dimension $n$ without boundary and let $k\in\nn$ be such that $2\leq 2k<n$. We consider a family of operators $(P_\alpha)_{\alpha>0}\to P_0$ of type (SCC) a family of functions $(\ua)_\alpha\in C^{2k}(M)$ such that \eqref{eq:ua} holds for all $\alpha>0$.
We assume that there exists $u_0\in C^{2k}(M)$ and a Bubble $B=(B_\alpha)_\alpha$ with parameters $U\in D_k^2(\rn)-\{0\}$, $(\xa)_\alpha\in M$,  $(\ma)_\alpha\in\rr_{>0}$ as in Definition \ref{def:bubble} such that
$$\ua=u_0+B_\alpha+o(1)\hbox{ in }H_k^2(M)\hbox{ as }\alpha\to 0.$$
Then $\lim_{\alpha\to 0}\ma^{\ea}=1$, $U\in L^{\crit-1}(\rn)$ and the localization of $x_0:=\lim_{\alpha\to 0}\xa$ is constrained as follows:

\smallskip\noindent$\bullet$ If $n\geq 2k+4$, then $|\nabla^{k-1}U|\in L^2(\rn)$ and 
\begin{eqnarray}
\lim_{\alpha\to 0} \frac{\ea}{\ma^2}\frac{n}{(\crit)^2}\int_{\rn}|U|^{\crit}\, dX&=& \hbox{Weyl}_g\otimes B \label{estim:1}\\
&&+\int_{\rn}\left(A_0^{(k-1)}-A_{g}^{(k-1)}\right)_{x_0}\left(\nabla^{k-1}_\xi U,\nabla^{k-1}_\xi U\right) \, dX\nonumber\\
&&-\frac{n-2k}{2}\left(\int_{\rn}|U|^{\crit-2}U\, dX\right)u_0(x_0){\bf 1}_{n=2k+4}\nonumber
\end{eqnarray}
where $A_{0}^{(k-1)}$ (resp. $A_{g}^{(k-1)}$) is the coefficient of the term of order $2k-2$ in $P_0$ (resp. the GJMS operator \eqref{def:gjms}) via the exponential chart $\tilde{\hbox{exp}}_{x_0}$.

\smallskip\noindent$\bullet$ If $2k<n<2k+4$, then
$$\lim_{\alpha\to 0}\frac{\ea}{\ma^{\frac{n-2k}{2}}}=-\crit \frac{\int_{\rn}|U|^{\crit-2}U\, dX}{\int_{\rn}|U|^{\crit}\, dX}u_0(x_0)$$
\end{theorem}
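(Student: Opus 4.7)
The plan is to combine the sharp pointwise control from Theorem \ref{th:estim-co:intro} with a Pohozaev-Pucci-Serrin identity (Section \ref{sec:poho}) applied on a fixed small ball $B_\delta(\xa)$. First I would use \eqref{est:opti} to upgrade the $H_k^2$-convergence: dominated convergence on the Green representation of $u_\alpha - u_0$ yields $u_\alpha \to u_0$ in $C^{2k-1}_{\mathrm{loc}}(M\setminus\{x_0\})$, while the rescaling $\tilde u_\alpha(X) := \ma^{(n-2k)/2} u_\alpha(\tilde{\hbox{exp}}_{\xa}(\ma X))$ converges in $C^{2k-1}_{\mathrm{loc}}(\rn)$ to $U$ (up to the isometry of Proposition \ref{prop:unique:bubble}), with decay $|U(X)| \le C(1+|X|)^{-(n-2k)}$. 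This decay forces $U\in L^{\crit-1}(\rn)$, and analogous decay of higher derivatives obtained from $\Delta_\xi^k U = |U|^{\crit-2}U$ gives $|\nabla^{k-1}U|\in L^2(\rn)$ exactly when $n\ge 2k+4$. To get $\ma^{\ea}\to 1$, I would test \eqref{eq:ua} against $u_\alpha$ and compare the two sides: expanding the bubble contribution in $\int_M |u_\alpha|^{\crit-\ea}$ produces a factor $\ma^{\ea(n-2k)/2}$ absent from the quadratic form, and the uniform coercivity of $P_\alpha$ forces this factor to be $1+o(1)$.

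Next I would apply the Pohozaev-Pucci-Serrin identity to $u_\alpha$ on $B_\delta(\xa)$ with the radial vector field in $\tilde{\hbox{exp}}_{\xa}$-coordinates. Schematically, this reads
\begin{equation*}
\frac{n\, \ea}{\crit(\crit-\ea)} \int_{B_\delta(\xa)} |u_\alpha|^{\crit-\ea}\, dv_g = \sum_{i=0}^{k-1} \mathcal V_\alpha^{(i)} + \mathcal B_\alpha,
\end{equation*}
where the $\mathcal V_\alpha^{(i)}$ are volume terms built from $A_\alpha^{(i)}$ together with curvature remainders coming from the normal-coordinate expansion of $\Delta_g^k$ carried out in the Appendix, and $\mathcal B_\alpha$ is a boundary integral on $\partial B_\delta(\xa)$. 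The $C^{2k-1}_{\mathrm{loc}}$-convergence away from $x_0$ ensures $\mathcal B_\alpha$ converges to a quantity depending only on $u_0$, which is $o(\ea/\ma^2 + \ma^{(n-2k)/2})$. The curvature part of $\sum_i \mathcal V_\alpha^{(i)}$ reassembles, because the GJMS coefficients $A_g^{(i)}$ are precisely those that make $P_g^k$ conformally invariant, into the product $\hbox{Weyl}_g\otimes B$ of Definition \ref{def:tensor:B} at the bubble scale. The residual, non-geometric part of the $\mathcal V_\alpha^{(i)}$ is dominated by $i=k-1$ and produces $\int_{\rn}(A_0^{(k-1)}-A_g^{(k-1)})_{x_0}(\nabla^{k-1}U,\nabla^{k-1}U)\,dX$ after rescaling.

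Finally I would treat the $u_0$-contribution by substituting $u_\alpha = u_0 + B_\alpha + o(1)$ inside $|u_\alpha|^{\crit-2-\ea}u_\alpha$ and integrating against the Pohozaev vector field. Splitting $B_\delta(\xa)$ into the bubble region $|X|\lesssim \ma$, the intermediate annulus where \eqref{est:opti} governs $u_\alpha$, and the fixed-size complement, the dominant cross-term is $\ma^{(n-2k)/2} u_0(x_0)\int_{\rn}|U|^{\crit-2}U\,dX$. Comparing this to the left-hand side, which after rescaling is of order $\ea/\ma^2$, separates the three regimes: the cross-term is negligible when $n>2k+4$, balances the bubble contribution exactly at $n=2k+4$ (producing the ${\bf 1}_{n=2k+4}$ indicator), and dominates when $2k<n<2k+4$ (yielding the second asymptotic). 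The main obstacle is the intermediate-scale bookkeeping: since \eqref{est:opti} only provides control up to a $d_g(x,\xa)^{-\nu}$ loss for any $\nu\in(0,1)$, extracting sharp constants requires cutoffs at radii $r_\alpha = \ma^a$ for a carefully chosen $a\in(0,1)$ together with a proof that the annular pieces of every term in the Pohozaev identity vanish in the limit; a secondary difficulty is tracing the normal-coordinate expansion of $\Delta_g^k$ to the first order at which the Weyl tensor appears, and verifying the cancellation with the GJMS counter-terms that reassembles the result into $\hbox{Weyl}_g\otimes B$.
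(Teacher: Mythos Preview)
Your overall architecture—Pohozaev--Pucci--Serrin identity plus the sharp pointwise control—is correct, and several of your observations (decay of $U$, $U\in L^{\crit-1}$, $|\nabla^{k-1}U|\in L^2$ iff $n\ge 2k+4$) match the paper. But two of the key mechanisms are misidentified.

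\textbf{The boundary term and the $u_0$ contribution.} You apply the identity on a \emph{fixed} ball $B_\delta(\xa)$ and claim that $\mathcal B_\alpha$ converges to a quantity depending only on $u_0$ which is $o(\ea/\ma^2+\ma^{(n-2k)/2})$. The convergence is right, but the smallness is not: on $\partial B_\delta(x_0)$ the limit $\mathcal B(u_0)$ is a fixed $O(1)$ number, and there is no reason for it to vanish. The paper instead takes the radius to be $\da=\sqrt{\ma}$ when $u_0\not\equiv 0$. At that scale, Proposition~\ref{prop:cv:1} gives $\hua(\sqrt{\ma}\,X)\to u_0(x_0)+K_0|X|^{2k-n}$ in $C^{2k}_{\mathrm{loc}}(\rn\setminus\{0\})$, and the boundary term $IV_\alpha$ then produces exactly the cross term $\frac{n-2k}{2}\,u_0(x_0)\int_{\rn}|U|^{\crit-2}U\,dX\cdot \ma^{(n-2k)/2}$ (the pure $|X|^{2k-n}$ part of the boundary vanishes by a separate Pohozaev argument on $\Gamma_1$). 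So the $u_0(x_0)$ term in \eqref{estim:1} comes from the \emph{boundary} at the intermediate scale, not from a volume cross term in the nonlinearity as you suggest; indeed, the identity already collapses the nonlinear contribution into $I_\alpha=\frac{n\ea}{\crit(\crit-\ea)}\int|\hua|^{\crit-\ea}+o(\ea)$, which carries no $u_0$ interaction. Your closing remark about cutoffs at $r_\alpha=\ma^a$ is therefore not peripheral bookkeeping but the heart of the matter.

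\textbf{The Weyl structure.} You assert that the curvature remainder ``reassembles, because the GJMS coefficients $A_g^{(i)}$ are those that make $P_g^k$ conformally invariant, into $\hbox{Weyl}_g\otimes B$.'' Conformal invariance does not deliver this directly: in raw exponential coordinates the expansion of $\Delta_g^k-\Delta_\xi^k$ involves the full Riemann tensor. The paper first rewrites the equation as $P_g^k u_\alpha + \sum_i(-1)^i\nabla^i((A_\alpha^{(i)}-A_g^{(i)})\nabla^i u_\alpha)=|u_\alpha|^{\crit-2-\ea}u_\alpha$ and then performs the Lee--Parker conformal change $g_{\xa}=\varphi_{\xa}^{4/(n-2k)}g$, using \eqref{invar:gjms} to transport the equation. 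The point is that $\hbox{Ric}_{g_{\xa}}(\xa)=0$, so at the center the Riemann tensor \emph{is} the Weyl tensor, and Proposition~\ref{prop:lap} then gives $\Delta_{g_\alpha}^k-\Delta_\xi^k=-\frac{k}{3}W_{piqj}x^px^q\partial_{ij}\Delta_\xi^{k-1}+\hbox{lot}$ directly (the terms $B_{qijp}$ and $C_{ijpq}$ in that proposition vanish by the antisymmetry $W_{ijpq}=-W_{jipq}$). The same conformal change kills $A_{g_\alpha}^{(k-1)}(\xa)$, which is why $II_\alpha$ yields exactly $\int(A_0^{(k-1)}-A_g^{(k-1)})_{x_0}(\nabla^{k-1}U,\nabla^{k-1}U)$.

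Two smaller points: \eqref{est:opti} is already the sharp estimate with no $\nu$-loss (you may be thinking of \eqref{est:co:eps}); and $\ma^{\ea}\to 1$ is not obtained by testing the equation against $u_\alpha$—Theorem~\ref{prop:prelim:1} only gives $\ma^{\ea}\to c_0\in(0,1]$—but falls out at the end, once the Pohozaev balance forces $\ea=O(\ma^t\ln\frac{1}{\ma})$ for some $t>0$.
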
  
When $u_0\equiv 0$, we can lower the critical dimension to $2k+2$:
\begin{theorem}\label{th:no:unull}[$u_0\equiv 0$] Let $(M,g)$ be a compact Riemannian manifold of dimension $n$ without boundary and let $k\in\nn$ be such that $2\leq 2k<n$. We consider a family of operators $(P_\alpha)_{\alpha>0}\to P_0$ of type (SCC) and a family of functions $(\ua)_\alpha\in C^{2k}(M)$ such that \eqref{eq:ua} holds for all $\alpha>0$. We assume that there exists a Bubble $B=(B_\alpha)_\alpha$ with parameters $U\in D_k^2(\rn)-\{0\}$, $(\xa)_\alpha\in M$  and $(\ma)_\alpha\in\rr_{>0}$ as in Definition \ref{def:bubble} such that
$$\ua= B_\alpha+o(1)\hbox{ in }H_k^2(M)\hbox{ as }\alpha\to 0.$$
Then $\lim_{\alpha\to 0}\ma^{\ea}=1$, $U\in L^{\crit-1}(\rn)$ and the localization of $x_0:=\lim_{\alpha\to 0}\xa$ is constrained as follows
 
\smallskip\noindent$\bullet$ If $\{n>2k+2\}$, then $|\nabla^{k-1}U|\in L^2(\rn)$ and 
\begin{equation}\label{estim:2}
\lim_{\alpha\to 0}\frac{\ea}{\ma^2}=\frac{ \hbox{Weyl}_g\otimes B +\int_{\rn}\left(A_0^{(k-1)}-A_{g}^{(k-1)}\right)_{x_0}\left(\nabla^{k-1}_\xi U,\nabla^{k-1}_\xi U\right) \, dX}{\frac{n}{(\crit)^2}\int_{\rn}|U|^{\crit}\, dX}
\end{equation}
where $A_{0}^{(k-1)}$ (resp. $A_{g}^{(k-1)}$) is the coefficient of the term of order $2k-2$ in $P_0$ (resp. the GJMS operator \eqref{def:gjms}) via the exponential chart $\tilde{\hbox{exp}}_{x_0}$.

\smallskip\noindent$\bullet$ If $\{n=2k+2\}$, then
$$\lim_{\alpha\to 0}\frac{\ea}{\ma^2\ln(\frac{1}{\ma})}= a(k,n,U)\int_{\snmoinsun}\left(A_0^{(k-1)}-A_{g}^{(k-1)}\right)_{x_0}\left(\nabla^{k-1}_\xi |x|^{2k-n},\nabla^{k-1}_\xi |x|^{2k-n}\right)\, d\sigma$$
where $$a(k,n,U):=\frac{\left(\crit C_{n,k}\int_{\rn}|U|^{\crit-2}U\, dX\right)^2}{n\int_{\rn}|U|^{\crit}\, dX}\hbox{ with }C_{n,k}\hbox{ as in }\eqref{def:Cnk}$$
\noindent$\bullet$ If $\{n=2k+1\}$, then
$$\lim_{\alpha\to 0}\frac{\ea}{\ma}=-\crit\frac{\left(\int_{\rn}|U|^{\crit-2}U\, dX\right)^2}{\int_{\rn}|U|^{\crit}\, dX} m_{P_0}(x_0)$$
where $m_{P_0}(x_0)$ is the mass of $P_0$ at $x_0$ defined in \eqref{def:mass:intro}.
\end{theorem}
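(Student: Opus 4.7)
The plan is to derive Theorem \ref{th:no:unull} from the sharp pointwise control of Theorem \ref{th:estim-co:intro} (specialized to $u_0\equiv 0$) combined with the Pohozaev--Pucci--Serrin identity of Section \ref{sec:poho}. As a warm-up, the pointwise bound $|\ua(x)|\leq C(\ma^2/(\ma^2+d_g(x,\xa)^2))^{(n-2k)/2}$ gives uniform control of the rescaled functions $\tua(X):=\ma^{(n-2k)/2}\ua(\tilde\exp_{\xa}(\ma X))$ by the profile $(1+|X|^2)^{-(n-2k)/2}$, so by dominated convergence $U\in L^{\crit-1}(\rn)\cap L^{\crit}(\rn)$ and $\nabla^{k-1}U\in L^2(\rn)$ when $n\geq 2k+2$. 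Rescaling \eqref{eq:ua} gives $\Delta_\xi^k\tua=\ma^{(n-2k)\ea/2}|\tua|^{\crit-2-\ea}\tua+o(1)$ locally, and passing to the limit $U\not\equiv 0$ forces $\ma^{\ea}\to 1$.

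Next I would apply the Pohozaev--Pucci--Serrin identity on the geodesic ball $B_{r_0}(\xa)$, with multiplier $\chi_\alpha(x):=X^j\partial_j \ua+\frac{n-2k}{2}\ua$ written in the geodesic normal chart $X=\tilde\exp_{\xa}^{-1}(x)$. Because the GJMS operator $P_g^k$ is conformally invariant, its Pohozaev defect is identically zero (up to boundary terms), so the identity takes the schematic form
\begin{equation*}
\frac{n\,\ea}{(\crit-\ea)^2}\int_{B_{r_0}(\xa)}|\ua|^{\crit-\ea}\,dv_g
=\int_{B_{r_0}(\xa)}\bigl\langle (P_\alpha-P_g^k)\ua,\chi_\alpha\bigr\rangle\,dv_g+\mathcal{B}_{r_0}(\ua)+o\bigl(\text{rhs}\bigr),
\end{equation*}
where $\mathcal{B}_{r_0}$ gathers boundary integrals on $\partial B_{r_0}(\xa)$. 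The left-hand side converges, after rescaling, to $(n\ea/\crit^2)\int_{\rn}|U|^{\crit}\,dX\cdot(1+o(1))$, which fixes the common normalization in \eqref{estim:2} and the two other formulas.

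The core computation is then to identify the leading term on the right in each dimension. For $n>2k+2$, both pieces of $\mathcal{B}_{r_0}$ are of order $o(\ma^2)$ (by the pointwise control) whereas the bulk integrand concentrates at scale $\ma$: Taylor expanding $A_\alpha^{(k-1)}(\tilde\exp_{\xa}(\ma X))-A_g^{(k-1)}$ at $x_0$ and using the expansion of $\Delta_g^k$ in normal coordinates from the appendix (this produces the Weyl tensor through the volume element and the second-order deviation of $\Delta_g$ from $\Delta_\xi$) yields, after rescaling by $\ma^2$, the sum $\hbox{Weyl}_g\otimes B+\int_{\rn}(A_0^{(k-1)}-A_g^{(k-1)})_{x_0}(\nabla_\xi^{k-1}U,\nabla_\xi^{k-1}U)\,dX$. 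For $n=2k+2$, $|\nabla^{k-1}U|^2$ is no longer integrable at infinity, so the integral of $(A_0^{(k-1)}-A_g^{(k-1)})_{x_0}(\nabla^{k-1}\ua,\nabla^{k-1}\ua)$ picks up a logarithmic divergence from the annular region $\ma\ll d_g(x,\xa)\ll r_0$; on this region Theorem \ref{th:estim-co:intro} together with Propositions \ref{prop:cv:1}--\ref{prop:cv:2} shows $\ua$ is governed by the ``outer'' profile $\crit C_{n,k}\int|U|^{\crit-2}U\,dX\cdot d_g(x,\xa)^{2k-n}$, and the angular average of $(A_0^{(k-1)}-A_g^{(k-1)})(\nabla_\xi^{k-1}|x|^{2k-n},\cdot)$ exactly produces the coefficient $a(k,n,U)$ after accounting for the $\log(1/\ma)$ from radial integration. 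For $n=2k+1$ the bulk term is of order $\ma^2$ and hence subleading; the dominant contribution comes from $\mathcal{B}_{r_0}$, which one evaluates through Green's representation $\ua(x)=\int G_\alpha(x,\cdot)|\ua|^{\crit-2-\ea}\ua\,dv_g$ and the expansion \eqref{def:mass:intro} of $G_0$ at $x_0$, yielding a mass contribution of order $\ma\,m_{P_0}(x_0)\cdot(\int|U|^{\crit-2}U\,dX)^2$.

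I expect the main obstacle to be the intermediate annulus $\ma\lesssim d_g(\cdot,\xa)\lesssim r_0$, where neither the inner (rescaled bubble) nor the outer (Green-function) asymptotics is uniformly sharp; this is where Theorem \ref{th:estim-co:intro} is essential, as it provides the bridging $L^\infty$ bound that controls the error terms at every dyadic scale. A secondary subtlety is the case $n=2k+2$: isolating the $\log(1/\ma)$ requires a careful dyadic decomposition and the symmetry of the angular integrand, together with showing that the subleading Weyl and radial pieces do not contribute to the leading logarithm. Finally, the mass case $n=2k+1$ requires using uniform Hölder control of $G_\alpha-G_0$ near $x_0$ and justifying integration by parts of the Green representation up to $\partial B_{r_0}(\xa)$, which rests on the Green's function estimates of Theorem \ref{th:Green:pointwise:BIS}.
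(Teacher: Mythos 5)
Your overall strategy --- the sharp pointwise control of Theorem \ref{th:estim-co:intro} together with Propositions \ref{prop:cv:1}, \ref{prop:cv:2} and \ref{prop:bnd:der} fed into the Pohozaev--Pucci--Serrin identity, with the three regimes ($n>2k+2$: bulk term at scale $\ma$; $n=2k+2$: logarithmic contribution from the annulus via the outer profile; $n=2k+1$: boundary term evaluated through the Green's function and the mass) --- is exactly the proof of Section \ref{sec:appli}, and your identification of where the work lies (the intermediate annulus, the dyadic extraction of the logarithm, the mass computation) is accurate.

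There is, however, one step that is wrong as written and that the paper repairs with a device you omit. You assert that, by conformal invariance, the Pohozaev defect of $P_g^k$ is ``identically zero up to boundary terms,'' and accordingly your schematic identity carries only $\langle(P_\alpha-P_g^k)\ua,\chi_\alpha\rangle$ in the bulk. Conformal invariance is invariance under conformal changes of metric, not under the dilation flow generated by $\chi_\alpha$ in a chart, and the bulk defect of the poly-Laplacian does not vanish: the term $\int(\Delta_\xi^k-\Delta_g^k)\hua\, T(\hua)\,dx$ is precisely what produces $\hbox{Weyl}_g\otimes B$ (your parenthetical attributing the Weyl tensor to ``the expansion of $\Delta_g^k$'' contradicts your own schematic identity, which contains no such term). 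Moreover, in the plain geodesic chart of $g$ that you propose, the expansion of $\Delta_g^k$ involves the full Riemann tensor, not only its Weyl part, and $A_g^{(k-1)}(x_0)\neq 0$; to reach the clean formula \eqref{estim:2} one must either check that the Ricci contributions from the metric expansion cancel against those contained in $A_g^{(k-1)}$ (cf.\ \eqref{exp:mv}), or, as the paper does, first perform the Lee--Parker conformal change $g_{\xa}=\varphi_{\xa}^{4/(n-2k)}g$ with $\hbox{Ric}_{g_{\xa}}(\xa)=0$, transport the equation to $\hua=\varphi_{\xa}^{-1}\ua\circ\tilde{\hbox{exp}}_{\xa}$ via \eqref{invar:gjms}, and only then apply Proposition \ref{prop:poho}: Ricci-flatness at the center is what reduces Proposition \ref{prop:lap} to the Weyl tensor and makes the transported $A^{(k-1)}_{g_{\xa}}$ vanish at the origin, so that only $(A_0^{(k-1)}-A_g^{(k-1)})_{x_0}$ survives. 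Without this normalization (or an explicit verification of the Ricci cancellation) your identification of the limit of the bulk term is not justified.
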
  
Theorems \ref{th:general} and \ref{th:no:unull} are proved in Section \ref{sec:appli}.

\subsection{Other consequences}The following results are consequences of Theorem \ref{th:general} and \ref{th:no:unull}:
\begin{theorem}\label{th:teaser:26} Let $(M,g)$ be a compact Riemannian manifold of dimension $n$ without boundary and let $k\in\nn$ be such that $2\leq 2k<n$. Let $x_0\in M$ be a point and $(P_\alpha)_{\alpha>0}\to P_0$ be of type (SCC). We assume that there exists $(h_\alpha)_\alpha\in C^{k-1,\theta}(M)$ such that
$$ P_\alpha:=\Delta_g^k+\Delta_g^{\frac{k-1}{2}}\left(h_\alpha\Delta_g^{\frac{k-1}{2}}\right)+\hbox{lot for }\alpha>0$$
and $h_0\in C^{k-1,\theta}(M)$ such that $\lim_{\alpha\to 0}h_\alpha=h_0$ in $C^{k-1,\theta}(M)$. Assume that we are in one of the following situations:
\begin{itemize}
\item $n=2k+1$ and $m_{P_0}(x_0)>0$;
\item $n\geq  2k+2$ and 
$$h_0(x_0)<\frac{k(3n(n-2)-4k^2+4)}{12 n(n-1)}R_g(x_0),$$
\end{itemize}
where $R_g$ is the scalar curvature of $(M,g)$. Then there is \underline{no} family $(\ua)_{\alpha>0}\in C^{2k}(M)$ such that 
$$P_\alpha \ua=|u_\alpha|^{\crit-2-\ea}u_\alpha\hbox{ in }M,\; (\ea)_\alpha\in [0,\crit-2)$$
and 
\begin{equation*}
\ua= B_\alpha^++o(1)\hbox{ in }H_k^2(M)\hbox{ as }\alpha\to 0 
\end{equation*}
where $(B_\alpha^+)_\alpha$ is a standard bubble as in \eqref{def:pos:bubble} centered at  $\xa\to x_0\in M$ and $\ea\to 0$ in $\rr$ as $\alpha\to 0$. 
\end{theorem}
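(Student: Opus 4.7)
The plan is to reduce Theorem \ref{th:teaser:26} to Theorem \ref{th:no:unull}. A standard bubble $B_\alpha^+$ has profile $U(X)=(1+a_{n,k}|X|^2)^{-(n-2k)/2}$, which is radial and strictly positive; in the language of Definition \ref{def:bubble}, this corresponds to $u_0\equiv 0$ together with this radial $U$. I would therefore apply Theorem \ref{th:no:unull} and split the analysis according to $n$.

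For $n=2k+1$, the third bullet of Theorem \ref{th:no:unull} yields
$$\lim_{\alpha\to 0}\frac{\ea}{\ma}=-\crit\,\frac{\left(\int_{\rn}U^{\crit-1}\, dX\right)^{\!2}}{\int_{\rn}U^{\crit}\, dX}\, m_{P_0}(x_0),$$
with both integrals strictly positive since $U>0$. The hypothesis $m_{P_0}(x_0)>0$ makes the right-hand side strictly negative, contradicting $\ea\geq 0$ and $\ma>0$.

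For $n\geq 2k+2$, the radial $U$ gives $\mathrm{Weyl}_g\otimes B=0$ by the proposition of Section \ref{sec:bubble}; Theorem \ref{th:no:unull} thus reduces the question to the sign of
$$I:=\int_{\rn}\bigl(A_0^{(k-1)}-A_g^{(k-1)}\bigr)_{x_0}\bigl(\nabla^{k-1}_\xi U,\nabla^{k-1}_\xi U\bigr)\,dX$$
(or of its analogue on $\snmoinsun$ when $n=2k+2$). I would handle the sign of $I$ in three steps. (i) From the leading $(2k-2)$-order quadratic form $h_0(x_0)\int(\Delta_\xi^{(k-1)/2}u)^2\,dX$ read off from $P_0$, I extract the fully-symmetric tensor $A_0^{(k-1)}(x_0)$ and compute $\mathrm{Tr}_{x_0}(A_0^{(k-1)})=c^\star_{n,k}\,h_0(x_0)$ for an explicit positive constant $c^\star_{n,k}$. (ii) I invoke the normal-coordinate expansion of $\Delta_g^k$ from the Appendix to obtain $\mathrm{Tr}_{x_0}(A_g^{(k-1)})=c^\star_{n,k}\,\frac{k(3n(n-2)-4k^2+4)}{12n(n-1)}R_g(x_0)$. (iii) By $SO(n)$-equivariance (Schur's lemma applied to the radial $U$), for any fully-symmetric $(0,2(k-1))$-tensor $T$ at $x_0$ one has $\int_{\rn}T(\nabla^{k-1}_\xi U,\nabla^{k-1}_\xi U)\,dX=K_{n,k,U}\,\mathrm{Tr}_{x_0}(T)$ with a strictly positive $K_{n,k,U}$, and the analogous identity holds on $\snmoinsun$ for $|x|^{2k-n}$. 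Combining (i)--(iii), the hypothesis $h_0(x_0)<\frac{k(3n(n-2)-4k^2+4)}{12n(n-1)}R_g(x_0)$ is equivalent to $I<0$, so that the first (resp.\ second) bullet of Theorem \ref{th:no:unull} gives $\lim\ea/\ma^2<0$ (resp.\ $\lim\ea/(\ma^2\ln(1/\ma))<0$), contradicting $\ea\geq 0$.

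The main obstacle is step (ii): identifying the precise $R_g(x_0)$-coefficient of the order $2k-2$ part of the GJMS operator requires the delicate normal-coordinate expansion of $\Delta_g^k$ worked out in the Appendix, together with the combinatorics of the totally symmetric trace defining $\mathrm{Tr}_{x_0}$. As a sanity check, the constant $\frac{k(3n(n-2)-4k^2+4)}{12(n-1)}$ specializes to $\frac{n-2}{4(n-1)}$ when $k=1$ (the well-known conformal Laplacian potential) and to $\frac{n^2-2n-4}{2(n-1)}$ when $k=2$, matching the trace of the known Paneitz coefficient $A_g^{(1)}$.
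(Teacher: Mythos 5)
Your reduction to Theorem \ref{th:no:unull} is exactly the paper's intended route: the $n=2k+1$ case is the third bullet of that theorem verbatim, and for $n\geq 2k+2$ the paper likewise kills the Weyl term by radial symmetry and converts the quadratic form in $\nabla^{k-1}U_{1,0}$ into a trace via the Proposition opening Section \ref{sec:extra} (your step (iii), with $K_{n,k,U}=n^{1-k}\int(\Delta_\xi^{\frac{k-1}{2}}U_{1,0})^2\,dx$). The one organizational difference is that for these particular operators the paper does not pass through $\mathrm{Tr}_{x_0}(A_0^{(k-1)})$ at all: it recomputes the Pohozaev term $II_\alpha$ directly, integrating $\Delta_{g_\alpha}^{\frac{k-1}{2}}(\hat h_\alpha\Delta_{g_\alpha}^{\frac{k-1}{2}}\hua)$ against $T(\hua)$ by parts to land immediately on $-\ma^2 h_0(x_0)\int(\Delta_\xi^{\frac{k-1}{2}}U_{1,0})^2\,dx$, which avoids the combinatorics of symmetrizing the tensor attached to $h_0$.

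The concrete gap is your step (ii). The constant $\frac{k(3n(n-2)-4k^2+4)}{12n(n-1)}R_g(x_0)$ cannot be extracted from the Appendix's expansion of $\Delta_g^k$ (Proposition \ref{prop:lap}): that expansion is performed in conformal normal coordinates where $\mathrm{Ric}_{g_p}(p)=0$, so it produces only Weyl-tensor terms and no scalar curvature; indeed, in those coordinates the GJMS coefficient satisfies $(\tilde{\hbox{exp}}_{\xa}^{g_{\xa}})^\star A^{(k-1)}_{g_\alpha}(0)=0$. The scalar-curvature contribution enters instead through the explicit formula for the subleading GJMS coefficient $T_g$ of the \emph{original} metric (formula (2.7) of Mazumdar--V\'etois, quoted as \eqref{exp:mv}), whose trace gives $\frac{\mathrm{Tr}_{x_0}(T_g)}{n}=\frac{k(3n(n-2)-4(k^2-1))}{12n(n-1)}R_g(x_0)$. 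Without importing that formula, your plan has no way to produce the constant, and this is precisely where the content of the theorem lies. Two smaller points: your sanity check drops a factor of $n$ (the $k=1$ specialization of $\frac{k(3n(n-2)-4k^2+4)}{12n(n-1)}$ is $\frac{n-2}{4(n-1)}$, but of your restated constant $\frac{k(3n(n-2)-4k^2+4)}{12(n-1)}$ it is $\frac{n(n-2)}{4(n-1)}$), which suggests the combinatorial bookkeeping in steps (i)--(ii) needs care; and for $n=2k+2$ the positivity of the proportionality constant in the spherical analogue of the trace identity for $|x|^{2k-n}$ is asserted but not checked (it does hold, via the same Mazumdar--V\'etois formulas, since $\Delta_\xi^{\frac{k-1}{2}}|x|^{-2}$ does not vanish on $\mathbb{S}^{n-1}$).
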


\begin{theorem}\label{th:teaser:25} Let $(M,g)$ be a compact Riemannian manifold of dimension $n$ without boundary and let $k\in\nn$ be such that $2\leq 2k<n$. Let $u_0\in C^{2k}(M)$ be a function, $x_0\in M$ be a point and $(P_\alpha)_{\alpha>0}\to P_0$ be of type (SCC). We assume that there exists $(h_\alpha)_\alpha\in C^{k-1,\theta}(M)$ such that
$$ P_\alpha:=\Delta_g^k+ \Delta_g^{\frac{k-1}{2}}\left(h_\alpha\Delta_g^{\frac{k-1}{2}}\right)+\hbox{lot for }\alpha>0$$
and $h_0\in C^{k-1,\theta}(M)$ such that $\lim_{\alpha\to 0}h_\alpha=h_0$ in $C^{k-1,\theta}(M)$. Assume that we are in one of the following situations:
\begin{itemize}
\item $2k<n<2k+4$ and $  u_0(x_0)>0$;
\item $n=2k+4$, $h_0(x_0)<\frac{k(3n(n-2)-4k^2+4)}{12 n(n-1)}R_g(x_0) + \kappa'_k   u_0(x_0)$ (see \eqref{def:kappa});
\item $n> 2k+4$ and $h_0(x_0)<\frac{k(3n(n-2)-4k^2+4)}{12 n(n-1)}R_g(x_0)$:
\end{itemize}
Then there is \underline{no} family $(\ua)_{\alpha>0}\in C^{2k}(M)$ such that 
$$P_\alpha \ua=|u_\alpha|^{\crit-2-\ea}u_\alpha\hbox{ in }M,\; (\ea)_\alpha\in [0,\crit-2)$$
and 
\begin{equation*}
\ua=u_0+  B_\alpha^++o(1)\hbox{ in }H_k^2(M)\hbox{ as }\alpha\to 0 
\end{equation*}
where $(B_\alpha^+)_\alpha$ is a standard bubble as in \eqref{def:pos:bubble} centered at  $\xa\to x_0\in M$ and $\ea\to 0$ in $\rr$ as $\alpha\to 0$. 
\end{theorem}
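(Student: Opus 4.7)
The plan is to reduce Theorem \ref{th:teaser:25} to Theorem \ref{th:teaser:2}. The specific hypothesis on the form of $P_\alpha$, namely $P_\alpha=\Delta_g^k+\Delta_g^{\frac{k-1}{2}}(h_\alpha\Delta_g^{\frac{k-1}{2}})+\hbox{lot}$, rigidly determines the symmetric $(0,2k-2)$-tensor $A_\alpha^{(k-1)}$ in terms of the scalar $h_\alpha$. Rewriting the middle term in divergence form -- directly when $k-1$ is even, and via the quadratic form $\int_M h_\alpha|\nabla\Delta_g^{(k-2)/2}u|_g^2\,dv_g$ followed by integration by parts when $k-1$ is odd -- I obtain at $x_0$ that $A_\alpha^{(k-1)}(x_0)=h_\alpha(x_0)\,g^{\otimes(k-1)}_{\mathrm{sym}}(x_0)$ modulo honest lower-order contributions, where $g^{\otimes(k-1)}_{\mathrm{sym}}$ denotes the fully symmetrized tensor product of $k-1$ copies of $g$. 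Taking the symmetric trace $Tr_{x_0}$ defined in \eqref{def:Tr:A}, I then get $Tr_{x_0}(A_\alpha^{(k-1)})=\gamma_{n,k}\,h_\alpha(x_0)$ for an explicit positive combinatorial constant $\gamma_{n,k}>0$.

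Next, I would invoke the classical expansion of the GJMS operator at the sub-principal order $2k-2$. On a Riemannian manifold $A_g^{(k-1)}$ decomposes as a sum of a scalar-curvature part (proportional to $R_g\,g^{\otimes(k-1)}_{\mathrm{sym}}$), a trace-free Ricci part, and a Weyl part. The crucial observation is that $Tr_{x_0}$ contracts pairs of indices in symmetric position, so the trace-free Ricci and Weyl contributions vanish under this trace, leaving only the scalar-curvature piece. The scalar-curvature coefficient in the GJMS sub-principal term is classical, see Fefferman-Graham \cite{FG}:
$$Tr_{x_0}(A_g^{(k-1)})=\gamma_{n,k}\cdot\frac{k(3n(n-2)-4k^2+4)}{12n(n-1)}R_g(x_0).$$
Subtracting,
$$Tr_{x_0}\bigl(A_0^{(k-1)}-A_g^{(k-1)}\bigr)=\gamma_{n,k}\left(h_0(x_0)-\frac{k(3n(n-2)-4k^2+4)}{12n(n-1)}R_g(x_0)\right).$$

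Setting $\kappa_k':=\kappa_k/\gamma_{n,k}$, the three cases of Theorem \ref{th:teaser:25} translate exactly into the three cases of Theorem \ref{th:teaser:2}, and the nonexistence conclusion follows immediately. The main obstacle is the identification of $Tr_{x_0}(A_g^{(k-1)})$ for general $k$. For $k=1,2$ this is the well-known sub-principal part of the conformal Laplacian and the Paneitz operator; for $k\geq 3$ one either exploits the Fefferman-Graham ambient metric construction to extract the coefficient of $R_g$ in the sub-leading symbol of $P_g^k$, or uses the recursive tractor structure of GJMS. A technical secondary point is the odd $k-1$ case, where $\Delta_g^{(k-1)/2}$ is only defined via the quadratic form $\int|\nabla\Delta_g^{(k-2)/2}u|_g^2\,dv_g$; the integration by parts there must be done carefully to separate the genuine $(k-1)$-order coefficient $A_\alpha^{(k-1)}$ from derivatives of the metric that legitimately belong to the lower-order terms.
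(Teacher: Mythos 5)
Your strategy (reduce to Theorem \ref{th:teaser:2} by identifying $Tr_{x_0}(A_0^{(k-1)})$ and $Tr_{x_0}(A_g^{(k-1)})$) is not the route the paper takes, and as written it leaves the two computations that actually carry the content of the theorem unproved. The paper (Section \ref{sec:extra}) never puts $\Delta_g^{\frac{k-1}{2}}(h_\alpha\Delta_g^{\frac{k-1}{2}})$ into the canonical form $(-1)^{k-1}\nabla^{k-1}(A^{(k-1)}\nabla^{k-1})$ and never computes $Tr_{x_0}(A_g^{(k-1)})$. Instead it conformally normalizes via \eqref{def:phi} so that $\hbox{Ric}_{g_{x_\alpha}}(\xa)=0$, which makes the GJMS sub-principal coefficient vanish \emph{at the point}, and then re-runs the Pohozaev computation of $II_\alpha$ directly for the specific operator: integrating by parts against $\hat h_\alpha(\Delta_\xi^{\frac{k-1}{2}}\hua)^2$ and against the tensor $\hat T_\alpha$ that the conformal change produces. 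The scalar-curvature constant $\frac{k(3n(n-2)-4k^2+4)}{12n(n-1)}$ is then read off as $\frac{1}{n}Tr_{x_0}(T_g)/R_g(x_0)$ from the explicit Mazumdar--V\'etois formula \eqref{exp:mv}. Your appeal to ``the classical sub-principal expansion of the GJMS operator, see Fefferman--Graham'' is not a proof of the identity $Tr_{x_0}(A_g^{(k-1)})=\gamma_{n,k}\,\frac{k(3n(n-2)-4k^2+4)}{12n(n-1)}R_g(x_0)$; no such closed formula is quoted in \cite{FG}, and this identification is precisely the non-trivial input. Worse, your heuristic for it is misleading: in the relevant decomposition the Ricci contribution enters through the combination $\hbox{Ric}_g-\frac{R_g}{2(n-1)}g$, which is \emph{not} trace-free, and its trace is exactly what produces the $-4k^2+4$ part of the constant. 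Declaring that ``the trace-free Ricci and Weyl contributions vanish under this trace'' and keeping only an a priori scalar-curvature coefficient is the step where the constant would come out wrong.

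The second gap is the normalization $\gamma_{n,k}$. You define $\kappa_k':=\kappa_k/\gamma_{n,k}$, but $\kappa_k'$ is already fixed by \eqref{def:kappa}, with $\kappa_k=(2k+4)^{k-1}\kappa_k'$; so to prove the theorem as stated you must show $\gamma_{n,k}=n^{k-1}$, i.e.\ you must relate the symmetrized trace of \eqref{def:Tr:A} to the quadratic form $A\mapsto\int_{\rn}A(\nabla^{k-1}U_{1,0},\nabla^{k-1}U_{1,0})\,dx$ for the radial profile. This is exactly the combinatorial Proposition that opens Section \ref{sec:extra} (using formulas (2.40) and (2.46) of \cite{mv}), and it is not a formality: the symmetrization in \eqref{def:Tr:A} mixes the two index groups, so the constant is not obtained by na\"ively contracting $k-1$ copies of $g$. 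Finally, even granting both identifications, you would still need to check that rewriting $\Delta_g^{\frac{k-1}{2}}(h_\alpha\Delta_g^{\frac{k-1}{2}})$ in divergence form only perturbs $A_\alpha^{(k-1)}$ by terms vanishing at $x_0$ or of strictly lower order (you flag this for odd $k-1$ but do not carry it out). The paper's direct integration by parts in the Pohozaev term, using the decay \eqref{ineq:59}--\eqref{ineq:60} and Lemma \ref{lem:deltap}, sidesteps all three issues at once, which is why it is the more economical route.
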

For $n=2k+4$, we have that
\begin{equation}\label{def:kappa}
\kappa_k':=\frac{(n-2k)\int_{\rn}U_{1,0}^{\crit-1}\, dx}{2\int_{\rn}(\Delta_\xi^{\frac{k-1}{2}}U_{1,0})^2\, dx}\hbox{ and }\kappa_k:=(2k+4)^{k-1}\kappa_k'.
\end{equation}
Theorems \ref{th:teaser:26} and \ref{th:teaser:26} are proved in Section \ref{sec:extra}. More general operators are also considered in Section \ref{sec:extra}.
\section{Preliminary material: Sobolev and Hardy inequalities}
It follows from the Euclidean Sobolev embedding that there exists $K(n,k)>0$ such that
\begin{equation}\label{sobo:ineq:rn}
\left(\int_{\rn}|u|^{\crit}\, dx\right)^\frac{2}{\crit}\leq K(n,k)\int_{\rn}(\Delta_\xi^{\frac{k}{2}}u)^2\, dx\hbox{ for all }u\in D_k^2(\rn).
\end{equation}
Concerning compact manifolds, see Hebey \cite{hebey.cims}, $H_k^2(M)\hookrightarrow L^{\crit}(M)$ continuously, so that there exists $C_S(k)>0$ such that 
\begin{equation}\label{sobo:ineq:M}
\Vert u\Vert_{\crit}\leq C_S(k)\Vert u\Vert_{H_k^2}\hbox{ for all }u\in H_k^2(M).
\end{equation}
Independently, the Hardy inequality states that for any $u\in H_k^2(M)$, then $u^2d_g(\cdot,x_0)^{-2k}\in L^1(M)$. Moreover, there exists $C_H(k)>0$ such that 
\begin{equation}\label{hardy:ineq:intro}
\int_M\frac{u^2\, dv_g}{d_g(x,x_0)^{2k}} \leq C_H(k)\Vert u\Vert_{H_k^2}^2\hbox{ for all }u\in H_k^2(M).
\end{equation}
This inequality is \eqref{ineq:hardy:app} that is proved in Appendix \ref{app:hardy}.

\section{Preliminary analysis: adapted chart and convergence up to rescaling}\label{sec:prelim}
\begin{theorem}\label{prop:prelim:1} Let $(M,g)$ be a compact Riemannian manifold of dimension $n$ without boundary and let $k\in\nn$ be such that $2\leq 2k<n$. We consider a family of operators $(P_\alpha)_{\alpha>0}\to P_0$ of type (SCC) and  a family of functions $(\ua)_\alpha\in C^{2k}(M)$ such that
\begin{equation}\label{eq:ua:prop}
P_\alpha \ua= |\ua|^{\crit-2-\ea}\ua\hbox{ in }M\hbox{ for all }\alpha\in\nn.
\end{equation}
We assume that there exists $u_0\in C^{2k}(M)$ and a Bubble $B=(B_\alpha)_\alpha$ with parameters $(\xa)_\alpha\in M$ and $(\ma)_\alpha\in\rr_{>0}$ (see Definition \ref{def:bubble}) such that
\begin{equation}\label{hyp:ua:bubble}
\ua=u_0+B_\alpha+o(1)\hbox{ in }H_k^2(M)\hbox{ as }\alpha\to 0.
\end{equation}
Then, $\lim_{\alpha\to 0}\ma^{\ea}=c_0\in (0,1]$ and there exists $U\in D_k^2(\rn)\cap C^{2k}(\rn)$ such that
\begin{equation}\label{lim:tua:c2k}
\lim_{\alpha\to +\infty}\ma^{\frac{n-2k}{2}}\ua(\tilde{\hbox{exp}}_{\xa}(\ma \cdot))=U\hbox{ in }C^{2k}_{loc}(\rn)\hbox{ and }\Delta_\xi^k U=c_0^{\frac{n-2k}{2}}|U|^{\crit-2}U\hbox{ in }\rn.
\end{equation}
\end{theorem}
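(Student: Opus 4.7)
The plan is to define the rescaled function $\tua(X) := \ma^{(n-2k)/2} \ua(\tilde{\hbox{exp}}_{\xa}(\ma X))$ for $X \in B_{r_g/\ma}(0) \subset \rn$, and to show it converges to a non-zero $U$ in $C^{2k}_{loc}(\rn)$ satisfying the stated limit equation. A direct computation using the scaling of the Laplacian and of $\nabla^i(A^{(i)}_\alpha \nabla^i)$ in the coordinate chart $\tilde{\hbox{exp}}_{\xa}$, followed by multiplying through by $\ma^{(n+2k)/2}$, yields the rescaled equation
\begin{equation*}
\Delta_{\hat{g}_\alpha}^k \tua + \sum_{i=0}^{k-1} \ma^{2(k-i)} (-1)^i \hat{\nabla}^i\bigl( \hat{A}^{(i)}_\alpha\, \hat{\nabla}^i \tua \bigr) = \ma^{\ea(n-2k)/2}\, |\tua|^{\crit-2-\ea}\, \tua,
\end{equation*}
where $\hat{g}_\alpha \to \xi$ in $C^\infty_{loc}(\rn)$, the rescaled coefficients $\hat{A}^{(i)}_\alpha \to A_0^{(i)}(x_0)$ in $C^{i,\theta}_{loc}(\rn)$, and all lower-order operator terms carry positive powers of $\ma$ and hence vanish in the limit.

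The weak limit of $\tua$ is read off from the hypothesis $\ua = u_0 + B_\alpha + o(1)$ in $H_k^2(M)$. By a direct rescaling argument: (i) the $u_0$-piece tends to $0$ in $D_k^2_{loc}(\rn)$ because $u_0 \in C^{2k}(M)$ and $\ma \to 0$; (ii) the bubble piece tends to $U \in D_k^2(\rn) \setminus \{0\}$ by the very definition of a Bubble (Definition \ref{def:bubble}); (iii) the $o(1)$-remainder goes to $0$ by scale-invariance of the $D_k^2$-seminorm. Hence $\tua \to U$ strongly in $D_k^2(\rn)$, and by Sobolev embedding in $L^\crit_{loc}(\rn)$. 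Extracting a subsequence, set $c_0 := \lim \ma^{\ea}$; since $\ma \to 0$ and $\ea \geq 0$, we have $c_0 \in [0, 1]$. Passing to the distributional limit in the rescaled equation --- the strong $L^\crit_{loc}$-convergence combined with $\crit - 2 - \ea \to \crit - 2$ and Vitali's theorem handles the critical nonlinearity --- produces $\Delta_\xi^k U = c_0^{(n-2k)/2} |U|^{\crit-2} U$ in $\rn$. If $c_0 = 0$, then $U$ would be a polyharmonic function in $D_k^2(\rn) \hookrightarrow L^\crit(\rn)$, forcing $U \equiv 0$, a contradiction. So $c_0 \in (0, 1]$, and uniqueness of the limit (determined by $U$ via the equation) shows the convergence of $\ma^{\ea}$ holds without extraction.

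The final step, which carries the main technical weight, is upgrading the $D_k^2$-convergence to $C^{2k}_{loc}$. The strong $L^\crit_{loc}$-convergence $\tua \to U$ ensures that the effective potential $V_\alpha := \ma^{\ea(n-2k)/2} |\tua|^{\crit-2-\ea}$ is uniformly bounded in $L^{n/(2k)}_{loc}(\rn)$ and \emph{equi-integrable}, i.e., no concentration occurs. A Brezis-Kato-type truncation bootstrap based on the $L^p$ Calder\'on-Zygmund theory for $\Delta_\xi^k$ --- available through the Agmon-Douglis-Nirenberg theory recalled in the Appendix --- then produces uniform $L^p_{loc}$-bounds on $\tua$ for every $p < \infty$. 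Applying ADN once more yields $W^{2k, p}_{loc}$-bounds, Sobolev embedding gives $C^{2k-1,\theta}_{loc}$-bounds, and a Schauder step on the smooth perturbation $\Delta_{\hat{g}_\alpha}^k + \hbox{l.o.t.}$ of $\Delta_\xi^k$ upgrades these to $C^{2k,\theta}_{loc}$, whence the claimed $C^{2k}_{loc}$-convergence and $U \in C^{2k}(\rn)$. The main obstacle is precisely the critical-exponent bootstrap: absent the equi-integrability of $V_\alpha$ that comes from \emph{strong} (not merely weak) $L^\crit$-convergence, the Brezis-Kato iteration would stall at the borderline integrability $n/(2k)$.
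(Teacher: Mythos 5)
Your argument is essentially correct in outline but follows a genuinely different route from the paper, and the one step you yourself identify as carrying ``the main technical weight'' is exactly where the two routes diverge. The paper never performs a critical-exponent bootstrap. Instead it introduces the global maximum point $y_\alpha$ with $|\ua(y_\alpha)|=\sup_M|\ua|=\nu_\alpha^{-\frac{n-2k}{2}}$, rescales at $y_\alpha$ at scale $\tau_\alpha=\nu_\alpha^{1-\frac{n-2k}{4k}\ea}$ (where the rescaled function is bounded by $1$ \emph{by construction}, so ADN plus Schauder apply immediately), and then shows by comparing $L^{\crit}$-masses on the balls $B_{R\ma}(\xa)$ and $B_{R\tau_\alpha}(y_\alpha)$ that $d_g(\xa,y_\alpha)=O(\ma)$ and $\nu_\alpha\asymp\ma$. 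This yields the uniform bound $|\tua|\leq C$ for free, after which elliptic regularity and the $L^{\crit}_{loc}$-identification with $U$ finish the proof; the same mass comparison produces $\lim\ma^{\ea}=c_0\in(0,1]$. What your approach buys is that it never needs the sup of $\ua$ over all of $M$, only local information near $\xa$; what the paper's approach buys is that it completely avoids the higher-order $\epsilon$-regularity machinery.

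That machinery is precisely where you should be careful. The phrase ``Brezis--Kato-type truncation bootstrap'' suggests testing the equation against truncated powers $u\min(|u|,L)^{2\beta}$; for $k\geq 2$ this genuinely fails, because there is no chain rule or Kato-type inequality giving $\int \Delta_\xi^{k/2}u\,\Delta_\xi^{k/2}\bigl(u|u|^{2\beta}\bigr)\gtrsim \int\bigl(\Delta_\xi^{k/2}|u|^{\beta+1}\bigr)^2$, and this lack of positivity-preserving structure is exactly the obstruction the paper highlights in its introduction. The correct higher-order substitute is the representation-formula version: write $\tua=\Gamma*\bigl(V_\alpha\tua\bigr)+(\hbox{polyharmonic part})$ locally with $\Gamma=C_{n,k}|x|^{2k-n}$, split $V_\alpha=V_\alpha^{(1)}+V_\alpha^{(2)}$ with $\Vert V_\alpha^{(1)}\Vert_{L^{n/2k}}$ uniformly small on small balls (this is where your equi-integrability observation enters, and it is the right one) and $V_\alpha^{(2)}$ uniformly bounded, and absorb the small part via the mapping properties of the Riesz potential $I_{2k}$ before iterating. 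With that replacement your proof closes; without it the bootstrap step is a gap rather than a routine citation. Two minor points: the convergence $\tua\to U$ holds in $D_k^2$ only \emph{locally} (the rescaled $u_0$-piece does not vanish in the global seminorm on $B_{r_g/\ma}(0)$), which suffices for your purposes; and your Liouville step ($\Delta_\xi^kU=0$ with $U\in L^{\crit}(\rn)$ forces $U\equiv 0$, ruling out $c_0=0$) and the identification of $c_0$ from the limit equation are both fine.
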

The proof goes through several steps. We consider a bubble and $(\ma)_\alpha$, $\eta$ and $U\in D_k^2(\rn)$ as in Definition \ref{def:bubble}. Since $\ua=u_0+B_\alpha+o(1)$ in $H_k^2(M)$ as $\alpha\to 0$,  we get that there exists $C>0$ such that 
\begin{equation}\label{bnd:hk2}
\Vert \ua\Vert_{H_k^2}\leq C\hbox{ for all }\alpha>0\hbox{ small enough.}
\end{equation}
Since $U\in D_k^2(\rn)$, Sobolev's inequality \eqref{sobo:ineq:rn} yields  $U\in L^{\crit}(\rn)$ and therefore for any $R>0$, direct computations yield
\begin{equation}\label{lim:norm:ua:1}
\int_{\Omega_\alpha}|\ua|^{\crit}\, dv_g\leq  C\hbox{Vol}_g(\Omega_\alpha)+ C\left\{\begin{array}{c}
\int_{\frac{\tilde{\hbox{exp}}_{\xa}^{-1}(\Omega_\alpha)}{\ma}\cap B_{R}(0)}|U|^{\crit}\, dx\\
\hbox{or }\int_{\Omega_\alpha\cap B_{R\ma}(\xa)}|B_\alpha|^{\crit}\, dv_g\end{array}\right\}+\eps(R)+o(1)
\end{equation}
where $\lim_{R\to +\infty}\eps(R)=0$. Similarly, for any $R>0$,  a change of variable yields
\begin{equation}\label{lim:norm:ua:2}
\int_{ B_{R\ma}(\xa)}|\ua|^{\crit}\, dv_g=\int_{ B_{R}(0)}|U|^{\crit}\, dx+o(1)\hbox{ as }\alpha\to 0.
\end{equation}
Let $(y_\alpha)_\alpha\in M$ and $(\nu_\alpha)_\alpha\in \rr_{>0}$ be such that
\begin{equation}\label{def:nua}
|\ua(y_\alpha)|=\sup_M|\ua|=\nu_\alpha^{-\frac{n-2k}{2}}.
\end{equation}
We claim that $\lim_{\alpha\to +\infty}\nu_\alpha=0$. We prove the claim by contradiction and we assume that there exists $C>0$ such that $|\ua(x)|\leq C$ for all $x\in M$ and $\alpha>0$, possibly for a subfamily. Therefore, for any $R>0$, we have that $\int_{ B_{R\ma}(\xa)}|\ua|^{\crit}\, dv_g\to 0$ as $\alpha\to 0$, contradicting \eqref{lim:norm:ua:2}. Then $\nu_\alpha\to 0$ as $\alpha\to 0$, and the claim is proved.

\medskip\noindent  We define $\tau_\alpha:=\nu_\alpha^{1-\frac{n-2k}{4k}\ea}$. Since $\ea\to 0$ and $\nu_\alpha\to 0$ as $\alpha\to 0$, we get that $\lim_{\alpha\to 0}\tau_\alpha=0$. We define
\begin{equation*}
\bua(X):=\nu_\alpha^{\frac{n-2k}{2}}\ua\left(\tilde{\hbox{exp}}_{y_\alpha}(\tau_\alpha X)\right)\hbox{ for all }X\in B_{\delta\tau_\alpha^{-1}}(0)
\end{equation*}
where $\delta\in (0, i_g(M))$ and $i_g(M)>0$ is the injectivity radius of $(M,g)$. In the sequel, the notation $T\star S$ will be any linear combination  of contractions of the tensors $T$ and $S$. As one checks, equation \eqref{eq:ua:prop} rewrites
\begin{equation}\label{eq:ua:prop:bar}
\Delta_{\bga}^k\bua+\sum_{j=0}^{2k-2}\tau_\alpha^{2k-j}\bar{B}_\alpha^j\star\nabla^j\bua=|\bua|^{\crit-2-\ea}\bua\hbox{ in }B_{\delta\tau_\alpha^{-1}}(0)
\end{equation}
where $\bga:=(\tilde{\hbox{exp}}_{y_\alpha}g)(\tau_\alpha\cdot)$ is the pull-back metric and for all $j=0,...,2k-2$, $(\bar{B}_\alpha^j)_\alpha$ is a family of $(j,0)-$tensors such that there exists $C_j>0$ such that $\Vert \bar{B}_\alpha^j\Vert_{C^{0,\theta}}\leq C_j$ for all $\alpha>0$. It follows from the definition of $\nu_\alpha$ that
\begin{equation*}
|\bua(X)|\leq |\bua(0)|=1.
\end{equation*}
It then follows from elliptic regularity theory (see \cite{ADN} and Theorem \ref{th:2} and \ref{th:2bis} in Appendix \ref{sec:regul:adn}) that there exists $V\in C^{2k}(\rn)$ such that, up to extraction, 
\begin{equation}\label{ineq:32}
\lim_{\alpha\to 0}\bua=V\hbox{ in }C^{2k}_{loc}(\rn)\hbox{ with }|V(X)|\leq |V(0)|=1\hbox{ for all }X\in\rn.
\end{equation}
We claim that
\begin{equation}\label{ineq:33}
V\in L^{\crit}(\rn)\hbox{ and }\lim_{\alpha\to 0}\nu_\alpha^{\ea}=c_1\in (0,1].
\end{equation}
We prove the claim. Given $R>0$, we have that 
\begin{equation}\label{ineq:34}
\int_{B_R(0)}|\bua|^{\crit}\, dv_{\bar{g}_\alpha}=\left(\nu_\alpha^{\ea}\right)^{\frac{n}{\crit-2}}\int_{B_{R\tau_\alpha}(y_\alpha)}|\ua|^{\crit}\, dv_g.
\end{equation}
Since $H_k^2(M)\hookrightarrow L^{\crit}(M)$ continuously (see \eqref{sobo:ineq:M}) and \eqref{bnd:hk2} holds, then there exists $C>0$ such that $\int_M|\ua|^{\crit}\, dv_g\leq C$ for all $\alpha>0$. Therefore, letting $\alpha\to 0$ and then $R\to +\infty$ in \eqref{ineq:34}, we get that
$$\int_{\rn}|V|^{\crit}\, dX\leq C\left(\liminf_{\alpha\to 0}\nu_\alpha^{\ea}\right)^{\frac{n}{\crit-2}}.$$
Since $\ea\geq 0$ and $\nu_\alpha\to 0$, we have that $\nu_\alpha^{\ea}\leq 1$ for $\alpha\to 0$, and then $V\in L^{\crit}(\rn)$. Moreover, since $V\in C^0(\rn)$ and $V\not\equiv 0$, we then get that $\liminf_{\alpha\to 0}\nu_\alpha^{\ea}>0$. This proves the claim.

\smallskip\noindent It follows from the claim that $\nu_\alpha\asymp \tau_\alpha$ when $\alpha\to 0$.

\medskip\noindent We claim that 
\begin{equation}\label{ineq:37}
d(x_\alpha,y_\alpha)=O(\ma+\nu_\alpha)\hbox{ when }\alpha\to 0.
\end{equation}
We argue by contradiction and we assume that $\frac{d(x_\alpha,y_\alpha)}{\ma+\nu_\alpha}\to +\infty$ when $\alpha\to 0$. We fix $R>0$. We then have that $B_{R\ma}(\xa)\cap B_{R\tau_\alpha}(y_\alpha)=\emptyset $ when $\alpha\to 0$ since  $\nu_\alpha\asymp \tau_\alpha$ when $\alpha\to 0$. Therefore, taking $\Omega_\alpha:=B_{R\tau_\alpha}(y_\alpha)$ in \eqref{lim:norm:ua:1}, we get that
$$\int_{B_{R\tau_\alpha}(y_\alpha)}|\ua|^{\crit}\, dv_g\leq C\nu_\alpha^n+\eps(R)+o(1)\hbox{ as }\alpha\to 0$$
where $\eps(R)\to 0$ as $R\to +\infty$. Passing to the limit $\alpha\to 0$ and $R\to +\infty$ and using \eqref{ineq:34}, \eqref{ineq:33} and \eqref{ineq:32}, we get that $0\geq c_1^{-n/(\crit-2)}\int_{\rn}|V|^{\crit}\, dx$, contradicting $V\not\equiv 0$. This proves the claim.

\medskip\noindent We claim that $\ma=O(\nu_\alpha)$ as $\alpha\to 0$. Here again, we argue by contradiction and assume that $\nu_\alpha=o(\ma)$ as $\alpha\to 0$. It then follows from \eqref{ineq:37} that $d(\xa, y_\alpha)=O(\ma)$ as $\alpha\to 0$. We fix $R>0$ and we set $\Omega_\alpha:=B_{R\tau_\alpha}(y_\alpha)$. It follows from \eqref{lim:norm:ua:1} that
$$\int_{B_{R\tau_\alpha}(y_\alpha)}|\ua|^{\crit}\, dv_g\leq o(1)+C\int_{\frac{\tilde{\hbox{exp}}_{\xa}^{-1}(B_{R\tau_\alpha}(y_\alpha))}{\ma}\cap B_{R}(0)}|U|^{\crit}\, dx +\eps(R)+o(1)\hbox{ as }\alpha\to 0.$$
We set $Y_\alpha\in\rn$ such that $y_\alpha=\tilde{\hbox{exp}}_{y_\alpha}(\ma Y_\alpha)$: since $d(\xa, y_\alpha)=O(\ma)$, we have that $|Y_\alpha|=O(1)$. We then get that
$$\frac{\tilde{\hbox{exp}}_{\xa}^{-1}(B_{R\tau_\alpha}(y_\alpha))}{\ma}\subset B_{C_1R\frac{\tau_\alpha}{\ma}}(Y_\alpha)\hbox{ for all }\alpha>0$$
for some $C_1>0$. Therefore, since $U\in L^{\crit}(\rn)$, $\tau_\alpha\asymp\nu_\alpha$ and $\nu_\alpha=o(\ma)$ as $\alpha\to 0$, we get that $\lim_{R\to +\infty}\lim_{\alpha\to 0}\int_{B_{R\tau_\alpha}(y_\alpha)}|\ua|^{\crit}\, dv_g=0$. Using the arguments of the preceding claim, we also get a contradiction. This proves the claim.

\medskip\noindent We claim that $\nu_\alpha=O( \ma)$ as $\alpha\to 0$. Here again, we argue by contradiction and we assume that $\ma=o(\nu_\alpha)$ as $\alpha\to 0$. We set $X_\alpha\in\rn$ such that $\xa=\tilde{\hbox{exp}}_{y_\alpha}(\tau_\alpha X_\alpha)$. With  \eqref{ineq:37} and $\tau_\alpha\asymp \nu_\alpha$, we get that $d(\xa, y_\alpha)=O(\tau_\alpha)$ as $\alpha\to 0$ and therefore $|X_\alpha|\leq C$ for all $\alpha\to 0$. Arguing as in the preceding claims, we get that 
\begin{eqnarray*}
\int_{B_{R\ma}(x_\alpha)}|\ua|^{\crit}\, dv_g&\leq &C\int_{\frac{\tilde{\hbox{exp}}_{y_\alpha}^{-1}(B_{R\ma}(x_\alpha)}{\tau_\alpha}}|\bua|^{\crit}\, dx\\
&\leq &C\int_{B_{CR\ma/\tau_\alpha}(X_\alpha)}|\bua|^{\crit}\, dx\leq C\left(\frac{\ma}{\nu_\alpha}\right)^n
\end{eqnarray*}
and then for any $R>0$, $\int_{B_{R\ma}(x_\alpha)}|\ua|^{\crit}\, dv_g=o(1)$ as $\alpha\to 0$. This contradicts \eqref{lim:norm:ua:2} since $U\not\equiv 0$. This proves the claim.

\medskip\noindent It then follows from the claims above that
\begin{equation}\label{ppty:mua:nua}
\nu_\alpha\asymp\ma\, ,\, \ma^{\ea}\to c_0\in (0,1]\hbox{ and }d(\xa,y_\alpha)=O(\ma)\hbox{ as }\alpha\to 0.
\end{equation}
We define
\begin{equation}
\label{def:tua}
\tua(X):=\ma^{\frac{n-2k}{2}}\ua(\tilde{\hbox{exp}}_{\xa}(\ma X))\hbox{ for all }X\in B_{\delta/\ma}(0)
\end{equation}
Equation \eqref{eq:ua:prop} rewrites
\begin{equation}\label{eq:tua:prop}
\Delta_{\tga}^k\tua+\sum_{j=0}^{2k-2}\ma^{2k-j}\tilde{B}_\alpha^j\star\nabla^j\tua=\left(\ma^{\ea}\right)^{\frac{n-2k}{2}}|\tua|^{\crit-2-\ea}\tua\hbox{ in }B_{\delta\ma^{-1}}(0)
\end{equation}
where $\tga:=(\tilde{\hbox{exp}}_{\xa}g)(\ma\cdot)$ is the pull-back metric and for all $j=0,...,2k-2$, $(\tilde{B}_\alpha^j)_\alpha$ is a family of $(j,0)-$tensors such that there exists $C_j>0$ such that $\Vert \tilde{B}_\alpha^j\Vert_{C^{0,\theta}}\leq C_j$ for all $\alpha>0$.

\smallskip\noindent We set $\tilde{Y}_\alpha\in\rn$ such that $y_\alpha=\tilde{\hbox{exp}}_{\xa}(\ma \tilde{Y}_\alpha)$. It follows from \eqref{ppty:mua:nua} that $|\tilde{Y}_\alpha|\leq C$ for all $\alpha\to 0$, and then there exists $\tilde{Y}_0\in\rn$ such that $\tilde{Y}_\alpha\to \tilde{Y}_0$ as $\alpha\to 0$. It then follows from \eqref{ppty:mua:nua} and \eqref{def:nua} that there exists $C>0$ and $\lambda>0$ such that
$$|\tua(X)|\leq C\hbox{ for all }X\in B_{\delta/\ma}(0)\hbox{ and }\lim_{\alpha\to 0}|\tua(\tilde{Y}_\alpha)|=\lambda>0.$$
It then follow from elliptic theory (see \cite{ADN}, Theorem \ref{th:2} and \ref{th:2bis} of Appendix \ref{sec:regul:adn}) that there exists $\tilde{U}\in C^{2k}(\rn)$ such that $\lim_{\alpha\to 0}\tua=\tilde{U}$ in $C^{2k}_{loc}(\rn)$. Passing to the limit in \eqref{eq:tua:prop} yields $\Delta_\xi^k\tilde{U}=c_0^{\frac{n-2k}{2}}|\tilde{U}|^{\crit-2}\tilde{U}$ in $\rn$. It follows from the definition of the bubble that $\lim_{\alpha\to 0}\tua=U$ in $L^{\crit}_{loc}(\rn)$. Therefore, $U=\tilde{U}$ a.e. so that $U\in C^{2k}(\rn)$ and \eqref{lim:tua:c2k} holds. This ends the proof of Theorem \ref{prop:prelim:1}.

\section{Strong pointwise control: proof of Theorem \ref{th:estim-co:intro}}\label{sec:strong}
This section is devoted to the prove of the sharp pointwise control of Theorem \ref{th:estim-co:intro}, namely:
\begin{theorem}\label{prop:prelim:2} Under the assumption of Theorem \ref{prop:prelim:1}, there exists $C>0$ such that
\begin{equation}\label{ineq:c0}
|\ua(x)|\leq C\Vert u_0\Vert_\infty^{(\crit-1)^2}+C \left(\frac{\ma}{\ma^2+d_g(x,\xa)^2}\right)^{\frac{n-2k}{2}}\hbox{ for all }x\in M\hbox{ and }\alpha>0.
\end{equation}
In particular, $|U(X)|\leq C(1+|X|)^{2k-n}$ for all $X\in \rn$ and $U\in L^{\crit-1}(\rn)$, where $U$ is as in Definition \ref{def:bubble}.
\end{theorem}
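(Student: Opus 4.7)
The plan is to deduce the sharp pointwise bound \eqref{ineq:c0} from the weaker estimate of Theorem \ref{th:main} via a bootstrap argument. First I would rewrite the nonlinear equation $P_\alpha \ua = |\ua|^{\crit-2-\ea} \ua$ as a linear Schr\"odinger-type problem: setting $V_\alpha := |\ua|^{\crit-2-\ea}$, we have $(P_\alpha - V_\alpha) \ua = 0$, so that $V_\alpha$ plays the role of a (potentially singular) Hardy-type potential. The compactness hypothesis \eqref{cpct:ua:resc} is immediate from the $C^{2k}_{loc}$ convergence $\ma^{(n-2k)/2}\ua(\tilde{\hbox{exp}}_{\xa}(\ma\cdot)) \to U$ proved in Theorem \ref{prop:prelim:1}. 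It is hypothesis \eqref{est:weak:V} that requires work.

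Hypothesis \eqref{est:weak:V} asks that $d_g(x, \xa)^{2k} |\ua(x)|^{\crit - 2 - \ea}$ be uniformly small outside a fixed rescaled ball $B_{R_\lambda \ma}(\xa)$. Because $|u|^{\crit - 2}$ matches $d^{-2k}$ precisely when $|u|$ saturates the bubble decay $d^{-(n-2k)/2}$, this hypothesis is essentially equivalent to the conclusion we wish to prove. The way out is to proceed iteratively. The $H_k^2$ bound \eqref{bnd:hk2} combined with the Sobolev and Hardy inequalities \eqref{sobo:ineq:M}--\eqref{hardy:ineq:intro} yields a first integrability control on $V_\alpha$; combining this with the Green's function pointwise bounds of Theorem \ref{th:Green:pointwise:BIS} and the ``small Hardy potential'' regularity Lemma \ref{lem:main}, one deduces a rough first pointwise decay of $\ua$ at intermediate scales. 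This rough decay is enough to verify \eqref{est:weak:V} for some $\lambda<1$, at which point Theorem \ref{th:main} applies and gives \eqref{est:co:eps} with some $\nu_1 \in (0,1)$. Feeding the improved decay back into the verification of \eqref{est:weak:V} with smaller $\lambda$ and re-applying Theorem \ref{th:main} progressively lowers $\nu$, and a finite chain of such improvements yields the sharp bubble profile on the right-hand side of \eqref{ineq:c0}.

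The constant term $C \Vert u_0 \Vert_\infty^{(\crit-1)^2}$ captures the contribution of the background $u_0$ in the regime where $|\ua| \asymp \Vert u_0 \Vert_\infty$: the subcritical source $|\ua|^{\crit-1-\ea}$ roughly contributes $\Vert u_0 \Vert_\infty^{\crit-1}$ after one Green-kernel integration, and a second iteration (needed to absorb the $V_\alpha$ term in the non-concentrating region via the representation formula for $(P_\alpha - V_\alpha)$) raises this exponent to $(\crit - 1)^2$. Once \eqref{ineq:c0} is established on $M$, the rescaled statement $|U(X)| \leq C(1+|X|)^{2k-n}$ is obtained by evaluating \eqref{ineq:c0} at $x = \tilde{\hbox{exp}}_{\xa}(\ma X)$, dividing by $\ma^{-(n-2k)/2}$, and passing to the limit using \eqref{lim:tua:c2k}; the $L^{\crit-1}$ integrability of $U$ then follows from the decay rate $2k-n < -\frac{n}{\crit-1}$.

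The principal obstacle is initiating this bootstrap: obtaining the first pointwise decay of $\ua$ away from $\xa$ without access to Harnack's inequality or a maximum principle (which fail for $k \geq 2$, see \cite{GGS}). This is precisely the role of the pointwise bounds of Theorem \ref{th:Green:pointwise:BIS} and Lemma \ref{lem:main} proved in later sections: they replace the classical comparison arguments, so each iteration can be carried out purely via Green's representation formula, and the scheme closes to give \eqref{ineq:c0}.
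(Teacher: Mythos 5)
Your overall architecture (linearize with a Hardy-type potential, invoke Theorem \ref{th:main}, then upgrade by one more Green-kernel convolution to get the sharp profile and the exponent $(\crit-1)^2$) matches the paper's, but the way you propose to \emph{initiate} the argument has a genuine circularity. You want to obtain the first pointwise decay of $\ua$ at intermediate scales by combining the $H_k^2$/Sobolev/Hardy integral bounds with the Green's function estimates of Theorem \ref{th:Green:pointwise:BIS} and with Lemma \ref{lem:main}. But both of those tools take the smallness $d_g(x,x_0)^{2k}|V(x)|\leq\lambda$ with $\lambda$ below a threshold $\lambda_\gamma$ as a \emph{hypothesis}; they cannot be used to produce it, and an $L^q$ control on $V_\alpha$ coming from the energy bound does not imply the pointwise statement. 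Moreover \eqref{est:weak:V} is not ``some $\lambda<1$'': it must hold for every $\lambda>0$ (with $R_\lambda$ depending on $\lambda$), because the constant $\lambda_\gamma$ in Theorem \ref{th:Green:pointwise:BIS} may be very small. The paper breaks this circle by a self-contained rescaling/contradiction argument (Steps \ref{prop:weak:estim}--\ref{prop:weak-strong}, \`a la Druet--Hebey--Robert): one first proves $d_g(x,\xa)^{2k}|\ua|^{\crit-2-\ea}\leq C$ and then $\sup_{M\setminus B_{R\ma}(\xa)}d_g(x,\xa)^{2k}|\ua-u_0|^{\crit-2-\ea}\to 0$, using only the localization of the $L^{\crit}$-mass from \eqref{lim:norm:ua:1}--\eqref{lim:norm:ua:2} and classical elliptic theory, with no Hardy-potential machinery. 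Your proposal is missing this ingredient, and without it the bootstrap never starts.

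Two further points. First, the linearization $(P_\alpha-V_\alpha)\ua=0$ with $V_\alpha:=|\ua|^{\crit-2-\ea}$ cannot satisfy \eqref{est:weak:V} when $u_0\not\equiv 0$: at fixed distance from $x_0$ one has $d_g(x,\xa)^{2k}|V_\alpha(x)|\to d_g(x,x_0)^{2k}|u_0(x)|^{\crit-2}$, which is bounded away from $0$ in general. The paper instead works with $v_\alpha:=\ua-u_0$ and splits $|\ua|^{\crit-2-\ea}\ua$ into a potential part (active only where $|v_\alpha|$ dominates $|u_0|$, hence controlled by $|v_\alpha|^{\crit-2-\ea}$ and small by Step \ref{prop:weak-strong}) and a bounded source $f_\alpha$ with $\Vert f_\alpha\Vert_\infty\leq C\Vert u_0\Vert_\infty^{\crit-1}$; this decomposition is essential and is absent from your write-up. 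Second, the sharp profile is not reached by ``progressively lowering $\nu$'': the constant $C_\nu$ in Theorem \ref{th:main} degenerates as $\nu\to 0$, so no finite or infinite chain of applications of that theorem alone removes the loss. The loss is removed in one stroke by writing $\ua(x)=\int_M G_\alpha(x,y)|\ua|^{\crit-2-\ea}\ua(y)\,dv_g(y)$ with the Green's function of $P_\alpha$ itself (bounded by $d_g(x,y)^{2k-n}$) and inserting the $\nu$-lossy estimate raised to the power $\crit-1-\ea$; for $\nu$ small the convolution integral returns exactly the bubble profile plus $\Vert u_0\Vert_\infty^{(\crit-1)^2}$. You gesture at this for the $u_0$-term but not for the bubble term, where it is equally needed.
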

The proof relies on several steps:
\begin{step}\label{prop:weak:estim} Under the assumptions and notations of Theorem \ref{prop:prelim:1}, we have that
\begin{equation*}
d_g(x,\xa)^{2k}|\ua(x)|^{\crit-2-\ea}\leq C\hbox{ for all }\alpha>0\hbox{ and }x\in M.
\end{equation*}
\end{step}
\begin{proof} The proof is similar to Chapter 4 in Druet-Hebey-Robert \cite{DHR} and uses the properties \eqref{lim:norm:ua:1} and \eqref{lim:norm:ua:2}. Define $w_\alpha(x):=d_g(x,\xa)^{2k}|\ua(x)|^{\crit-2-\ea}$ for all $x\in M$ and let $z_\alpha\in M$ be such that $\max_M w_\alpha=w_\alpha(z_\alpha)$. Assume by contradiction that
$$\lim_{\alpha\to 0}\max_Mw_\alpha=+\infty.$$
In particular $w_\alpha(z_\alpha)\to +\infty$. Define
$$\lambda_\alpha:=|\ua(z_\alpha)|^{-\frac{2}{n-2k}}\hbox{ and }l_\alpha:=\lambda_\alpha^{1-\ea/(\crit-2)}.$$
Since $w_\alpha(z_\alpha)\to +\infty$ as $\alpha\to 0$, we get that
\begin{equation}\label{lim:lambda:alpha}
\lim_{\alpha\to 0}\lambda_\alpha=\lim_{\alpha\to 0}l_\alpha=0\hbox{ and }\lim_{\alpha\to 0}\frac{d_g(\xa, z_\alpha)}{l_\alpha}=+\infty.
\end{equation}
Let us set
$$\hat{u}_\alpha(X):=\lambda_\alpha^{\frac{n-2k}{2}}\ua(\tilde{\hbox{exp}}_{z_\alpha}(l_\alpha X))\hbox{ for all }X\in B_{\delta\tau_\alpha^{-1}}(0)$$
where $\delta\in (0, i_g(M))$. As in  \eqref{eq:ua:prop:bar}, equation \eqref{eq:ua:prop} rewrites
\begin{equation}\label{eq:ua:prop:hat}
\Delta_{\hat{g}_\alpha}^k\bua+\sum_{j=0}^{2k-2}l_\alpha^{2k-j}\hat{B}_\alpha^j\star\nabla^j\hat{u}_\alpha=|\hat{u}_\alpha|^{\crit-2-\ea}\hat{u}_\alpha\hbox{ in }B_{\delta l_\alpha^{-1}}(0)
\end{equation}
where $\hat{g}_\alpha:=(\tilde{\hbox{exp}}_{z_\alpha}g)(l_\alpha\cdot)$ is the pull-back metric and for all $j=0,...,2k-2$, $(\hat{B}_\alpha^j)_\alpha$ is a family of $(j,0)-$tensors such that there exists $C_j>0$ such that $\Vert \hat{B}_\alpha^j\Vert_{C^{0,\theta}}\leq C_j$ for all $\alpha>0$. Let us fix $R>0$. For any $X\in B_R(0)$, it follows from the definitions of $w_\alpha$ and $z_\alpha$ that
$$d_g(\xa, \tilde{\hbox{exp}}_{z_\alpha}(l_\alpha X))^{2k}|\ua(\tilde{\hbox{exp}}_{z_\alpha}(l_\alpha X))|^{\crit-2-\ea}\leq d_g(\xa, z_\alpha)^{2k}|\ua(z_\alpha)|^{\crit-2-\ea}.$$
Using the triangle inequality, the definition of $\hat{u}_\alpha$ and \eqref{lim:lambda:alpha}, for $\alpha\to 0$ small enough, we get that
$$|\hat{u}_\alpha(X)|\leq \left(\frac{1}{1-\frac{R l_\alpha}{d_g(\xa, z_\alpha)}}\right)^{\frac{2k}{\crit-2-\ea}}\hbox{ for all }X\in B_R(0).$$
So that $(\hat{u}_\alpha)_\alpha$ is uniformly bounded on $B_R(0)$. It then follows from elliptic theory (see Theorems \ref{th:2} and \ref{th:2bis} of Appendix \ref{sec:regul:adn}) that there exists $\hat{u}\in C^{2k}(\rn)$ such that $\lim_{\alpha\to 0}\hat{u}_\alpha=\hat{u}$ in $C^{2k}_{loc}(\rn)$. Moreover, $|\hat{u}(0)|=\lim_{\alpha\to 0}|\hat{u}_\alpha(0)|=1$. With a change of variable, we get that 
$$\int_{B_{ l_\alpha}(z_\alpha)}|\ua|^{\crit}\, dv_g=\left(\lambda_\alpha^{\ea}\right)^{-\frac{n}{\crit-2}}\int_{B_1(0)}|\hat{u}_\alpha|^{\crit}\, dv_{\hat{g}_\alpha}\geq \int_{B_1(0)}|\hat{u}|^{\crit}\, dx+o(1)$$
since $\lambda_\alpha^{\ea}\leq 1$. Independently, since $B_{ l_\alpha}(z_\alpha)\cap B_{ R\ma}(\xa)=\emptyset$ for $\alpha\to 0$ (otherwise $d_g(\xa, z_\alpha)=O(\ma+l_\alpha)$: absurd), it follows from \eqref{lim:norm:ua:1} that
$$\lim_{\alpha\to 0}\int_{B_{ l_\alpha}(z_\alpha)}|\ua|^{\crit}\, dv_g=0.$$
Which yields $\int_{B_1(0)}|\hat{u}|^{\crit}\, dx=0$: absurd since $\hat{u}\in C^0(\rn)$ and $|\hat{u}(0)|=1$. This ends Step \ref{prop:weak:estim}.\end{proof}

\begin{step}\label{prop:cv:u0} Under the assumptions and notations of Theorem \ref{prop:prelim:1}, we have that
\begin{equation*}
\lim_{\alpha\to 0}\ua=u_0\hbox{ in }C^{2k}_{loc}(M-\{x_0\}),\hbox{ where }x_0:=\lim_{\alpha\to 0}\xa.
\end{equation*}
\end{step}
\begin{proof} It follows from Proposition \ref{prop:weak:estim} that for all $\delta>0$, there exists $C(\delta)>0$ such that $|\ua(x)|\leq C(\delta)$ for all $x\in M-B_\delta(x_0)$. Since $\ua$ solves \eqref{eq:ua:prop}, it follows from elliptic theory (see Theorems \ref{th:2} and \ref{th:2bis} of Appendix \ref{sec:regul:adn}) that there exists $\hat{u}_0\in C^{2k}(M-\{x_0\})$ such that $\lim_{\alpha\to 0}\ua=\hat{u}_0$ in $C^{2k}_{loc}(M-\{x_0\})$. Independently, it follows from  Assumption \eqref{hyp:ua:bubble} that  $\lim_{\alpha\to 0}\ua=u_0$ in $L^2(M)$. Therefore $\hat{u}_0=u_0$. This ends Step \ref{prop:cv:u0}.\end{proof}

\begin{step}\label{prop:weak-strong} Under the assumptions and notations of Theorem \ref{prop:prelim:1}, we have that
\begin{equation*}
\lim_{R\to +\infty}\lim_{\alpha\to 0}\sup_{x\in M\setminus B_{R\ma}(\xa)}d_g(x,\xa)^{2k}|\ua(x)-u_0(x)|^{\crit-2-\ea}=0.
\end{equation*}
\end{step}
\begin{proof} Here again the proof follows Chapter 4 of Druet-Hebey-Robert \cite{DHR}. We sketch the proof since it is similar to the proof of Step \ref{prop:weak:estim}. We argue by contradiction and we assume that there exists $(z_\alpha)_\alpha\in M$ such that 
$$\lim_{\alpha\to 0}\frac{d_g(\xa, z_\alpha)}{\ma}=+\infty\hbox{ and }\lim_{\alpha\to 0}d_g(\xa, z_\alpha)^{2k}|\ua(z_\alpha)-u_0(z_\alpha)|^{\crit-2-\ea}=c_1>0.$$
It follows from Step \ref{prop:cv:u0} that $z_\alpha\to x_0$ as $\alpha\to 0$. Therefore $d_g(\xa, z_\alpha)\to 0$ and $|\ua(z_\alpha)|\to +\infty$ as $\alpha\to 0$. We define $(\lambda_\alpha)_\alpha$ and $(l_\alpha)_\alpha$ as in the proof of Step \ref{prop:weak:estim}. The rescaled function $\hat{u}_\alpha$ then converges to a nonzero function on a punctured domain. The following integrals are estimated on $B_{\rho l_\alpha}(z_\alpha)$ with $\rho>0$ small enough.  We refer to Chapter 4 of Druet-Hebey-Robert \cite{DHR} for details. This ends Step \ref{prop:weak-strong}.\end{proof}

\begin{step}\label{step:34} Under the assumption of Theorem \ref{prop:prelim:1}, we define $v_\alpha:=\ua-u_0$. There exists $(V_\alpha)_\alpha, (f_\alpha)_\alpha\in L^\infty(M)$ such that
\begin{equation}\label{eq:P:V:f}
P_\alpha v_\alpha= V_\alpha v_\alpha+f_\alpha
\end{equation}
where there exists $C>0$ such that
\begin{equation}
\Vert f_\alpha\Vert_\infty\leq C\Vert u_0\Vert_\infty^{\crit-1 }.\label{bnd:falpha}
\end{equation} 
and for  any $\delta>0$, there exists $R_\delta>0$ such that for all $R>R_\delta$ we have that
\begin{equation}\label{ineq:45}
d_g(x,\xa)^{2k}|V_\alpha(x)|\leq \delta\hbox{ for all }x\in M\setminus B_{R\ma}(\xa).
\end{equation}
\end{step}
\begin{proof} We define $v_\alpha:=\ua-u_0$. We  have that \eqref{eq:P:V:f} holds with
$$V_\alpha:=\frac{{\bf 1}_{|u_0|<\frac{1}{2}|v_\alpha|}|u_0+v_\alpha|^{\crit-2-\ea}(u_0+v_\alpha)}{v_\alpha}$$
$$\hbox{and }f_\alpha:={\bf 1}_{\frac{1}{2}|v_\alpha|\leq |u_0|}|u_0+v_\alpha|^{\crit-2-\ea}(u_0+v_\alpha)-P_\alpha u_0.$$
Therefore
\begin{equation}\label{ineq:V:f}
|V_\alpha|\leq C|\va|^{\crit-2-\ea}\hbox{ and }\Vert f_\alpha\Vert_\infty\leq C\Vert u_0\Vert_\infty^{\crit-1-\ea}+C\Vert u_0\Vert_{C^{2k}}\end{equation}
Passing to the weak limit $\ua\rightharpoonup u_0$ in \eqref{eq:ua:prop}, we get that
$$P_0u_0=\Delta_g^k u_0+\sum_{i=0}^{k-1} (-1)^i\nabla^i(A^{(i)}_0\nabla^i u_0)=|u_0|^{\crit-2}u_0\hbox{ in }M.$$
It follows from elliptic theory (see Theorem \ref{th:2} and \ref{th:2bis} of Appendix \ref{sec:regul:adn}) that $  \Vert u_0\Vert_{C^{2k}}\leq C(k,L)\Vert u_0\Vert_\infty^{\crit-1}$. Moreover, using the uniform coercivity of an operator of type (SCC), we get that $\Vert u_0\Vert_{\crit}^2\leq C\Vert u_0\Vert_{\crit}^{\crit}$, so that $\Vert u_0\Vert_{\crit}\geq c$ if $u_0\not\equiv 0$. Therefore, $\Vert f_\alpha\Vert_\infty\leq C\Vert u_0\Vert_\infty^{\crit-1 }$. It follows from \eqref{ineq:V:f} and Step \ref{prop:weak-strong} that for any $\delta>0$, there exists $R_\delta>0$ such that
\begin{equation*}
d_g(x,\xa)^{2k}|V_\alpha(x)|\leq \delta\hbox{ for all }x\in M\setminus B_{R\ma}(\xa).
\end{equation*}
This ends Step \ref{step:34}.\end{proof}

\begin{step}\label{step:nu:proof.3.1} {\bf Proof of Theorem \ref{th:main}.} We consider a family of operators $P_\alpha\to P_0$ of type (SSC), a family of potentials $(V_\alpha)_\alpha\in L^1(M)$, $(v_\alpha)_\alpha\in C^{2k}(M)$ and $(f_\alpha)_\alpha\in L^\infty(M)$ such that 
$$P_\alpha v_\alpha=V_\alpha v_\alpha+f_\alpha\hbox{ in }M\hbox{ for all }\alpha\in M.$$
We also let $(\xa)_\alpha\in M$ and $(\ma)_\alpha\in (0,+\infty)$ such that \eqref{cpct:ua:resc} and \eqref{est:weak:V} hold. We then claim that for all $\nu\in (0,1)$, there exists $C=C_\nu>0$ such that
\begin{eqnarray*}
|v_\alpha(x)|  &\leq& C_\nu  \frac{\Vert f_\alpha\Vert_\infty }{\left(\ma^\nu+d_g(x,\xa)^{\nu}\right)^{\frac{n-2k}{2}}}+C_\nu   \left(\frac{\ma^{1-\nu} }{\ma^{2-\nu}+d_g(x,\xa)^{2-\nu}}\right)^{\frac{n-2k}{2}}
\end{eqnarray*} 
for all $x\in M$ and all $\alpha>0$.
\end{step}
This analysis requires sharp pointwise control on the Green's function for elliptic operators of high order of Theorems \ref{th:Green:main} and \ref{th:Green:pointwise:BIS} of Section \ref{sec:green1}.
\begin{proof} Let $\eta\in C^\infty(\rr)$ be such that $\eta(t)=0$ if $t\leq 1$, $\eta(t)=1$ if $t\geq 2$ and $0\leq\eta\leq 1$. Define 
$$\eta_{R,\alpha}(x):=\eta\left(\frac{d_g(x,\xa)}{R\ma}\right)\hbox{ for all }x\in M.$$
We fix $\nu\in (0,1)$ and we set  $\gamma:=\frac{n-2k}{2}\nu$. We fix $\lambda_\gamma$ as in Theorem \ref{th:Green:pointwise:BIS}. It follows from \eqref{est:weak:V} that there exists $R:=R_\lambda>0$ such that
$$d_g(x,\xa)^{2k}|\eta_{R,\alpha}V_\alpha(x)|\leq \lambda_\gamma\hbox{ for all }x\in M\hbox{ and }\alpha>0.$$
Up to taking $\lambda_\gamma$ smaller, we have the uniform coercivity of $P_\alpha-\eta_{R,\alpha}V_\alpha$. Let $\hat{G}_\alpha$ be the Green's function for $P_\alpha-\eta_{R,\alpha}V_\alpha$ given by Theorem \ref{th:Green:main}, so that the pointwise estimates of Theorem \ref{th:Green:pointwise:BIS} hold uniformly with respect to $\alpha\in\nn$. It follows from the symmetry of $P_\alpha$ and the form of the coefficients of $P_\alpha$ that there exist tensors $\bar{A_\alpha}_{lm}$ such that, integrating by parts, for any smooth domain $\Omega\subset M-\{\xa\}$, we have that for all $u,v\in C^{2k}(\overline{\Omega})$,
\begin{equation}
\int_\Omega (P_\alpha u)v\, dv_g=\int_\Omega u(P_\alpha v)\, dv_g+\sum_{l+m<2k}\int_{\partial\Omega}\bar{A_\alpha}_{lm}\star\nabla^lu\star \nabla^m v\, d\sigma_g
\end{equation}
Since $(P_\alpha-\eta_{R,\alpha}V_\alpha)v_\alpha=f_\alpha$ on $M\setminus B_{2R\ma}(\xa)$, for any $z\in M\setminus B_{3R\ma}(\xa)$, Green's representation formula yields
\begin{eqnarray*}
v_\alpha(z)&=&\int_{M\setminus B_{2R\ma}(\xa)}\hat{G}_\alpha(z,\cdot) f_\alpha\, dv_g\\
&&+\sum_{l+m<2k}\int_{\partial(M\setminus B_{2R\ma}(\xa))}\bar{A_\alpha}_{lm}\star\nabla^l_y\hat{G}_\alpha(z,y)\star \nabla^m v_\alpha(y)\, d\sigma_g(y)\\
&=&\int_{M\setminus B_{2R\ma}(\xa)}\hat{G}_\alpha(z,\cdot) f_\alpha\, dv_g\\
&&-\sum_{l+m<2k}\int_{\partial B_{2R\ma}(\xa)}\bar{A_\alpha}_{lm}\star\nabla^l_y\hat{G}_\alpha(z,y)\star \nabla^m v_\alpha (y)\, d\sigma_g(y)
\end{eqnarray*}
We first deal with the boundary term. It follows from  \eqref{cpct:ua:resc} that there exists $C(R)>0$ such that for all $m<2k$,
$$|\nabla^m v_\alpha(y)|\leq C(R)\ma^{-\frac{n-2k}{2}-m}\hbox{ for all }y\in \partial B_{2R\ma}(\xa)$$
Since $d(z,\xa)>3R\ma$, we get that $d(y,\xa)<\frac{2}{3}d(z,\xa)$ for all $y\in \partial B_{2R\ma}(\xa)$, it then follows from the pointwise control of Theorem \ref{th:Green:pointwise:BIS} that for all $l<2k$,
\begin{eqnarray*}
|\nabla^l_y\hat{G}_\alpha(z,y)|&\leq& C(\gamma) \left(\frac{\max\{d(z,\xa),d(y,\xa)\}}{\min\{d(z,\xa),d(y,\xa)\}}\right)^{\gamma+l} d_g(z,y)^{2k-n-l}\\
&\leq & C(\gamma)\left(\frac{ d(z,\xa) }{\ma}\right)^{\gamma+l} d_g(z,\xa)^{2k-n-l}\leq C(\gamma) \ma^{-\gamma-l}d_g(z,\xa)^{2k-n+\gamma}
\end{eqnarray*}
and then
\begin{eqnarray*}
&&\left|\sum_{l+m<2k}\int_{\partial B_{2R\ma}(\xa)}\bar{A_\alpha}_{lm}\star\nabla^l_y\hat{G}_\alpha(z,y)\star \nabla^m v_\alpha (y)\, d\sigma_g(y)\right|\\
&&\leq C(\gamma) \sum_{l+m<2k}\int_{\partial B_{2R\ma}(\xa)} \ma^{-\frac{n-2k}{2}-m}\ma^{-\gamma-l}d_g(z,\xa)^{2k-n+\gamma}\,  d\sigma_g(y)\\
&&\leq C(\gamma,R) \sum_{l+m<2k}\ma^{n-1} \ma^{-\frac{n-2k}{2}-\gamma-m-l} d_g(z,\xa)^{2k-n+\gamma}\\
&&\leq C(\gamma,R) \sum_{l+m<2k} \frac{\ma^{\frac{n-2k}{2}-\gamma } }{d_g(z,\xa)^{n-2k-\gamma}} \ma^{2k-1-(m+l)} \leq C(\gamma,R)   \frac{\ma^{\frac{n-2k}{2}-\gamma } }{d_g(z,\xa)^{n-2k-\gamma}} 
\end{eqnarray*}
for all $z\in M$ such that $d_g(z,\xa)>3R\ma$. We now deal with the interior integral. Using the pointwise estimates of Theorem \ref{th:Green:pointwise:BIS}, we get that
\begin{eqnarray*}
&&\left|\int_{M\setminus B_{2R\ma}(\xa)}\hat{G}_\alpha(z,\cdot) f_\alpha\, dv_g\right|\leq  \Vert f_\alpha\Vert_\infty\int_{M\setminus B_{2R\ma}(\xa)}|\hat{G}_\alpha(z,\cdot)|\, dv_g\\
&&\leq C(\gamma)\Vert f_\alpha\Vert_\infty\int_{M\setminus B_{2R\ma}(\xa)}\left(\frac{\max\{d_g(z,\xa),d_g(y,\xa)\}}{\min\{d_g(z,\xa),d_g(y,\xa)\}}\right)^\gamma d_g(z,y)^{2k-n}\, dv_g(y)\\
&&\leq C(\gamma)\Vert f_\alpha\Vert_\infty\left(\int_{d_g(z,\xa)<d_g(y,\xa)/2}+\int_{d_g(y,\xa)<d_g(z,\xa)/2}+\int_{d_g(z,\xa)/2<d_g(y,\xa)<2d_g(z,\xa)}\right)\\
&&\leq C(\gamma)\Vert f_\alpha\Vert_\infty\left(\int_M\frac{d_g(y, \xa)^{2k+\gamma-n}}{d_g(z,\xa)^\gamma}\, dv_g(y)+\int_M\frac{d_g(z, \xa)^{2k+\gamma-n}}{d_g(y,\xa)^\gamma}\, dv_g(y)+\int_M d_g(z,y)^{2k-n}\, dv_g(y)\right)
\end{eqnarray*} and then
\begin{equation*}
\left|\int_{M\setminus B_{2R\ma}(\xa)}\hat{G}_\alpha(z,\cdot) f_\alpha\, dv_g\right|\leq C\Vert f_\alpha\Vert_\infty d_g(z,\xa)^{-\gamma}.
\end{equation*}
Putting these inequalities together and since $\gamma:=\frac{n-2k}{2}\nu$, we get that 
\begin{eqnarray*}
|v_\alpha(z)|  &\leq& C(\gamma)  \Vert u_0\Vert_\infty^{\crit-1 }d_g(z,\xa)^{-\frac{n-2k}{2}\nu}+ C(\gamma,R)    \left(\frac{\ma^{1-\nu} }{d_g(z,\xa)^{2-\nu}}\right)^{\frac{n-2k}{2}} \\
&\leq& C(\gamma,R)  \Vert f_\alpha\Vert_\infty\left(\ma^\nu+d_g(z,\xa)^{\nu}\right)^{-\frac{n-2k}{2}}+C(\gamma,R)   \left(\frac{\ma^{1-\nu} }{\ma^{2-\nu}+d_g(z,\xa)^{2-\nu}}\right)^{\frac{n-2k}{2}}
\end{eqnarray*}
for all $z\in M$ such that $d_g(z,\xa)>3R\ma$. The same estimate holds on $B_{R\ma}(\xa)$ due to \eqref{cpct:ua:resc}. This ends Step \ref{step:nu:proof.3.1}. \end{proof}

\begin{step} Under the assumption of Theorem \ref{prop:prelim:1}, there exists $C>0$ such that \eqref{ineq:c0} holds.
\end{step}
\begin{proof} We first consider $v_\alpha:=\ua-u_0$, $f_\alpha$, $V_\alpha$ defined in Step \ref{step:34} satisfying \eqref{eq:P:V:f}. Using \eqref{bnd:falpha}, \eqref{ineq:45} and \eqref{lim:tua:c2k}, for any $\nu\in (0,1)$, it follows from Step \ref{step:nu:proof.3.1} that there exists $C=C_\nu>0$ such that
\begin{eqnarray}
|u_\alpha(x)|  &\leq& C_\nu  \frac{\Vert u_0\Vert_\infty^{\crit-1 }}{\left(\ma^\nu+d_g(x,\xa)^{\nu}\right)^{\frac{n-2k}{2}}}+C_\nu   \left(\frac{\ma^{1-\nu} }{\ma^{2-\nu}+d_g(x,\xa)^{2-\nu}}\right)^{\frac{n-2k}{2}}\label{est:nu}
\end{eqnarray} 
for all $x\in M$ and all $\alpha>0$. We let $G_\alpha$ be the Green's function for the operator $P_\alpha$. We then have that
$$\ua(x)=\int_M G_\alpha(x,y)|\ua|^{\crit-2-\ea}\ua(y)\, dv_g(y)$$
for all $x\in M$. The estimates \eqref{ctrl:G} on the Green's function yield $|G_\alpha(x,y)|\leq C d_g(x,y)^{2k-n}$ for all $x,y\in M,\, x\neq y$. Therefore, with these pointwise controls and \eqref{est:nu}, for any $x\in M$, we get that
\begin{eqnarray*}
|\ua(x)|&\leq&  C(\nu)  \Vert u_0\Vert_\infty^{(\crit-1)^2 }\int_M d_g(x,y)^{2k-n}\left(\ma^\nu+d_g(y,\xa)^{\nu}\right)^{-\frac{n-2k}{2}(\crit-1-\ea)}\, dv_g(y)\\
&&+ C(\nu )\int_M d_g(x,y)^{2k-n}\left(\frac{\ma^{1-\nu} }{\ma^{2-\nu}+d_g(y,\xa)^{2-\nu}}\right)^{\frac{n-2k}{2}\cdot (\crit-1-\ea)}\, dv_g(y).
\end{eqnarray*}
For $\nu>0$ small enough, see \cite{GR.CalcVar} for instance, we then get that
\begin{equation*}
|\ua(z)|\leq C  \Vert u_0\Vert_\infty^{(\crit-1)^2 }+C   \left(\frac{\ma}{\ma^{2}+d_g(z,\xa)^{2}}\right)^{\frac{n-2k}{2}}
\end{equation*}
for all $z\in M$. This proves \eqref{ineq:c0} and ends the proof of Theorem \ref{prop:prelim:2}.\end{proof}
As mentioned above, this proves Theorem \ref{th:estim-co:intro}.

\section{Green's function with Hardy potential: construction and first estimates}\label{sec:green1}
\begin{defi}\label{def:p} We say that an operator $P$ is of type $O_{k,L}$ if
$$\bullet\; P:=\Delta_g^k+\sum_{i=0}^{k-1} (-1)^i\nabla^i(A^{(i)} \nabla^i)\hbox{ for }\alpha>0$$
where for all $i=0,...,k-1$, $ A^{(i)}\in C^{i}_{\chi}(M,\Lambda_{S}^{(0,2i)}(M))$ is a family $C^{i}-$field of $(0,2i)-$tensors on $M$ such that:\par
$\bullet$ For any $i$ , $A^{(i)}(T,S)=A^{(i)}(S,T)$ for any $(i,0)-$tensors $S$ and $T$. \par
$\bullet$ $\Vert A^{(i)}\Vert_{C^{i}}\leq L$ for all $i=0,...,k-1$\par
$\bullet$ $P$ is  coercive and 
$$\int_M u P u\, dv_g=\int_M (\Delta_g^{\frac{k}{2}}u)^2\, dv_g+\sum_{i=0}^{k-1}\int_M A^{(i)} (\nabla^iu,\nabla^i u)\, dv_g\geq \frac{1}{L}\Vert u\Vert_{H_k^2}^2$$
for all $u\in H_k^2(M)$.\end{defi}
The proofs of Theorems \ref{th:estim-co:intro}  and \ref{th:main} rely on pointwise estimates of the Green's function for elliptic operators with Hardy potential. For $x_0\in M$ and $\lambda>0$, we define
\begin{equation*}
\Pl(x_0):=\left\{\begin{array}{cc}
V\in L^1(M)\hbox{ such that}\\
|V(x)|\leq \lambda d_g(x,x_0)^{-2k}\hbox{ for all }x\in \Mmx\end{array}\right\}.
\end{equation*}
The core of this paper is devoted to the proof of the following theorems:
\begin{theorem}\label{th:Green:main} Let $(M,g)$ be a compact Riemannian manifold of dimension $n$ without boundary. Fix $k\in\nn$ such that $2\leq 2k<n$, $L>0$, $\lambda>0$ and $x_0\in M$. We consider an operator $P\in O_{k,L}$ and a potential $V\in \Pl(x_0)$ such that $P-V$ is coercive.

\smallskip\noindent Then there exists $G:(\Mmx)\times (\Mmx)\setminus\{(z,z)/z\in \Mmx\}\to \rr$ such that:
\begin{itemize}
\item For all $x\in \Mmx$, $G(x,\cdot)\in L^q(M)$ for all $1\leq q<\frac{n}{n-2k}$
\item For all $x\in \Mmx$, $G(x,\cdot)\in L^{\crit}_{loc}(M\setminus\{x\})$
\item For all $f\in L^{\frac{2n}{n+2k}}(M)\cap L^p_{loc}(\Mmx)$, $p>\frac{n}{2k}$, we let $\varphi\in H_k^2(M)$ such that $P\varphi=f$ in the weak sense. Then $\varphi\in C^0(\Mmx)$ and
$$\varphi(x)=\int_M  G(x,\cdot) f\, dv_g\hbox{ for all }x\in \Mmx.$$
\end{itemize}
Moreover, such a function $G$ is unique. It is the Green's function for $P-V$. In addition, for all $x\in \Mmx$, $G(x,\cdot)\in H_{2k}^p(M\setminus\{x_0,x\})$ for all $1<p<\infty$.
\end{theorem}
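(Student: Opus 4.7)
I would invert $P-V$ by Lax--Milgram using its coercivity and then extract $G(x,\cdot)$ as the Riesz-dual representation of the functional ``evaluation at $x$''. Concretely, for $f\in L^{\frac{2n}{n+2k}}(M)\hookrightarrow(H_k^2(M))^*$, coercivity of $P-V$ on $H_k^2$ produces a unique $\varphi\in H_k^2(M)$ with $(P-V)\varphi=f$ and $\|\varphi\|_{H_k^2}\le C\|f\|_{\frac{2n}{n+2k}}$. On compact subsets of $\Mmx$ the potential $V$ is bounded, so Agmon--Douglis--Nirenberg interior estimates (see \cref{sec:regul:adn}) bootstrap $\varphi\in H_k^2\hookrightarrow L^{\crit}$ up to $\varphi\in C^0(\Mmx)$ (in fact $H_{2k}^p$ locally) provided additionally $f\in L^p_{\mathrm{loc}}(\Mmx)$ for some $p>n/(2k)$.

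\textbf{Construction and integrability of $G(x,\cdot)$.} Fix $x\in\Mmx$. For every $r>0$ and every $p>n/(2k)$, the linear map $f\mapsto\varphi(x)$ is continuous on $L^p(M\setminus B_r(x_0))$, so $L^p$-duality furnishes a unique $G(x,\cdot)\in L^{p'}(M\setminus B_r(x_0))$ with $\varphi(x)=\int_M G(x,\cdot)f\,dv_g$ for such $f$; a diagonal argument in $(r,p)$ yields $G(x,\cdot)\in L^q(\Mmx)$ for every $1\le q<n/(n-2k)$. To pass through $x_0$ and obtain membership in $L^{\crit}_{\mathrm{loc}}(M\setminus\{x\})$, I would test the distributional equation $(P-V)G(x,\cdot)=0$, valid on $M\setminus\{x\}$, against cutoffs supported in a small ball $B_\rho(x_0)\subset M\setminus\{x\}$: coercivity of $P-V$ controls $\|G(x,\cdot)\|_{H_k^2(B_\rho(x_0))}$ and Sobolev \eqref{sobo:ineq:M} provides the desired local $L^{\crit}$-bound. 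Finally, on any compact $K\Subset M\setminus\{x_0,x\}$, the potential $V$ is bounded and $(P-V)G(x,\cdot)=0$ on $K$, so interior ADN regularity (\cref{sec:regul:adn}) delivers $G(x,\cdot)\in H_{2k}^p(K)$ for $1<p<\infty$. Uniqueness is immediate: two such Green's functions define the same linear functional on a dense subclass of $L^{q'}$, hence agree almost everywhere and, by the just-proved regularity, everywhere.

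\textbf{Main obstacle.} The principal difficulty is the $C^0$-bootstrap of $\varphi$ on $\Mmx$ used in the first paragraph. Since no maximum principle is available at order $2k\ge 4$, pointwise comparison arguments are out of reach, and the bootstrap must rest entirely on ADN estimates together with the Hardy inequality \eqref{hardy:ineq:intro}, which quantifies how the singular multiplier $V$ propagates through the equation and prevents the iteration from breaking down as one approaches $x_0$; this integral-norm substitute for positivity is essentially what Lemma~\ref{lem:main} is designed to furnish in the present paper. A related subtlety is the verification that the abstract functional $f\mapsto\varphi(x)$ is actually represented by a genuine function $G(x,\cdot)$ with good integrability across $x_0$, which is why the localized coercive energy argument on small balls around $x_0$ is needed to close the construction.
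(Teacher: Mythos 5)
Your overall route differs from the paper's: you invert $P-V$ directly by coercivity and define $G(x,\cdot)$ by duality from the evaluation functional, whereas the paper first truncates $V$ near $x_0$ to a bounded $V_\eps$, builds $G_\eps$ by the parametrix construction of Appendix~\ref{app:green}, proves estimates uniform in $\eps$, and passes to the limit (the convergence $\varphi_\eps\to\varphi_0$ in $H_k^2$ being controlled by the Hardy inequality). Most of your steps are sound: the $C^0$ bootstrap of $\varphi$ on $\Mmx$ only uses that $V$ is locally bounded away from $x_0$ plus ADN (Lemma~\ref{lem:main} is in fact not needed for this theorem; it enters only in the pointwise estimates of Theorem~\ref{th:Green:pointwise:BIS}), and the first and third bullets and uniqueness go through by duality essentially as in the paper.

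The genuine gap is your treatment of the second bullet, $G(x,\cdot)\in L^{\crit}_{loc}$ across $x_0$, by a localized energy argument. Two things break. First, the distributional identity $(P-V)G(x,\cdot)=0$ is not yet meaningful on test functions that do not vanish near $x_0$: with only $G(x,\cdot)\in L^q(M)$ for $q<\frac{n}{n-2k}$, the pairing $\int G(x,\cdot)\,V\psi\,dv_g$ need not converge, since $d_g(\cdot,x_0)^{-2k}\in L^s$ only for $s<\frac{n}{2k}=\bigl(\frac{n}{n-2k}\bigr)'$ — the exponents are exactly critical and fail. Second, to extract $H_k^2(B_\rho(x_0))$ control from coercivity you must test the equation against (a cutoff times) $G(x,\cdot)$ itself, which presupposes that $G(x,\cdot)$ already lies in the energy space near $x_0$; this is precisely what is unknown (for $n>4k$ one does not even know $G(x,\cdot)\in L^2$ near $x_0$ at that stage), so the argument is circular. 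The paper closes this point by a second duality argument rather than an energy argument: for $f\in L^{\frac{2n}{n+2k}}(M)$ vanishing on $B_{\delta/2}(x)$, global coercivity gives $\Vert\varphi\Vert_{\crit}\leq C\Vert f\Vert_{\frac{2n}{n+2k}}$ and ADN regularity near $x$ (where $V$ is bounded) gives $|\varphi(x)|\leq C\Vert f\Vert_{\frac{2n}{n+2k}}$, whence $G(x,\cdot)\in L^{\crit}(M\setminus B_{\delta/2}(x))$ by duality — all of this carried out for the truncated potentials $V_\eps$, for which every manipulation is legitimate, and then passed to the limit. If you want to keep your direct approach, you should replace the Caccioppoli step by this $L^{\frac{2n}{n+2k}}$--$L^{\crit}$ duality.
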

In addition, we get the following pointwise control:
\begin{theorem}\label{th:Green:pointwise:BIS} Let $(M,g)$ be a compact Riemannian manifold of dimension $n$ without boundary. Fix $k\in\nn$ such that $2\leq 2k<n$, $L>0$, $\lambda>0$ and $x_0\in M$.  We consider an operator $P\in O_{k,L}$ and a potential $V\in L^1(M)$ such that $P-V$ is coercive and
$$d_g(x,x_0)^{2k}|V(x)|\leq \lambda\hbox{ for all }x\in M.$$
We let $G$ be the Green's function of $P-V$ as in Theorem \ref{th:Green:main}. 

\smallskip\noindent Then for any $\gamma\in (0,n-2k)$, there exists $\lambda_\gamma>0$ such that, if $\lambda<\lambda_\gamma$, then we have the following pointwise estimates: for any $x,y\in \Mmx$ such that $x\neq y$, we have that 
\begin{itemize}
\item $$|G(x,y)|\leq C(\gamma,L,\lambda, k)\left(\frac{\max\{d_g(x,x_0),d_g(y,x_0)\}}{\min\{d_g(x,x_0),d_g(y,x_0)\}}\right)^\gamma d_g(x,y)^{2k-n}$$
\item If $d_g(x,x_0)<d(y, x_0)$ and $l\leq 2k-1$, we have that
$$|\nabla_y^lG(x,y)|\leq C(\gamma,L,\lambda, k,l)\left(\frac{\max\{d_g(x,x_0),d_g(y,x_0)\}}{\min\{d_g(x,x_0),d_g(y,x_0)\}}\right)^\gamma d_g(x,y)^{2k-n-l}$$
\item If $d_g(y,x_0)<d(x, x_0)$ and $l\leq 2k-1$, we have that
$$|\nabla_y^lG(x,y)|\leq C(\gamma,L,\lambda, k,l)\left(\frac{\max\{d_g(x,x_0),d_g(y,x_0)\}}{\min\{d_g(x,x_0),d_g(y,x_0)\}}\right)^{\gamma+l} d_g(x,y)^{2k-n-l}$$
\end{itemize}
where $C(\gamma,L,\lambda, k)$ depends only on $(M,g)$, $L$, $\lambda$ and $l$. Moreover, these estimates are uniform with respect to $x_0\in M$.
\end{theorem}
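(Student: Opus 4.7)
The strategy is to represent $G$ as a Neumann-type series in the small Hardy potential $V$, starting from the Green's function $G_0$ of $P$ alone. Using the existence and standard pointwise estimates for $G_0$ (which the paper's appendix establishes for operators in $O_{k,L}$ via classical order-$2k$ elliptic theory), one has $|\nabla_y^l G_0(x,y)| \leq C(L,k,l)\, d_g(x,y)^{2k-n-l}$ for $0 \leq l < 2k$, uniformly with respect to $x_0$. The resolvent identity
$$G(x,y) = G_0(x,y) + \int_M G_0(x,z)\, V(z)\, G(z,y)\, dv_g(z)$$
then converts the desired pointwise bound into a fixed-point problem.

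For $\gamma \in (0,n-2k)$, introduce the weight $W_\gamma(x,y) := \bigl(\max\{d_g(x,x_0),d_g(y,x_0)\}/\min\{d_g(x,x_0),d_g(y,x_0)\}\bigr)^\gamma d_g(x,y)^{2k-n}$. The heart of the proof is the weighted three-Green's-function estimate
$$\int_M d_g(x,z)^{2k-n}\, d_g(z,x_0)^{-2k}\, W_\gamma(z,y)\, dv_g(z) \leq \Lambda(\gamma)\, W_\gamma(x,y),$$
for a finite constant $\Lambda(\gamma)$ depending only on $(M,g)$, $k$ and $\gamma$. I would prove this by partitioning $M$ into a bounded number of regions comparing $d_g(x,z)$, $d_g(z,x_0)$, $d_g(z,y)$ with $d_g(x,x_0)$ and $d_g(y,x_0)$, and reducing each piece to a rescaled Riesz-type composition in Euclidean coordinates near $x_0$. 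The condition $0 < \gamma < n-2k$ is sharp: $\Lambda(\gamma)$ blows up at both endpoints, which is exactly the mechanism forcing $\lambda_\gamma \to 0$ as $\gamma$ approaches either endpoint. This is the main technical obstacle of the argument.

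With the convolution lemma in hand, set $\lambda_\gamma := 1/(2\Lambda(\gamma))$ (adjusted by the implicit constants from the bound on $G_0$). To close the estimate rigorously, first approximate $V$ by bounded truncations $V_\epsilon := V\,\mathbf{1}_{\{d_g(\cdot,x_0)>\epsilon\}}$. The associated Green's functions $G_\epsilon$ are given by Theorem \ref{th:Green:main}, and since $V_\epsilon$ is bounded one has a qualitative a priori bound $M_\epsilon := \sup |G_\epsilon|/W_\gamma < \infty$ by classical theory. Plugging the resolvent identity with $V_\epsilon$ into $W_\gamma$ and applying the convolution lemma yields $M_\epsilon \leq C_0 + \lambda\Lambda(\gamma) M_\epsilon$, so $M_\epsilon \leq 2C_0$ uniformly in $\epsilon$ for $\lambda < \lambda_\gamma$. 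Passing to the limit $\epsilon \to 0$, together with uniqueness of the Green's function, yields the $C^0$ pointwise bound for $G$ itself.

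The derivative estimates finally follow from interior elliptic regularity on the natural length scale. For fixed $x$, the map $y \mapsto G(x,y)$ satisfies $(P-V)G(x,\cdot) = 0$ on $M \setminus \{x,x_0\}$. On a ball $B_r(y)$ with $r := c\min\{d_g(x,y),d_g(y,x_0)\}$ and suitably small $c > 0$, the Hardy potential obeys $|V| \leq \lambda r^{-2k}$. Rescaling to unit radius produces a uniformly elliptic operator of order $2k$ with coefficients controlled in $C^{k-1,\theta}$, and Theorems \ref{th:2}--\ref{th:2bis} convert the supremum bound from the previous step into the claimed gradient estimates with the correct scaling factor $r^{-l}$. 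The dichotomy in the statement between $d_g(x,x_0) < d_g(y,x_0)$ and its reverse simply records whether $d_g(x,y)$ or $d_g(y,x_0)$ sets this scale, which accounts for the asymmetric power $\gamma + l$ versus $\gamma$ in the two cases.
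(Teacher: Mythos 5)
Your proposal is correct in outline but follows a genuinely different route from the paper. The paper never writes a resolvent expansion: it proves the bound by contradiction along sequences $(x_i,y_i)$, splitting into five regimes according to the relative sizes of $d_g(x_i,y_i)$, $d_g(x_i,x_0)$, $d_g(y_i,x_0)$, and in each regime rescaling to one of two auxiliary statements (Theorems \ref{th:G:close} and \ref{th:G:far}) that control $\alpha^{n-2k}G_\eps(\tilde{\hbox{exp}}(\alpha X),\tilde{\hbox{exp}}(\alpha Y))$ on separated compact sets. Those auxiliary statements rest in turn on the regularity Lemma \ref{lem:main}, itself proved by a blow-up argument: a limiting $\Delta_\xi^k$-harmonic function on $\rn-\{0\}$ bounded by $|X|^{-\gamma}$ with $0<\gamma<n-2k$ is shown to vanish at a unit point via Green's formula for the polyharmonic Poisson kernel, contradicting normalization. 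So in the paper the constraints $\gamma\in(0,n-2k)$ and $\lambda<\lambda_\gamma$ enter through a compactness/rigidity mechanism, whereas in your argument they enter quantitatively through convergence of the weighted Riesz composition and the absorption condition $\lambda\Lambda(\gamma)<1/2$. Your approach is the natural generalization of the perturbative treatment of Hardy--Schr\"odinger Green's functions for $k=1$ (cf. \cite{GR.CalcVar}); it buys an explicit $\lambda_\gamma$ and avoids the two rescaled limit theorems, at the price of the full case-by-case verification of the weighted convolution inequality, which is where essentially all the work of your proof is concentrated. I checked the critical regimes ($|x|\ll|y|$, $|y|\ll|x|$, both small, both comparable) and the inequality does hold for $0<\gamma<n-2k$, with $\Lambda(\gamma)$ blowing up like $\gamma^{-1}$ and $(n-2k-\gamma)^{-1}$ at the endpoints exactly as you predict. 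Your derivation of the derivative bounds by elliptic regularity on balls of radius $c\min\{d_g(x,y),d_g(y,x_0)\}$ is also sound and explains the asymmetry $\gamma$ versus $\gamma+l$ correctly; the paper handles these by rerunning the contradiction argument instead.

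Two points deserve more care than your sketch gives them. First, the resolvent identity $G_\eps=G_0+G_0\ast(V_\eps G_\eps)$ cannot be read off directly from the representation formula of Theorem \ref{th:Green:main}, because $V_\eps G_\eps(\cdot,y)$ behaves like $d_g(\cdot,y)^{2k-n}$ near $y$ and this lies in $L^p$ only for $p<\frac{n}{n-2k}$, which may fall below the thresholds $\frac{2n}{n+2k}$ and $\frac{n}{2k}$ when $n$ is large relative to $k$; one should derive the identity by regularizing the Dirac mass or by testing against $P\varphi$ for smooth $\varphi$ and using uniqueness. Second, the a priori finiteness $M_\eps<\infty$ and the passage $\eps\to0$ should be tied to the paper's construction (the truncations $V_\eps$ of \eqref{hyp:Ve} and the convergence \eqref{lim:Ge}), so that the limit object is the Green's function of $P-V$ whose uniqueness Theorem \ref{th:Green:main} guarantees. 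Neither point is an obstruction.
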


We fix $k\in\nn$ such that $2\leq 2k<n$ and $L>0$. We consider an operator $P\in O_{k,L}$ (see Definition \ref{def:p}).

\subsection{The Hardy-type potential} We fix $x_0\in M$. We claim that there exists $\lambda_0=\lambda_0(k, L)$ such for all $V_0\in \Pl(x_0) $ with $0<\lambda<\lambda_0$, then 
\begin{equation}\label{coer:PV:0}
\int_M (Pu-V_0 u)u\, dv_g\geq \frac{1}{2L}\Vert u\Vert_{H_k^2}^2\hbox{ for all }u\in H_k^2(M)
\end{equation}
and there exists a family $(V_\eps)_{\eps>0} \in L^\infty(M)$ such that:
\begin{equation}\label{hyp:Ve}
\left\{\begin{array}{cc}
lim_{\eps\to 0}V_\eps(x)=V_0(x)&\hbox{ for a.e. }x\in \Mmx\\
V_\eps\in \Pl(x_0)&\hbox{ for all }\eps>0\\
P-V_\eps&\hbox{ is uniformly coercive for all }\eps>0
\end{array}\right\}
\end{equation}
in the sense that
\begin{equation}\label{coer:PV}
\int_M (Pu-V_\eps u)u\, dv_g\geq \frac{1}{2L}\Vert u\Vert_{H_k^2}^2\hbox{ for all }u\in H_k^2(M)\hbox{ and }\eps>0
\end{equation}

\smallskip\noindent We prove the claim. 

The coercivity of $P$ and the Hardy inequality \eqref{hardy:ineq:intro} yield
$$\int_M u(P-V_0)u\, dv_g\geq \frac{1}{L}\Vert u\Vert_{H_k^2}^2-\lambda \int_M \frac{u^2}{d_g(x,x_0)^{2k}}\, dv_g\geq \left(\frac{1}{L} -\lambda C_H(k)\right)\Vert u\Vert_{H_k^2}^2$$
for all $u\in H_k^2(M)$. For $\eta\in C^\infty(\rr)$ such that $\eta(t)=0$ for $t\leq 1$ and $\eta(t)=1$ for $t\geq 2$, define $V_\eps(x):=\eta(d_g(x,x_0)/\eps) V_0(x)$ for all $\eps>0$ and a.e. $x\in M$. As one checks, the claim holds with $0<\lambda_0<(C_H(k)L)^{-1}$. This proves the claim.

\medskip\noindent For any $\eps>0$,  we let $G_\eps$ be the Green's function for the operator $P-V_\eps$. Since $V_\eps\in L^\infty(M)$, the existence of $G_\eps$ follows from Theorem \ref{th:green:nonsing} of the Appendix \ref{app:green} (see the concluding remark of the Theorem).

\medskip\noindent{\bf Step 1: Integral bounds.} We choose $f\in C^{0}(M)$ and we fix $\eps>0$. Since $P-V_\eps$ is coercive, it follows from variational methods that there exists a unique function $\varphi_\eps\in H_k^2(M)$ such that 
$$(P-V_\eps)\varphi_\eps=f\hbox{ in }M\hbox{ in the weak sense.}$$
It follows from point (ii) of Definition   \ref{def:p} of $P\in O_{k,L}$ that $A^{(i)}\in C^{i}$ for all $i=0,...,2k-1$, we then get with Theorem \ref{th:3} and Sobolev's embedding theorem that $\varphi_\eps\in C^{2k-1}(M)$ and $\varphi_\eps\in H_{2k}^p(M)$ for all $p>1$. The coercivity hypothesis \eqref{coer:PV} yields
\begin{eqnarray*}
\frac{1}{2L}\Vert \varphi_\eps\Vert_{H_k^2}^2\leq \int_M (P\varphi_\eps-V_\eps\varphi_\eps)\varphi_\eps\, dv_g=\int_M f\varphi_\eps\, dv_g\leq \Vert f\Vert_{\frac{2n}{n+2k}}\Vert\varphi_\eps\Vert_{\frac{2n}{n-2k}}
\end{eqnarray*}
With Sobolev's embedding Theorem and the inequality \eqref{sobo:ineq:M}, we get that
\begin{equation}\label{eq:14}
C_S(k)^{-1}\Vert\varphi_\eps\Vert_{\crit}\leq \Vert\varphi_\eps \Vert_{H_k^2}\leq 2LC_S(k)\Vert f\Vert_{\frac{2n}{n+2k}}
\end{equation}
for all $f\in C^{0}(M)$. We fix $p>1$ such that
$$\frac{n}{2k}<p<\frac{n}{2k-1}\hbox{ and }\theta_p:=2k-\frac{n}{p}\in (0,1).$$
We fix $\delta\in (0, i_g(M)/2)$, where $i_g(M)>0$ is the injectivity radius of $(M,g)$. Since $V_\eps\in \Pl(x_0)$ for all $\eps>0$ and $P\in O_{k,L}$, it follows from regularity theory (Theorem \ref{th:2} of Appendix \ref{sec:regul:adn}) and Sobolev's embedding theorem that
\begin{eqnarray*}
\Vert\varphi_\eps\Vert_{C^{0,\theta_p}(M\setminus B_\delta(x_0))}&\leq & C(p,\delta,k)\Vert \varphi_\eps\Vert_{H_{2k}^p(M\setminus B_\delta(x_0))}\\
&\leq & C(p,\delta,k, L,\lambda_0) \left(\Vert f\Vert_{L^p(M\setminus B_{\delta/2}(x_0))}+\Vert \varphi_\eps\Vert_{L^{\crit}(M\setminus B_{\delta/2}(x_0))}\right).
\end{eqnarray*}
With \eqref{eq:14} and noting that $\frac{n}{2k}>\frac{2n}{n+2k}$, we get that
\begin{equation*}
\Vert\varphi_\eps\Vert_{C^{0,\theta_p}(M\setminus B_\delta(x_0))}\leq  C(p,\delta,k, L,\lambda_0) 
\Vert f\Vert_{L^p(M)}.
\end{equation*}
Since $\varphi_\eps\in H_{2k}^p(M)$ for all $p>1$, for any $x\in \Mmx$, Green's representation formula (see Theorem \ref{th:green:nonsing}) yields
$$\varphi_\eps(x)=\int_M G_\eps(x,y)f(y)\, dv_g(y)\hbox{ for all }x\in M- \{x_0\},$$
and then when $d_g(x,x_0)>\delta$, we get
$$\left|\int_M G_\eps(x,y)f(y)\, dv_g(y)\right|\leq C(p,\delta,k, L,\lambda) 
\Vert f\Vert_{L^p(M)}$$
for all $f\in C^{0}(M)$ and $p\in \left(\frac{n}{2k},\frac{n}{2k-1}\right)$. Via duality, we then deduce that 
\begin{equation}\label{eq:16}
\Vert G_\eps(x,\cdot)\Vert_{L^q(M)}\leq C(q,\delta, k, L,\lambda_0)\hbox{ for all }q\in \left(1,\frac{n}{n-2k}\right)\hbox{ and }d_g(x,x_0)>\delta. 
\end{equation}
We now fix $x\in M$ such that $d_g(x,x_0)>\delta$. We take $f\in C^0(M)$ such that $f\equiv 0$ in $B_{\delta/2}(x)$, so that $(P-V_\eps)\varphi_\eps=0$ in $B_{\delta/2}(x)$. Since $V_\eps\in \Pl(x_0)$ for all $\eps>0$ and $P\in O_{k,L}$, it follows from regularity theory (Theorems \ref{th:1} and \ref{th:2} of Appendix \ref{sec:regul:adn}) and Sobolev's embedding theorem that for any $p>\frac{n}{2k}$,
\begin{eqnarray*}
\Vert\varphi_\eps\Vert_{C^{0,\theta_p}(B_{\delta/4}(x))}&\leq & C(p,\delta,k)\Vert \varphi_\eps\Vert_{H_{2k}^p(B_{\delta/4}(x))}\\
&\leq & C(p,\delta,k, L,\lambda_0) \Vert \varphi_\eps\Vert_{L^{\crit}(B_{\delta/2}(x))}.
\end{eqnarray*}
With \eqref{eq:14}, we get that
\begin{equation*}
\Vert\varphi_\eps\Vert_{C^{0,\theta_p}(  B_{\delta/4}(x))}\leq  C(p,\delta,k, L,\lambda_0) 
\Vert f\Vert_{L^{\frac{2n}{n+2k}}(M)}.
\end{equation*}
Since $\varphi_\eps\in H_{2k}^p(M)$ for all $p>1$, Green's representation formula (see Theorem \ref{th:green:nonsing}) yields
$$\varphi_\eps(x)=\int_M G_\eps(x,y)f(y)\, dv_g(y),$$
and then 
$$\left|\int_M G_\eps(x,y)f(y)\, dv_g(y)\right|\leq C(p,\delta,k, L,\lambda) 
\Vert f\Vert_{L^{\frac{2n}{n+2k}}(M)}$$
for all $f\in C^{0}(M)$ vanishing in $B_{\delta/2}(x)$. Via duality, we then deduce that 
\begin{equation}\label{eq:16:crit}
\Vert G_\eps(x,\cdot)\Vert_{L^{\crit}(M\setminus B_{\delta/2}(x))}\leq C(\delta, k, L,\lambda_0)\hbox{ when }d_g(x,x_0)>\delta. 
\end{equation}

\medskip\noindent{\bf Step 2: passing to the limit $\eps\to 0$ and Green's function for $P-V_0$.}\par
\noindent We fix $\delta>0$ and $x\in M$ such that $d_g(x,x_0)>\delta$. For all $\eps>0$, we have that 
\begin{equation}\label{eq:Ge}
P G_\eps(x,\cdot)-V_\eps G_\eps(x,\cdot)=0\hbox{ in }M-\{x\}.
\end{equation}
Since $V_\eps\in \Pl(x_0)$ for all $\eps>0$, we have that $|V_\eps(y)|\leq C(\lambda, \delta)$ for all $y\in M\setminus B_{\delta/2}(x_0)$ and $\eps>0$. Since $P\in O_{k,L}$, it follows from the control \eqref{eq:16} and standard regularity theory (see Theorem \ref{th:2}) that given $\nu\in (0,1)$, we have that for any $r>0$,
\begin{equation}\label{est:G:1}
\Vert G_\eps(x,\cdot)\Vert_{C^{2k-1,\nu}(M-(B_r(x_0)\cup B_r(x))}\leq C(\delta, k, \lambda, L,r,\nu,\lambda_0)\hbox{ for all }d_g(x,x_0)>\delta.
\end{equation}
It then follows from Ascoli's theorem that, up to extraction of a subfamily, there exists $G_0(x,\cdot)\in C^{2k-1}(M-\{x,x_0\})$ such that
\begin{equation}\label{lim:Ge}
\lim_{\eps\to 0}G_\eps(x,\cdot)=G_0(x,\cdot)\hbox{ in }C^{2k-1}_{loc}(M-\{x,x_0\}).
\end{equation}
By Theorem \ref{th:2} again, we also get that
\begin{equation}\label{lim:Ge:H}
\lim_{\eps\to 0}G_\eps(x,\cdot)=G_0(x,\cdot)\hbox{ in }H^p_{2k,loc}(M-\{x,x_0\})\hbox{ for all }p>1.
\end{equation}
Moreover, via the monotone convergence theorem, passing to the limit in \eqref{eq:16}, we get that 
\begin{equation}\label{eq:17}
\Vert G_0(x,\cdot)\Vert_{L^q(M)}\leq C(q,\delta, k, L,\lambda_0)\hbox{ for all }q\in \left(1,\frac{n}{n-2k}\right)\hbox{ and }d_g(x,x_0)>\delta,
\end{equation}
and then $G_0(x,\cdot)\in L^q(M)$ for all $q\in \left(1,\frac{n}{n-2k}\right)$ and $x\neq x_0$. Similarly, using \eqref{eq:16:crit}, we get that
\begin{equation}\label{eq:16:crit:final}
\Vert G_0(x,\cdot)\Vert_{L^{\crit}(M\setminus B_{\delta/2}(x))}\leq C(\delta, k, L,\lambda_0)\hbox{ when }d_g(x,x_0)>\delta. 
\end{equation}
So that $G_0(x,\cdot)\in L^{\crit}_{loc}(M\setminus\{x\})$.

\medskip\noindent{\bf Step 3: Representation formula.} We fix $f\in L^{\frac{2n}{n+2k}}(M)\cap L^p_{loc}(\Mmx)$, $p>\frac{n}{2k}>1$. Via the coercivity of $P-V_\eps$ and $P-V_0$, it follows from variational methods (see also Theorem \ref{th:3}) that there exists  $\varphi_\eps\in H_k^2(M)\cap H_{2k}^{\frac{2n}{n+2k}}(M)$ and $\varphi_0 \in H_k^2(M)$ such that 
\begin{equation}\label{eq:phieps:phi0}
(P-V_0)\varphi_0=f\hbox{ and }(P-V_\eps)\varphi_\eps=f\hbox{ in }M.
\end{equation}
We claim that 
\begin{equation}\label{lim:phi:eps}
\lim_{\eps\to 0}\varphi_\eps=\varphi_0\hbox{ in }H_k^2(M).
\end{equation}
We prove the claim. It follows from \eqref{eq:14} that there exists $C=C(k,L,\Vert f\Vert_{\frac{2n}{n+2k}})>0$ independent of $\eps>0$ such that 
$$\Vert\varphi_\eps\Vert_{H_k^2}\leq C(k,L,\Vert f\Vert_{\frac{2n}{n+2k}})\hbox{ and }\Vert\varphi_0\Vert_{H_k^2}\leq C(k,L,\Vert f\Vert_{\frac{2n}{n+2k}})$$ 
for all $\eps>0$. We have that
\begin{equation}\label{eq:phiemoinphi0}
(P-V_\eps)(\varphi_\eps-\varphi_0)=(V_\eps-V_0)\varphi_0\hbox{ weakly in }H_k^2(M).
\end{equation}
The uniform coercivity \eqref{coer:PV} then yields
\begin{eqnarray*}
\frac{1}{2L}\Vert \varphi_\eps-\varphi_0\Vert_{H_k^2}^2&\leq &\int_M (\varphi_\eps-\varphi_0)(P-V_\eps)(\varphi_\eps-\varphi_0)\, dv_g\\
&=&\int_M (V_\eps-V_0)\varphi_0(\varphi_\eps-\varphi_0)\, dv_g\\
&\leq & \sqrt{\int_M |V_\eps-V_0|\cdot |\varphi_0|^2\, dv_g}\sqrt{\int_M |V_\eps-V_0|\cdot |\varphi_\eps-\varphi_0|^2\, dv_g}\\
&\leq & \sqrt{\int_M |V_\eps-V_0|\cdot |\varphi_0|^2\, dv_g}\sqrt{2\lambda \int_Md_g(\cdot,x_0)^{-2k} |\varphi_\eps-\varphi_0|^2\, dv_g}
\end{eqnarray*}
The Hardy inequality \eqref{hardy:ineq:intro} yields
\begin{eqnarray*}
\Vert \varphi_\eps-\varphi_0\Vert_{H_k^2}^2&\leq & 2L\sqrt{\int_M |V_\eps-V_0|\cdot |\varphi_0|^2\, dv_g}\sqrt{2\lambda C_H(k)} \Vert \varphi_\eps-\varphi_0\Vert_{H_k^2}
\end{eqnarray*}
and then 
\begin{equation}\label{eq:21}
\Vert \varphi_\eps-\varphi_0\Vert_{H_k^2}\leq 2L\sqrt{2\lambda C(k)} \sqrt{\int_M |V_\eps-V_0|\cdot |\varphi_0|^2\, dv_g} 
\end{equation}
Since $V_\eps,V_0\in \Pl(x_0)$, we have that $|V_\eps-V_0|(x)\cdot |\varphi_0|^2(x)\leq 2\lambda d_g(x,x_0)^{-2k}|\varphi_0|^2(x)\in L^1(M)$ due to the Hardy inequality \eqref{hardy:ineq:intro}. Since $\lim_{\eps\to 0}V_\eps(x)=V_0(x)$ for a.e. $x\in M$, Lebesgue's convergence theorem yields
$$\lim_{\eps\to 0}\int_M |V_\eps-V_0|\cdot |\varphi_0|^2\, dv_g=0.$$
And then \eqref{eq:21} yields \eqref{lim:phi:eps} and the claim is proved.

\medskip\noindent Since $\varphi_\eps\in H_{2k}^{\frac{2n}{n+2k}}$ is a solution to \eqref{eq:phieps:phi0}, $P\in O_{k,L}$, $V_\eps\in \Pl(x_0)$ and $f\in L^p_{loc}(M\setminus \{x_0\})$, $p>\frac{n}{2k}$,  it  follows from regularity theory (Theorems \ref{th:1} and \ref{th:2}) that $\varphi_\eps\in H_{2k,loc}^p(M\setminus\{x_0\})$ and that for any $\delta>0$, using \eqref{lim:phi:eps}, we get
$$\Vert \varphi_\eps\Vert_{H_{2k}^p(M\setminus B_r(x_0))}\leq C( r, k,L,\lambda_0)\left(\Vert f\Vert_{L^p(M\setminus B_{r/2}(x_0))}+\Vert \varphi_\eps\Vert_{L^2(M\setminus B_{r/2}(x_0))}\right)\leq C( r, k,L,\lambda_0).$$
Since $p>n/2k$, it follows from Sobolev's embedding theorem that $\varphi_\eps\in C^0(M\setminus\{x_0\})$ and that
\begin{equation}\label{ineq:co:phi}
\Vert \varphi_\eps\Vert_{C^0(M\setminus B_r(x_0))}\leq C(k,r)\Vert \varphi_\eps\Vert_{H_{2k}^p(M-B_r(x_0))}\leq C( r, k,L,\lambda_0)
\end{equation}
for all $\eps>0$. We now take $\eps,\eps'>0$. As one check, $(P-V_\eps)(\varphi_{\eps}-\varphi_{\eps'})=(V_\eps-V_{\eps'})\varphi_{\eps'}$. It then follows from regularity theory (Theorems \ref{th:1} and \ref{th:2}) that for any $r>0$, we have that for $\lambda<\lambda_0$ and any $\eps,\eps'>0$, we have that for $p>n/(2k)$, 
\begin{eqnarray*}
&&\Vert \varphi_\eps-\varphi_{\eps'}\Vert_{H_{2k}^p(M\setminus B_r(x_0))}\\
&&\leq C( p,r, k,L,\lambda_0)\left(\Vert (V_\eps-V_{\eps'})\varphi_{\eps'}\Vert_{L^p(M\setminus B_{r/2}(x_0))}+\Vert \varphi_\eps-\varphi_{\eps'}\Vert_{L^2(M\setminus B_r(x_0))}\right).
\end{eqnarray*}
Note that \eqref{lim:phi:eps} yields $\Vert\varphi_\eps-\varphi_0\Vert_2\to 0$ as $\eps\to 0$. Therefore, with \eqref{hyp:Ve}, \eqref{ineq:co:phi}, Lebesgue's convergence theorem, we get that $(\varphi_\eps)_\eps$ has a limit in $H_{2k,loc}^p(M\setminus \{x_0\})$, and since $\Vert\varphi_\eps-\varphi_0\Vert_2\to 0$ as $\eps\to 0$, we get that this limit is $\varphi_0$ and $\lim_{\eps\to 0}\varphi_\eps=\varphi_0$ in  $H_{2k,loc}^p(M\setminus \{x_0\})$. With Sobolev's embedding Theorem, we get that $\varphi_0\in C^0(M\setminus\{x_0\})$ and 
\begin{equation}\label{lim:C0}
\lim_{\eps\to 0}\varphi_\eps=\varphi_0\hbox{ in }C^0_{loc}(\Mmx)
\end{equation}
We now write Green's formula for $\varphi_\eps$ to get
$$\varphi_\eps(x)=\int_{M}G_\eps(x,\cdot)f\, dv_g\hbox{ for }x\neq x_0\hbox{ and for all }\eps>0.$$
With \eqref{eq:16}, \eqref{lim:Ge}, \eqref{eq:17}, \eqref{eq:16:crit} and \eqref{lim:C0}, we pass to the limit in Green's formula to get
$$\varphi_0(x)=\int_{M}G_0(x,\cdot)f\, dv_g.$$
This yields the existence of a Green's function for $P-V_0$ in Theorem \ref{th:Green:main}. Concerning uniqueness, let us consider another Green's function as in Theorem \ref{th:Green:main}, say $\bar{G}_0$, and, given $x\in \Mmx$, let us define $H_x:=G_0(x,\cdot)-\bar{G}_0(x,\cdot)$. We then get that $H_x\in L^q(M)$ for all $1\leq q<\frac{n}{n-2k}$ and
$$\int_MH_x f\, dv_g=0\hbox{ for all }f\in C^0(M).$$
By density, this identity is also valid for all $f\in L^{q'}(M)$ where $\frac{1}{q}+\frac{1}{q'}=1$. By duality, this yields $H_x\equiv 0$, and then $\bar{G}_0=G_0$, which proves uniqueness. This ends the proof of Theorem \ref{th:Green:main}.

\medskip\noindent{\bf Step 4: First pointwise control.} As above, we fix $\delta>0$ and we take $x\in M$ such that $d_g(x,x_0)>\delta$. It follows from \eqref{eq:Ge}, \eqref{est:G:1}, \eqref{lim:Ge} and regularity theory (see Theorem \ref{th:2}) that for all $l\in \{0,...,2k-1\}$, we have that
\begin{equation}\label{est:G:2}
| \nabla_y^lG_\eps(x,y)|\leq  C(\delta, k, \lambda_0, L)\hbox{ for  }\{d_g(x,y)\geq\delta,\, d_g(x,x_0)\geq\delta,\,d_g(y,x_0)\geq\delta\}.
\end{equation}
and
\begin{equation}\label{est:G:3}
| \nabla_y^lG_0(x,y)|\leq  C(\delta, k, \lambda_0, L)\hbox{ for  }\{d_g(x,y)\geq\delta,\, d_g(x,x_0)\geq\delta,\,d_g(y,x_0)\geq\delta\}.
\end{equation}
We fix $\gamma\in (0,n-2k)$. Since $G_\eps(x,\cdot)$ satisfies \eqref{eq:Ge} in the weak sense, it follows from Lemma \ref{lem:main} that there exists $\lambda=\lambda(\gamma,L,\delta)>0$, there exists $C=C(\gamma,p, L,\delta)>0$ such that
\begin{equation*}
d_g(y,x_0)^{\gamma}|G_\eps(x,y)|\leq C  \Vert G_\eps(x,\cdot)\Vert_{L^p(B_{\delta/2}(x_0))}\hbox{ for all }y\in B_{\delta/3}(x_0)-\{x_0\}
\end{equation*}
when $d_g(x,x_0)\geq \delta$. It then follows from \eqref{eq:16} that
\begin{equation}\label{ineq:36}
d_g(y,x_0)^{\gamma}|G_\eps(x,y)|\leq C(\delta,k,L,\gamma) \hbox{ for all }y\in B_{\delta/2}(x_0)-\{x_0\}\hbox{ and }d_g(x,x_0)\geq \delta.
\end{equation}
These inequalities are valid for $\eps>0$, and then for $\eps=0$. In order to get the full estimates of Theorem \ref{th:Green:pointwise:BIS}, we now perform infinitesimal versions of these estimates.

\section{Asymptotics for the Green's function close to the singularity}\label{sec:green2}
This section is devoted to the proof of an infinitesimal version of \eqref{est:G:2} and \eqref{ineq:36} for $x,y$ close to the singularity $x_0$. 

\begin{theorem}\label{th:G:close} Let $(M,g)$ be a compact Riemannian manifold of dimension $n$. Fix $k\in\nn$ such that $2\leq 2k<n$, $L>0$, $\lambda>0$ and $x_0\in M$. Fix an operator $P$ of type $O_{k,L}$ (see Definition \ref{def:p}), $V\in \Pl(x_0)$ and a family $(V_\eps)$ as in \eqref{hyp:Ve}. For $\lambda>0$ sufficiently small, let $G$ (resp. $G_\eps$) be the Green's function for $P-V$ (resp. $P-V_\eps$). Let us fix $\omega,U$ two open subsets of $\rn$ such that
\begin{equation*}
\omega\subset\subset \rn-\{0\}\, ,\, U\subset\subset \rn \hbox{ and }\overline{\omega}\cap\overline{U}=\emptyset.
\end{equation*}
We let $\alpha_0(M,g,U,\omega)>0$ be such that $|\alpha X|<i_g(M)/2$ for all $0<\alpha<\alpha_0$ and $X\in U\cup \omega$.  We fix $\gamma\in (0, n-2k)$. Then there exists $\lambda=\lambda(\gamma)>0$, there exists $C(U,\omega, M,g,\lambda, k, L)>0$ such that 
\begin{equation}\label{ineq:G:close}
\left| |X|^\gamma \alpha^{n-2k}G_\eps(\tilde{\hbox{exp}}_{x_0}(\alpha X),\tilde{\hbox{exp}}_{x_0}(\alpha Y) )\right|\leq C(U,\omega, M,g,\lambda, k, L)
\end{equation}
for all $X\in U-\{0\}$, $Y\in\omega$, $\alpha\in (0,\alpha_0)$ and $\eps\geq 0$.
\end{theorem}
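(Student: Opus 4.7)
My plan is to establish the estimate by rescaling around $x_0$ at scale $\alpha$. Define
$$\hat{G}_{\eps,\alpha}(X,Y) := \alpha^{n-2k}\, G_\eps\bigl(\tilde{\hbox{exp}}_{x_0}(\alpha X),\, \tilde{\hbox{exp}}_{x_0}(\alpha Y)\bigr)$$
for $X,Y$ in a fixed large ball of $\rn$ containing $U\cup\omega$. By the standard homogeneity of Green's functions under rescaling, $\hat{G}_{\eps,\alpha}(\cdot, Y_0)$ is, for each fixed $Y_0$, the Green's function at $Y_0$ of the rescaled operator $\tilde{P}_\alpha - \tilde{V}_{\eps,\alpha}$, where $\tilde{P}_\alpha$ is the pullback of $P$ in the rescaled metric $g_\alpha := \alpha^{-2}(\tilde{\hbox{exp}}_{x_0})^* g(\alpha\cdot)$, and $\tilde{V}_{\eps,\alpha}(X) := \alpha^{2k} V_\eps(\tilde{\hbox{exp}}_{x_0}(\alpha X))$. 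The decisive observations are: (i) $\tilde{P}_\alpha$ is of type $O_{k, L'}$ for some $L' = L'(L)$ uniformly as $\alpha\to 0$ (since $g_\alpha\to \xi$ in $C^1_{\mathrm{loc}}$ and the rescaled tensors $\tilde{A}_\alpha^{(i)}$ are uniformly bounded in $C^{i,\theta}$ on compact subsets of $\rn$); (ii) the rescaled potential satisfies the scale-invariant Hardy bound $|X|^{2k}|\tilde{V}_{\eps,\alpha}(X)| \leq C\lambda$; and (iii) the uniform coercivity of $\tilde{P}_\alpha - \tilde{V}_{\eps,\alpha}$ on any fixed bounded ball in $\rn$ follows from \eqref{coer:PV} together with the scale invariance of the $H_k^2$ Hardy inequality \eqref{hardy:ineq:intro}.

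Next, pick an open set $W$ with $U \subset\subset W$ and $\overline{W}\cap\overline{\omega}=\emptyset$. For fixed $Y_0\in \omega$, the function $X\mapsto \hat{G}_{\eps,\alpha}(X,Y_0)$ satisfies $(\tilde{P}_\alpha-\tilde{V}_{\eps,\alpha})\hat{G}_{\eps,\alpha}(\cdot,Y_0)=0$ on $W$, and in particular across the singularity $X=0$. I then invoke Lemma \ref{lem:main} at the point $0$: for any $\gamma\in (0,n-2k)$ there exists a threshold $\lambda(\gamma)>0$ and an exponent $p$ such that, provided $\lambda<\lambda(\gamma)$,
$$|X|^\gamma\, |\hat{G}_{\eps,\alpha}(X, Y_0)| \;\leq\; C(\gamma, L, k, U, W)\; \|\hat{G}_{\eps,\alpha}(\cdot, Y_0)\|_{L^p(W')}$$
for all $X\in U\setminus\{0\}$, with $W\subset\subset W'\subset\subset \rn\setminus\omega$. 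The inequality \eqref{ineq:G:close} will follow once the right-hand side is bounded uniformly in $\alpha\in(0,\alpha_0)$, $\eps\geq 0$ and $Y_0\in\omega$.

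For this uniform $L^p$ bound I exploit the symmetry $\hat{G}_{\eps,\alpha}(X,Y_0)=\hat{G}_{\eps,\alpha}(Y_0,X)$, which descends from the symmetry of $P-V_\eps$, together with a duality argument modeled on Step 1 of Section \ref{sec:green1} but performed on the rescaled problem. Given $f\in C^0_c(W')$, I solve $(\tilde{P}_\alpha-\tilde{V}_{\eps,\alpha})\psi_{\eps,\alpha}=f$ by the uniform variational method, then apply the Agmon--Douglis--Nirenberg regularity of Theorems \ref{th:1} and \ref{th:2} uniformly in $\alpha$ and the Sobolev embedding $H_{2k}^q\hookrightarrow C^0$ for $q>n/(2k)$ to obtain $|\psi_{\eps,\alpha}(Y_0)|\leq C\|f\|_{L^q(W')}$. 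Green's representation $\psi_{\eps,\alpha}(Y_0) = \int_{W'} \hat{G}_{\eps,\alpha}(Y_0,X) f(X)\,dX$ combined with $L^q$--$L^{q'}$ duality then yields $\|\hat{G}_{\eps,\alpha}(Y_0,\cdot)\|_{L^{q'}(W')}\leq C$ uniformly in the parameters. Choosing $p=q'$ in the estimate above closes the loop.

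The hard part is ensuring that every constant in the argument is uniform as $\alpha\to 0$. The rescaled manifold blows up to all of $\rn$, so $\alpha$-dependence could sneak in through the coercivity constant, through the Sobolev constant of $H_{2k}^q\hookrightarrow C^0$ on the growing ball, or through the ADN regularity constants. Working consistently on a fixed bounded ball of $\rn$ containing $U\cup\omega$, and leveraging the scale invariance of both the $H_k^2$ norm and of the Hardy inequality, is what forces all these constants to be $\alpha$-independent; the passage $\eps\to 0$ at the end is then handled by the pointwise and $H_k^2$ convergence established at \eqref{lim:Ge}--\eqref{lim:phi:eps}.
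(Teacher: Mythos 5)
Your proposal is correct and uses exactly the same two ingredients as the paper: the rescaling at scale $\alpha$ together with the weighted regularity Lemma \ref{lem:main} near the Hardy singularity, and a duality argument through an auxiliary variational solution of $(P-V_\eps)\varphi=f_\alpha$ controlled by \eqref{coer:PV}, ADN regularity and the scale invariance of the relevant norms. The difference is the order in which these are combined. The paper never applies Lemma \ref{lem:main} to the Green's function itself: it takes $f$ supported in a neighborhood $\omega'$ of $\omega$, applies Lemma \ref{lem:main} to the auxiliary solution $\tilde{\varphi}_{\alpha,\eps}$ to get $|X|^\gamma|\tilde{\varphi}_{\alpha,\eps}(X)|\leq C\Vert f\Vert_{L^{2n/(n+2k)}(\omega')}$ on $U$, and only then dualizes to obtain $\Vert\, |X|^\gamma\tilde{G}_{\alpha,\eps}(X,\cdot)\Vert_{L^{\crit}(\omega')}\leq C$, finishing with interior regularity at $Y\in\omega$ where the rescaled potential is bounded. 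You reverse this: you dualize first, with test data supported near $U$ and evaluation at $Y_0\in\omega$ (where only standard regularity is needed), to get an unweighted $L^{\crit}$ bound on $\hat{G}_{\eps,\alpha}(Y_0,\cdot)$ near $U$, and then apply Lemma \ref{lem:main} directly to $X\mapsto\hat{G}_{\eps,\alpha}(X,Y_0)$ using the symmetry of $G_\eps$. This is legitimate, but it puts two extra obligations on you that the paper's ordering avoids: you must justify that $\hat{G}_{\eps,\alpha}(\cdot,Y_0)$ is a weak $H_k^2$ solution \emph{across} the singularity $X=0$ in the sense required by Lemma \ref{lem:main} (true for $\eps>0$ since $V_\eps$ vanishes near $x_0$ and the pole sits at $\tilde{\hbox{exp}}_{x_0}(\alpha Y_0)\neq x_0$, with $\eps=0$ recovered by \eqref{lim:Ge}), and you must invoke the symmetry of the Green's function, which the paper's version does not need at this stage. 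The paper's route is slightly more economical for that reason; yours buys a cleaner conceptual statement (the weighted bound is proved directly for the object of interest). Either way the estimate \eqref{ineq:G:close} follows, provided you also supplement Lemma \ref{lem:main} by ordinary elliptic regularity on $U\setminus B_\delta(0)$, where the rescaled potential is merely bounded, as the paper does explicitly.
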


\noindent{\it Proof of Theorem \ref{th:G:close}.} We first set $\omega',U'$ two open subsets of $\rn$ such that
\begin{equation*}
\omega\subset\subset \omega'\subset\subset \rn-\{0\}\, ,\, U\subset\subset U'\subset\subset \rn \hbox{ and }\overline{\omega'}\cap\overline{U'}=\emptyset.
\end{equation*}
We fix $f\in C^\infty_c(\omega')$ and for any $0<\alpha<\alpha_0$, we set 
$$f_\alpha(x):=\frac{1}{\alpha^{\frac{n+2k}{2}}}f\left(\frac{\tilde{\hbox{exp}}_{x_0}^{-1}(x)}{\alpha}\right)\hbox{ for all }x\in M.$$
As one checks, $f_\alpha\in C^\infty_c(\tilde{\hbox{exp}}_{x_0}(\alpha \omega'))$ and $\tilde{\hbox{exp}}_{x_0}(\alpha \omega')\subset\subset M-\{x_0\}$. It follows from Theorem \ref{th:3} that there exists $\varphi_{\alpha,\eps}\in H_{2k}^q(M)$ for all $q>1$ be such that
\begin{equation}\label{eq:phi:alpha}
P\varphi_{\alpha,\eps}-V_\eps\varphi_{\alpha,\eps}=f_\alpha\hbox{ in }M.
\end{equation}
It follows from Sobolev's embedding theorem that $\varphi_{\alpha,\eps}\in C^{2k-1}(M-\{x_0\})$. We define
\begin{equation}\label{def:tilde:phi}
\tilde{\varphi}_{\alpha,\eps}(X):= \alpha^{\frac{n-2k}{2}}\varphi_{\alpha,\eps}\left(\tilde{\hbox{exp}}_{x_0}(\alpha X)\right)\hbox{ for all }X\in \rn-\{0\},\, |\alpha X|<i_g(M).
\end{equation}
A change of variable and upper-bounds for the metric yield
\begin{eqnarray*}
\Vert f_\alpha\Vert_{\frac{2n}{n+2k}}^{\frac{2n}{n+2k}}&=&\int_M |f_\alpha(x)|^{\frac{2n}{n+2k}}\, dv_g=\int_{\tilde{\hbox{exp}}_{x_0}(\alpha \omega')} |f_\alpha(x)|^{\frac{2n}{n+2k}}\, dv_g\\
&=& \int_{\omega'} |f(X)|^{\frac{2n}{n+2k}}\, dv_{g_\alpha}(X)\leq C\int_{\omega'} |f(X)|^{\frac{2n}{n+2k}}\, dX,
\end{eqnarray*}
where here and in the sequel, $\tilde{\hbox{exp}}_p^\star g$ denotes the pull-back metric of $g$ via the map $\tilde{\hbox{exp}}_p$, $p\in M$ and  $g_\alpha:=(\tilde{\hbox{exp}}_{x_0}^\star g)(\alpha\cdot)$. Therefore
\begin{equation}\label{eq:31}
\Vert f_\alpha\Vert_{L^{\frac{2n}{n+2k}}(M)}\leq C(k)\Vert f\Vert_{L^{\frac{2n}{n+2k}}(\omega')}.
\end{equation}
where $C(k)$ depends only on $(M,g)$ and $k$. With \eqref{coer:PV}, \eqref{eq:phi:alpha} and the Sobolev inequality \eqref{sobo:ineq:M}, we get
\begin{eqnarray*}
\frac{1}{2L}\Vert \varphi_{\alpha,\eps}\Vert_{H_k^2(M)}^2&\leq& \int_M \varphi_{\alpha,\eps}(P-V_\eps)\varphi_{\alpha,\eps}\, dv_g=\int_M f_\alpha\varphi_{\alpha,\eps} \, dv_g\\
&\leq & \Vert f_\alpha\Vert_{L^{\frac{2n}{n+2k}}(M)} \Vert \varphi_{\alpha,\eps}\Vert_{L^{\frac{2n}{n-2k}}(M)}\leq C_S(k) \Vert f_\alpha\Vert_{L^{\frac{2n}{n+2k}}(M)} \Vert \varphi_{\alpha,\eps}\Vert_{H_k^2(M)}.
\end{eqnarray*}
Therefore, using again the Sobolev inequality \eqref{sobo:ineq:M} and \eqref{eq:31}, we get that
\begin{equation}\label{eq:33}
\Vert \varphi_{\alpha,\eps}\Vert_{L^{\frac{2n}{n-2k}}(M)}\leq C(k,L) \Vert f\Vert_{L^{\frac{2n}{n+2k}}(\omega')} 
\end{equation}
With $g_\alpha$ as above, equation \eqref{eq:phi:alpha} rewrites
\begin{equation}\label{eq:tpl}
\Delta_{g_\alpha}^k\tilde{\varphi}_{\alpha,\eps}+\sum_{l=0}^{2k-2}\alpha^{2k-l}B_l(\tilde{\hbox{exp}}_{x_0}(\alpha \cdot))\star\nabla_{g_\alpha}^l\tilde{\varphi}_{\alpha,\eps}-\alpha^{2k}V_\eps(\tilde{\hbox{exp}}_{x_0}(\alpha X))\tilde{\varphi}_{\alpha,\eps}=f
\end{equation}
weakly locally in $\rn$ where the $B_l$'s are $(l,0)-$tensors that are bounded in $L^\infty$ due to Definition \eqref{def:p}. Since $V_\eps$ satisfies \eqref{hyp:Ve}, we have that
\begin{equation*}
\left|\alpha^{2k}V_\eps(\tilde{\hbox{exp}}_{x_0}(\alpha X))\right|\leq \lambda |X|^{-2k}\hbox{ for all }X\in U'-\{0\}
\end{equation*}
Since $f(X)=0$ for all $X\in U'$ and $\tilde{\varphi}_{\alpha,\eps}\in H_{2k,loc}^q(U')$, it follows from the regularity Lemma \ref{lem:main} that there exists $\lambda=\lambda(\gamma)>0$ such that for any  $\delta>0$ such that $B_{\delta}(0)\subset\subset U'$, there exists $C( L, \delta,\gamma, U')>0$ such that
\begin{equation*}
|X|^\gamma \left|\tilde{\varphi}_{\alpha,\eps}(X)\right|\leq 
C( L, \delta,\gamma, U,U')\Vert \tilde{\varphi}_{\alpha,\eps} \Vert_{L^{\crit}(U')}\hbox{ for all }X\in B_{\delta}(0)-\{0\}
\end{equation*}
Since the coefficients of $P-V_\eps$ are uniformly bounded outside $x_0$, classical elliptic regularity yields
\begin{equation*}
 \left|\tilde{\varphi}_{\alpha,\eps}(X)\right|\leq C( L, \delta,\gamma,U,U')\Vert \tilde{\varphi}_{\alpha,\eps} \Vert_{L^{\crit}(U')}\hbox{ for all }X\in U- B_{\delta}(0)
\end{equation*}
These two inequalities yield the existence of $C( L, \delta,\gamma,U,U')$ such that
\begin{equation}\label{eq:44}
|X|^\gamma \left|\tilde{\varphi}_{\alpha,\eps}(X)\right|\leq C\Vert \tilde{\varphi}_{\alpha,\eps} \Vert_{L^{\crit}(U')}\hbox{ for all }X\in U-\{0\}
\end{equation}
Arguing as in the proof of \eqref{eq:31}, we have that
\begin{equation}\label{eq:32}
\Vert \tilde{\varphi}_{\alpha,\eps}\Vert_{L^{\frac{2n}{n-2k}}(U')}\leq C(k)\Vert  \varphi_{\alpha,\eps}\Vert_{L^{\frac{2n}{n-2k}}(M)}.
\end{equation}
Putting together \eqref{def:tilde:phi}, \eqref{eq:33}, \eqref{eq:44} and \eqref{eq:32} we get that
\begin{equation}\label{ineq:G:a}
|X|^\gamma \left| \alpha^{\frac{n-2k}{2}}\varphi_{\alpha,\eps}\left(\tilde{\hbox{exp}}_{x_0}(\alpha X)\right) \right|\leq C( L, \delta, \lambda,\gamma,U,U')\Vert f\Vert_{L^{\frac{2n}{n+2k}}(\omega')}
\end{equation}
for all $X\in U-\{0\}$. For $\alpha>0$, we define
\begin{equation}\label{def:G:t:close}
\tilde{G}_{\alpha,\eps}(X,Y):=\alpha^{n-2k}G_\eps(\tilde{\hbox{exp}}_{x_0}(\alpha X),\tilde{\hbox{exp}}_{x_0}(\alpha Y) )\hbox{ for }(X,Y)\in U'\times \omega',\, X\neq 0
\end{equation}
It follows from Green's representation formula for $G_\eps$, $\eps>0$, and \eqref{eq:phi:alpha} that
$$\varphi_{\alpha,\eps}\left(\tilde{\hbox{exp}}_{x_0}(\alpha X)\right)=\int_M G_\eps\left(\tilde{\hbox{exp}}_{x_0}(\alpha X),y\right)f_\alpha(y)\, dv_g$$
for all $X\in U-\{0\}$. 
With a change of variable, we then get that
\begin{equation}\label{ineq:G:b}
\alpha^{\frac{n-2k}{2}}\varphi_{\alpha,\eps}\left(\tilde{\hbox{exp}}_{x_0}(\alpha X)\right)=\int_{\omega'} \tilde{G}_{\alpha,\eps}(X,Y) f(Y)\, dv_{g_\alpha}
\end{equation}
for all $X\in U-\{0\}$. Putting together \eqref{ineq:G:a} and \eqref{ineq:G:b}, we get that
\begin{equation*}
\left||X|^\gamma\int_{\omega'} \tilde{G}_{\alpha,\eps}(X,Y) f(Y)\, dv_{g_\alpha}
\right|\leq C( L, \delta, \lambda,\gamma,U,U',\omega') \Vert f\Vert_{L^{\frac{2n}{n+2k}}(\omega')}\end{equation*}
for all $f\in C^\infty_c(\omega')$ and $ X\in U-\{0\}$. It then follows from duality arguments that
\begin{equation}\label{ineq:35}
\Vert |X|^\gamma \tilde{G}_{\alpha,\eps}(X,\cdot)\Vert_{L^{\crit}(\omega')}\leq C( L, \delta, \lambda,\gamma,U,U',\omega')\hbox{ for }X\in U-\{0\} 
\end{equation}
Since $G_\eps(x,\cdot)$ is a solution to $(P-V_\eps)G_\eps(x,\cdot)=0$ in $M-\{x_0,x\}$, as in \eqref{eq:tpl},
we get that
\begin{eqnarray*}
&&\Delta_{g_\alpha}^k\tilde{G}_{\alpha,\eps}(X,\cdot)+\sum_{l=0}^{2k-2}\alpha^{2k-l}B_l(\tilde{\hbox{exp}}_{x_0}(\alpha \cdot))\star\nabla_{g_\alpha}^l\tilde{G}_{\alpha,\eps}(X,\cdot)\\
&&-\alpha^{2k}V_\eps(\tilde{\hbox{exp}}_{x_0}(\alpha \cdot))\tilde{G}_{\alpha,\eps}(X,\cdot)=0\hbox{ in }\omega'\subset\subset \rn-\{0,X\} 
\end{eqnarray*}
Since $\omega'\subset\subset \rn-\{0,X\}$, there exists $c_{\omega'}>0$ such that $|Y|\geq c_{\omega'}$ for all $Y\in\omega'$. Since $V_\eps$ satisfies \eqref{hyp:Ve}, we have that
\begin{equation*}
\left|\alpha^{2k}V_\eps(\tilde{\hbox{exp}}_{x_0}(\alpha Y))\right|\leq \lambda c_{\omega'}^{-2k}\hbox{ for all }Y\in \omega'.
\end{equation*}
It then follows from elliptic regularity theory (see Theorem \ref{th:2}) that
\begin{equation*}
|X|^\gamma |\tilde{G}_{\alpha,\eps}(X,Y)|\leq C(k,L, \lambda,\omega',U')\Vert |X|^\gamma \tilde{G}_{\alpha,\eps}(X,\cdot)\Vert_{L^{\crit}(\omega')}
\end{equation*}
for all $Y\in \omega\subset\subset\omega'$ and $X\in U-\{0\}$. The conclusion \eqref{ineq:G:close}  of Theorem \ref{th:G:close} then follows from this inequality, \eqref{ineq:35}, Definition \eqref{def:G:t:close} of $\tilde{G}_{\alpha,\eps}$ and the limit \eqref{lim:Ge}.

\section{Asymptotics for the Green's function far from the singularity}\label{sec:green3}
This section is devoted to the proof of an infinitesimal version of \eqref{est:G:2} and \eqref{ineq:36} for $x,y$ far from the singularity $x_0$. 

\begin{theorem}\label{th:G:far} We fix $p\in M-\{x_0\}$ and $\omega,U$ two open subsets of $\rn$ such that
\begin{equation*}
\omega\subset\subset \rn \, ,\, U\subset\subset \rn \hbox{ and }\overline{\omega}\cap\overline{U}=\emptyset.
\end{equation*}
We let $\alpha_0>0$ be such that 
\begin{equation}\label{hyp:alpha}
|\alpha X|<\frac{1}{2}\min\{i_g(M),d_g(x_0,p)\} \hbox{ for all }0<\alpha<\alpha_0\hbox{ and }X\in U\cup \omega.
\end{equation}
Then for all $\gamma\in (0,n-2k)$, there exists $\lambda=\lambda(\gamma)>0$, there exists $C(U,\omega, L,\alpha_0,\gamma,\lambda)>0$ such that 
\begin{equation}\label{ineq:G:far}
\left|  \alpha^{n-2k}G_\eps(\tilde{\hbox{exp}}_{p}(\alpha X),\tilde{\hbox{exp}}_{p}(\alpha Y) )\right|\leq C(U,\omega, L,\alpha_0,\gamma,\lambda)
\end{equation}
for all $X\in U $ and $Y\in\omega$ and $\alpha\in (0,\alpha_0)$ and $\eps\geq 0$ small enough.
\end{theorem}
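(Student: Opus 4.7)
The argument runs in close parallel to that of Theorem \ref{th:G:close}, with one crucial simplification: since $p\neq x_0$ and \eqref{hyp:alpha} ensures $d_g(\tilde{\hbox{exp}}_p(\alpha Z),x_0)\geq \frac{1}{2}d_g(p,x_0)>0$ uniformly for $Z\in U\cup\omega$ and $\alpha<\alpha_0$, the rescaled potential $\alpha^{2k}V_\eps(\tilde{\hbox{exp}}_p(\alpha \cdot))$ is uniformly bounded on both $U$ and $\omega$. Consequently no singular regularity lemma is needed: standard elliptic theory (Theorem \ref{th:2}) suffices. The $\gamma$-dependence in the statement enters only through the smallness of $\lambda$ required for the coercivity of $P-V_\eps$ via Lemma \ref{lem:main}, applied globally to control $G_\eps$ near the singularity $x_0$ when needed.

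The plan is as follows. First, fix auxiliary disjoint open sets $U\subset\subset U'\subset\subset\rn$ and $\omega\subset\subset\omega'\subset\subset\rn$, still satisfying \eqref{hyp:alpha} after shrinking $\alpha_0$ if necessary, with $\overline{U'}\cap\overline{\omega'}=\emptyset$. For $f\in C^\infty_c(\omega')$, set
\[
f_\alpha(x):=\alpha^{-\frac{n+2k}{2}}f\!\left(\frac{\tilde{\hbox{exp}}_p^{-1}(x)}{\alpha}\right),
\]
supported in $\tilde{\hbox{exp}}_p(\alpha\omega')\subset\subset M\setminus\{x_0\}$. Let $\varphi_{\alpha,\eps}\in H_k^2(M)$ be the (unique) solution to $(P-V_\eps)\varphi_{\alpha,\eps}=f_\alpha$ provided by Theorem \ref{th:3}. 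A change of variable yields $\|f_\alpha\|_{\frac{2n}{n+2k}}\leq C\|f\|_{\frac{2n}{n+2k}(\omega')}$, and then the uniform coercivity \eqref{coer:PV} combined with the Sobolev inequality \eqref{sobo:ineq:M} gives $\|\varphi_{\alpha,\eps}\|_{L^{\crit}(M)}\leq C(k,L)\|f\|_{\frac{2n}{n+2k}(\omega')}$.

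Second, define the rescaling $\tilde{\varphi}_{\alpha,\eps}(X):=\alpha^{\frac{n-2k}{2}}\varphi_{\alpha,\eps}(\tilde{\hbox{exp}}_p(\alpha X))$ on a neighborhood of $\overline{U'}\cup\overline{\omega'}$, and the pullback metric $g_\alpha:=(\tilde{\hbox{exp}}_p^\star g)(\alpha\cdot)$. A further change of variable yields $\|\tilde{\varphi}_{\alpha,\eps}\|_{L^{\crit}(U')}\leq C\|f\|_{\frac{2n}{n+2k}(\omega')}$. On $U'$ the source $f$ vanishes, so $\tilde{\varphi}_{\alpha,\eps}$ satisfies an equation of the same form as \eqref{eq:tpl} but with right-hand side zero and with the potential $\alpha^{2k}V_\eps(\tilde{\hbox{exp}}_p(\alpha\cdot))$ uniformly bounded on $U'$. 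Classical elliptic theory (Theorem \ref{th:2}) then upgrades the $L^{\crit}$ bound to
\[
|\tilde{\varphi}_{\alpha,\eps}(X)|\leq C(k,L,U,U',\lambda)\,\|f\|_{\frac{2n}{n+2k}(\omega')}\qquad\text{for all }X\in U.
\]

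Third, set $\tilde{G}_{\alpha,\eps}(X,Y):=\alpha^{n-2k}G_\eps(\tilde{\hbox{exp}}_p(\alpha X),\tilde{\hbox{exp}}_p(\alpha Y))$. Green's representation formula and a change of variable give, for $X\in U$,
\[
\alpha^{\frac{n-2k}{2}}\varphi_{\alpha,\eps}(\tilde{\hbox{exp}}_p(\alpha X))=\int_{\omega'}\tilde{G}_{\alpha,\eps}(X,Y)f(Y)\,dv_{g_\alpha}(Y).
\]
The previous step and duality (since $f\in C^\infty_c(\omega')$ is arbitrary) then yield $\|\tilde{G}_{\alpha,\eps}(X,\cdot)\|_{L^{\crit}(\omega')}\leq C$ uniformly for $X\in U$, $\alpha<\alpha_0$, $\eps>0$.

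Finally, for each $X\in U$, the function $Y\mapsto \tilde{G}_{\alpha,\eps}(X,Y)$ solves
\[
\Delta_{g_\alpha}^k\tilde{G}_{\alpha,\eps}(X,\cdot)+\sum_{l=0}^{2k-2}\alpha^{2k-l}B_l(\tilde{\hbox{exp}}_p(\alpha\cdot))\star\nabla_{g_\alpha}^l\tilde{G}_{\alpha,\eps}(X,\cdot)-\alpha^{2k}V_\eps(\tilde{\hbox{exp}}_p(\alpha\cdot))\tilde{G}_{\alpha,\eps}(X,\cdot)=0
\]
on $\omega'$, with coefficients uniformly bounded thanks to $\overline{U'}\cap\overline{\omega'}=\emptyset$ and $d_g(\tilde{\hbox{exp}}_p(\alpha\cdot),x_0)\geq\frac{1}{2}d_g(p,x_0)$ on $\omega'$. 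Another application of Theorem \ref{th:2} promotes the $L^{\crit}(\omega')$ bound to the pointwise bound \eqref{ineq:G:far} on $\omega\subset\subset\omega'$, uniformly in $X\in U$, $\alpha\in(0,\alpha_0)$, and $\eps>0$. The case $\eps=0$ follows by passing to the limit via \eqref{lim:Ge}.

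The main potential pitfall is bookkeeping: one must verify that the disjointness $\overline{U'}\cap\overline{\omega'}=\emptyset$ and the distance bound \eqref{hyp:alpha} are both preserved through the shrinking choices, so that the source $f_\alpha$ vanishes in a fixed neighborhood of the evaluation locus and the Hardy potential is genuinely bounded after rescaling. Everything else is routine elliptic regularity with uniform constants.
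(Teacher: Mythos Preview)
Your proof is correct and follows essentially the same approach as the paper: introduce intermediate sets $U',\omega'$, solve $(P-V_\eps)\varphi_{\alpha,\eps}=f_\alpha$ for rescaled test data, use coercivity and Sobolev to bound $\varphi_{\alpha,\eps}$ in $L^{\crit}$, exploit the uniform boundedness of the rescaled potential (the key simplification over Theorem~\ref{th:G:close}) to apply standard elliptic regularity on $U$, then pass to the Green's function via duality and a second elliptic step on $\omega'$. The paper's proof is in fact terser, writing out only the steps up through the pointwise bound on $\tilde{\varphi}_{\alpha,\eps}$ and then directing the reader to ``follow verbatim the proof of Theorem~\ref{th:G:close}'' for the remaining duality and regularity arguments---which you have spelled out explicitly.
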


\noindent{\it Proof of Theorem \ref{th:G:far}.} We first set $\omega',U'$ two open subsets of $\rn$ such that
\begin{equation*}
\omega\subset\subset \omega'\subset\subset \rn \, ,\, U\subset\subset U'\subset\subset \rn \hbox{ and }\overline{\omega'}\cap\overline{U'}=\emptyset
\end{equation*}
and \eqref{hyp:alpha} still holds for $X\in U'\cup\omega'$. We fix $f\in C^\infty_c(\omega')$ and for any $0<\alpha<\alpha_0$, we set 
$$f_\alpha(x):=\frac{1}{\alpha^{\frac{n+2k}{2}}}f\left(\frac{\tilde{\hbox{exp}}_{p}^{-1}(x)}{\alpha}\right)\hbox{ for all }x\in M.$$
As one checks, $f_\alpha\in C^\infty_c(\tilde{\hbox{exp}}_{p}(\alpha \omega'))$ and $\tilde{\hbox{exp}}_{p}(\alpha \omega')\subset\subset M-\{x_0\}$. It follows from Theorem \ref{th:3} that there exists $\varphi_{\alpha,\eps}\in H_{2k}^q(M)$ for all $q>1$ such that
\begin{equation}\label{eq:phi:alpha:far}
P\varphi_{\alpha,\eps}-V_\eps\varphi_{\alpha,\eps}=f_\alpha\hbox{ in }M.
\end{equation}
It follows from Sobolev's embedding theorem that $\varphi_{\alpha,\eps}\in C^{2k-1}(M-\{x_0\})$. We define
\begin{equation}\label{def:tilde:phi:far}
\tilde{\varphi}_{\alpha,\eps}(X):= \alpha^{\frac{n-2k}{2}}\varphi_{\alpha,\eps}\left(\tilde{\hbox{exp}}_{p}(\alpha X)\right)\hbox{ for all }X\in \rn,\, |\alpha X|<i_g(M).
\end{equation}
As in \eqref{eq:31}, a change of variable yields
\begin{equation}\label{eq:31:far}
\Vert f_\alpha\Vert_{L^{\frac{2n}{n+2k}}(M)}\leq C(k)\Vert f\Vert_{L^{\frac{2n}{n+2k}}(\omega')}.
\end{equation}
As for \eqref{eq:33}, we also get that
\begin{equation}\label{eq:33:far}
\Vert \varphi_{\alpha,\eps}\Vert_{L^{\frac{2n}{n-2k}}(M)}\leq C(k,L) \Vert f\Vert_{L^{\frac{2n}{n+2k}}(\omega')} 
\end{equation}
Taking $g_\alpha:=(\tilde{\hbox{exp}}_{p}^\star g)(\alpha\cdot)$, equation \eqref{eq:phi:alpha:far} rewrites
\begin{equation}\label{eq:tpl:far}
\Delta_{g_\alpha}^k\tilde{\varphi}_{\alpha,\eps}+\sum_{l=0}^{2k-2}\alpha^{2k-l}B_l(\tilde{\hbox{exp}}_{p}(\alpha \cdot))\star\nabla_{g_\alpha}^l\tilde{\varphi}_{\alpha,\eps}-\alpha^{2k}V_\eps(\tilde{\hbox{exp}}_{p}(\alpha X))\tilde{\varphi}_{\alpha,\eps}=f
\end{equation}
weakly in $\rn$ where the $B_l$'s are $(l,0)-$tensors that are bounded in $L^\infty$ due to Definition \eqref{def:p}. Since $V_\eps$ satisfies \eqref{hyp:Ve}, we have that
\begin{equation*}
\left|\alpha^{2k}V_\eps(\tilde{\hbox{exp}}_{p}(\alpha X))\right|\leq \lambda \alpha^{2k}d_g(x_0,(\tilde{\hbox{exp}}_{p}(\alpha X) )^{-2k}\hbox{ for all }X\in U' 
\end{equation*}
With \eqref{hyp:alpha}, we have that
$$d_g(x_0,(\tilde{\hbox{exp}}_{p}(\alpha X) )\geq d_g(x_0,p)-\alpha|X|\geq \frac{2}{3}d_g(x_0,p)$$
for all $X\in U'$, and therefore, we get that 
\begin{equation*}
\left|\alpha^{2k}V_\eps(\tilde{\hbox{exp}}_{p}(\alpha X))\right|\leq \lambda\left(\frac{d_g(x_0,p)}{\alpha}\right)^{-2k} \leq C(\lambda_0)\hbox{ for all }X\in U' 
\end{equation*}
Since $f(X)=0$ for all $X\in U'$, it follows from standard regularity theory (see Theorem \ref{th:2}) that there exists $C( k,L, U, U',\omega,\omega',\alpha_0)>0$ such that
\begin{equation}\label{eq:64}
  \left|\tilde{\varphi}_{\alpha,\eps}(X)\right|\leq C\Vert \tilde{\varphi}_{\alpha,\eps} \Vert_{L^{\crit}(U')}\hbox{ for all }X\in U
\end{equation}
Arguing as in the proof of \eqref{eq:31}, we have that
\begin{equation}\label{eq:32:far}
\Vert \tilde{\varphi}_{\alpha,\eps}\Vert_{L^{\frac{2n}{n-2k}}(U')}\leq C(k)\Vert  \varphi_{\alpha,\eps}\Vert_{L^{\frac{2n}{n-2k}}(M)}.
\end{equation}
Putting together \eqref{def:tilde:phi:far}, \eqref{eq:64}, \eqref{eq:32:far} and \eqref{eq:33:far} we get that
\begin{equation}\label{ineq:G:a:far}
  \left| \alpha^{\frac{n-2k}{2}}\varphi_{\alpha,\eps}\left(\tilde{\hbox{exp}}_{p}(\alpha X)\right) \right|\leq C( k,L, U, U',\lambda) \Vert f\Vert_{L^{\frac{2n}{n+2k}}(\omega')} \hbox{ for all }X\in U.
\end{equation}
We now just follow verbatim the proof of Theorem \ref{th:G:close} above to get the conclusion \eqref{ineq:G:far}  of Theorem \ref{th:G:far}. We leave the details to the reader.

\section{Proof of Theorem \ref{th:Green:pointwise:BIS}}\label{sec:green4}
We let $(M,g)$, $k$, $\lambda$, $L$, $P$, $V$ as in the statement of Theorem \ref{th:Green:pointwise:BIS}. With $\lambda>0$ small enough, we let $G_0$ be the Green's function of $P-V$ as in Theorem \ref{th:Green:main}. Given $\gamma\in (0,n-2k)$, we let $\lambda_\gamma>0$ as in \eqref{ineq:36} and Theorems \ref{th:G:close} and \ref{th:G:far} hold when $0<\lambda<\lambda_\gamma$. We prove here the first estimate of Theorem \ref{th:Green:pointwise:BIS}, that is the existence of $C=C(\gamma,\lambda, L)>0$ such that
\begin{equation}\label{est:G:final}
|G(x,y)|\leq C(\gamma,\lambda, L) \left(\frac{\max\{d_g(x,x_0),d_g(y,x_0)\}}{\min\{d_g(x,x_0),d_g(y,x_0)\}}\right)^\gamma d_g(x,y)^{2k-n}
\end{equation}
for all $x,y\in M-\{x_0\}$, $x\neq y$. We argue by contradiction and we assume that there is a family of operators $(P_i)_{i\in\nn}\in O_{k,L}$, a family of potentials $(V_i)_{i\in\nn}\in {\mathcal P}_{\lambda_\gamma}$, sequences $(x_i), (y_i)\in M-\{x_0\}$ such that $x_i\neq y_i$ for all $i\in\nn$ and
\begin{equation}\label{hyp:absurd}
\lim_{i\to +\infty}\frac{d_g(x_i,y_i)^{n-2k}|G_i(x_i,y_i)|}{\left(\frac{\max\{d(x_i,x_0),d(y_i,x_0)\}}{\min\{d(x_i,x_0),d(y_i,x_0)\}}\right)^\gamma }=+\infty,
\end{equation}
where $G_i$ denotes the Green's function of $P_i-V_i$ for all $i\in\nn$. We distinguish 5 cases:

\smallskip\noindent{\it Case 1:} $d_g(x_i, y_i)=o(d_g(x_i, x_0))$ as $i\to +\infty$. It then follows from the triangle inequality that $d_g(x_i,y_i)=o(d_g(y_i,x_0))$ and $d_g(x_i, x_0)=(1+o(1))d_g(y_i, x_0)$. Therefore
$$\left(\frac{\max\{d(x_i,x_0),d(y_i,x_0)\}}{\min\{d(x_i,x_0),d(y_i,x_0)\}}\right)^\gamma =1+o(1)$$
and then \eqref{hyp:absurd} yields
\begin{equation}\label{hyp:absurd:1}
\lim_{i\to +\infty} d_g(x_i,y_i)^{n-2k}|G_i(x_i,y_i)|=+\infty
\end{equation}
We let $Y_i\in\rn$ be such that $y_i:=\tilde{\hbox{exp}}_{x_i}(d_g(x_i, y_i)Y_i)$. In particular, $|Y_i|=1$, so, up to a subsequence, there exists $Y_\infty\in\rn$ such that $\lim_{n\to +\infty}Y_i=Y_\infty$ with $|Y_\infty|=1$
We apply Theorem \ref{th:G:far} with $p:=x_i$, $\alpha:=d_g(x_i, y_i)$, $U=B_{1/3}(0)$, $\omega=B_{1/3}(Y_\infty)$: for $i\in\nn$ large enough, taking $X=0$ and $Y=Y_i$ in \eqref{ineq:G:far}, we get that
$$d_g(x_i, y_i)^{n-2k}|G_i(x_i,y_i )|=d_g(x_i, y_i)^{n-2k}|G_i(\tilde{\hbox{exp}}_{x_i}(d_g(x_i, y_i)\cdot 0),\tilde{\hbox{exp}}_{x_i}(d_g(x_i, y_i)\cdot Y_i) )|\leq C(L,\gamma,\lambda)$$
which contradicts \eqref{hyp:absurd:1}. This ends Case 1.

\smallskip\noindent The case  $d_g(x_i, y_i)=o(d_g(x_i, x_0))$ as $i\to +\infty$ is equivalent to Case 1.

\medskip\noindent{\it Case 2:} $d_g(x_i, x_0)=o(d_g(x_i, y_i))$ and $d_g(x_i, y_i)\not\to 0$ as $i\to +\infty$. Therefore  \eqref{hyp:absurd} rewrites
\begin{equation}\label{hyp:absurd:2bis}
\lim_{i\to +\infty}  d(x_i,x_0)^{\gamma}|G_i(x_i,y_i)| =+\infty
\end{equation}
This is a contradiction with \eqref{ineq:36} when $\eps=0$. This ends Case 2.

\medskip\noindent{\it Case 3:} $d_g(x_i, x_0)=o(d_g(x_i, y_i))$ and $d_g(x_i, y_i)\to 0$ as $i\to +\infty$. Then $d_g(x_i,x_0)=o(d_g(y_i, x_0))$ and  $d_g(x_i, y_i)=(1+o(1))d_g(y_i, x_0)$. Therefore  \eqref{hyp:absurd} rewrites
\begin{equation}\label{hyp:absurd:2}
\lim_{i\to +\infty}   d_g(y_i, x_i)^{n-2k-\gamma}d(x_i,x_0)^{\gamma}|G_i(x_i,y_i)| =+\infty
\end{equation}
We let $X_i, Y_i\in\rn$ be such that  $x_i:=\tilde{\hbox{exp}}_{x_0}(d_g(x_i, y_i)X_i)$ and  $y_i:=\tilde{\hbox{exp}}_{x_0}(d_g(x_i, y_i)Y_i)$. In particular, $\lim_{i\to +\infty}|X_i|=0$ and $|Y_i|=1+o(1)$. So, up to a subsequence, there exists $Y_\infty\in\rn$ such that $\lim_{n\to +\infty}Y_i=Y_\infty$ with $|Y_\infty|=1$. We apply Theorem \ref{th:G:close} with  $\alpha:=d_g(x_i, y_i)$, $U=B_{1/3}(0)$, $\omega=B_{1/3}(Y_\infty)$: for $i\in\nn$ large enough, taking $X=X_i\neq 0$ and $Y=Y_i$ in \eqref{ineq:G:close}, we get that
$$|X_i|^\gamma d_g(x_i,y_i)^{n-2k}|G_i(\tilde{\hbox{exp}}_{x_0}(d_g(x_i, y_i)X_i),\tilde{\hbox{exp}}_{x_0}(d_g(x_i, y_i)Y_i))|\leq C(\lambda,k,L),$$
and, coming back to the definitions of $X_i$ and $Y_i$, we get that
$$d_g(x_i,x_0)^\gamma d_g(x_i,y_i)^{n-2k-\gamma}|G_i(x_i,y_i)|\leq C(\lambda,k,L),$$
which contradicts \eqref{hyp:absurd:2}. This ends Case 3.

\medskip\noindent{\it Case 4:} $d_g(y_i, x_0)=o(d_g(x_i, y_i))$ as $i\to +\infty$. Since the Green's function is symmetric, this is similar to Case 2 and 3.

\medskip\noindent{\it Case 5:}  $d_g(x_i, x_0)\asymp  d_g( y_i, x_0)\asymp  d_g(x_i, y_i) $. Then \eqref{hyp:absurd} rewrites
\begin{equation}\label{hyp:absurd:3}
\lim_{i\to +\infty} d_g(x_i,y_i)^{n-2k}|G_i(x_i,y_i)|=+\infty
\end{equation}
\smallskip{\it Case 5.1:} $d_g(x_i, y_i)\not\to 0$ as $i\to +\infty$. Then it follows from \eqref{est:G:2} that $|G_i(x_i,y_i)|\leq C(\lambda,k,L)$ for all $i$, which contradicts \eqref{hyp:absurd:3}.\par

\smallskip\noindent {\it Case 5.2:} $d_g(x_i, y_i)\to 0$ as $i\to +\infty$. We let $X_i, Y_i\in\rn$ be such that  $x_i:=\tilde{\hbox{exp}}_{x_0}(d_g(x_i, y_i)X_i)$ and  $y_i:=\tilde{\hbox{exp}}_{x_0}(d_g(x_i, y_i)Y_i)$. In particular, there exists $c>0$ such that $c^{-1}<|X_i|,|Y_i|<c$ and $|X_i-Y_i|\geq c^{-1}$ for all $i$. So, up to a subsequence, there exists $X_\infty,Y_\infty\in\rn$ such that $\lim_{n\to +\infty}X_i=X_\infty\neq 0$  and $\lim_{n\to +\infty}Y_i=Y_\infty\neq 0$ and $X_\infty\neq Y_\infty$. We apply Theorem \ref{th:G:close} with  $\alpha_i:=d_g(x_i, y_i)$, $U=B_{r_0}(X_\infty)$, $\omega=B_{r_0}(Y_\infty)$ for some $r_0>0$ small enough. So for $i\in\nn$ large enough, taking $X=X_i\neq 0$ and $Y=Y_i$ in \eqref{ineq:G:close}, we get that
\begin{equation*}
|X_i|^\gamma \alpha_i^{n-2k}\left|   G_i(\tilde{\hbox{exp}}_{x_0}(\alpha_i X_i),\tilde{\hbox{exp}}_{x_0}(\alpha_i Y_i) )\right|\leq C(U,\omega,L,\gamma,\lambda)
\end{equation*}
and, coming back to the definitions of $X_i$ and $Y_i$, we get that
$$  d_g(x_i,y_i)^{n-2k }|G_i(x_i,y_i)|\leq C,$$
which contradicts \eqref{hyp:absurd:3}. This ends Case 5.

\medskip\noindent Therefore, in all 5 cases, we have obtained a contradiction with \eqref{hyp:absurd}. This proves \eqref{est:G:final}, and then the first estimate of  Theorem \ref{th:Green:pointwise:BIS}. The proof of the estimates on the derivative uses the same method by contradiction, with a few more cases to study using regularity theory (Theorem \ref{th:2}). We leave the details to the reader.

\section{The regularity Lemma}\label{sec:lemma}
\begin{defi}\label{def:P:loc} Let $(X,g)$ be a Riemannian manifold of dimension $n$ and let us fix an interior point $x_0\in Int(X)$, $k\in\nn$ such that $2\leq 2k<n$ and $\delta,L>0$. We say that an operator $P$ is of type $O_{k,L}(B_\delta(x_0))$ if 
\begin{itemize}
\item[(i)] $P:=\Delta_g^k+\sum_{i=0}^{k-1} (-1)^i\nabla^i(A^{(i)} \nabla^i)$, where for all $i=0,...,k-1$, $ A^{(i)}\in C^{i}_{\chi}(M,\Lambda_{S}^{(0,2i)}(B_\delta(x_0)))$ is a family $C^{i}-$field of $(0,2i)-$tensors on $B_\delta(x_0)$.
\item[(ii)] For any $i$ , $A^{(i)}(T,S)=A^{(i)}(S,T)$ for any $(i,0)-$tensors $S$ and $T$. \par
\item[(iii)]  $\Vert A^{(i)}\Vert_{C^{i}}\leq L$ for all $i=0,...,k-1$\par
\end{itemize}
\end{defi}

\begin{lemma}\label{lem:main} Let $(X,g)$ be a Riemannian manifold  of dimension $n$ and let us fix an interior point $x_0\in Int(X)$, $k\in\nn$ such that $2\leq 2k<n$ and $\delta,L>0$. Fix $p>1$ and $\delta_1,\delta_2>0$ such that $0<\delta_1<\delta_2$. We consider a differential operator $P\in O_{k,L}(x_0,\delta_2)$.
Then for all $0<\gamma<n-2k$, there exists $\lambda=\lambda(\gamma,p, L,\delta_1,\delta_2)> 0$ and $C_0=C_0(X,g,\gamma,p, L,\delta_1,\delta_2)>0$ such that for any $V\in L^1(B_{\delta_2}(x_0))$ such that
$$|V(x)|\leq \lambda d_g(x,x_0)^{-2k}\hbox{ for all }x\in B_{\delta_2}(x_0),$$
then for any $\varphi\in H_k^2(B_{\delta_2}(x_0))\cap H_{2k,loc}^s(B_{\delta_2}(x_0)-\{x_0\})$ (for some $s>1$)  such that
$$P\varphi-V\cdot\varphi=0\hbox{ weakly in }H_k^2(B_{\delta_2}(x_0)),$$
then we have that
\begin{equation}\label{ineq:lem}
d_g(x,x_0)^{\gamma}|\varphi(x)|\leq C_0\cdot \Vert\varphi\Vert_{L^p(B_{\delta_2}(x_0))}\hbox{ for all }x\in B_{\delta_1}(x_0)-\{x_0\}.
\end{equation}
Moreover, for any $0< l<2k$, there exists $C_l=C_l(X,g,\gamma,p, L,\delta_1,\delta_2)>0$ such that
\begin{equation}\label{ineq:lem:l}
d_g(x,x_0)^{\gamma+l}|\nabla^l\varphi(x)|\leq C_l\cdot \Vert\varphi\Vert_{L^p(B_{\delta_2}(x_0))}\hbox{ for all }x\in B_{\delta_1}(x_0)-\{x_0\}
\end{equation}
\end{lemma}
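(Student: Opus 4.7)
My plan combines rescaled interior regularity on dyadic annuli around $x_0$ with a Green's-function bootstrap in weighted sup-norms. The central analytic input is the Riesz-type scaling identity
\begin{equation*}
\int_{B_{2\delta_2}(x_0)} d_g(x,y)^{2k-n}\, d_g(y,x_0)^{-(2k+\beta)}\, dv_g(y) \leq C(n,k,\beta)\, d_g(x,x_0)^{-\beta},
\end{equation*}
which converges precisely when $0 < \beta < n-2k$, with $C(n,k,\beta)$ continuous on compact subintervals of $(0, n-2k)$ and blowing up at both endpoints.

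\emph{Step 1 (Annular regularity).} For $r \in (0, \delta_2/4)$, set $\psi_r(X) := \varphi(\tilde{\hbox{exp}}_{x_0}(rX))$ on $A := \{1/4 < |X| < 4\}$ in the pulled-back metric $g_r := (\tilde{\hbox{exp}}_{x_0}^\star g)(r\,\cdot)$. The equation rewrites
\begin{equation*}
\Delta_{g_r}^k \psi_r + \sum_{j=0}^{2k-2} r^{2k-j}\, \tilde B_j^{(r)} \star \nabla_{g_r}^j \psi_r = W_r \psi_r,
\end{equation*}
with $|W_r(X)| \leq \lambda |X|^{-2k} \leq 4^{2k}\lambda$ on $A$ and the $\tilde B_j^{(r)}$ uniformly $C^0$-bounded via $\|A^{(i)}\|_{C^i} \leq L$. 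Theorem~\ref{th:2} then yields, for any $s > 1$, constants $C = C(s,k,L)$ and $\lambda_0 = \lambda_0(s,k,L)$ such that, for $\lambda < \lambda_0$ and $A' \subset\subset A$,
\begin{equation*}
\sum_{l=0}^{2k-1}\|\nabla_{g_r}^l \psi_r\|_{L^\infty(A')} \leq C\,\|\psi_r\|_{L^s(A)}.
\end{equation*}
Plugging $s = 2^{\star}$ and using the Sobolev inclusion $\varphi \in H_k^2 \hookrightarrow L^{2^{\star}}$, with $\|\varphi\|_{H_k^2(B_{\delta_2})}$ bounded in terms of $\|\varphi\|_{L^p(B_{\delta_2})}$ via a standard interior energy estimate for $P\varphi - V\varphi = 0$, then undoing the scaling on each dyadic shell, produces the base pointwise estimate $|\varphi(x)| \leq C\, d_g(x,x_0)^{-\beta_0}\,\|\varphi\|_{L^p}$ on $B_{\delta_1}(x_0)\setminus\{x_0\}$ with $\beta_0 := (n-2k)/2$, together with \eqref{ineq:lem:l} whenever $\gamma \geq \beta_0$.

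\emph{Step 2 (Green's bootstrap).} Let $G_P$ denote the Green's function of $P$ on $B_{\delta_2}(x_0)$, obtained by applying Theorem~\ref{th:green:nonsing} to a smooth coercive extension of $P$ to a closed manifold; it satisfies $|G_P(x,y)| \leq C\, d_g(x,y)^{2k-n}$. A cutoff $\chi \equiv 1$ on $B_{(\delta_1+\delta_2)/2}$ vanishing near $\partial B_{\delta_2}$ yields $P(\chi\varphi) = V(\chi\varphi) + h_\varphi$ with $\|h_\varphi\|_\infty \leq C\,\|\varphi\|_{L^p}$ by Step~1. Green's representation, together with $|V| \leq \lambda\, d_g^{-2k}$ and the scaling identity, yields, with $M_\beta := \sup_{B_{\delta_1}(x_0)\setminus\{x_0\}} d_g^\beta|\varphi|$ and any $\beta \in (0, n-2k)$ for which $M_\beta < \infty$,
\begin{equation*}
M_\beta \leq \lambda\, C(n,k,\beta)\, M_\beta + C(\delta_1,\delta_2)\,\|\varphi\|_{L^p}.
\end{equation*}
For $\gamma \in [\beta_0, n-2k)$, inequality \eqref{ineq:lem} follows at once from Step~1 since $M_\gamma \leq \delta_1^{\gamma - \beta_0}\, M_{\beta_0}$. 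For $\gamma \in (0, \beta_0)$, fix $\lambda = \lambda(\gamma) > 0$ so that $\lambda\,\sup_{\beta \in [\gamma, \beta_0]} C(n,k,\beta) \leq 1/2$, then descend from $\beta_0$ to $\gamma$ by a weighted-Moser iteration in $L^2(d_g^{-2\beta}\,dv_g)$, based on a weighted Hardy inequality of the form $\int |u|^2\, d_g^{-2k-2\beta}\,dv_g \leq C_\beta \int |\nabla^k u|^2\, d_g^{-2\beta}\,dv_g$ valid in the admissible range; this propagates the a priori finiteness of $M_{\beta_0}$ into $M_\beta < \infty$ for successively smaller $\beta \geq \gamma$, and the absorbing inequality at $\beta = \gamma$ then delivers $M_\gamma \leq 2\,C(\delta_1,\delta_2)\,\|\varphi\|_{L^p}$. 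The derivative bounds \eqref{ineq:lem:l} finally follow by one last application of Step~1 to the now-bounded function $d_g^\gamma\,\varphi$.

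\emph{Main obstacle.} The principal difficulty is the descent in the exponent from the Sobolev ceiling $\beta_0 = (n-2k)/2$ down to arbitrarily small $\gamma > 0$: Green's iteration at a fixed $\beta$ only sharpens the constant, not the exponent, and Sobolev alone cannot push past $\beta_0$. Making the descent rigorous requires a weighted-Moser iteration that genuinely gains integrability at each step; verifying the attendant weighted Hardy inequality (for every weight $\beta$ inside $(0, n-2k)$) and controlling the composition of iterations uniformly in $\lambda$ is where the technical bulk of the proof must concentrate. The precise dependence $\lambda = \lambda(\gamma)$ reflects the combined blow-up of $C(n,k,\beta)$ and of the weighted-Hardy constant as $\beta$ approaches either endpoint of the admissible interval $(0, n-2k)$.
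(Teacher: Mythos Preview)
Your approach differs substantially from the paper's, and it has a genuine gap in the regime $\gamma<\beta_0=(n-2k)/2$, which is exactly the range needed in the applications (see Step~\ref{step:nu:proof.3.1}, where $\gamma=\tfrac{n-2k}{2}\nu$ with $\nu\in(0,1)$).

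The problem is your descent in Step~2. Your Green's representation gives, for any $\beta\in(0,n-2k)$,
\[
|\varphi(x)|\;\le\;C\lambda\,M_\beta\int d_g(x,y)^{2k-n}\,d_g(y,x_0)^{-(2k+\beta)}\,dv_g(y)\;+\;C\Vert\varphi\Vert_{L^p}\;\le\;C\lambda\,C(\beta)\,M_\beta\,d_g(x,x_0)^{-\beta}+C\Vert\varphi\Vert_{L^p},
\]
so the Riesz--Giraud convolution \emph{reproduces} the exponent $\beta$; it never improves it. Thus the absorbing inequality $M_\gamma\le\lambda C(\gamma)M_\gamma+C$ is useless unless you already know $M_\gamma<\infty$, and Step~1 only gives $M_{\beta_0}<\infty$. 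You patch this with a ``weighted-Moser iteration'', but for polyharmonic operators ($k\ge2$) no such iteration is available: Moser's scheme rests on Kato-type chain rules ($|\nabla|u||\le|\nabla u|$, truncations as test functions) that have no analogue for $\nabla^k$. The accompanying ``weighted Hardy inequality'' $\int|u|^2 d_g^{-2k-2\beta}\le C_\beta\int|\nabla^k u|^2 d_g^{-2\beta}$ is also not standard on a manifold ball and would itself require proof. As written, the descent from $\beta_0$ to $\gamma<\beta_0$ is a black box containing the entire difficulty of the lemma.

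A secondary issue: the claim in Step~1 that $\Vert\varphi\Vert_{H_k^2}\le C\Vert\varphi\Vert_{L^p}$ via a ``standard interior energy estimate'' is not standard for $(P-V)\varphi=0$ with a Hardy potential and $k\ge2$; a Caccioppoli inequality for $\Delta_g^k$ involves all intermediate derivatives and must be combined with the Hardy inequality and the smallness of $\lambda$ in a way you do not indicate.

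The paper avoids all of this by a blow-up contradiction: normalize so that $\Vert\psi_\lambda\Vert_{L^p}=1$ while $\sup d_g(\cdot,x_0)^\gamma|\psi_\lambda|\to\infty$, rescale around the maximizing point $x_\lambda$ by $r_\lambda=d_g(x_\lambda,x_0)$, and pass to the limit. The limit $\tilde\psi\in C^{2k}(\rn\setminus\{0\})$ satisfies $\Delta_\xi^k\tilde\psi=0$, $|\tilde\psi(X)|\le|X|^{-\gamma}$, and $\tilde\psi(X_0)=1$ for some $|X_0|=1$. One then integrates $\tilde\psi$ against the fundamental solution $\Gamma_{X_0}(X)=C_{n,k}|X-X_0|^{2k-n}$ on $B_R\setminus(B_{1/R}\cup B_\eps(X_0))$; all boundary terms vanish as $R\to\infty$, $\eps\to0$ precisely because $0<\gamma<n-2k$, forcing $\tilde\psi(X_0)=0$, a contradiction. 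This Liouville-type step handles every $\gamma$ in the admissible range in one stroke, with no iteration and no dependence on second-order tools.
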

The sequel of this section is devoted to the proof of this Lemma. We split the proof in three steps.

\medskip\noindent{\bf Step 1: Proof of \eqref{ineq:lem} when $V\equiv 0$ around $x_0$.} As a preliminary step, we prove the Lemma under the additional assumption that $V$ vanishes around $x_0$:
\begin{lemma}\label{lem:bis} Let $(X,g)$ be a Riemannian manifold  of dimension $n$ and let us fix an interior point $x_0\in Int(X)$, $k\in\nn$ such that $2\leq 2k<n$ and $\delta,L>0$. Fix $p>1$ and $\delta_1,\delta_2>0$ such that $0<\delta_1<\delta_2$. We consider a differential operator $P\in O_{k,L}(x_0,\delta_2)$.
Then for all $0<\gamma<n-2k$, there exists $\lambda=\lambda(\gamma,p, L,\delta_1,\delta_2)> 0$ and $C_0=C_0(\gamma,p, L,\delta_1,\delta_2)>0$ such that for any $V\in L^1(B_{\delta_2}(x_0))$ such that
$$|V(x)|\leq \lambda d_g(x,x_0)^{-2k}\hbox{ for all }x\in B_{\delta_2}(x_0),$$
and $$V\hbox{ vanishes around }x_0.$$
then for any $\varphi\in H_k^2(B_{\delta_2}(x_0))\cap H_{2k,loc}^s(B_{\delta_2}(x_0)-\{x_0\})$ (for some $s>1$)  such that
$$P\varphi-V\cdot\varphi=0\hbox{ weakly in }H_k^2(B_{\delta_2}(x_0)),$$
then we have that
\begin{equation}\label{ineq:lem:bis}
d_g(x,x_0)^{\gamma}|\varphi(x)|\leq C_0\cdot \Vert\varphi\Vert_{L^p(B_{\delta_2}(x_0))}\hbox{ for all }x\in B_{\delta_1}(x_0)-\{x_0\}.
\end{equation}
\end{lemma}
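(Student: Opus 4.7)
The plan is to prove \eqref{ineq:lem:bis} by contradiction and blow-up, reducing to a Liouville-type rigidity on $\rn$ that the smallness of $\lambda$ precludes. Suppose the conclusion fails: there exist sequences $P_i\in O_{k,L}(B_{\delta_2}(x_0))$, $V_i\in\Pl(x_0)$ each vanishing on some $B_{r_0^{(i)}}(x_0)$, and $\varphi_i$ with $\Vert\varphi_i\Vert_{L^p(B_{\delta_2}(x_0))}=1$, but $M_i:=\sup_{x\in B_{\delta_1}(x_0)-\{x_0\}}d_g(x,x_0)^\gamma|\varphi_i(x)|\to +\infty$, attained at some $x_i$. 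On any shell $\{d_g(x,x_0)\geq \rho\}$, the estimate $|V_i|\leq\lambda\rho^{-2k}$ combined with Theorem \ref{th:2} yields uniform $L^\infty$ control of $\varphi_i$ in terms of $\Vert\varphi_i\Vert_{L^p}$, so necessarily $x_i\to x_0$.

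Setting $r_i:=d_g(x_i,x_0)\to 0$, I rescale by $\tilde\varphi_i(X):=(r_i^\gamma/M_i)\,\varphi_i(\tilde{\hbox{exp}}_{x_0}(r_i X))$. A direct computation shows that $|X|^\gamma|\tilde\varphi_i(X)|\leq C$ on $B_{\delta_1/r_i}(0)-\{0\}$, that $|\tilde\varphi_i(X_i)|\asymp 1$ at some $|X_i|\sim 1$, and that $\tilde\varphi_i$ solves $\tilde P_i\tilde\varphi_i=\tilde V_i\tilde\varphi_i$, where $\tilde P_i\to\Delta_\xi^k$ on compact subsets of $\rn$ and the rescaled potential verifies $|\tilde V_i(X)|\leq \lambda|X|^{-2k}$. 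Elliptic regularity uniform away from the origin (Theorem \ref{th:2}) gives local $C^{2k-1,\alpha}$ bounds on compact subsets of $\rn-\{0\}$, so up to extraction $\tilde\varphi_i\to\tilde\varphi_\infty$ in $C^{2k-1}_{\rm loc}(\rn-\{0\})$ and $\tilde V_i\to\tilde V_\infty$ a.e., and the limit satisfies $\Delta_\xi^k\tilde\varphi_\infty=\tilde V_\infty\tilde\varphi_\infty$ on $\rn-\{0\}$ with $|\tilde\varphi_\infty(X)|\leq C|X|^{-\gamma}$, $|\tilde V_\infty(X)|\leq \lambda|X|^{-2k}$, and $\tilde\varphi_\infty(X_\infty)\neq 0$ for some $|X_\infty|=1$.

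The principal obstacle is to show $\tilde\varphi_\infty\equiv 0$ for $\lambda$ small, which I handle in two stages. First, since $\gamma<n-2k$, both $\tilde\varphi_\infty$ and $\tilde V_\infty\tilde\varphi_\infty$ belong to $L^1_{\rm loc}(\rn)$. Combining this with the interior scaling estimate $|\nabla^j\tilde\varphi_\infty(X)|\leq C|X|^{-\gamma-j}$ and a commutator computation with cutoffs $\eta_\epsilon$ vanishing on $B_\epsilon(0)$, the terms $\nabla^j\eta_\epsilon\star\nabla^{2k-j}\tilde\varphi_\infty$ contribute at most $O(\epsilon^{n-2k-\gamma})\to 0$, so the identity $\Delta_\xi^k\tilde\varphi_\infty=\tilde V_\infty\tilde\varphi_\infty$ extends as a distribution to all of $\rn$. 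Second, since $\tilde\varphi_\infty(X)\to 0$ at infinity ($\gamma>0$), the fundamental solution $\Phi_k(X)=C_{n,k}|X|^{2k-n}$ of $\Delta_\xi^k$ on $\rn$ yields the representation
\[
\tilde\varphi_\infty(X)=C_{n,k}\int_{\rn}|X-Y|^{2k-n}\,\tilde V_\infty(Y)\,\tilde\varphi_\infty(Y)\,dY.
\]
Setting $C_*:=\sup_{X\neq 0}|X|^\gamma|\tilde\varphi_\infty(X)|$, the Giraud-type identity $\int_{\rn}|X-Y|^{2k-n}|Y|^{-2k-\gamma}\,dY=\Lambda(n,k,\gamma)\,|X|^{-\gamma}$, valid precisely for $0<\gamma<n-2k$, gives $C_*\leq \lambda\,C_{n,k}\,\Lambda(n,k,\gamma)\,C_*$. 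Choosing $\lambda_\gamma$ so that $\lambda_\gamma C_{n,k}\Lambda(n,k,\gamma)<1$ forces $C_*=0$, contradicting $\tilde\varphi_\infty(X_\infty)\neq 0$. Crucially none of these estimates depend on the vanishing radii $r_0^{(i)}$, so the resulting constant $C_0$ in \eqref{ineq:lem:bis} is independent of them, as required.
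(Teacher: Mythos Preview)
Your contradiction/blow-up framework matches the paper's, but the endgame differs. The paper couples the two quantifiers by taking $C=\lambda^{-1}$ and sending $\lambda\to 0$; then the rescaled potential, bounded by $\lambda|X|^{-2k}$, vanishes in the limit, and the limiting equation is simply $\Delta_\xi^k\tilde\psi=0$ on $\rn-\{0\}$ with $|\tilde\psi(X)|\leq |X|^{-\gamma}$ and $\tilde\psi(X_0)=1$. The paper then integrates $\Gamma_{X_0}(X)=C_{n,k}|X-X_0|^{2k-n}$ against $\tilde\psi$ by parts on $B_R(0)\setminus(B_{1/R}(0)\cup B_\eps(X_0))$; the boundary terms on $\partial B_R$ and $\partial B_{1/R}$ vanish as $R\to\infty$ precisely because $0<\gamma<n-2k$, and the $\partial B_\eps(X_0)$ term yields $\tilde\psi(X_0)=0$, a contradiction. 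No representation formula, no removable-singularity step, no Liouville-for-polynomials.

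Your route keeps $\lambda$ fixed, so a Hardy potential survives in the limit, and you close via a Riesz-potential bootstrap. This is correct and has the advantage of producing an explicit threshold $\lambda_\gamma=(C_{n,k}\Lambda(n,k,\gamma))^{-1}$. Two points deserve tightening. First, the rescaled potentials $\tilde V_i$ need not converge a.e.; what you actually get is $\tilde V_i\rightharpoonup W$ weak-$*$ in $L^\infty_{loc}(\rn-\{0\})$ with $|W|\leq\lambda|X|^{-2k}$, and then $\tilde V_i\tilde\varphi_i\to W\tilde\varphi_\infty$ since $\tilde\varphi_i\to\tilde\varphi_\infty$ uniformly on compacta. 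Second, your representation formula hides a Liouville step: once $\Delta_\xi^k\tilde\varphi_\infty=W\tilde\varphi_\infty$ holds in $\mathcal D'(\rn)$ and $u$ is the Riesz potential of $W\tilde\varphi_\infty$, you need $\Delta_\xi^k(\tilde\varphi_\infty-u)=0$ in $\mathcal D'(\rn)$ together with $|\tilde\varphi_\infty-u|\leq C|X|^{-\gamma}$ at infinity to force $\tilde\varphi_\infty=u$ (e.g.\ via hypoellipticity and the fact that tempered polyharmonic distributions are polynomials). With these two clarifications your argument is complete.
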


 For the reader's convenience, we set $\delta:=\delta_1$ and we assume that $\delta_2=3\delta_1=3\delta$. The general case follows the same proof by changing $2\delta$, $2.9\delta$, etc, into various radii $\delta',\delta^{\prime\prime},...$ such that $\delta_1<\delta'<\delta^{\prime\prime}<\delta_2$, etc.

\smallskip\noindent We prove \eqref{ineq:lem:bis} by contradiction. We assume that there exists $\gamma\in (0,n-2k)$, $p>1$, $L>0$, $\delta>0$ such that for all $\lambda>0$ and all $C>0$, there exists a differential operator $P_{\lambda,C}$ and a function $V_{\lambda,C}\in L^1(B_{3\delta}(x_0))$ such that
\begin{itemize}
\item[(i)] $P_{\lambda, C}:=\Delta_g^k+\sum_{i=0}^{k-1} (-1)^i\nabla^i((A^{(i)})_{\lambda,C}\nabla^i)$, with $(A^{(i)})_{\lambda, C}$  a symmetric $(0, 2i)-$tensor for all $i=0,...,k-1$, 
\item[(ii)] $(A^{(i)})_{\lambda, C}\in C^i$ and $\Vert (A^{(i)})_{\lambda, C}\Vert_{C^{i}(B_\delta(x_0))}\leq L$ for all $i=0,...,k-1$
\item[(iii)] $P_{\lambda, C}$ is symmetric in the $L^2-$sense (this will not be used in this Step 1).
\end{itemize}
with
$$|V_{\lambda, C}(x)|\leq \lambda d_g(x,x_0)^{-2k}\hbox{ for all }x\in B_{3\delta}(x_0)-\{x_0\}\hbox{ and }V_{\lambda,C}\equiv 0\hbox{ around }x_0$$
and there exists $\varphi_{\lambda,C}\in H_k^2(B_{3\delta}(x_0))\cap   H_{2k,loc}^s(B_{3\delta}(x_0)-\{x_0\})$ (for some $s>1$)  such that
$$P_{\lambda, C}\varphi_{\lambda, C}-V_{\lambda, C}\cdot\varphi_{\lambda, C}=0\hbox{ weakly in }H_k^2(B_{3\delta}(x_0)),$$
and there exists $x_{\lambda, C}\in B_\delta(x_0)-\{x_0\}$ such that  
$$d_g(x_{\lambda, C},x_0)^{\gamma}|\varphi_{\lambda, C}(x_{\lambda, C})|> C\cdot \Vert\varphi_{\lambda, C}\Vert_{L^p(B_{3\delta}(x_0))}.$$
For convenience, we will  write
$$P_{\lambda, C}:=\Delta_g^ku+\sum_{l=0}^{2k-2}  (B_l)_{\lambda,C}\star\nabla^l.$$
where the tensors $(B_l)_{\lambda,C}$ are bounded in $C^0(B_\delta(x_0))$.

\smallskip\noindent We choose $C:=\lambda^{-1}$, we define $P_\lambda:=P_{\lambda, \lambda^{-1}}$, $V_\lambda:=V_{\lambda, \lambda^{-1}}$ and 
$$\psi_\lambda:=\frac{\varphi_{\lambda, \lambda^{-1}}}{\Vert \varphi_{\lambda, \lambda^{-1}}\Vert_{L^p(B_{3\delta}(x_0)}}$$
so that we have that $\psi_\lambda\in H_k^2(B_{3\delta}(x_0))$ is such that
\begin{equation}\label{eq:75}
\left\{\begin{array}{l}
(P_\lambda-V_\lambda)\psi_\lambda=0 \hbox{ weakly in }H_k^2(B_{3\delta}(x_0))\cap H_{2k,loc}^s(B_{3\delta}(x_0)-\{x_0\})\\
\Vert\psi_\lambda\Vert_{L^p(B_{3\delta}(x_0)}=1\\
|V_{\lambda}(x)|\leq \lambda d_g(x,x_0)^{-2k}\hbox{ for all }x\in B_{3\delta}(x_0)-\{x_0\}\\
V_{\lambda}\equiv 0\hbox{ around }x_0\\
\sup_{x\in \overline{B_\delta(x_0)}} d_g(x,x_0)^{\gamma}|\psi_\lambda(x)|>\frac{1}{\lambda}\to +\infty\hbox{ as }\lambda\to 0
\end{array}\right\}
\end{equation} 
With our assumption that $V_\lambda$ vanishes around $x_0$, we get that $V_\lambda\in L^\infty(B_{3\delta}(x_0))$. Then, by regularity theory (see Theorem \ref{th:1}), we get that $\psi_\lambda\in C^0(B_{2\delta}(x_0))$. Therefore, there exists $x_\lambda\in \overline{B_\delta(x_0)}$ such that
\begin{equation}\label{lim:xlambda}
d_g(x_\lambda,x_0)^{\gamma}|\psi_\lambda(x_\lambda)|=\sup_{x\in \overline{B_\delta(x_0)}} d_g(x,x_0)^{\gamma}|\psi_\lambda(x)|>\frac{1}{\lambda}\to +\infty
\end{equation}
as $\lambda\to 0$. 

\medskip\noindent{\bf Step 1.1:} we claim that $\lim_{\lambda\to 0}x_\lambda=x_0$. 

\smallskip\noindent We prove the claim. For any $r>0$, we have that $|V_\lambda(x)|\leq \lambda r^{-2k}$ for all $x\in B_{3\delta}(x_0)\setminus B_r(x_0)$. So, with regularity theory (see Theorem \ref{th:1} and \ref{th:2}), we get that for all $q>1$, then $\Vert \psi_\lambda\Vert_{H_{2k}^q(B_{2\delta}(x_0)\setminus B_{2r}(x_0))}= C(r, q, L,p, )\Vert\psi_\lambda\Vert_{L^p(B_{3\delta}(x_0)}=C(r,q, L,p)$. Taking $q>\frac{n}{2k}$, we get that $|\psi_\lambda(x)|\leq C(r,q, L,p)$ for all $x\in B_{2\delta}(x_0)\setminus B_{2r}(x_0)$. With \eqref{lim:xlambda}, this forces $\lim_{\lambda\to 0}x_\lambda=x_0$. The claim is proved.

\medskip\noindent{\bf Step 1.2: Convergence after rescaling.} We set $r_\lambda:=d_g(x_\lambda,x_0)>0$ and we define
\begin{equation*}
\tilde{\psi}_\lambda(X):=\frac{\psi_\lambda(\tilde{\hbox{exp}}_{x_0}(r_\lambda X))}{\psi_\lambda(x_\lambda)}\hbox{ for }X\in\rn-\{0\}\hbox{ such that }|X|<\frac{\delta}{r_\lambda}.
\end{equation*}
We define $X_\lambda\in \rn$ such that $x_\lambda=\tilde{\hbox{exp}}_{x_0}(r_\lambda X_\lambda)$. In particular $|X_\lambda|=1$. With the definition of $x_\lambda$, for any $X\in \rn$ such that $0<|X|<\frac{\delta}{r_\lambda}$, we have that
\begin{eqnarray*}
r_\lambda^\gamma |X|^\gamma|\tpl(X)|&=& \frac{d_g(x_0, \tilde{\hbox{exp}}_{x_0}(r_\lambda X))^\gamma |\psi_\lambda(\tilde{\hbox{exp}}_{x_0}(r_\lambda X))|}{|\psi_\lambda(x_\lambda)|}\leq \frac{d_g(x_0, x_\lambda)^\gamma |\psi_\lambda(x_\lambda)|}{|\psi_\lambda(x_\lambda)|}=r_\lambda^\gamma.
\end{eqnarray*}
Therefore, we get that
\begin{equation}\label{bnd:tpl}
|X|^\gamma |\tpl(X)|\leq 1\hbox{ for all }X\in \rn\hbox{ such that }0<|X|<\frac{\delta}{r_\lambda}\hbox{ and }\tpl(X_\lambda)=1.
\end{equation}
Defining $g_\lambda:=(\tilde{\hbox{exp}}_{x_0}^\star g)(r_\lambda\cdot)$ the pull-back metric of $g$, the equation satisfied by $\tpl$ in \eqref{eq:75} rewrites
\begin{equation}\label{eq:tpl:bis}
\Delta_{g_\lambda}^k\tpl+\sum_{l=0}^{2k-2}r_\lambda^{2k-l}(B_l)_{\lambda}(\tilde{\hbox{exp}}_{x_0}(r_\lambda \cdot))\star\nabla_{g_\lambda}^l\tpl-r_\lambda^{2k}V_\lambda(\tilde{\hbox{exp}}_{x_0}(r_\lambda X))\tpl=0
\end{equation}
weakly in $B_{3\delta/r_\lambda}(0)-\{0\}$. Note that
\begin{equation}\label{bnd:Vl}
|r_\lambda^{2k}V_\lambda(\tilde{\hbox{exp}}_{x_0}(r_\lambda X))|\leq \lambda |X|^{-2k}\hbox{ for all }\lambda>0\hbox{ and }0<|X|<\frac{3\delta}{r_\lambda}.
\end{equation}
With the bound \eqref{bnd:tpl} and the bounds of the coefficients $(B_l)_\lambda$, it follows from regularity theory (see Theorems \ref{th:1} and \ref{th:2}) that for any $R>0$ and any $0<\nu<1$, there exists $C(R)>0$ such that
$$\Vert \tpl\Vert_{C^{2k-1,\nu}(B_R(0)-B_{R^{-1}}(0))}\leq C(R,\nu)\hbox{ for all }\lambda>0.$$
It follows from Ascoli's theorem, that, up to extraction, there exists $\tilde{\psi}\in C^{2k-1}(\rn-\{0\})$ such that $\tpl\to \tilde{\psi}$ in $C^{2k-1}_{loc}(\rn-\{0\})$ as $\lambda\to 0$. Passing to the limit $\lambda\to 0$ in \eqref{eq:tpl:bis} and \eqref{bnd:Vl}, we get that
$$\Delta^k_\xi\tilde{\psi}=0\hbox{ weakly in }\rn-\{0\}.$$
Since $\tilde{\psi}\in C^{2k-1}(\rn-\{0\})$, regularity yields $\tilde{\psi}\in C^{2k}(\rn-\{0\})$. Still up to extraction, we define $X_0:=\lim_{\lambda\to 0}X_\lambda$, so that $|X_0|=1$. Finally, passing to the limit in \eqref{bnd:tpl}, we get that
 \begin{equation}\label{eq:psi}
\left\{\begin{array}{l}
\tilde{\psi}\in C^{2k}(\rn-\{0\})\\
\Delta^k_\xi\tilde{\psi}=0 \hbox{  in }\rn-\{0\}\\
\tilde{\psi}(X_0)=1\hbox{ with }|X_0|=1\\
|\tilde{\psi}(X)|\leq |X|^{-\gamma}\hbox{ for all }X\in \rn-\{0\}
\end{array}\right\}
\end{equation} 
By standard elliptic theory (see Theorem \ref{th:2}), for any $l=1,...,2k$, there exists $C_l>0$ such that
\begin{equation}\label{bnd:nabla:psi}
|\nabla^l\tilde{\psi}(X)|\leq C_l |X|^{-\gamma-l}\hbox{ for all }X\in \rn-\{0\}
\end{equation}
\noindent{\bf Step 1.3: Contradiction via Green's formula.} Let us consider the Poisson kernel of $\Delta^k$ at $X_0$, namely
$$\Gamma_{X_0}(X):=C_{n,k}|X-X_0|^{2k-n}\hbox{ for all }X\in \rn-\{X_0\},$$
where
\begin{equation}\label{def:Cnk}
C_{n,k}:=\frac{1}{(n-2)\omega_{n-1}\Pi_{i=1}^{k-1}(n-2k+2(i-1))(2k-2i)}.
\end{equation}
Let us choose $R>3$ and $0<\eps<1/2$ and define the domain
$$\Omega_{R,\eps}:=B_R(0)\setminus\left(B_{R^{-1}}(0)\cup B_{\eps}(X_0)\right).$$
Note that all the balls involved here have boundaries that do not intersect. Integrating by parts, we have that
\begin{equation}\label{ipp:gamma}
\int_{\Omega_{R,\eps}}(\Delta^k_\xi\Gamma_{X_0})\tilde{\psi}\, dX=\int_{\Omega_{R,\eps}}\Gamma_{X_0}(\Delta^k_\xi\tilde{\psi})\, dX+\int_{\partial\Omega_{R,\eps}}\sum_{i=0}^{k-1}\tilde{B}_i(\Gamma_{X_0},\tilde{\psi})\, d\sigma
\end{equation}
where
$$\tilde{B}_i(\Gamma_{X_0},\tilde{\psi}):=-\partial_\nu\Delta_\xi^{k-i-1}\Gamma_{X_0}\Delta_\xi^i\tilde{\psi}+\Delta_\xi^{k-i-1}\Gamma_{X_0}\partial_\nu\Delta_\xi^i\tilde{\psi}$$
We have that
$$\partial\Omega_{R,\eps}=\partial B_R(0)\cup \partial B_{R^{-1}}(0)\cup B_{\eps}(X_0).$$
Using that $\Gamma_{X_0}$ is smooth at $0$, that $\tilde{\psi}$ is smooth at $X_0$, using the bounds \eqref{bnd:nabla:psi} and the corresponding ones for $\Gamma_{X_0}$, for any $i=0,...,k-1$, we get that
\begin{eqnarray*}
&&\left|\int_{\partial B_R(0)}\tilde{B}_i(\Gamma_{X_0},\tilde{\psi})\, d\sigma\right| \\
&&\leq  C R^{n-1}(R^{2k-n-2(k-i-1)-1}R^{-\gamma-2i}+R^{2k-n-2(k-i-1)}R^{-\gamma-2i-1})\leq CR^{-\gamma}
\end{eqnarray*}
\begin{eqnarray*}
\left|\int_{\partial B_{R^{-1}}(0)}\tilde{B}_i(\Gamma_{X_0},\tilde{\psi})\, d\sigma\right| &\leq &  C (R^{-1})^{n-1}( (R^{-1})^{-\gamma-2i}+ (R^{-1})^{-\gamma-2i-1})\\
&\leq& CR^{2-n+\gamma+2i}\leq C R^{-(n-2k-\gamma)}
\end{eqnarray*}
\begin{eqnarray*}
&&\left|\int_{\partial B_{\eps}(X_0)}\tilde{B}_i(\Gamma_{X_0},\tilde{\psi})\, d\sigma\right| \leq  C \eps^{n-1}(\eps^{2k-n-2(k-i-1)-1} +\eps^{2k-n-2(k-i-1)} )\leq C\eps^{2i}
\end{eqnarray*}
and
$$\left|\int_{\partial B_{\eps}(X_0)}\Delta_\xi^{k-1}\Gamma_{X_0}\partial_\nu\tilde{\psi}\, d\sigma\right|\leq C \eps^{n-1}\eps^{2k-n-2(k-1)}\leq C\eps.$$
Therefore, since $0<\gamma<n-2k$, all the terms involving $R$ go to $0$ as $R\to +\infty$, the terms involving $\eps$ go to $0$ when $i\neq 0$. Since $\Delta^k_\xi\Gamma_{X_0}=0$, $\Delta^k_\xi\tilde{\psi}=0$, it follows from \eqref{ipp:gamma} and the inequalities above that
$$\int_{\partial B_{\eps}(X_0)}\partial_\nu\Delta_\xi^{k-1}\Gamma_{X_0} \tilde{\psi}\, d\sigma=o(1)\hbox{ as }\eps\to 0.$$
With the definition of $\Gamma_{X_0}$, we get that 
\begin{equation}\label{calc:delta:gamma}
-\partial_\nu\Delta_\xi^{k-1}\Gamma_{X_0}(X)=\frac{1}{\omega_{n-1}}|X-X_0|^{1-n}\hbox{ for }X\neq X_0.
\end{equation}
So that, with a change of variable, we get that
$$\int_{\partial B_1(0)} \tilde{\psi}(X_0+\eps X)\, d\sigma=o(1)\hbox{ as }\eps\to 0.$$
Passing to the limit, we get that $\tilde{\psi}(X_0)=0$, which is a contradiction with \eqref{eq:psi}. This proves \eqref{ineq:lem} when $V$ vanishes around $x_0$ and $\delta_2=3\delta_1=3\delta$. As noted at the beginning of this case, the case $0<\delta_1<\delta_2$ is similar. 

\medskip\noindent{\bf Step 2: The general case. Proof of Lemma \ref{lem:main}.} We let $P\in O_{k,L}(x_0,\delta_2)$ and $V$ as in the statement of Lemma \ref{lem:main}. We consider $\varphi\in H_k^2(B_{\delta_2}(x_0))\cap H_{2k,loc}^s(B_{\delta_2}(x_0)-\{x_0\})$ (for some $s>1$) such that
\begin{equation}\label{eq:P:V}
P\varphi-V\cdot\varphi=0\hbox{ weakly in }H_k^2(B_{\delta_2}(x_0)).
\end{equation}
We approximate $V$ by a potential $V_\eps$ vanishing around $x_0$ via a cut-off function. Potentially taking $\delta_2>0$ and $\lambda>0$ smaller, for any $\eps>0$, we  $\varphi_\eps\in H_k^2(B_{\delta_2}(x_0))\cap H_{2k}^q(B_{\delta_2}(x_0))$ for all $q>1$ be such that
\begin{equation}\label{eq:phi:eps}
\left\{\begin{array}{ll}
(P-V_\eps)\varphi_\eps=0&\hbox{ in }B_{\delta_2}(x_0)\\
\partial_\nu^i\varphi_\eps=\partial_\nu^i\varphi&\hbox{ on }\partial B_{\delta_2}(x_0)\hbox{ for }i=0,\cdots,k-1\\
\end{array}\right.
\end{equation}
As one checks, $\lim_{\eps\to 0}\varphi_\eps=\varphi$ in $H_k^2(B_{\delta_2'}(x_0)\cap C^0(\bar{B}(x_0,\delta_1)\setminus B_r(x_0))$ for all $r>0$. We then apply \eqref{ineq:lem:bis} to $\varphi_\eps$ and we get \eqref{ineq:lem} by letting $\eps\to 0$. The estimate on the derivatives is a consequence of the control \eqref{ineq:lem} and elliptic theory. This ends the proof of Lemma \ref{lem:main}.

\section{Rescaling far from the concentration point}\label{sec:11}
This section is devoted to the proof of the three controls and convergence below:
\begin{proposition}\label{prop:cv:1} We take the assumptions and notations of Theorems \ref{prop:prelim:1} and \ref{prop:prelim:2}. Let $(r_\alpha)_{\alpha}>0$ be such that 
$$\lim_{\alpha\to 0}\ra=0\, ,\, \lim_{\alpha\to 0}\frac{\ra}{\ma}=+\infty\hbox{ and }\{\ra=O(\sqrt{\ma})\hbox{ if }u_0\not\equiv 0\}.$$
Then for any $X\in\rn-\{0\}$, we have that
\begin{equation*}
\lim_{\alpha\to 0}\frac{\ra^{n-2k}}{\ma^{\frac{n-2k}{2}}}\ua(\tilde{\hbox{exp}}_{\xa}(\ra X))=\lim_{\alpha\to 0}\left(\frac{\ra}{\sqrt{\ma}}\right)^{n-2k}u_0(x_0)+\frac{K_0 c_0^{\frac{n-2k}{2}}}{|X|^{n-2k}}
\end{equation*} 
where
\begin{equation}\label{def:K0}
K_0:=C_{n,k}\int_{\rn}|U|^{\crit-2}U\, dX,
\end{equation}
where $C_{n,k}$ is defined in \eqref{def:Cnk}. Moreover, the convergence above holds in $C^{2k}_{loc}(\rn-\{0\})$.
\end{proposition}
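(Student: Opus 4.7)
The plan is to use Green's representation for $\ua$ with respect to $P_\alpha$, combined with the pointwise bound \eqref{ineq:c0} and the rescaled convergence $\tua \to U$ in $C^{2k}_{loc}(\rn)$ from Theorem \ref{prop:prelim:1}. Let $G_\alpha$ denote the Green's function of $P_\alpha$ (given by Theorem \ref{th:Green:main} with $V\equiv 0$) and set $z_\alpha^X:=\tilde{\hbox{exp}}_{\xa}(\ra X)$ for $X\in\rn\setminus\{0\}$. From $P_\alpha\ua=|\ua|^{\crit-2-\ea}\ua$ we write
$$\ua(z_\alpha^X)=\int_M G_\alpha(z_\alpha^X,y)\,|\ua|^{\crit-2-\ea}\ua(y)\,dv_g(y),$$
and split the integral as $I^{in}_{\alpha,R}+I^{mid}_{\alpha,R,\delta}+I^{out}_{\alpha,\delta}$ over $B_{R\ma}(\xa)$, $B_\delta(\xa)\setminus B_{R\ma}(\xa)$ and $M\setminus B_\delta(\xa)$ respectively, with the limits taken in the order $\alpha\to 0$, then $R\to+\infty$, then $\delta\to 0$.

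For $I^{in}_{\alpha,R}$, the change of variables $y=\tilde{\hbox{exp}}_{\xa}(\ma Y)$, together with $\ma/\ra\to 0$, yields $d_g(z_\alpha^X,y)=\ra|X|(1+o(1))$ uniformly for $Y\in B_R(0)$, so the near-diagonal Hadamard expansion $G_\alpha(x,y)=C_{n,k}d_g(x,y)^{2k-n}(1+o(1))$, combined with \eqref{lim:tua:c2k}, $\ma^{\ea}\to c_0$ and dominated convergence, gives
$$\lim_{R\to+\infty}\lim_{\alpha\to 0}\frac{\ra^{n-2k}}{\ma^{(n-2k)/2}}I^{in}_{\alpha,R}=\frac{c_0^{(n-2k)/2}}{|X|^{n-2k}}\cdot C_{n,k}\int_{\rn}|U|^{\crit-2}U\,dX=\frac{K_0c_0^{(n-2k)/2}}{|X|^{n-2k}}.$$
For $I^{out}_{\alpha,\delta}$, Step \ref{prop:cv:u0} gives $\ua\to u_0$ in $C^{2k}_{loc}(M\setminus\{x_0\})$, and passing to the weak limit in \eqref{eq:ua:prop} yields $P_0u_0=|u_0|^{\crit-2}u_0$; combining with the off-diagonal convergence $G_\alpha(z_\alpha^X,\cdot)\to G_0(x_0,\cdot)$ in $C^0_{loc}(M\setminus\{x_0\})$ and Green's formula for $u_0$, one obtains $\lim_{\delta\to 0}\lim_{\alpha\to 0}I^{out}_{\alpha,\delta}=u_0(x_0)$. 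Multiplying by $\ra^{n-2k}/\ma^{(n-2k)/2}=(\ra/\sqrt{\ma})^{n-2k}$ produces the first term of the claimed limit, the hypothesis $\ra=O(\sqrt{\ma})$ ensuring that this factor is bounded when $u_0\not\equiv 0$; when $u_0\equiv 0$ the term vanishes identically.

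The main technical obstacle is controlling the intermediate annulus $I^{mid}_{\alpha,R,\delta}$. Using $|G_\alpha(z_\alpha^X,y)|\leq Cd_g(z_\alpha^X,y)^{2k-n}$ (Theorem \ref{th:Green:pointwise:BIS} with $V\equiv 0$) together with \eqref{ineq:c0}, I bound the integrand by a sum of Riesz-type expressions and split the annulus according to whether $d_g(y,\xa)$ is smaller than, comparable to, or larger than $\ra|X|/2$; a direct computation in geodesic polar coordinates centered at $\xa$ shows that each piece, after multiplying by $\ra^{n-2k}/\ma^{(n-2k)/2}$, is bounded by $C/R^{2k}$ plus terms vanishing as $\alpha\to 0$, so the contribution dies in the iterated limit. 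The crucial point is the sharp exponent $(n-2k)/2$ in \eqref{ineq:c0} together with $\ma\ll\ra$. Finally, the $C^{2k}_{loc}(\rn\setminus\{0\})$ convergence is deduced by applying elliptic regularity (Theorems \ref{th:2} and \ref{th:2bis}) to the rescaled function $v_\alpha(X):=(\ra^{n-2k}/\ma^{(n-2k)/2})\ua(\tilde{\hbox{exp}}_{\xa}(\ra X))$: the $L^\infty_{loc}(\rn\setminus\{0\})$ bound just established, the fact that the rescaled operator associated to $P_\alpha$ converges to $\Delta_\xi^k$ on compact subsets of $\rn\setminus\{0\}$, and the uniform $C^0_{loc}$ control of the rescaled right-hand side furnished by \eqref{ineq:c0} together yield compactness up to order $2k$ and identify the limit through the $C^0$ convergence already obtained.
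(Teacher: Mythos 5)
Your proposal is correct and follows essentially the same route as the paper: the same Green's representation for $\ua$ with respect to $P_\alpha$, the same three-region splitting with the iterated limits $\alpha\to 0$, $R\to+\infty$, $\delta\to 0$, the same use of the Hadamard-type expansion of $G_\alpha$ near the diagonal on the inner ball, the same pointwise bounds \eqref{ineq:c0} and \eqref{ctrl:G} on the annulus, and the same elliptic bootstrap on the rescaled function for the $C^{2k}_{loc}$ upgrade.
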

\begin{proposition}\label{prop:cv:2} We take the assumptions and notations of Theorems \ref{prop:prelim:1} and \ref{prop:prelim:2}. In addition, we assume that $u_0\equiv 0$. Then for any $x\in M-\{x_0\}$, we have that
\begin{equation*}
\lim_{\alpha\to 0}\frac{\ua(x)}{\ma^{\frac{n-2k}{2}}} =\frac{c_0^{\frac{n-2k}{2}}K_0}{C_{n,k}}G_0(x,x_0)
\end{equation*} 
where $K_0$ is defined in \eqref{def:K0} and $G_0$ is the Green's function of the operator $P_0$. Moreover, the convergence above holds in $C^{2k}_{loc}(M-\{x_0\})$.
\end{proposition}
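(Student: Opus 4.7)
The strategy is to use the Green's representation formula for $u_\alpha$ in terms of the Green's function $G_\alpha$ of $P_\alpha$, split the integral into a ``bubble region'' $B_{R\mu_\alpha}(x_\alpha)$ and its complement, and pass to the limit in each piece using the pointwise control of Theorem \ref{prop:prelim:2} together with the $C^{2k}_{\rm loc}$-convergence \eqref{lim:tua:c2k}. Since $u_0\equiv 0$, rewriting \eqref{eq:ua:prop} gives
\begin{equation*}
u_\alpha(x)=\int_M G_\alpha(x,y)\,|u_\alpha(y)|^{2^\star-2-\varepsilon_\alpha}u_\alpha(y)\,dv_g(y)\qquad (x\neq x_0),
\end{equation*}
and the convergence $(P_\alpha)\to P_0$ of type (SCC) together with standard Green's function estimates yields $G_\alpha\to G_0$ in $C^{2k-1}_{\rm loc}\bigl((M\times M)\setminus\mathrm{diag}\bigr)$.

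For the inner contribution, I would change variables $y=\tilde{\hbox{exp}}_{x_\alpha}(\mu_\alpha Y)$. Using $(2^\star-1)\frac{n-2k}{2}=\frac{n+2k}{2}$, the Jacobian and the identity $\mu_\alpha^{\varepsilon_\alpha}\to c_0$ yield
\begin{equation*}
\int_{B_{R\mu_\alpha}(x_\alpha)} G_\alpha(x,y)\,|u_\alpha|^{2^\star-2-\varepsilon_\alpha}u_\alpha\,dv_g
=\mu_\alpha^{\frac{n-2k}{2}}\int_{B_R(0)} G_\alpha\bigl(x,\tilde{\hbox{exp}}_{x_\alpha}(\mu_\alpha Y)\bigr)\,(\mu_\alpha^{\varepsilon_\alpha})^{\frac{n-2k}{2}}|\tilde u_\alpha|^{2^\star-2-\varepsilon_\alpha}\tilde u_\alpha\,dv_{\tilde g_\alpha}(Y).
\end{equation*}
Since $\tilde{\hbox{exp}}_{x_\alpha}(\mu_\alpha Y)\to x_0$ and $x\neq x_0$, $G_\alpha(x,\tilde{\hbox{exp}}_{x_\alpha}(\mu_\alpha Y))\to G_0(x,x_0)$ uniformly for $Y\in B_R(0)$, while $\tilde u_\alpha\to U$ in $C^{2k}_{\rm loc}(\rn)$ by \eqref{lim:tua:c2k}. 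The integrand $|\tilde u_\alpha|^{2^\star-2-\varepsilon_\alpha}\tilde u_\alpha$ is dominated by $C(1+|Y|)^{-(n+2k)}\in L^1(\rn)$ thanks to the uniform bound $|U(Y)|\le C(1+|Y|)^{2k-n}$ of Theorem \ref{prop:prelim:2}, so one can pass to the limit $\alpha\to 0$ and then $R\to +\infty$ to obtain
\begin{equation*}
\mu_\alpha^{-\frac{n-2k}{2}}\!\!\int_{B_{R\mu_\alpha}(x_\alpha)}\!\!\!\!G_\alpha(x,y)|u_\alpha|^{2^\star-2-\varepsilon_\alpha}u_\alpha\,dv_g\xrightarrow[\alpha\to 0,\,R\to\infty]{} c_0^{\frac{n-2k}{2}}G_0(x,x_0)\int_{\rn}|U|^{2^\star-2}U\,dY.
\end{equation*}

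For the outer contribution, I would use the pointwise control of Theorem \ref{prop:prelim:2} (with $u_0\equiv 0$): on $M\setminus B_{R\mu_\alpha}(x_\alpha)$,
$$|u_\alpha(y)|^{2^\star-1-\varepsilon_\alpha}\leq C\,\mu_\alpha^{\frac{n+2k}{2}-\varepsilon_\alpha\frac{n-2k}{2}}\,d_g(y,x_\alpha)^{-(n+2k)+\varepsilon_\alpha(n-2k)}.$$
Combining this with $|G_\alpha(x,y)|\leq C\,d_g(x,y)^{2k-n}$ and splitting the complement region into $\{R\mu_\alpha<d_g(y,x_\alpha)<d_g(x,x_0)/2\}$ (where $G_\alpha(x,\cdot)$ is bounded and the $y$-integral is controlled by $C R^{-2k+o(1)}$), and its remainder (where standard convolution estimates apply and $|u_\alpha|^{2^\star-1-\varepsilon_\alpha}$ is $O(\mu_\alpha^{\frac{n+2k}{2}})$), yields
\begin{equation*}
\mu_\alpha^{-\frac{n-2k}{2}}\left|\int_{M\setminus B_{R\mu_\alpha}(x_\alpha)}G_\alpha(x,\cdot)|u_\alpha|^{2^\star-2-\varepsilon_\alpha}u_\alpha\,dv_g\right|\leq C\,R^{-2k+o(1)}+o(1).
\end{equation*}
Letting $R\to+\infty$ proves the pointwise convergence and yields the stated formula upon recognizing $K_0/C_{n,k}=\int_{\rn}|U|^{2^\star-2}U\,dX$. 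The upgrade to $C^{2k}_{\rm loc}(M\setminus\{x_0\})$ follows by applying the $\alpha$-uniform elliptic estimates of Theorem \ref{th:2} to the family $\mu_\alpha^{-(n-2k)/2}u_\alpha$, which solves $P_\alpha(\mu_\alpha^{-(n-2k)/2}u_\alpha)=|u_\alpha|^{2^\star-2-\varepsilon_\alpha}(\mu_\alpha^{-(n-2k)/2}u_\alpha)$ with right-hand side uniformly bounded on compacts of $M\setminus\{x_0\}$ (again by Theorem \ref{prop:prelim:2}); the limit is then identified with the right-hand side of Proposition \ref{prop:cv:2} by the $C^0$ convergence just established. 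The main obstacle is controlling the outer integral uniformly in $R$ and $\alpha$ with the loss $\varepsilon_\alpha(n-2k)$ in the exponent, but this is absorbed since $\varepsilon_\alpha\to 0$ and $2k>0$.
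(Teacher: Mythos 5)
Your proof is correct and follows essentially the same route as the paper: the paper's argument for Proposition \ref{prop:cv:2} is precisely the Green's representation formula with the bubble-region/complement splitting carried out in the proof of Proposition \ref{prop:cv:1} (simplified since $u_0\equiv 0$), followed by the same elliptic bootstrap for the $C^{2k}_{loc}$ upgrade. Your bookkeeping of the exponents, including the harmless $\varepsilon_\alpha(n-2k)$ loss, matches the paper's estimates.
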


\begin{proposition}\label{prop:bnd:der} Under the assumptions and notations of Theorems \ref{prop:prelim:1} and \ref{prop:prelim:2}, for $l\in \{0,...,2k\}$, we have that
\begin{equation}\label{ineq:49}
|\nabla^l\ua(x)|\leq C\frac{\ma^{\frac{n-2k}{2}}}{\left(\ma+d_g(x,\xa)\right)^{n-2k+l}}\hbox{ if }\left\{\begin{array}{c}
d_g(x,\xa)\leq C\sqrt{\ma}\\
\hbox{or }u_0\equiv 0\hbox{ and }x\in M\end{array}\right\}.
\end{equation}
\end{proposition}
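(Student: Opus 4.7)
My plan is to derive \eqref{ineq:49} from the pointwise bound \eqref{ineq:c0} of Theorem \ref{prop:prelim:2} by a two-scale rescaling-and-regularity argument: the bubble scale $\ma$ is handled directly by the $C^{2k}_{loc}$-convergence of $\tua$ to $U$, and the intermediate regime $R\ma\leq d_g(x,\xa)\leq C\sqrt{\ma}$ (or any scale when $u_0\equiv 0$) by a second rescaling centred at $x$ itself.

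First, for $d_g(x,\xa)\leq R\ma$ with $R$ fixed, I would invoke Theorem \ref{prop:prelim:1}, which gives $\tua\to U$ in $C^{2k}_{loc}(\rn)$ with $U\in C^{2k}(\rn)$, so $\Vert \tua\Vert_{C^{2k}(B_R(0))}\leq C(R)$. Unrolling \eqref{def:tua} yields $|\nabla^l\ua(x)|\leq C(R)\ma^{-(n-2k)/2-l}$, which matches \eqref{ineq:49} because $\ma+d_g(x,\xa)\asymp\ma$ in this range.

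Next, for $\ra:=d_g(x,\xa)\geq R\ma$ I would set
$$\hat u_\alpha(X):=\frac{\ra^{n-2k}}{\ma^{(n-2k)/2}}\, \ua\bigl(\tilde{\hbox{exp}}_x(\ra X)\bigr)\quad\text{for }X\in B_{1/2}(0),$$
so that the ball $B_{\ra/2}(x)$ stays away from $\xa$ and $d_g(y,\xa)\asymp\ra$ on it. The pointwise bound \eqref{ineq:c0} then gives
$$|\hat u_\alpha(X)|\leq C\Vert u_0\Vert_\infty^{(\crit-1)^2}\frac{\ra^{n-2k}}{\ma^{(n-2k)/2}}+C\ma^{(n-2k)/2},$$
which is uniformly bounded in $\alpha$ precisely under the two dichotomous hypotheses of \eqref{ineq:49}. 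A rescaling computation analogous to the one producing \eqref{eq:tua:prop}, together with $\crit-2=4k/(n-2k)$, transforms \eqref{eq:ua} into
$$\Delta_{\hat g_\alpha}^k\hat u_\alpha+\sum_{j=0}^{2k-2}\ra^{2k-j}\hat B_\alpha^j\star\nabla^j\hat u_\alpha=\kappa_\alpha |\hat u_\alpha|^{\crit-2-\ea}\hat u_\alpha\quad\text{in }B_{1/2}(0),$$
with $\kappa_\alpha=(\ma/\ra)^{2k}\cdot(\ra^{n-2k}/\ma^{(n-2k)/2})^{\ea}$ uniformly bounded once $\hat u_\alpha$ is, and the $\hat B_\alpha^j$ bounded in $C^{0,\theta}$. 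I would then apply interior elliptic regularity (Theorems \ref{th:2} and \ref{th:2bis}) to obtain $\Vert\hat u_\alpha\Vert_{C^{2k}(B_{1/4}(0))}\leq C$ uniformly in $\alpha$, and reading off $|\nabla^l\hat u_\alpha(0)|$ would produce $|\nabla^l\ua(x)|\leq C\ma^{(n-2k)/2}/\ra^{n-2k+l}$. Since $\ra\geq R\ma$ implies $\ra\asymp\ma+\ra$, this matches \eqref{ineq:49} and dovetails with the bubble-scale estimate.

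The main technical obstacle will be the case $l=2k$, which requires full Schauder-type $C^{2k,\theta}$ regularity for the high-order operator $P_\alpha$ with $C^{0,\theta}$ coefficients (Theorem \ref{th:2bis}), rather than the $L^p$ Calderón--Zygmund estimates that suffice for $l\leq 2k-1$. The heuristic that pins down the dichotomy is that $\ra\sim\sqrt{\ma}$ is exactly the scale at which the constant background $\Vert u_0\Vert_\infty$ and the bubble tail $(\ma/\ra)^{n-2k}$ balance after rescaling; beyond this scale the constant background dominates and \eqref{ineq:49} simply could not hold, which is consistent with the restriction in the statement.
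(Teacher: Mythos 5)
Your proposal is correct and follows essentially the same route as the paper: the paper merely packages the two regimes as a contradiction argument invoking the $C^{2k}_{loc}$ convergences of Propositions \ref{prop:cv:1} and \ref{prop:cv:2}, which are themselves established by exactly your mechanism (the pointwise bound \eqref{ineq:c0}, the rescaled equation at scale $d_g(x,\xa)$, and the elliptic/Schauder estimates of Theorems \ref{th:2} and \ref{th:2bis}). Two harmless points to fix: in your bound for $|\hat u_\alpha|$ the bubble contribution is $O(1)$ rather than $O(\ma^{(n-2k)/2})$, and in the case $u_0\equiv 0$ with $d_g(x,\xa)$ bounded below you should cap the rescaling radius at the injectivity radius (or argue directly on a fixed ball as in the proof of Proposition \ref{prop:cv:2}).
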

\noindent{\it Proof of Proposition \ref{prop:cv:1}:} For $(\ua)_\alpha$ and $(\ra)_\alpha$ as in the statement of Proposition \ref{prop:cv:1}. We set
$$\wa(X):=\frac{\ra^{n-2k}}{\ma^{\frac{n-2k}{2}}}\ua(\tilde{\hbox{exp}}_{\xa}(\ra X))$$
for all $X\in B_{i_g(M)\ra^{-1}}(0)\subset \rn$. As in Definition \ref{def:green}, let $G_\alpha$ be the Green's function for $P_\alpha$, $\alpha>0$, and $G_0$ be the Green's function for the limiting operator $P_0$. Let us fix $X\in \rn-\{0\}$. We let $R>0$ and $\delta\in (0,i_g(M))$ to be chosen later. Green's representation formula yields
\begin{eqnarray*}
\wa(X)&=& \frac{\ra^{n-2k}}{\ma^{\frac{n-2k}{2}}}\int_M G_\alpha(\tilde{\hbox{exp}}_{\xa}(\ra X),y)|\ua|^{\crit-2-\ea}\ua(y)\, dv_g(y)\\
&=& \frac{\ra^{n-2k}}{\ma^{\frac{n-2k}{2}}}\left(\int_{M\setminus B_\delta(\xa)}+\int_{B_\delta(\xa)\setminus B_{R\ma}(\xa)}+\int_{B_{R\ma}(\xa)}\right)
\end{eqnarray*}
Using Step \ref{prop:cv:u0}, $\lim_{\alpha\to 0}\ra=0$ and $G_\alpha\to G_0$ uniformly outside the diagonal, we get that 
\begin{eqnarray*}
&&\lim_{\alpha\to 0}\int_{M\setminus B_\delta(\xa)}G_\alpha(\tilde{\hbox{exp}}_{\xa}(\ra X),y)|\ua|^{\crit-2-\ea}\ua(y)\, dv_g(y)\\
&&=\int_{M\setminus B_\delta(x_0)}G_0(x_0,y)|u_0|^{\crit-2}u_0(y)\, dv_g(y).
\end{eqnarray*}
Since $u_0$ satisfies the limit equation $P_0u_0=|u_0|^{\crit-2}u_0$, we get that
\begin{eqnarray*}
&&\lim_{\delta\to 0}\lim_{\alpha\to 0}\int_{M\setminus B_\delta(\xa)}G_\alpha(\tilde{\hbox{exp}}_{\xa}(\ra X),y)|\ua|^{\crit-2-\ea}\ua(y)\, dv_g(y)\\
&&=\int_{M}G_0(x_0,y)|u_0|^{\crit-2}u_0(y)\, dv_g(y)=u_0(x_0).
\end{eqnarray*}
We now deal with the second integral. Using the pointwise controls \eqref{ineq:c0} and \eqref{ctrl:G}, we get that
\begin{eqnarray*}
&&\left|\frac{\ra^{n-2k}}{\ma^{\frac{n-2k}{2}}}\int_{B_\delta(\xa)\setminus B_{R\ma}(\xa)}G_\alpha(\tilde{\hbox{exp}}_{\xa}(\ra X),y)|\ua|^{\crit-2-\ea}\ua(y)\, dv_g(y)\right|\\
&&\leq C \frac{\ra^{n-2k}}{\ma^{\frac{n-2k}{2}}}\int_{B_\delta(\xa)\setminus B_{R\ma}(\xa)} d_g(y, \tilde{\hbox{exp}}_{\xa}(\ra X))^{2k-n}\left({\bf 1}_{u_0\neq 0}+\frac{\ma^{\frac{n-2k}{2}}}{d_g(y,\xa)^{n-2k}}\right)^{\crit-1-\ea}\, dv_g(y)\\
&&\leq C \frac{\ra^{n-2k}}{\ma^{\frac{n-2k}{2}}}\delta^{2k}{\bf 1}_{u_0\neq 0}+C \ra^{n-2k}\ma^{2k}\int_{B_\delta(0)\setminus B_{R\ma}(0)} |Y-\ra X|^{2k-n}|Y|^{-(n-2k)(\crit-1-\ea)}\, dY
\end{eqnarray*}
Keeping in mind that $X\neq 0$ is fixed, we have that
\begin{eqnarray*}
&&\int_{B_\delta(0)\setminus B_{R\ma}(0)} |Y-\ra X|^{2k-n}|Y|^{-(n-2k)(\crit-1-\ea)}\, dY\leq \int_{|Y-\ra X|>\ra|X|/2}+\int_{|Y-\ra X|\leq\ra|X|/2}\\
&&\leq C\ra^{2k-n}\int_{\rn\setminus B_{R\ma}(0)}|Y|^{-(n-2k)(\crit-1-\ea)}\, dY\\
&&+C\ra^{-(n-2k)(\crit-1-\ea)}\int_{B_{\ra|X|/2}(\ra X)} |Y-\ra X|^{2k-n}\, dY\\
&&\leq C\ra^{2k-n} (R\ma)^{-2k}+C\ra^{-n}
\end{eqnarray*}
Therefore,
\begin{eqnarray*}
&&\lim_{R\to +\infty,\delta\to 0}\lim_{\alpha\to 0}\left|\frac{\ra^{n-2k}}{\ma^{\frac{n-2k}{2}}}\int_{B_\delta(\xa)\setminus B_{R\ma}(\xa)}G_\alpha(\tilde{\hbox{exp}}_{\xa}(\ra X),y)|\ua|^{\crit-2-\ea}\ua(y)\, dv_g(y)\right|\\
&&\leq \lim_{R\to +\infty, \delta\to 0} \left(\left(\frac{\ra}{\sqrt{\ma}}\right)^{\frac{n-2k}{2}}\delta^{2k}{\bf 1}_{u_0\neq 0} + CR^{-2k}+ \left(\frac{\ma}{\ra}\right)^{2k}\right)=0
\end{eqnarray*}
Finally, using \eqref{def:tua}, the third integral reads
\begin{eqnarray*}
&&\frac{\ra^{n-2k}}{\ma^{\frac{n-2k}{2}}}\int_{B_{R\ma}(\xa)} G_\alpha(\tilde{\hbox{exp}}_{\xa}(\ra X),y)|\ua|^{\crit-2-\ea}\ua(y)\, dv_g(y)\\
&&=\frac{\ra^{n-2k}}{\ma^{\frac{n-2k}{2}}}\ma^n\int_{B_{R}(0)} G_\alpha(\tilde{\hbox{exp}}_{\xa}(\ra X),\tilde{\hbox{exp}}_{\xa}(\ma Y))|\tua|^{\crit-2-\ea}\tua(Y)\ma^{-(\crit-1-\ea)(n-2k)/2}\, dv_{\tga}(Y)\\
&&= (\ma^{\ea})^{\frac{n-2k}{2}}\int_{B_{R}(0)} \ra^{n-2k}G_\alpha(\tilde{\hbox{exp}}_{\xa}(\ra X),\tilde{\hbox{exp}}_{\xa}(\ma Y))|\tua|^{\crit-2-\ea}\tua(Y)\, dv_{\tga}(Y),
\end{eqnarray*}
where $\tga:=(\tilde{\hbox{exp}}_{\xa}^\star g)(\ma\cdot)$. It follows from the construction of the Green's function (see the proof of Theorem \ref{th:green:nonsing}) that 
\begin{equation*}
|G_\alpha(x,y)-C_{n,k}d_g(x,y)^{2k-n}|\leq C d_g(x,y)^{2k+1-n}\hbox{ for all }x,y\in M, \, x\neq y,
\end{equation*}
where $C_{n,k}$ is defined in \eqref{def:Cnk}. Therefore, using it follows from Theorem \ref{prop:prelim:1} that
\begin{eqnarray*}
&&\lim_{R\to +\infty}\lim_{\alpha\to 0}\frac{\ra^{n-2k}}{\ma^{\frac{n-2k}{2}}}\int_{B_{R\ma}(\xa)} G_\alpha(\tilde{\hbox{exp}}_{\xa}(\ra X),y)|\ua|^{\crit-2-\ea}\ua(y)\, dv_g(y)\\
&&= c_0^{\frac{n-2k}{2}}\int_{\rn} C_{n,k}|X|^{2k-n}|U|^{\crit-2}U(Y)\, dY=\frac{c_0^{\frac{n-2k}{2}}K_0}{|X|^{n-2k}}.
\end{eqnarray*}
Therefore,
\begin{equation}\label{eq:lim:pointwise}
\lim_{\alpha\to 0}\wa(X)=\lim_{\alpha\to 0}\left(\frac{\ra}{\sqrt{\ma}}\right)^{n-2k}u_0(x_0)+\frac{K_0 c_0^{\frac{n-2k}{2}}}{|X|^{n-2k}}
\end{equation}
where $K_0$ is as in \eqref{def:K0}. This proves the pointwise convergence. We are left with proving the $C^{2k}-$convergence. Equation \eqref{eq:ua:prop} rewrites
\begin{equation}\label{eq:wa:prop}
\Delta_{h_\alpha}^k\wa+\sum_{j=0}^{2k-2}\ra^{2k-j}\tilde{B}_\alpha^j\star\nabla^j\wa=\left(\frac{\ma}{\ra}\right)^{2k}\frac{(\ra^{\ea})^{n-2k}}{(\ma^{\ea})^{\frac{n-2k}{2}}}|\wa|^{\crit-2-\ea}\wa\hbox{ in }B_{\delta\ra^{-1}}(0)
\end{equation}
where $h_\alpha:=(\tilde{\hbox{exp}}_{\xa}g)(\ra\cdot)$ and for all $j=0,...,2k-2$, $(\tilde{B}_\alpha^j)_\alpha$ is a family of $(j,0)-$tensors such that there exists $C_j>0$ such that $\Vert \tilde{B}_\alpha^j\Vert_{C^{0,\theta}}\leq C_j$ for all $\alpha>0$. Since $\ma^{\ea}\to c_0\in (0,1]$, it follows from the hypotheses of Proposition \ref{prop:cv:1} that $\ra^{\ea}\to c_1\in (0,1]$. The pointwise control \eqref{ineq:c0} rewrites
$$|\wa(X)|\leq C+C|X|^{2k-n}\hbox{ for all }\alpha>0\hbox{ and }X\in B_R(0)-\{0\}.$$
It then follows from elliptic theory (Theorem \ref{th:2bis}) that, up to extraction, $(\wa)_\alpha$ has a limit in $C^{2k}_{loc}(\rn-\{0\})$. With \eqref{eq:lim:pointwise}, we get that the limit holds without extraction. This proves Proposition \ref{prop:cv:1}.\qed

\medskip\noindent{\it Proof of Proposition \ref{prop:cv:2}.} We assume here that $u_0\equiv 0$. The argument is very similar to the proof of Proposition \ref{prop:cv:1}. Via Green's representation formula, for any $x\in M-\{x_0\}$, we get that 
\begin{equation*}
\lim_{\alpha\to 0}\frac{\ua(x)}{\ma^{\frac{n-2k}{2}}} =c_0^{\frac{n-2k}{2}}\left(\int_{\rn}|U|^{\crit-2}U\, dX\right)G_0(x,x_0)=\frac{K_0 c_0^{\frac{n-2k}{2}}}{C_{n,k}}G_0(x,x_0).
\end{equation*}
The pointwise control \eqref{ineq:c0} yields 
$$\left|\ma^{-\frac{n-2k}{2}}\ua(x)\right|\leq Cd_g(x,\xa)^{2k-n}\hbox{ for all }x\in M-\{\xa\}\hbox{ and }\alpha>0.$$
Equation \eqref{eq:ua:prop} rewrites
$$P_\alpha\left(\ma^{-\frac{n-2k}{2}}\ua\right)=\ma^{\frac{n-2k}{2}(\crit-2-\ea)}\left|\ma^{-\frac{n-2k}{2}}\ua\right|^{\crit-2-\ea}\left(\ma^{-\frac{n-2k}{2}}\ua\right).$$
Elliptic theory then yields convergence of $(\ma^{-\frac{n-2k}{2}}\ua)_\alpha$ in $C^{2k}_{loc}(M-\{x_0\})$ up to as subsequence, and then, via uniqueness, for the full family. This proves Proposition \ref{prop:cv:2}.\qed

\medskip\noindent{\it Proof of Proposition \ref{prop:bnd:der}.} We argue by contradiction. We assume that there exists $l\in \{0,...,2k\}$ and $(z_\alpha)_\alpha\in M$ such that
\begin{equation}\label{hyp:contra:1}
\lim_{\alpha\to 0}\ma^{-\frac{n-2k}{2}}\left(\ma+d_g(z_\alpha,\xa)\right)^{n-2k+l}|\nabla^l\ua(z_\alpha)|=+\infty,
\end{equation} 
with $d_g(z_\alpha,\xa)=O(\sqrt{\ma})$ if $u_0\not\equiv 0$. It follows from the convergence \eqref{lim:tua:c2k} that $\ma=o(d_g(z_\alpha, \xa))$.

\smallskip\noindent Assume first that $d_g(z_\alpha, \xa)\to 0$ as $\alpha\to 0$. We set $\ra:=d_g(z_\alpha, \xa)$ and $Z_\alpha\in\rn$ be such that $z_\alpha=\tilde{\hbox{exp}}_{\xa}(\ra Z_\alpha)$. In particular $|Z_\alpha|=1$ for all $\alpha>0$. We use the notations of Proposition \ref{prop:cv:1}, so that $(\wa)_\alpha$ has a limit in $C^{2k}_{loc}(\rn-\{0\})$. In particular
$$\frac{d_g(z_\alpha,\xa)^{n-2k+l}|\nabla^l\ua(z_\alpha)|}{\ma^{\frac{n-2k}{2}}}=|\nabla^l\wa(Z_\alpha)|$$
is bounded as $\alpha\to 0$, contradicting  \eqref{hyp:contra:1} since $\ma=o(\ra)$.

\smallskip\noindent Assume now that $d_g(z_\alpha, \xa)\not\to 0$ as $\alpha\to 0$. In particular $u_0\equiv 0$. It then follows from Proposition \ref{prop:cv:2} that $(\ma^{-\frac{n-2k}{2}}|\nabla^l\ua(z_\alpha)|)_\alpha$ is bounded, as $\alpha\to 0$, contradicting  \eqref{hyp:contra:1}.

\smallskip These two cases prove Proposition \ref{prop:bnd:der}.\qed

\section{Pohozaev-Pucci-Serrin identity for polyharmonic operators}\label{sec:poho}
The following identities are essentially in Pucci-Serrin \cite{PucciSerrinIdentity} and are generalizations of the historical Pohozaev identity \cite{pohozaev}. We recall and prove them for the sake of completeness.
\begin{lemma}\label{lem:deltap} For any $v\in C^\infty(\Omega)$, where $\Omega$ is a domain of $\rn$, we have that
\begin{equation*}
\left\{\begin{array}{cc}
\Delta_\xi^p(x^i\partial_i v)=2p\Delta_\xi^p v+x^i\partial_i\Delta_\xi^p v&\hbox{ for all }p\in\nn\hbox{ and }\\
\partial_j\Delta_\xi^p(x^i\partial_i v)=(2p+1)\partial_j\Delta_\xi^p v+x^i\partial_i(\partial_j\Delta_\xi^p v)&\hbox{ for all }p\in\nn\hbox{ and }j=1,...,n.
\end{array}\right\}.
\end{equation*}
With the convention $\Delta_\xi^\frac{l}{2}=\nabla \Delta_\xi^\frac{l-1}{2}$ when $l$ is odd, these identities rewrite
\begin{equation*}
\Delta_\xi^\frac{l}{2}(x^i\partial_i v)=l\Delta_\xi^\frac{l}{2} v+x^i\partial_i(\Delta_\xi^\frac{l}{2} v)\hbox{ for all }l\in\nn.
\end{equation*}
\end{lemma}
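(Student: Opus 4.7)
The plan is a direct induction on $p$, built on the single commutator identity $[\Delta_\xi, x^i\partial_i] = 2\Delta_\xi$, followed by differentiation to obtain the odd-order statement. There is no real obstacle here; the only point to watch is the minus-sign convention $\Delta_\xi = -\sum_j \partial_j^2$, which is harmless since the commutator formula is invariant under an overall sign change of $\Delta_\xi$.

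First I would establish the base case $p=1$ by direct calculation. Using $\partial_j(x^i\partial_i v) = \partial_j v + x^i\partial_i\partial_j v$ and differentiating once more, summing over $j$ and inserting the sign convention gives
\begin{equation*}
\Delta_\xi(x^i\partial_i v) = 2\Delta_\xi v + x^i\partial_i\Delta_\xi v,
\end{equation*}
which is the case $p=1$ and which also encodes the commutator $[\Delta_\xi, x^i\partial_i] = 2\Delta_\xi$. The case $p=0$ is of course trivial.

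Next I would run the induction. Assuming the identity holds at level $p$, apply $\Delta_\xi$ to both sides and use the $p=1$ case with $v$ replaced by $\Delta_\xi^p v$:
\begin{equation*}
\Delta_\xi^{p+1}(x^i\partial_i v) = 2p\Delta_\xi^{p+1} v + \Delta_\xi\bigl(x^i\partial_i\Delta_\xi^p v\bigr) = 2p\Delta_\xi^{p+1} v + 2\Delta_\xi^{p+1} v + x^i\partial_i \Delta_\xi^{p+1} v,
\end{equation*}
which is the identity at level $p+1$. This closes the first bullet of the lemma. For the second bullet, I would simply apply $\partial_j$ to the identity just proved and use $\partial_j(x^i\partial_i w) = \partial_j w + x^i\partial_i\partial_j w$ with $w = \Delta_\xi^p v$, yielding
\begin{equation*}
\partial_j\Delta_\xi^p(x^i\partial_i v) = 2p\,\partial_j\Delta_\xi^p v + \partial_j\Delta_\xi^p v + x^i\partial_i(\partial_j\Delta_\xi^p v) = (2p+1)\partial_j\Delta_\xi^p v + x^i\partial_i(\partial_j\Delta_\xi^p v).
\end{equation*}

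Finally, the unified statement with $l\in\nn$ is just a repackaging: for $l=2p$ even it is the first bullet, while for $l=2p+1$ odd, the convention $\Delta_\xi^{l/2} = \nabla \Delta_\xi^p$ means the identity holds componentwise, where each component is exactly the second bullet. So no further work is needed beyond collecting the two cases.
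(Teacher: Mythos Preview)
Your proof is correct and is exactly the ``straightforward iteration'' the paper invokes without spelling out; the paper's own proof is the single sentence ``This is straightforward iteration.'' You have simply written that iteration in full, and every step checks.
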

This is straightforward iteration.
\begin{proposition}\label{prop:poho} Let $\Omega\subset\rn$ be a smooth bounded domain with $2\leq 2k<n$. Then for all $u\in C^{2k+1}(\rn)$, $f\in C^1_c(\rn)$ and $\eps\in [0,\crit-2)$, we have that
\begin{eqnarray*}
&&\int_\Omega\left(\Delta_\xi^k u-f|u|^{\crit-2-\eps}u\right) T(u)\, dx=\frac{n\eps}{\crit(\crit-\eps)}\int_\Omega f|u|^{\crit-\eps}\, dx+\frac{1}{\crit-\eps}\int_\Omega ( \nabla f,x)|u|^{\crit-\eps}\, dx\\
&&+\int_{\partial\Omega}\left((x,\nu)\left(\frac{(\Delta_\xi^\frac{k}{2}u)^2}{2}-\frac{f|u|^{\crit-\eps}}{\crit-\eps}\right)+S(u)\right)\, d\sigma
\end{eqnarray*}
where
$$T(u):=\frac{n-2k}{2}u+x^i\partial_i u$$
and
\begin{eqnarray*}
S(u)&:=&\sum_{i=0}^{E(k/2)-1}\left(-\partial_\nu\Delta_\xi^{k-i-1}u\Delta_\xi^iT(u)+\Delta_\xi^{k-i-1}u\partial_\nu\Delta_\xi^iT(u)\right)\\
&&-{\bf 1}_{\{k\hbox{ odd}\}}\partial_\nu(\Delta_\xi^\frac{k-1}{2} u)\Delta_\xi^\frac{k-1}{2}T(u)
\end{eqnarray*}
\end{proposition}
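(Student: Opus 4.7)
The plan is to compute the two integrals $\int_\Omega (\Delta_\xi^k u)T(u)\,dx$ and $\int_\Omega f|u|^{\crit-2-\eps}u\,T(u)\,dx$ separately and combine them. For the polyharmonic term, I would iterate the Green identity $\int_\Omega (\Delta_\xi v)w\,dx = \int_\Omega v(\Delta_\xi w)\,dx + \int_{\partial\Omega}(v\partial_\nu w - w\partial_\nu v)\,d\sigma$ to move half of the Laplacians from $\Delta_\xi^k u$ onto $T(u)$. Writing $k=2p$ in the even case and $k=2p+1$ in the odd case, $p$ applications yield the interior term $\int_\Omega \Delta_\xi^{k/2}u\cdot \Delta_\xi^{k/2}T(u)\,dx$ (with the convention $\Delta_\xi^{k/2}= \nabla \Delta_\xi^{(k-1)/2}$ when $k$ is odd) plus the boundary sum $\sum_{i=0}^{p-1}\int_{\partial\Omega}\bigl(\Delta_\xi^{k-1-i}u\,\partial_\nu\Delta_\xi^{i}T(u) - \Delta_\xi^{i}T(u)\,\partial_\nu\Delta_\xi^{k-1-i}u\bigr)\,d\sigma$, which is exactly the first sum in $S(u)$ since $E(k/2)=p$ in both parities. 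In the odd case, an extra integration by parts applied to $\int_\Omega \Delta_\xi^{p+1}u\cdot\Delta_\xi^{p}T(u)\,dx$ (writing it as $\int \nabla\Delta_\xi^p u\cdot\nabla \Delta_\xi^p T(u) - \int_{\partial\Omega}\Delta_\xi^p T(u)\partial_\nu\Delta_\xi^p u$) produces the remaining boundary contribution $-\int_{\partial\Omega}\partial_\nu \Delta_\xi^{(k-1)/2}u\cdot \Delta_\xi^{(k-1)/2}T(u)\,d\sigma$ appearing as the final line of $S(u)$.

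Next, I would apply Lemma \ref{lem:deltap} to compute $\Delta_\xi^{k/2}T(u) = \frac{n-2k}{2}\Delta_\xi^{k/2}u + k\Delta_\xi^{k/2}u + x^i\partial_i\Delta_\xi^{k/2}u = \frac{n}{2}\Delta_\xi^{k/2}u + x^i\partial_i\Delta_\xi^{k/2}u$, valid for every integer $k$ under the convention above. Setting $w:=\Delta_\xi^{k/2}u$ (scalar or vector depending on parity) and using the elementary identity $\int_\Omega w\cdot x^i\partial_i w\,dx = \frac{1}{2}\int_\Omega x^i\partial_i|w|^2\,dx = -\frac{n}{2}\int_\Omega |w|^2\,dx + \frac{1}{2}\int_{\partial\Omega}(x,\nu)|w|^2\,d\sigma$, the two interior contributions cancel and leave only $\frac{1}{2}\int_{\partial\Omega}(x,\nu)(\Delta_\xi^{k/2}u)^2\,d\sigma$. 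Combining with the first paragraph gives $\int_\Omega(\Delta_\xi^k u)T(u)\,dx = \int_{\partial\Omega}\bigl(\tfrac{1}{2}(x,\nu)(\Delta_\xi^{k/2}u)^2 + S(u)\bigr)\,d\sigma$.

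For the nonlinear piece, I would use $|u|^{\crit-2-\eps}u\,\partial_i u = \frac{1}{\crit-\eps}\partial_i|u|^{\crit-\eps}$ to rewrite
\[
\int_\Omega f|u|^{\crit-2-\eps}u\,T(u)\,dx = \frac{n-2k}{2}\int_\Omega f|u|^{\crit-\eps}\,dx + \frac{1}{\crit-\eps}\int_\Omega f\,x^i\partial_i|u|^{\crit-\eps}\,dx.
\]
An integration by parts of the last term together with $\partial_i(fx^i) = (\nabla f,x) + nf$ produces $-\frac{1}{\crit-\eps}\int_\Omega\bigl((\nabla f,x)+nf\bigr)|u|^{\crit-\eps}\,dx + \frac{1}{\crit-\eps}\int_{\partial\Omega}(x,\nu)f|u|^{\crit-\eps}\,d\sigma$. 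The algebraic identity $\frac{n-2k}{2}-\frac{n}{\crit-\eps} = -\frac{n\eps}{\crit(\crit-\eps)}$, which follows from $\frac{n}{\crit}=\frac{n-2k}{2}$, collapses the interior contributions into the stated form. Subtracting this from the polyharmonic identity yields the proposition. The computation is essentially direct integration by parts; the only real obstacle is the bookkeeping of the boundary terms, especially tracking signs and matching the precise description of $S(u)$ in the odd-$k$ case where the extra $\nabla\Delta_\xi^{(k-1)/2}$ step is needed.
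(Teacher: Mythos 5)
Your proposal is correct and follows essentially the same route as the paper: iterated Green identities to transfer $E(k/2)$ Laplacians onto $T(u)$ (with one extra gradient-level integration by parts in the odd case producing the indicator term in $S(u)$), Lemma \ref{lem:deltap} to get $\Delta_\xi^{k/2}T(u)=\tfrac{n}{2}\Delta_\xi^{k/2}u+x^i\partial_i\Delta_\xi^{k/2}u$, the divergence identity to reduce the interior term to $\tfrac12\int_{\partial\Omega}(x,\nu)(\Delta_\xi^{k/2}u)^2\,d\sigma$, and the same treatment of the nonlinear term via $x^i\partial_i|u|^{\crit-\eps}$ and the identity $\tfrac{n}{\crit}=\tfrac{n-2k}{2}$. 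No gaps.
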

\begin{proof} Integrating by parts, for any $l\in\nn$, $l\geq 1$, $U,V\in C^{2l}(\rn)$, we have that
\begin{equation*}
\int_{\Omega }(\Delta^l_\xi U)V \, dX=\int_{\Omega }U(\Delta^l_\xi V)\, dX+\int_{\partial\Omega}\mathcal{B}^{(l)}(U,V)\, d\sigma
\end{equation*}
where
$$\mathcal{B}^{(l)}(U,V):=\sum_{i=0}^{l-1}\left(-\partial_\nu\Delta_\xi^{l-i-1}U\Delta_\xi^i V+\Delta_\xi^{l-i-1}U\partial_\nu\Delta_\xi^iV\right)$$

We first assume that $k=2p$ is even, with $p\in\nn$. Using Lemma \ref{lem:deltap}, we get 
\begin{eqnarray*}
&&\int_\Omega\left(\Delta_\xi^k u-f|u|^{\crit-2-\eps}u\right) T(u)\, dx=\int_\Omega\Delta_\xi^pu\Delta^p_\xi T(u)\, dx+\int_{\partial\Omega}\mathcal{B}^{(p)}(\Delta_\xi^p u,T(u))\, d\sigma\\
&&-\left(\frac{n-2k}{2}\int_\Omega f |u|^{\crit-\eps}\, dx+\int_\Omega fx^i\frac{\partial_i |u|^{\crit-\eps}}{\crit-\eps}\, dx\right)\\
&&=\int_\Omega\Delta_\xi^p\left(\frac{n}{2}\Delta^p_\xi u+x^i\partial_i(\Delta_\xi^p u)\right)\, dx+\int_{\partial\Omega}\mathcal{B}^{(p)}(\Delta_\xi^p u,T(u))\, d\sigma\\
&&-\left(\frac{n-2k}{2}\int_\Omega f |u|^{\crit-\eps}\, dx-\frac{1}{\crit-\eps}\int_\Omega (n f+(\nabla f,x))|u|^{\crit-\eps}\, dx+\int_{\partial\Omega}(x,\nu)\frac{f|u|^{\crit-\eps}}{\crit-\eps}\, dx\right)\\
&&=\int_\Omega\partial_i\left(\frac{x^i(\Delta_\xi^p u)^2}{2}\right)\, dx+\int_{\partial\Omega}\mathcal{B}^{(p)}(\Delta_\xi^p u,T(u))\, d\sigma\\
&&+\frac{n\eps}{\crit(\crit-\eps)}\int_\Omega f|u|^{\crit-\eps}\, dx+\frac{1}{\crit-\eps}\int_\Omega ( \nabla f,x)|u|^{\crit-\eps}\, dx-  \int_{\partial\Omega}(x,\nu)\frac{f|u|^{\crit-\eps}}{\crit-\eps}\, dx\\
&&=\frac{n\eps}{\crit(\crit-\eps)}\int_\Omega f|u|^{\crit-\eps}\, dx+\frac{1}{\crit-\eps}\int_\Omega ( \nabla f,x)|u|^{\crit-\eps}\, dx\\
&&+\int_{\partial\Omega}\left((x,\nu)\left(\frac{(\Delta_\xi^p u)^2}{2}-\frac{f|u|^{\crit-\eps}}{\crit-\eps}\right)+\mathcal{B}^{(p)}(\Delta_\xi^p u,T(u))\right)\, d\sigma
\end{eqnarray*}
which proves Proposition \ref{prop:poho} when $k=2p$ is even. When $k=2q+1$ is odd, we get that
\begin{eqnarray*}
\int_\Omega \Delta_\xi^k u T(u)\, dx&=& \int_\Omega \Delta_\xi^q(\Delta_\xi^{q+1}u)T(u)\, dx\\
&=& \int_\Omega\Delta_\xi^{q+1}u\Delta_\xi^q T(u)\, dx+\int_{\partial\Omega}\mathcal{B}^{(q)}(\Delta_\xi^{q+1}u,T(u))\, d\sigma\\
&=& \int_\Omega \sum_j\partial_j(\Delta_\xi^q u)\partial_j(\Delta_\xi^q T(u))\, dx\\
&&+\int_{\partial\Omega}\left(\mathcal{B}^{(q)}(\Delta_\xi^{q+1}u,T(u))-\partial_\nu(\Delta_\xi^q u)\Delta_\xi^qT(u)\right)\, d\sigma
\end{eqnarray*}
Using Lemma \ref{lem:deltap}, we get that
\begin{eqnarray*}
&&\int_\Omega \Delta_\xi^k u T(u)\, dx= \int_\Omega \sum_j\partial_j(\Delta_\xi^q u)\left(\frac{n}{2}\partial_j \Delta_\xi^q u+x^i\partial_i\partial_j \Delta_\xi^q u\right)\\
&&+\int_{\partial\Omega}\left(\mathcal{B}^{(q)}(\Delta_\xi^{q+1}u,T(u))-\partial_\nu(\Delta_\xi^q u)\Delta_\xi^qT(u)\right)\, d\sigma\\
&&= \int_\Omega \partial_i \left(x^i\frac{(\partial_j\Delta_\xi^q u)^2}{2}\right)\, dx +\int_{\partial\Omega}\left(\mathcal{B}^{(q)}(\Delta_\xi^{q+1}u,T(u))-\partial_\nu(\Delta_\xi^q u)\Delta_\xi^qT(u)\right)\, d\sigma\\
&&= \int_{\partial\Omega}(x,\nu)\frac{|\nabla\Delta_\xi^q u|^2}{2} \, d\sigma +\int_{\partial\Omega}\left(\mathcal{B}^{(q)}(\Delta_\xi^{q+1}u,T(u))-\partial_\nu(\Delta_\xi^q u)\Delta_\xi^qT(u)\right)\, d\sigma\end{eqnarray*}
Using the same computations as in the case when $k$ is even, we get the conclusion of Proposition \ref{prop:poho}.\end{proof}

\section{Consequences of the Pohozaev-Pucci-Serrin identity: proof of Theorems \ref{th:general} and \ref{th:no:unull}}\label{sec:appli}
It follows from Lee-Parker \cite{lp} that for all $p\in B_{\delta}(x_0)$, there exists $\varphi_p\in C^\infty(M)$, $\varphi_p>0$, such that, setting $g_p:=\varphi_p^{\frac{4}{n-2k}}g$, we have that 
\begin{equation}\label{def:phi}
\varphi_p(p)=1\, ,\, \nabla\varphi_p(p)=0\, ,\, \hbox{Ric}_{g_p}(p)=0.
\end{equation}
Moreover, the map $p\mapsto \varphi_p$ is continuous. We set $\phia:=\varphi_{\xa}\circ \tilde{\hbox{exp}}_{\xa}$ and $g_\alpha:=\tilde{\hbox{exp}}_{\xa}^\star g_{\xa}$.  We rewrite equation \eqref{eq:ua:prop} with the GJMS operator \eqref{def:gjms} as
\begin{equation*}
P_g^k\ua+\sum_{i=0}^{k-1} (-1)^i\nabla^i((A_\alpha^{(i)}-A_{g}^{(i)}) \nabla^i\ua)=|\ua|^{\crit-2-\ea}\ua\hbox{ in }M.
\end{equation*}
We define 
\begin{equation}\label{def:hua}
\hua:=(\varphi_{x_\alpha}^{-1}\ua)\circ \tilde{\hbox{exp}}_{\xa}=\phia\cdot \ua\circ \tilde{\hbox{exp}}_{\xa} : B_\delta(0)\subset\rn\to \rr.
\end{equation}
Using the conformal properties \eqref{invar:gjms} of the GJMS operator, we get that $P_{g_{\xa}}(\varphi_{\xa}^{-1}\ua)=\varphi_{\xa}^{-(\crit-1)}P_g^k\ua$. Plugging this expression in the above equation satisfied by $\ua$ yields
\begin{equation}\label{eq:hua}
\Delta_{\ga}^k\hua+\sum_{i=0}^{k-1} (-1)^i\nabla^i_{\ga}(\hat{A}_\alpha^{(i)}  \nabla^i_{\ga}\hua)=\phia^{-\ea}|\hua|^{\crit-2-\ea}\hua\hbox{ in }B_\delta(0).
\end{equation}
where $\hat{A}^{(i)}_\alpha$ is a smooth symmetric $(0, 2i)-$tensor for all $i=0,...,k-1$ and there exists $\Vert \hat{A}^{(i)}_\alpha\Vert_{C^{i,\theta}}\leq C$ for all $\alpha$. Moreover,
\begin{equation}\label{eq:76}
\hat{A}_\alpha^{(k-1)}:=\phia^{2-\crit}\left(((\tilde{\hbox{exp}}_{\xa}^{g_{\xa}})^\star (A_\alpha^{(k-1)}-A_g^{(k-1)})\right)+(\tilde{\hbox{exp}}_{\xa}^{g_{\xa}})^\star A^{(k-1)}_{\ga}.
\end{equation}

\medskip\noindent We apply the identity of Proposition \ref{prop:poho} to $\hua$ on $B_{\da}(0)$ with
\begin{equation*}
\da:=\left\{\begin{array}{cc}
\sqrt{\ma}&\hbox{if }u_0\not\equiv 0\\
\delta &\hbox{if }u_0\equiv 0.
\end{array}\right.
\end{equation*}
Using \eqref{eq:hua}, we get
\begin{equation}\label{pohi:IaIV}
III_\alpha-II_\alpha=I_\alpha+IV_\alpha
\end{equation}
where
\begin{equation}\label{def:Ialpha}
I_\alpha:=\frac{n\ea}{\crit(\crit-\ea)}\int_{B_{\da}(0)} \phia^{-\ea}|\hua|^{\crit-\ea}\, dx+\frac{1}{\crit-\ea}\int_{B_{\da}(0)} ( \nabla \phia^{-\ea},x)|\hua|^{\crit-\ea}\, dx
\end{equation}
\begin{equation}\label{def:IIalpha}
II_\alpha:=\int_{B_{\da}(0)}\left(\sum_{i=0}^{k-1} (-1)^i\nabla^i_{\ga}(\hat{A}_\alpha^{(i)}  \nabla^i_{\ga}\hua) \right) T(\hua)\, dx
\end{equation}
\begin{equation}\label{def:IIIalpha}
III_\alpha:=\int_{B_{\da}(0)}\left(\Delta_\xi^k\hua  -\Delta_{\ga}^k  \hua\right) T(\hua)\, dx
\end{equation}
\begin{equation}\label{def:IValpha}
IV_\alpha:=\int_{\partial B_{\da}(0)}\left((x,\nu)\left(\frac{(\Delta_\xi^\frac{k}{2}\hua)^2}{2}-\frac{\phia^{-\ea}|\hua|^{\crit-\ea}}{\crit-\ea}\right)+S(\hua)\right)\, d\sigma
\end{equation}
where
$$T(\hua):=\frac{n-2k}{2}\hua+x^i\partial_i \hua$$
and
\begin{eqnarray*}
S(\hua)&:=&\sum_{i=0}^{E(k/2)-1}\left(-\partial_\nu\Delta_\xi^{k-i-1}\hua\Delta_\xi^iT(\hua)+\Delta_\xi^{k-i-1}\hua\partial_\nu\Delta_\xi^iT(\hua)\right)\\
&&-{\bf 1}_{\{k\hbox{ odd}\}}\partial_\nu(\Delta_\xi^\frac{k-1}{2} \hua)\Delta_\xi^\frac{k-1}{2}T(\hua)
\end{eqnarray*}
We estimate these terms separately. First note that for $l=0,...,2k$, the pointwise control \eqref{ineq:49} writes
\begin{equation}\label{ineq:59}
|\nabla^l\hua(x)|\leq C\frac{\ma^{\frac{n-2k}{2}}}{\left(\ma+|x|\right)^{n-2k+l}}\hbox{ for all }x\in B_{\da}(0).
\end{equation}
Using \eqref{ineq:59}, we get that
\begin{equation}\label{ineq:60}
\int_{B_{\da}(0)}|x|^t|\nabla^l \hua|\left(|x|\cdot|\nabla \hua|+|\hua|\right)\, dx\leq C\left\{\begin{array}{cc}
\ma^{t+2k-l} &\hbox{ if }n-4k+l>t\\
\ma^{n-2k}\ln\frac{1}{\ma} &\hbox{ if }n-4k+l=t\\
\ma^{\frac{n+t-l}{2}} &\hbox{ if }n-4k+l<t\hbox{ and }u_0\not\equiv 0\\
\ma^{n-2k}  &\hbox{ if }n-4k+l<t\hbox{ and }u_0\equiv 0
\end{array}\right.
\end{equation}
We define $\va(x):=\ma^{\frac{n-2k}{2}}\hua( \ma x)=\ma^{\frac{n-2k}{2}}\phia^{-1}(\ma X)\ua\circ \tilde{exp}_{\xa}( \ma x))$ for all $x\in B_{\delta/\ma}(0)$. Since $\phia(0)=1$, \eqref{lim:tua:c2k} writes
\begin{equation}\label{lim:tua:c2k:bis}
\lim_{\alpha\to +\infty}\va=U\hbox{ in }C^{2k}_{loc}(\rn)\hbox{ and }\lim_{\alpha\to 0}\ma^{\ea}=c_0>0.
\end{equation}
\subsection{Estimate of $I_\alpha$.} Given $R>0$, we get on the one hand that
$$\left|\int_{B_{\da}(0)\setminus B_{R\ma}(0)} \phia^{-\ea}|\hua|^{\crit-\ea}\, dx\right|\leq C\int_{B_{\da}(0)\setminus B_{R\ma}(0)} \left(\frac{\ma^{\frac{n-2k}{2}}}{\left(\ma+|x|\right)^{n-2k}}\right)^{\crit-\ea}\, dx\leq \eta(R)$$
where $\lim_{R\to +\infty}\eta(R)=0$. On the other hand, with the change of variables $x=\ma X$ and \eqref{lim:tua:c2k:bis}, we get
\begin{eqnarray*}
\int_{  B_{R\ma}(0)} \phia^{-\ea}|\hua|^{\crit-\ea}\, dx&=&\left(\ma^{\ea}\right)^{\frac{n-2k}{2}}\int_{B_R(0)}\phia(\ma X)^{-\ea}|\va|^{\crit-\ea}\, dX\\
&\to &c_0^{\frac{n-2k}{2}}\int_{B_R(0)} |U|^{\crit } \, dX\hbox{ as }\alpha\to 0
\end{eqnarray*}
Combining these two estimates and using that $U\in L^{\crit}(\rn)$ yields
\begin{equation}\label{eq:81}
\lim_{\alpha\to 0}\int_{B_{\da}(0) } \phia^{-\ea}|\hua|^{\crit-\ea}\, dx=c_0^{\frac{n-2k}{2}}\int_{\rn} |U|^{\crit }\, dX
\end{equation}
The rough control \eqref{ineq:59} and $\nabla\phia(0)=0$ yield
\begin{eqnarray*}
&&\left|\int_{B_{\da}(0)} ( \nabla \phia^{-\ea},x)|\hua|^{\crit-\ea}\, dx\right|=\left|\int_{B_{\da}(0)} -\ea( \nabla \phia,x)\phia^{-\ea-1}|\hua|^{\crit-\ea}\, dx\right|\\
&&\leq C\ea \left|\int_{B_{\da}(0)} |x|^2\left(\frac{\ma^{\frac{n-2k}{2}}}{\left(\ma+|x|\right)^{n-2k}}\right)^{\crit-\ea}\, dx\right|=o(\ea)
\end{eqnarray*}
We then get that
\begin{equation}\label{estim:Ialpha}
I_\alpha=\left(\frac{n}{(\crit)^2}c_0^{\frac{n-2k}{2}}\int_{\rn} |U|^{\crit }\, dX\right) \ea+o(\ea)
\end{equation}
\subsection{Estimate of $II_\alpha$.} For convenience, we define
$$\theta_\alpha:=\left\{\begin{array}{cc}
\ma^2 &\hbox{ if }n>2k+2,\\
\ma^2\ln\frac{1}{\ma} &\hbox{ if }n=2k+2,\\
\end{array}\right.$$
We compute $II_\alpha$ in the general context. In section \ref{sec:extra}, we compute the same term in a specific situation. Using \eqref{ineq:60}, we get that
\begin{equation*}
\int_{B_{\da}(0)}\nabla^i_{\ga}(\hat{A}_\alpha^{(i)}  \nabla^i_{\ga}\hua) T(\hua)\, dx=o(\theta_\alpha) \hbox{ for }i<k-1\hbox{ and } n\geq 2k+2.
\end{equation*}
We are left with the term of order $k-1$. Since the Levi-Civita connexion satisfies $\nabla^{k-1}_{\ga}=\nabla^{k-1}_\xi+\hbox{lot}$ where lot denotes differential terms of order less that $k-2$, using \eqref{ineq:60}, we get that
\begin{eqnarray*}
&&(-1)^{k-1}\int_{B_{\da}(0)}\nabla^{k-1}_{\ga}(\hat{A}_\alpha^{(k-1)}  \nabla^{k-1}_{\ga}\hua) T(\hua)\, dx\\
&&=(-1)^{k-1}\int_{B_{\da}(0)}\nabla^{k-1}_{\xi}(\hat{A}_\alpha^{(k-1)}  \nabla^{k-1}_{\xi}\hua) T(\hua)\, dx+o(\theta_\alpha)\hbox{ for }n\geq 2k+2.
\end{eqnarray*}
Integrating $k-1$ times by parts and using \eqref{ineq:59} yields
\begin{eqnarray*}
&& (-1)^{k-1}\int_{B_{\da}(0)}\nabla^{k-1}_{\xi}(\hat{A}_\alpha^{(k-1)}  \nabla^{k-1}_{\xi}\hua) T(\hua)\, dx\\
&&=\int_{B_{\da}(0)}\hat{A}_\alpha^{(k-1)}  (\nabla^{k-1}_{\xi}\hua,\nabla^{k-1}_{\xi} T(\hua))\, dx+\sum_{p+q<2k-2}\int_{\partial B_{\da}(0)}\nabla^p\hua\star\nabla^q\hua\, d\sigma\\
&&= \int_{B_{\da}(0)}\hat{A}_\alpha^{(k-1)}(0)  (\nabla^{k-1}_{\xi}\hua,\nabla^{k-1}_{\xi} T(\hua))\, dx+o(\theta_\alpha)\hbox{ for }n\geq 2k+2.
\end{eqnarray*}
With a slight abuse of notation, we have that
\begin{equation*}
\nabla^{k-1}_{\xi} T(\hua)=\frac{n-2k}{2}\nabla^{k-1}_{\xi} \hua+\nabla^{k-1}_{\xi} (x^p\partial_p\hua)=\frac{n-2}{2}\nabla^{k-1}_{\xi} \hua+x^p\partial_p\nabla^{k-1}_{\xi} \hua.
\end{equation*}
Using the symmetry of $(X,Y)\mapsto \hat{A}_\alpha^{(k-1)} (0) (X,Y)$, integrating by parts and using the pointwise controls \eqref{ineq:59} and \eqref{ineq:60}, we get that
\begin{eqnarray*}
&& \int_{B_{\da}(0)}\hat{A}_\alpha^{(k-1)} (0) (\nabla^{k-1}_{\xi}\hua,x^p\partial_p\nabla^{k-1}_{\xi} \hua)\, dx\\
&&= \int_{B_{\da}(0)}x^p\partial_p\frac{\hat{A}_\alpha^{(k-1)} (0) (\nabla^{k-1}_{\xi}\hua,\nabla^{k-1}_{\xi} \hua)}{2}\, dx\\
&&=-\frac{n}{2}\int_{B_{\da}(0)} \hat{A}_\alpha^{(k-1)} (0) (\nabla^{k-1}_{\xi}\hua,\nabla^{k-1}_{\xi} \hua)\, dx+O\left(\int_{\partial B_{\da}(0)}|x|\cdot|\nabla^{k-1}_\xi\hua|^2\, d\sigma\right)\\
&&=-\frac{n}{2}\int_{B_{\da}(0)} \hat{A}_\alpha^{(k-1)} (0) (\nabla^{k-1}_{\xi}\hua,\nabla^{k-1}_{\xi} \hua)\, dx+o(\theta_\alpha)\hbox{ if }n\geq 2k+2
\end{eqnarray*}
Putting all these estimates together yields
\begin{eqnarray}
&&(-1)^{k-1}\int_{B_{\da}(0)}\nabla^{k-1}_{\xi}(\hat{A}_\alpha^{(k-1)}  \nabla^{k-1}_{\xi}\hua) T(\hua)\, dx\\
&&=-\int_{B_{\da}(0)} \hat{A}_\alpha^{(k-1)} (0) (\nabla^{k-1}_{\xi}\hua,\nabla^{k-1}_{\xi} \hua)\, dx+o(\theta_\alpha)\label{eq:457}\\
&&=-\ma^2\int_{B_{\da/\ma}(0)} \hat{A}_\alpha^{(k-1)} (0) (\nabla^{k-1}_{\xi}\va,\nabla^{k-1}_{\xi} \va)\, dx+o(\theta_\alpha)\hbox{ when }n\geq 2k+2
\end{eqnarray}
Using \eqref{ineq:59} and \eqref{lim:tua:c2k:bis} and arguing as in the proof of \eqref{eq:81}, we get that
\begin{eqnarray*}
&&(-1)^{k-1}\int_{B_{\da}(0)}\nabla^{k-1}_{\xi}(\hat{A}_\alpha^{(k-1)}  \nabla^{k-1}_{\xi}\hua) T(\hua)\, dx\\
&&=\left\{\begin{array}{ll}
-\ma^2\int_{\rn} \hat{A}_0^{(k-1)} (0) (\nabla^{k-1}_{\xi}U,\nabla^{k-1}_{\xi}U)\, dx+o(\ma^2) &\hbox{if }n>2k+2\\
O\left(\ma^2\ln\frac{1}{\ma}\right)&\hbox{if }n=2k+2
\end{array}\right.
\end{eqnarray*}
where we have used that $|\nabla^{k-1}_\xi U|\in L^2(\rn)$ when $n>2k+2$ due to \eqref{ineq:59} and \eqref{lim:tua:c2k:bis}. With the expression \eqref{eq:76} and using that $\phia(0)=1$ and $\hbox{Ric}_{g_{\alpha}}(\xa)=0$, using formula (2.7) in Mazumdar-V\'etois \cite{mv} (see \eqref{exp:mv}), we get that $(\tilde{\hbox{exp}}_{\xa}^{g_{\xa}})^\star A^{(k-1)}_{\ga}(0)=0$ and then $\hat{A}_0^{(k-1)} (0)=(A_0^{(k-1)}-A_g^{(k-1)})_{x_0}$ via the chart $\tilde{\hbox{exp}}_{x_0}$, and then
\begin{eqnarray*}
&&(-1)^{k-1}\int_{B_{\da}(0)}\nabla^{k-1}_{\xi}(\hat{A}_\alpha^{(k-1)}  \nabla^{k-1}_{\xi}\hua) T(\hua)\, dx\\
&&=\left\{\begin{array}{ll}
-\ma^2\int_{\rn} (A_0^{(k-1)}-A_g^{(k-1)})_{x_0} (\nabla^{k-1}_{\xi}U,\nabla^{k-1}_{\xi}U)\, dx+o(\ma^2) &\hbox{if }n>2k+2\\
O\left(\ma^2\ln\frac{1}{\ma}\right)&\hbox{if }n=2k+2
\end{array}\right.
\end{eqnarray*}
Let us deal with the case $n=2k+2$ and $u_0\equiv 0$. It follows from Proposition \ref{prop:cv:1} that for any sequence $(y_\alpha)_\alpha\in B_\delta(0)$ such that
$$\lim_{\alpha\to 0}y_\alpha=0\, ,\, \lim_{\alpha\to 0}\frac{|y_\alpha|}{\ma}=+\infty,$$
then 
\begin{equation*}
\hua(y_\alpha)=\left(c_0^{\frac{n-2k}{2}}K_0+o(1)\right)\frac{\ma^{\frac{n-2k}{2}}}{|y_\alpha|^{n-2k}}\hbox{ if }u_0\equiv 0.
\end{equation*} 
Therefore, using \eqref{eq:457} and arguing as in Ghoussoub-Mazumdar-Robert (p84 of \cite{gmr:memams} or p537 of \cite{gmr:jde}), we get that
\begin{eqnarray}
&&(-1)^{k-1}\int_{B_{\da}(0)}\nabla^{k-1}_{\xi}(\hat{A}_\alpha^{(k-1)}  \nabla^{k-1}_{\xi}\hua) T(\hua)\, dx\label{est:log}\\
&&=
-\ma^2\ln\frac{1}{\ma}\int_{\mathbb{S}^{n-1}}\phi(\sigma)\, d\sigma+ o\left(\ma^2\ln\frac{1}{\ma}\right)\hbox{ if }u_0\equiv 0\hbox{ and }n=2k+2\nonumber
\end{eqnarray}
where
$$\phi(\sigma):=\left(c_0^{\frac{n-2k}{2}}K_0 \right)^2 (A_0^{(k-1)}-A_g^{(k-1)})_{x_0} (\nabla^{k-1}_{\xi}|x|^{2k-n},\nabla^{k-1}_{\xi}|x|^{2k-n})$$
Putting all these estimates together yields
\begin{equation}\label{estim:IIalpha}
II_\alpha=\left\{\begin{array}{ll}
-\ma^2\int_{\rn} (A_0^{(k-1)}-A_g^{(k-1)})_{x_0} (\nabla^{k-1}_{\xi}U,\nabla^{k-1}_{\xi}U)\, dx+o(\ma^2) &\hbox{if }n>2k+2\\
O\left(\ma^2\ln\frac{1}{\ma}\right)&\hbox{if }n=2k+2\\
-\ma^2\ln\frac{1}{\ma}\int_{\mathbb{S}^{n-1}}\phi(\sigma)\, d\sigma+ o\left(\ma^2\ln\frac{1}{\ma}\right)&\hbox{if }u_0\equiv 0\hbox{ and }n=2k+2
\end{array}\right.
\end{equation}
\subsection{Estimate of $III_\alpha$.} We let $(R_{ijkl})$ (we omit the index $\alpha$ for convenience) be the coordinates of the Riemann tensor $Rm_{\ga}$ at $\xa$ in the chart $\tilde{\hbox{exp}}_{\xa}$. Since $\hbox{Ric}_{\ga}(\xa)=0$, we have that $R_{ijpq}=W_{ijpq}$ where $(W_{ijpq})$ denotes the coordinates of the Weyl tensor at $\xa$ in the chart $\tilde{\hbox{exp}}_{\xa}$. With Proposition \ref{prop:lap}, we  get that
\begin{eqnarray*}
&&III_\alpha=\int_{B_{\da}(0)}\left(\frac{k}{3}W_{p iq j}x^p x^q\partial_{ij}\Delta^{k-1}_\xi \hua \right) T(\hua)\, dx\\
&&+\int_{B_{\da}(0)}\left( -B_{q i j p}x^q\partial_{ijp}\Delta_\xi^{k-2}\hua+ C_{ijpq}\partial_{ijpq}\Delta_\xi^{k-3}\hua\right)T(\hua)\, dx\\
&&\int_{B_{\da}(0)}\left( (x^3)
\star\nabla^{2k}\hua  +(x^2)\star\nabla^{2k-1}\hua +(x)\star\nabla^{2k-2}u+D^{(2k-3)}\hua\right) T(\hua)\, dx
\end{eqnarray*}
where  $(x^l)$  denotes any tensor fields of the form $P(x) T(x)$ where $T$ is a tensor field and $P$ is an homogeneous polynomial of degree $l$ and

$$B_{q i j p}=\frac{k(k-1)}{3}\left(W_{q i p j} +W_{p i q j}\right)\hbox{ and }C_{ijpq}=\frac{2k(k-1)(k-2)}{9}\left(W_{piqj}+W_{qipj}\right)$$
and $D^{(2k-3)}$ is a differential operator of order $\leq\max\{ 2k-3,0\}$. A standard symmetry of the Weyl tensor is that $W_{ijpq}=-W_{jipq}$, so that the terms involving $B_{ijpq}$ and $C_{ijpq}$ vanish. With \eqref{ineq:60}, the remainder terms are neglictible so that
\begin{equation}\label{eq:III:a}
 III_\alpha=\frac{k}{3}W_{p iq j}\int_{B_{\da}(0)} x^p x^q(\partial_{ij}\Delta^{k-1}_\xi \hua)  T(\hua)\, dx+o(\theta_\alpha)\hbox{ for }n\geq 2k+2.
\end{equation}
When $n>2k+2$, with the change of variable $x=\ma y$, arguing as in the proof of \eqref{eq:81}, using \eqref{lim:tua:c2k:bis} and \eqref{ineq:59}, and the Definition \ref{def:tensor:B} of $\hbox{Weyl}_g\otimes B$, we get that
\begin{eqnarray*}
 III_\alpha&=&\ma^2\frac{k}{3}W_{p iq j}\int_{B_{\da/\ma}(0)} y^p y^q(\partial_{ij}\Delta^{k-1}_\xi \va ) T(\va)\, dx+o(\ma^2)\\
&=&\ma^2 \hbox{Weyl}_g\otimes B+o(\ma^2)\hbox{ for }n>2k+2.
\end{eqnarray*}
Note that since $|\nabla^lU(x)|\leq C(1+|x|)^{2k-n-l}$ for $l\in\{0,...,2k\}$ and $x\in\rn$, then for $n>2k+2$, the integral  makes sense in the definition of $\hbox{Weyl}_g\otimes B$. We now deal with the case $n=2k+2$. The pointwise bound \eqref{ineq:59} in \eqref{eq:III:a} yield $III_\alpha=O\left(\ma^2\ln\frac{1}{\ma}\right)$ for $n= 2k+2$. When $u_0\equiv 0$, $n=2k+2$, as in \eqref{est:log}, we get
\begin{eqnarray*}
III_\alpha=  \ma^2\ln\frac{1}{\ma}\frac{k}{3}
\left(K_0c_0^{\frac{n-2k}{2}} \right)^2L+o\left(\ma^2\ln\frac{1}{\ma}\right)
\end{eqnarray*}
with $L:=W_{p iq j}\int_{\mathbb{S}^{n-1}} y^p y^q(\partial_{ij}\Delta^{k-1}_\xi |x|^{2k-n})  T(|x|^{2k-n})\, dx$. Computing explicitly the integral on the sphere and using again the symmetry properties of the Weyl tensor, we get that $L=0$, so that $III_\alpha=  o\left(\ma^2\ln\frac{1}{\ma}\right)$. Finally, we get that
\begin{equation}\label{estim:IIIalpha}
III_\alpha=\left\{\begin{array}{cc}
\ma^2 \hbox{Weyl}_g\otimes B+o(\ma^2)&\hbox{when }n>2k+2,\\
 O\left(\ma^2\ln\frac{1}{\ma}\right)&\hbox{when }n=2k+2,\\
o\left(\ma^2\ln\frac{1}{\ma}\right)&\hbox{when }n=2k+2\hbox{ and }u_0\equiv 0.
\end{array}\right.
\end{equation}

\subsection{Preliminary estimate of $IV_\alpha$.} In this situation, we take $\da=\sqrt{\ma}$. It follows from Proposition \ref{prop:cv:1} that
\begin{equation*}
\lim_{\alpha\to 0} w_\alpha(X)= u_0(x_0)+\Gamma_1(X)\hbox{ for all }X\in\rn\setminus \{0\}
\end{equation*} 
where $\wa(X):=\hua(\sqrt{\ma} X)$ and $\Gamma_1(X):=\frac{K_0 c_0^{\frac{n-2k}{2}}}{|X|^{n-2k}}$ for all $X\in\rn\setminus\{0\}$. Moreover, the convergence holds in $C^{2k}_{loc}(\rn-\{0\})$. Therefore, we get that
\begin{eqnarray*}
IV_\alpha&=&\int_{\partial B_{\sqrt{\ma}}(0)}\left((x,\nu)\left(\frac{(\Delta_\xi^\frac{k}{2}\hua)^2}{2}-\frac{\phia^{-\ea}|\hua|^{\crit-\ea}}{\crit-\ea}\right)+S(\hua)\right)\, d\sigma\\
&=&\ma^{\frac{n-2k}{2}}\int_{\partial B_{1}(0)}\left((x,\nu)\left(\frac{(\Delta_\xi^\frac{k}{2}\wa)^2}{2}-\frac{\phia^{-\ea}(\sqrt{\ma}x)\ma^{k}|\wa|^{\crit-\ea}}{\crit-\ea}\right)+S(\wa)\right)\, d\sigma\\
&=&\ma^{\frac{n-2k}{2}}\int_{\partial B_{1}(0)}\left((x,\nu) \frac{(\Delta_\xi^\frac{k}{2} \Gamma_1)^2}{2}  +S(u_0(x_0)+\Gamma_1)+o(1)\right)\, d\sigma\\
&=&\ma^{\frac{n-2k}{2}}\int_{\partial B_{1}(0)}\left((x,\nu) \frac{(\Delta_\xi^\frac{k}{2} \Gamma_1)^2}{2}  +S( \Gamma_1)\right)\, d\sigma\\
&&-\frac{n-2k}{2}\ma^{\frac{n-2k}{2}}\int_{\partial B_{1}(0)}\partial_\nu\Delta_\xi^{k-1}\Gamma_1 u_0(x_0)\, d\sigma+o(\ma^{\frac{n-2k}{2}})
\end{eqnarray*}
Since $-\partial_\nu \Delta_\xi^{k-1}(C_{n,k}|x|^{2k-n})=\omega_{n-1}^{-1}|x|^{1-n}$ (see \eqref{calc:delta:gamma}), we get that
\begin{eqnarray*}
IV_\alpha&=&\ma^{\frac{n-2k}{2}}\int_{\partial B_{1}(0)}\left((x,\nu) \frac{(\Delta_\xi^\frac{k}{2} \Gamma_1)^2}{2}  +S( \Gamma_1)\right)\, d\sigma\\
&&+\frac{n-2k}{2}c_0^{\frac{n-2k}{2}}\left(\int_{\rn}|U|^{\crit-2}U\, dX\right) u_0(x_0)\ma^{\frac{n-2k}{2}} +o(\ma^{\frac{n-2k}{2}})
\end{eqnarray*}
We now deal with the remaining boundary integral. Taking the identity of Proposition \ref{prop:poho} for $f\equiv 0$, $u\equiv\Gamma_1$ so that $\Delta_\xi^k u=0$ and $\Omega=B_1(0)-B_r(0)$ for $0<r<1$, we get that $D_r=D_1$ for all $0<r<1$ where
$$D_r:=\int_{\partial B_{r}(0)}\left((x,\nu) \frac{(\Delta_\xi^\frac{k}{2} \Gamma_1)^2}{2}  +S( \Gamma_1)\right)\, d\sigma.$$
A quick computation yields  the existence of $D_{k,n}\in\rr$ such that
$$\left(|x| \frac{(\Delta_\xi^\frac{k}{2} \Gamma_1)^2}{2}  +S( \Gamma_1)\right)=D_{k,n}|x|^{2k+1-2n}\hbox{ for all }x\in \rn\setminus\{0\},$$
so that $D_r=D_{k,n}\omega_{n-1}r^{2k-n}$ for all $0<r<1$. Since this quantity is independent of $r$, we get that $D_{k,n}=0$, so that $D_1=0$ and then
\begin{equation}\label{estim:IValpha:1}
IV_\alpha=\frac{n-2k}{2}c_0^{\frac{n-2k}{2}}\left(\int_{\rn}|U|^{\crit-2}U\, dX\right) u_0(x_0)\ma^{\frac{n-2k}{2}} +o(\ma^{\frac{n-2k}{2}})\hbox{ if }u_0\not\equiv 0.
\end{equation}
Let us deal with the case $u_0\equiv 0$, in this case, $\da=\delta>0$. Using Proposition \ref{prop:cv:2} and that $n=2k+1$, we get that
\begin{equation}\label{estim:IValpha:2}
IV_\alpha=O\left(\ma^{n-2k}\right)\hbox{ when }u_0\equiv 0.
\end{equation}

\subsection{Proof of Theorem \ref{th:general} and \ref{th:no:unull} for $n\geq 2k+2$.} We put the expressions \eqref{estim:Ialpha}, \eqref{estim:IIalpha}, \eqref{estim:IIIalpha}, \eqref{estim:IValpha:1} and \eqref{estim:IValpha:2} into \eqref{pohi:IaIV}. On the one hand, this yields $\ea=O(\ma^t\ln\frac{1}{\ma})$ for some $t>0$, and therefore $c_0=1$. On the other hand, separating the different dimensions, we get the asymptotics of Theorems \ref{th:general} and \ref{th:no:unull} for $n\geq 2k+2$.

\subsection{Proof of Theorem \ref{th:general} for $n= 2k+1$.} Using the control \eqref{ineq:59}, we get immediately that $II_\alpha=O(\ma^{n-2k})$ and $III_\alpha=O(\ma^{n-2k})$ when $n=2k+1$. We put these upper bounds together with the expressions \eqref{estim:Ialpha} and \eqref{estim:IValpha:1} into \eqref{pohi:IaIV}. On the one hand, we get $c_0=1$. On the other hand, we get the limit of Theorem \ref{th:general} for $n= 2k+1$. 

\subsection{Proof of Theorem  \ref{th:no:unull} when   $n=2k+1$ and $u_0\equiv 0$.}\label{subsec:mass} It follows from Proposition \ref{prop:cv:2} and the definition of $\hua$ that
\begin{equation}\label{lim:hua:G}
\lim_{\alpha\to 0}\frac{\hua(x)}{\ma^{\frac{n-2k}{2}}} =\lambda \hat{G}(x)\hbox{ for all }x\in B_{2\delta}(0)-\{0\}
\end{equation} 
and the convergence holds in $C^{2k}_{loc}(B_{2\delta}(0)-\{0\})$ where 
\begin{equation*}
\lambda:=c_0^{\frac{n-2k}{2}}\int_{\rn}|U|^{\crit-2}U\, dX\hbox{ and }\hat{G}(x):= \frac{G_0(x_0,\tilde{\hbox{exp}}_{x_0}^{g_{x_0}}(x))}{\varphi_{x_0}(\tilde{\hbox{exp}}_{x_0}^{g_{x_0}}(x))}\hbox{ for all }x\in B_{2\delta}(0)-\{0\}
\end{equation*}
and $\varphi_{x_0}$ is as in \eqref{def:phi} and $\tilde{\hbox{exp}}_{x_0}^{g_{x_0}}$ is an exponential map with respect to $g_{x_0}$ and $G_0$ is the Green function for the operator $P_0:=\Delta_g^k+\sum_{i=0}^{k-1} (-1)^i\nabla^i(A^{(i)}_0\nabla^i)$. With the conformal change of metric as in \eqref{eq:hua}, we get that 
\begin{equation}\label{eq:hG}
\Delta_{g_{x_0}}^k\hat{G}+\sum_{i=0}^{k-1} (-1)^i\nabla^i_{g_{x_0}}(\hat{A}_0^{(i)}  \nabla^i_{g_{x_0}}\hat{G})=0\hbox{ in }B_{2\delta}(0)-\{0\}.
\end{equation}
With \eqref{lim:hua:G}, we get that
\begin{eqnarray}
\frac{IV_\alpha}{\ma^{n-2k}}&=&\ma^{2k-n}\int_{\partial B_{\delta}(0)}\left((x,\nu)\left(\frac{(\Delta_\xi^\frac{k}{2}\hua)^2}{2}-\frac{\phia^{-\ea}|\hua|^{\crit-\ea}}{\crit-\ea}\right)+S(\hua)\right)\, d\sigma\nonumber\\
&=&\lambda^2\int_{\partial B_{\delta}(0)}\left((x,\nu) \frac{(\Delta_\xi^\frac{k}{2}\hat{G})^2}{2}+S(\hat{G})\right)\, d\sigma+o(1)\label{eq:77}
\end{eqnarray}
as $\alpha\to 0$. It follows from \eqref{ineq:59} that $|\nabla^l\hua|(x)\leq C\ma^{\frac{n-2k}{2}}|x|^{2k-n-l}$ for all $x\in B_{2\delta}(0)-\{0\}$ and $0\leq l\leq  2k$. Using also \eqref{lim:hua:G} and $n=2k+1$, we get that
\begin{equation}\label{eq:766}
\lim_{\alpha\to 0}\frac{III_\alpha-II_\alpha}{\ma^{n-2k}}= \lambda^2 \int_{B_{\delta}(0)}\left(\Delta_\xi^k\hat{G}  -\Delta_{g_{x_0}}^k \hat{G} -\sum_{i=0}^{k-1} (-1)^i\nabla^i_{g_{x_0}}\left(\hat{A}_0^{(i)}  \nabla^i_{g_{x_0}}\hat{G}\right)\right) T(\hat{G})\, dx.
\end{equation}
Using \eqref{eq:hG}, Proposition \ref{prop:lap} and \eqref{ineq:59}, we get that
\begin{eqnarray*}
\Delta_\xi^k\hat{G}&=& \Delta_\xi^k\hat{G}  -\Delta_{g_{x_0}}^k \hat{G} -\sum_{i=0}^{k-1} (-1)^i\nabla^i_{g_{x_0}}(\hat{A}_0^{(i)}  \nabla^i_{g_{x_0}}\hat{G})\\
&=& O\left(|x|^2 |\nabla^{2k}\hat{G}| +|x|\cdot |\nabla^{2k-1}\hat{G}|+\sum_{i=0}^{2k-2}|\nabla^{i}\hat{G}|\right)=O\left( |x|^{2-n}\right).
\end{eqnarray*}
As one checks, $T(\hat{G})=O(|x|^{2k-n})$. We put \eqref{estim:Ialpha}, \eqref{eq:77} and \eqref{eq:766} into \eqref{pohi:IaIV}. On the one hand, this yields $\ea=O(\ma)$ and then $c_0=\lim_{\alpha\to 0}\ma^{\ea}=1$. On the other hand, we get that 
\begin{eqnarray*}
\lim_{\alpha\to 0}\frac{\ea}{\ma}&=&\lambda^2\frac{ C_\delta}{\frac{n}{(\crit)^2}\int_{\rn} |U|^{\crit }\, dX}
\end{eqnarray*}
where
\begin{eqnarray*}
C_\delta&:=&\int_{B_{\delta}(0)}\left(\Delta_\xi^k\hat{G}  \right) T(\hat{G})\, dx-\int_{\partial B_{\delta}(0)}\left((x,\nu) \frac{(\Delta_\xi^\frac{k}{2}\hat{G})^2}{2}+S(\hat{G})\right)\, d\sigma
\end{eqnarray*}
Applying Proposition \ref{prop:poho} to $u\equiv\hat{G}$, $f\equiv 0$ and $\Omega=B_\delta(0)\setminus B_r(0)$ for $0<r<\delta$, we get that $C_\delta=C_r$ and then $C_\delta$ is independent of the choice of $\delta\to 0$. Since $\Delta_\xi^k\hat{G}=O\left( |x|^{2-n}\right)$ and $T(\hat{G})=O(|x|^{2k-n})$, using that $n=2k+1$, we get that
$$\int_{B_{\delta}(0)}\left(\Delta_\xi^k\hat{G}  \right) T(\hat{G})\, dx=o(1)\hbox{ as }\delta\to 0.$$ 

\smallskip\noindent We now justify and use the mass defined in \eqref{def:mass:intro}. The mass at $x_0$ is defined as follows. Let us fix $\eta\in C^\infty(M)$ such that $\eta(x)=1$ for $d_g(x,x_0)<i_g(M)/3$ and $\eta(x)=0$ for $d_g(x,x_0)>i_g(M)/2$. It follows from the construction of the Green's function that there exists $\beta_{x_0}\in C^0(M)\cap H_{2k}^p(M)$ for all $p<\frac{n}{n-2}$ such that
$$G_0(x,x_0)=\eta(x)\frac{C_{n,k}}{d_g(x,x_0)^{n-2k}}+\beta_{x_0}(x)\hbox{ for all }x\in M-\{x_0\}.$$
Note that we have that $P_0\beta_{x_0}=-P_0(\eta C_{n,k}d_g(\cdot,x_0)^{2k-n})=O(d_g(\cdot,x_0)^{2-n})$. We define the mass as
$$m_{P_0}(x_0):=\beta_{x_0}(x_0),$$
which is coherent with \eqref{def:mass:intro}. Since $G_0$ is the Green's function for $P_0$ on $(M,g)$, using the conformal change of metric $g_{x_0}=\varphi_{x_0}^{\frac{4}{n-2k}}g$, as one checks, the function 
$$(x,y)\mapsto H_0(x,y):=\frac{G_0(x,y)}{\varphi_{x_0}(x)\varphi_{x_0}(y)}$$
is the Green's function of the operator $u\mapsto P'_0:=\varphi_{x_0}^{1-\crit}P_0(\varphi_{x_0}u)=\Delta_{g_{x_0}}^k+\hbox{lot}$ on $(M, g_{x_0})$. Since $\varphi_{x_0}(x_0)=1$ and $\nabla\varphi_{x_0}(x_0)=0$, we have that $\hat{G}(x)=H_0(x_0,\tilde{\hbox{exp}}_{x_0}^{g_{x_0}}(x))$ for all $x\in B_{2\delta}(0)-\{0\}$, and it follows from elliptic theory that there exists $\hat{\beta}\in C^0(B_{2\delta}(0))$ such that
\begin{equation*}
\hat{G}(x)= \frac{C_{n,k}}{|x|^{n-2k}}+\hat{\beta}(x)\hbox{ for all }x\in B_{2\delta}(0)-\{0\},
\end{equation*}
with $\hat{\beta}(0)=m_{P_0}(x_0)$, $ |\nabla \hat{\beta}(x)|\leq C(1+|\ln |x||)$ and $|\nabla^l \hat{\beta}(x)|\leq C |x|^{1-l}$ for $2\leq l<2k$ and $x\in B_{2\delta}(0)-\{0\}$. Arguing as in the proof of \eqref{estim:IValpha:1} for $IV_\alpha$, we get that
$$\int_{\partial B_{\delta}(0)}\left((x,\nu) \frac{(\Delta_\xi^\frac{k}{2}\hat{G})^2}{2}+S(\hat{G})\right)\, d\sigma=\frac{n-2k}{2}\hat{\beta}(0)+o(1)\hbox{ as }\delta\to 0.$$
Therefore, we get that
\begin{eqnarray*}
\lim_{\alpha\to 0}\frac{\ea}{\ma}&=&-\lambda^2\frac{(n-2k) m_{p_0}(x_0) }{\frac{2n}{(\crit)^2}\int_{\rn} |U|^{\crit }\, dX}
\end{eqnarray*}
which yields the case $n=2k+1$ and $u_0\equiv 0$ of Theorem \ref{th:no:unull}. 

\section{Simplification for radial bubbles}\label{sec:extra}
\begin{proposition} We fix $n,k\in\nn$ be such that $2\leq 2k<n$. Let $U_{1,0}$ be defined as in \eqref{def:sol:pos} and let $A=(A^{i_1...i_{k-1}, j_1...j_{k-1}})$ be a $(2(k-1),0)-$tensor on $\rn$ which is symmetric in the sense that 
$A^{i_1...i_{k-1}, j_1...j_{k-1}}=A^{ j_1...j_{k-1},i_1...i_{k-1}}$ for all $i_1,....,i_{k-1}, j_1,..., j_{k-1}\in \{1,...,n\}$. We assume that $n>2k+2$. Then 
\begin{equation}
\int_{\rn} A(\nabla^{k-1} U_{1,0},\nabla^{k-1} U_{1,0})\, dx=\frac{\int_{\rn}(\Delta_\xi^{\frac{k-1}{2}} U_{1,0})^2\, dx}{n^{k-1}}\hbox{Tr}(A)
\end{equation}
where all the integrals make sense and $\hbox{Tr}(A)=\hbox{Tr}_{x_0}(A)$ is defined in \eqref{def:Tr:A}. 
 \end{proposition}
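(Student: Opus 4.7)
The plan is to show that the tensor
\begin{equation*}
M_{i_1\cdots i_{k-1},\,j_1\cdots j_{k-1}}:=\int_{\rn}\bigl(\partial^{k-1}_{i_1\cdots i_{k-1}}U_{1,0}\bigr)\bigl(\partial^{k-1}_{j_1\cdots j_{k-1}}U_{1,0}\bigr)\,dx
\end{equation*}
is, as a $(0,2(k-1))$-tensor on $\rn$, both totally symmetric in all $2(k-1)$ indices and $O(n)$-invariant. By classical invariant theory, the space of $O(n)$-invariant totally symmetric $(0,2p)$-tensors is one-dimensional and generated by the symmetrized Kronecker tensor $\delta^{S}_{i_1\cdots i_{2p}}=\frac{1}{(2p)!}\sum_{\sigma\in\mathcal{S}_{2p}}\delta_{i_{\sigma(1)}i_{\sigma(2)}}\cdots\delta_{i_{\sigma(2p-1)}i_{\sigma(2p)}}$. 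Hence $M=\lambda\,\delta^{S}$ for a unique scalar $\lambda$, and then for any $A$ with the block-swap symmetry of the proposition, $\int_{\rn}A(\nabla^{k-1}U_{1,0},\nabla^{k-1}U_{1,0})\,dx=A\cdot M=\lambda\,A\cdot\delta^{S}=\lambda\,\hbox{Tr}(A)$, the last equality following immediately by expanding $\delta^{S}$ and using the total symmetry of $A^{S}$ (each of the $(2k-3)!!$ pairings contributes $\hbox{Tr}(A)$, which is then divided by $(2k-3)!!$).

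The total symmetry of $M$ is the crux. Moving all $k-1$ derivatives from one copy of $U_{1,0}$ to the other by integration by parts yields
\begin{equation*}
M_{I,J}=(-1)^{k-1}\int_{\rn}U_{1,0}\,\partial^{2(k-1)}_{I\cup J}U_{1,0}\,dx,
\end{equation*}
and the right-hand side is invariant under any permutation of the $2(k-1)$ combined indices by Schwarz's theorem. The boundary integrals at infinity vanish at each step: since $|\nabla^{j}U_{1,0}(x)|\leq C(1+|x|)^{2k-n-j}$, the worst surface contribution on $\partial B_R$ is of order $R^{n-1}\cdot R^{2k-n}\cdot R^{2k-n-(2k-3)}=R^{2k+2-n}\to 0$, which is exactly where the hypothesis $n>2k+2$ is used (and is tight). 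The $O(n)$-invariance of $M$ is immediate from the radiality of $U_{1,0}$.

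It remains to identify $\lambda$. I would contract against the ``straight-pair'' tensor $A_0$ whose components are a suitable product of Kronecker deltas tailored so that $A_0(\nabla^{k-1}U_{1,0},\nabla^{k-1}U_{1,0})=(\Delta_\xi^{(k-1)/2}U_{1,0})^{2}$ pointwise: one pairs indices within each block when $k-1$ is even, and for $k-1$ odd one additionally joins a single index across the two blocks to match the convention $\Delta_\xi^{k/2}=\nabla\Delta_\xi^{(k-1)/2}$ of Definition \ref{def:scc}. This gives
\begin{equation*}
\int_{\rn}(\Delta_\xi^{(k-1)/2}U_{1,0})^{2}\,dx=A_0\cdot M=\lambda\,\hbox{Tr}(A_0)=\lambda\,n^{k-1},
\end{equation*}
the last equality being a routine combinatorial computation of the trace of the symmetrized product of Kronecker deltas. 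Solving for $\lambda$ yields the stated formula. The delicate step is the first one: justifying the many integrations by parts and tracking the decay carefully, for which $n>2k+2$ is precisely the required integrability threshold.
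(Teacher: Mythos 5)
Your strategy is sound and is essentially a more conceptual packaging of the paper's own argument: the paper computes the moment tensor $M_{I,J}:=\int_{\rn}\partial_I U_{1,0}\,\partial_J U_{1,0}\,dx$ explicitly through the radial-derivative and sphere-moment formulas of Mazumdar--V\'etois, whereas you identify it abstractly as an $O(n)$-invariant totally symmetric tensor, hence a multiple $\lambda\,\delta^S$ of the symmetrized Kronecker tensor. Your justification of the total symmetry by integration by parts, the decay bookkeeping showing that $n>2k+2$ is exactly the threshold, and the identity $A\cdot\delta^S=\hbox{Tr}(A)$ are all correct.

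The gap is in the normalization. Writing $p:=k-1$, your tensor $A_0$ is a product of $p$ Kronecker deltas over a perfect matching of the $2p$ slots, so its full symmetrization is $\delta^S$ itself and
\begin{equation*}
\hbox{Tr}(A_0)=\hbox{Tr}(\delta^S)=\frac{1}{(2p-1)!!}\sum_{\pi}\delta^{i_1i_2}\cdots\delta^{i_{2p-1}i_{2p}}\,\delta^{\pi}_{i_1\cdots i_{2p}}=\frac{n(n+2)\cdots(n+2p-2)}{(2p-1)!!},
\end{equation*}
where $\pi$ runs over the $(2p-1)!!$ perfect matchings and $\delta^\pi$ is the corresponding product of deltas (the sum of full contractions is the Wick count $n(n+2)\cdots(n+2p-2)$). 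This equals $n^{p}$ only for $p\le 1$. Hence your argument, carried out correctly, yields $\lambda=\frac{(2p-1)!!}{n(n+2)\cdots(n+2p-2)}\int_{\rn}(\Delta_\xi^{p/2}U_{1,0})^2\,dx$ rather than $n^{-p}\int_{\rn}(\Delta_\xi^{p/2}U_{1,0})^2\,dx$. One can check within your own framework that $n^{-p}$ cannot be the right constant for $k=3$: with $M=\lambda\delta^S$ and $A^{i_1i_2,j_1j_2}=\delta_{i_11}\delta_{i_21}\delta_{j_11}\delta_{j_21}$, the left-hand side is $M_{11,11}=\lambda$ and $\hbox{Tr}(A)=1$, while $\int_{\rn}(\Delta_\xi U_{1,0})^2dx=\sum_{i,j}M_{ii,jj}=\lambda\sum_{i,j}\delta^S_{iijj}=\lambda\,\frac{n(n+2)}{3}$, so the stated identity would force $n(n+2)=3n^2$. (The same slip occurs in the paper's own normalization step, where taking ``$A_{i_1\cdots i_{2p}}=1$'' tacitly identifies $\int(\sum_I\partial_I U)^2$ with $\int|\nabla^p U|^2$; these agree only for $p\le1$.) So you should either correct the value of $\hbox{Tr}(A_0)$ and state the identity with the constant $\frac{(2k-3)!!}{n(n+2)\cdots(n+2k-4)}$, or restrict to $k\le 2$, where $n^{k-1}$ is indeed correct.
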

 \begin{proof} For convenience, we define $p:=k-1$ and $U:=U_{1,0}$. We have that $|\nabla^pU|=|\nabla^{k-1}U_{1,0}|\in L^2(\rn)$ since $n>2k+2$. Since $\nabla^pU$ is symmetric, we get that
$$\int_{\rn} (\sigma.A)(\nabla^{p} U,\nabla^{p} U)\, dx=\int_{\rn} A(\nabla^{p} U,\nabla^{p} U)\, dx$$
for all $\sigma \in {\mathcal S}_{2p}$ where $(\sigma.A)_{l_1...l_{2p}}=A_{l_{\sigma(1)}...l_{\sigma(2p)}}$. Therefore, we have that
$$\int_{\rn}A^S(\nabla^{p} U,\nabla^{p} U)\, dx=\int_{\rn} A(\nabla^{p} U,\nabla^{p} U)\, dx,$$
where $A^S$ is as in \eqref{def:Tr:A} is fully symmetric in the sense that $\sigma.A^S=A^S$ for all $\sigma\in {\mathcal S}_{2p}$. It follows from formula (2.40) in Mazumdar-V\'etois \cite{mv} that for all $i_1,...,i_{2p}\in \{1,...,n\}$, since $U$ is radially symmetric, we have that
$$\partial_{i_1...i_{2p}}U=\sum_{m=0}^p \frac{2^{2p-2m}}{m!(2p-2m)!}\partial_r^{2p-m}U \sum_{\sigma\in{\mathcal S}_{2p}}\delta_{i_{\sigma(1)}i_{\sigma(2)}}...\delta_{i_{\sigma(2m-1)}i_{\sigma(2m)}}x_{i_{\sigma(2m+1)}}...x_{i_{\sigma(2p)}}.$$
Moreover, formula (2.46) in \cite{mv} yields the existence of  $\beta(n,q)\in\rr$ such that
$$\int_{\snmoinsun}x_{i_{1}}...x_{i_{2q}}\, d\hbox{Vol}_{\snmoinsun}=\beta(n,q)\sum_{\sigma\in{\mathcal S}_{2q}}\delta_{i_{\sigma(1)}i_{\sigma(2)}}...\delta_{i_{\sigma(2q-1)}i_{\sigma(2q)}}$$
for $q\leq p$. Using the full symmetry of $A^S$, we then get that 
 there exists $C(p,n)\in\rr$ such that $\int_{\rn} A^S(\nabla^{p} U,\nabla^{p} U)\, dx=C(p,n)\hbox{Tr}(A^S)$ for any fully symmetric tensor $A^S$ where the trace is defined in \eqref{def:Tr:A}. Taking $A_{i_1...i_{2p}}=1$ for all index, we get that the value of $C(p,n)$. An extra integration by parts yields
\begin{eqnarray*}
\int_{\rn} A(\nabla^{p} U,\nabla^{p} U)\, dx
&=&\frac{\int_{\rn}|\nabla^p U|^2\, dx}{n^p}\hbox{Tr}(A^S)=\frac{\int_{\rn}(\Delta_\xi^{\frac{p}{2}} U)^2\, dx}{n^p}\hbox{Tr}(A^S)
\end{eqnarray*}\end{proof}

\medskip\noindent We now simplify expression \eqref{estim:1} and \eqref{estim:2} $U:=U_{1,0}$ is radial. Due to the radial symmetry, the term involving the Weyl tensor vanishes and we get for $n\geq 2k+4$
\begin{eqnarray*}
\lim_{\alpha\to 0} \frac{\ea}{\ma^2}\frac{n^k}{(\crit)^2}\frac{\int_{\rn} U_{1,0}^{\crit}\, dx}{\int_{\rn}(\Delta_\xi^{\frac{k-1}{2}} U_{1,0})^2\, dx}&=& 
Tr_{x_0}\left(A_0^{(k-1)}-A_{g}^{(k-1)}\right)-\kappa_k u_0(x_0){\bf 1}_{n=2k+4}
\end{eqnarray*}
where $\kappa_k$ is defined in \eqref{def:kappa}. This expression also holds when $u_0\equiv 0$ and $n>2k+2$.

\medskip\noindent We now deal with a particular case. We let $(P_\alpha)_{\alpha>0}\to P_0$ be of type (SCC). We assume that there exists a family $(h_\alpha)_\alpha\in C^{k-1,\theta}(M)$ and a family of $(2,0)-$tensors $(T_\alpha)_\alpha$ of class $C^{2(k-2),\theta}$ such that
$$ P_\alpha:=\Delta_g^k+\Delta_g^{\frac{k-1}{2}}\left(h_\alpha\Delta_g^{\frac{k-1}{2}}\right)-\Delta_g^{k-2}((T_\alpha,\nabla^2))+\hbox{lot for }\alpha>0\hbox{ or }=0$$
and $h_0\in C^{k-1,\theta}(M)$ such that $\lim_{\alpha\to 0}h_\alpha=h_0$ in $C^{k-1,\theta}(M)$ and $T_0$ is a $(2,0)-$tensor of class $C^{2(k-2),\theta}$ such that $\lim_{\alpha\to 0}T_\alpha=T_0$ in $C^{2(k-2),\theta}$. When $k=1$, we take $T_\alpha\equiv 0$. Then, taking the formalism of Definition \ref{def:scc}, \eqref{eq:hua} writes
\begin{eqnarray}\label{eq:hua:bis}
\Delta_{\ga}^k\hua+\sum_{i=0}^{k-1} (-1)^i\nabla^i_{\ga}(\hat{A}_\alpha^{(i)}  \nabla^i_{\ga}\hua)=\phia^{-\ea}|\hua|^{\crit-2-\ea}\hua\hbox{ in }B_\delta(0).
\end{eqnarray}
for some symmetric $(0, 2i)-$tensor $\hat{A}^{(i)}_\alpha$ for all $i=0,...,k-1$ and\begin{equation*}
(-1)^{k-1}\nabla^{k-1}_{\ga}(\hat{A}_\alpha^{(k-1)}  \nabla^{k-1}_{\ga}\hua)= \Delta_{\ga}^{\frac{k-1}{2}}(\hat{h}_\alpha \Delta_{\ga}^{\frac{k-1}{2}})-\Delta_{\ga}^{k-2}((\hat{T}_\alpha,\nabla_{\ga}^2))+\hbox{lot}
\end{equation*}
where $\hat{h}_\alpha:=\phia^{-\frac{4}{n-2k}}h_\alpha\circ \tilde{\hbox{exp}}_{\xa}$ and $\hat{T}_\alpha:=\phia^{-\frac{8}{n-2k}}\tilde{\hbox{exp}}_{\xa}^\star(T_\alpha-T_g)+\tilde{\hbox{exp}}_{\xa}^\star T_{\ga}$ and, see formula (2.7) in \cite{mv}, 
\begin{equation}\label{exp:mv}
T_g:=\frac{k(n-2)}{4(n-1)}R_g g-\frac{2k(k-1)(k+1)}{3(n-2)}\left(\hbox{Ric}_g-\frac{R_g}{2(n-1)}g\right).
\end{equation}
Using \eqref{ineq:60}, Lemma \ref{lem:deltap} and integrating by parts, we have that
\begin{eqnarray*}
&&\int_{B_{\da}(0)}\Delta_{\ga}^{\frac{k-1}{2}}(\hat{h}_\alpha \Delta_{\ga}^{\frac{k-1}{2}}\hua) T(\hua)\, dx=\int_{B_{\da}(0) }\hat{h}_\alpha \Delta_{\xi}^{\frac{k-1}{2}}\hua \Delta_{\xi}^{\frac{k-1}{2}}T(\hua)\, dx+o(\ma^2)\\
&&=\int_{B_{\da}(0) }\hat{h}_\alpha \Delta_{\xi}^{\frac{k-1}{2}}\hua \left(\frac{n-2}{2}\Delta_{\xi}^{\frac{k-1}{2}}\hua+x^i\partial_i\Delta_{\xi}^{\frac{k-1}{2}}\hua\right)\, dx+o(\ma^2)\\
&&=-\int_{B_{\da}(0) }\hat{h}_\alpha(\Delta_{\xi}^{\frac{k-1}{2}}\hua)^2 \, dx+o(\ma^2)\hbox{ for }n>2k+2.
\end{eqnarray*}
Using \eqref{ineq:59} and \eqref{lim:tua:c2k:bis} and arguing as in the proof of \eqref{eq:81}, we get that
\begin{equation*}
 \int_{B_{\da}(0)}\Delta_{\ga}^{\frac{k-1}{2}}(\hat{h}_\alpha \Delta_{\ga}^{\frac{k-1}{2}}\hua) T(\hua)\, dx \\
=
-\ma^2h_0(x_0)\int_{\rn} (\Delta_{\xi}^{\frac{k-1}{2}}U_{1,0})^2 \, dx+o(\ma^2)
\end{equation*}
when $n>2k+2$ where we have used that $(\Delta^{(k-1)/2}_\xi U_{1,0})\in L^2(\rn)$ due to \eqref{ineq:59} and \eqref{lim:tua:c2k:bis}. Using \eqref{ineq:60}, Lemma \ref{lem:deltap} and integrating by parts, we have that
\begin{eqnarray*}
&&\int_{B_{\da}(0)} -\Delta_{\tga}^{k-2}((\hat{T}_\alpha,\nabla^2\hua)) T(\hua)\, dx =\int_{B_{\da}(0)} -\Delta_{\xi}^{\frac{k-2}{2}}((\hat{T}_\alpha,\nabla^2\hua)) \Delta_{\xi}^{\frac{k-2}{2}}(T(\hua))\, dx \\
&&=\int_{B_{\da}(0)} -(\hat{T}_\alpha,\nabla^2\Delta_{\xi}^{\frac{k-2}{2}}\hua) \left(\frac{n-4}{2}\Delta_{\xi}^{\frac{k-2}{2}}\hua+x^p\partial_p\Delta_{\xi}^{\frac{k-2}{2}}\hua\right)\, dx
\end{eqnarray*}
We now set $V_\alpha:=\Delta_{\xi}^{\frac{k-2}{2}}\hua$. Using Lemma 2 in \cite{mesmar-robert}, we get that
\begin{eqnarray*}
&&\int_{B_{\da}(0)} -\Delta_{\tga}^{k-2}((\hat{T}_\alpha,\nabla^2\hua)) T(\hua)\, dx \\
&&=\int_{B_{\da}(0)} -(\hat{T}_\alpha,\nabla^2V_\alpha) \left(\frac{n-4}{2}V_\alpha+x^p\partial_pV_\alpha\right)\, dx\\
&&=-\frac{n-4}{2}\int_{B_{\da}(0)} \hat{T}_\alpha^{ij}V_\alpha \partial_{ij}V_\alpha\, dx-\int_{B_{\da}(0)} \hat{T}_\alpha^{ij}x^p\partial_{ij}V_\alpha\partial_pV_\alpha\, dx\\
&&=-\int_{B_{\da}(0)}\hat{T}_\alpha^{ij}\partial_i V_\alpha \partial_{j}V_\alpha\, dx+o(\ma^2)
\end{eqnarray*}
Using \eqref{ineq:59} and \eqref{lim:tua:c2k:bis} and arguing as in the proof of \eqref{eq:81}, we get that
\begin{eqnarray*}
&&\int_{B_{\da}(0)} -\Delta_{\tga}^{k-2}((\hat{T}_\alpha,\nabla^2\hua)) T(\hua)\, dx=-\ma^2\frac{\hbox{Tr}_{x_0}(\hat{T}_0)}{n}\int_{\rn}|\nabla \Delta_\xi^{\frac{k-2}{2}}U_{1,0}|^2\, dx+o(\ma^2)\\
&&=-\ma^2\frac{\hbox{Tr}_{x_0}(T_0-T_g)}{n}\int_{\rn}(\Delta_\xi^{\frac{k-1}{2}}U_{1,0})^2\, dx+o(\ma^2)
\end{eqnarray*}
Coming back to the expression involving the $A_\alpha^{(i)}$'s, we then get that
 \begin{equation*}
II_\alpha=
-\ma^2\left(h_0(x_0)+\frac{\hbox{Tr}_{x_0}(T_0-T_g)}{n}\right)\int_{\rn} (\Delta_{\xi}^{\frac{k-1}{2}}U_{1,0})^2 \, dx+o(\ma^2) \hbox{ when }n>2k+2.\end{equation*}
Note that 
$$\frac{\hbox{Tr}_{x_0}( T_g)}{n}=\frac{k(3n(n-2)-4(k^2-1))}{12n(n-1)}R_g(x_0).$$
We put the expressions \eqref{estim:Ialpha}, $II_\alpha$ above, \eqref{estim:IIIalpha}, \eqref{estim:IValpha:1} and \eqref{estim:IValpha:2} into \eqref{pohi:IaIV} so that for $n\geq 2k+4$, \eqref{estim:1} rewrites
\begin{eqnarray*}
\lim_{\alpha\to 0} \frac{\ea}{\ma^2}\frac{n}{(\crit)^2}\frac{\int_{\rn} U_{1,0}^{\crit}\, dX}{\int_{\rn} (\Delta_{\xi}^{\frac{k-1}{2}}U_{1,0})^2 \, dx}&=&   h_0(x_0)+\frac{\hbox{Tr}_{x_0}(T_0-T_g)}{n} -\kappa_k' u_0(x_0){\bf 1}_{n=2k+4}\end{eqnarray*}
where $\kappa_k'$ is defined in \eqref{def:kappa}. Note that we have assumed that $k\geq 2$. When $T_\alpha\equiv T_0\equiv 0$, we get that for $n\geq 2k+4$, \eqref{estim:1} rewrites
\begin{eqnarray*}
\lim_{\alpha\to 0} \frac{\ea}{\ma^2}\frac{n}{(\crit)^2}\frac{\int_{\rn} U_{1,0}^{\crit}\, dX}{\int_{\rn} (\Delta_{\xi}^{\frac{k-1}{2}}U_{1,0})^2 \, dx}&=&   h_0(x_0)-\frac{k(3n(n-2)-4(k^2-1))}{12n(n-1)}R_g(x_0)  \\
&&-\kappa_k' u_0(x_0){\bf 1}_{n=2k+4} \end{eqnarray*}
which turns to be valid also when $k=1$. When $u_0\equiv 0$, the same result hold for $n>2k+2$. 

\smallskip\noindent Theorem \ref{th:teaser:26} and \ref{th:teaser:25} are consequences of these limits when $n>2k+2$.

 \appendix
\section{Expansion of the Riemannian Laplacian}
Recall that given $T$ and $S$ two tensor fields on $M$, the notation $T\star S$ will denote any linear combination of products of contractions of $T$ and $S$. The notation $(x^l)$ will denote any tensor fieldof the form $P(x) T(x)$ where $T$ is another tensor field and $P$ is  homogeneous  of degree $l$. Fix a point $p\in M$ and consider an exponential chart $\tilde{\hbox{exp}}_p$ with respect to the metric $g$. According to Cartan's expansion, the coordinates of the metric $g$ in the chart $\tilde{\hbox{exp}}_p$ writes
\begin{equation}\label{exp:cartan}
g_{ij}(x)=\delta_{ij}-\frac{1}{3}R_{p iq j}x^p x^q+(x^3)\hbox{ as }x\to 0,
\end{equation}
where $R_{ijkl}$ denotes the coordinates of the Riemann tensor $Rm_g$ at $p$ in the same chart. This expression is locally uniform in $p$ in a neighborhood of a fixed point $p_0\in M$. Therefore, the coordinates of the inverse metric tensor are
\begin{equation}\label{exp:cartan:inverse}
g^{ij}(x)=\delta^{ij}+\frac{1}{3}R_{p iq j}x^p x^q+(x^3)\hbox{ as }x\to 0,
\end{equation}
where one shall remember that the exponential chart is normal at $p$. This expression is locally uniform in $p$. In any chart, the Riemannian Laplacian $\Delta_g$ writes $\Delta_g:=-g^{ij}\left(\partial_{ij}-\Gamma_{ij}^l\partial_l\right)$. We have the following result:
\begin{proposition}\label{prop:lap} Let $(M,g)$ be a Riemannian manifold and let $p\in M$ be a point of $M$. Assume that $\hbox{Ric}_g(p)=0$. Then, in the exponential chart $\tilde{\hbox{exp}}_p$, we have that
\begin{eqnarray*}
\Delta_g^k u&=&\Delta_\xi^k u-\frac{k}{3}R_{p iq j}x^p x^q\partial_{ij}\Delta^{k-1}_\xi u+(x)^3
\star\nabla^{2k}u\\
&&+B_{q i j p}x^q\partial_{ijp}\Delta_\xi^{k-2}u+(x^2)\star\nabla^{2k-1}u\\
&&-C_{ijpq}\partial_{ijpq}\Delta_\xi^{k-3}u+(x)\star\nabla^{2k-2}u+D^{(2k-3)}u
\end{eqnarray*}
where 
$$B_{q i j p}=\frac{k(k-1)}{3}\left(R_{q i p j} +R_{p i q j}\right)\hbox{ and }C_{ijpq}=\frac{2k(k-1)(k-2)}{9}\left(R_{piqj}+R_{qipj}\right)$$
and $D^{(2k-3)}$ is a differential operator of order $\leq\max\{ 2k-3,0\}$. Moreover, this expansion is locally uniform with respect to $p$.
\end{proposition}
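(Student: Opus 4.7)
The plan is to prove Proposition \ref{prop:lap} by induction on $k$.

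For the base case $k=1$, I would start from $\Delta_g u=-\frac{1}{\sqrt{|g|}}\partial_i(\sqrt{|g|}g^{ij}\partial_j u)$. In normal coordinates at $p$ one has $\sqrt{|g|}(x)=1-\frac{1}{6}\hbox{Ric}_{pq}(p)x^p x^q+O(|x|^3)=1+O(|x|^3)$ by $\hbox{Ric}_g(p)=0$. From \eqref{exp:cartan:inverse}, $g^{ij}(x)=\delta^{ij}+\frac{1}{3}R_{piqj}x^p x^q+(x^3)$, and a direct computation of $\partial_i g^{ij}$ yields an $O(|x|^2)$ term because the two $i$-contractions of the linear coefficient are $R_{iiqj}=0$ (by antisymmetry in the first pair) and $R_{qiij}=\hbox{Ric}_{qj}=0$. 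Combining,
\begin{equation*}
\Delta_g u=\Delta_\xi u-\frac{1}{3}R_{piqj}x^p x^q\partial_{ij} u+(x^3)\star\nabla^2 u+(x^2)\star\nabla u,
\end{equation*}
which is the statement for $k=1$ with $B=C=0$ and $D^{(-1)}\equiv 0$.

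\smallskip

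For the inductive step, I would write $\Delta_g=\Delta_\xi+L$ where $L:=\Delta_g-\Delta_\xi$ has the explicit form above, and decompose $\Delta_g^k u=\Delta_\xi(\Delta_g^{k-1} u)+L(\Delta_g^{k-1} u)$. Using the inductive expansion of $\Delta_g^{k-1} u$, the calculation reduces to applying $\Delta_\xi$ to each of its terms and $L$ to $\Delta_\xi^{k-1} u$, via the elementary identities
\begin{equation*}
\Delta_\xi(x^p x^q f)=-2\delta^{pq}f-2x^p\partial_q f-2x^q\partial_p f+x^p x^q\Delta_\xi f,\qquad \Delta_\xi(x^q f)=-2\partial_q f+x^q\Delta_\xi f.
\end{equation*}
The crucial observation is that any $\delta^{pq}$-contraction produced this way generates a Ricci-type contraction $R_{piqj}\delta^{pq}=\hbox{Ric}_{ij}$, which vanishes at $p$ by hypothesis; no stray terms survive.

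\smallskip

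I would then collect the contributions slot by slot. The coefficient of $x^p x^q\partial_{ij}\Delta_\xi^{k-1} u$ picks up $-\frac{1}{3}R_{piqj}$ from $L\Delta_\xi^{k-1} u$ and $-\frac{k-1}{3}R_{piqj}$ from the $x^p x^q\Delta_\xi f$ term of the first identity applied to the leading $x^2$-piece of $\Delta_g^{k-1} u$, summing to $-\frac{k}{3}R_{piqj}$. The coefficient of $x^q\partial_{ijp}\Delta_\xi^{k-2} u$ accumulates from the cross-terms $-2x^p\partial_q f-2x^q\partial_p f$ in the first identity and from $\Delta_\xi$ acting on the $x^1$-piece of $\Delta_g^{k-1} u$, giving the recursion $B^{(k)}_{qijp}=B^{(k-1)}_{qijp}+\frac{2(k-1)}{3}(R_{qipj}+R_{piqj})$. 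With the inductive value $B^{(k-1)}=\frac{(k-1)(k-2)}{3}(R_{qipj}+R_{piqj})$, this gives the announced $B^{(k)}=\frac{k(k-1)}{3}(R_{qipj}+R_{piqj})$. A parallel induction in the $\partial^{2k-2}$ slot produces $C_{ijpq}=\frac{2k(k-1)(k-2)}{9}(R_{piqj}+R_{qipj})$, with the factor $2k(k-1)$ arising from the cascading application of the $-2\delta^{pq}f$ piece through $k-2$ levels. All remaining contributions collapse into the prescribed remainders $(x^l)\star\nabla^{2k-l}u$ and the lower-order operator $D^{(2k-3)}$ by a straightforward count of $x$-degree and derivative order; local uniformity in $p$ is inherited from the corresponding property of the Cartan and Christoffel expansions.

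\smallskip

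The main obstacle will be the index bookkeeping: each contribution must be relabeled and symmetrized using the Riemann tensor identities $R_{ijkl}=-R_{jikl}=-R_{ijlk}=R_{klij}$ to land in the exact stated form for $B_{qijp}$ and $C_{ijpq}$, and one must verify at every step that the only surviving $\delta^{pq}$-traces are those annihilated by $\hbox{Ric}_g(p)=0$.
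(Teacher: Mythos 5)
Your proposal is correct and follows exactly the route the paper indicates (its entire proof of Proposition \ref{prop:lap} is the remark that one iterates on $k$ using \eqref{exp:cartan} and \eqref{exp:cartan:inverse}): your base case, the two commutation identities for $\Delta_\xi$, the vanishing of all $\delta^{pq}$-traces via $\hbox{Ric}_g(p)=0$, and the recursion $B^{(k)}=B^{(k-1)}+\tfrac{2(k-1)}{3}(R_{qipj}+R_{piqj})$ all check against the stated coefficients. One small attribution slip: the $C$-term is not generated by the $-2\delta^{pq}f$ piece (which, as you yourself observe, produces a Ricci contraction that vanishes at $p$) but by the $-2\partial_q f$ piece of $\Delta_\xi(x^q f)$ acting on the $B$-term of $\Delta_g^{k-1}u$, giving $C^{(k)}=C^{(k-1)}+2B^{(k-1)}$, which is consistent with the announced formula.
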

The proof is  by iteration on $k$ and uses the expression of $\Delta_g$, \eqref{exp:cartan} and \eqref{exp:cartan:inverse}.

\section{Hardy-type inequality on manifolds}\label{app:hardy}
For the sake of completeness, we prove the well-known Hardy inequality \eqref{hardy:ineq:intro}.
\begin{proposition}\label{prop:hardy}[Hardy-inequalities on manifolds] Let $(M,g)$ be a compact Riemannian manifold of dimension $n\geq 3$. Let $k\in\nn$ be such that $2\leq 2k<n$. Then for any $u\in H_k^2(M)$, we have that $u^2d_g(\cdot,x_0)^{-2k}\in L^1(M)$. Moreover, there exists $C_H(M,g,k)>0$ such that 
\begin{equation}\label{ineq:hardy:app}
\int_M\frac{u^2\, dv_g}{d_g(x,x_0)^{2k}} \leq C_H(M,g,k)\Vert u\Vert_{H_k^2}^2\hbox{ for all }u\in H_k^2(M).
\end{equation}
\end{proposition}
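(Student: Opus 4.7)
The plan is to reduce the manifold statement to its Euclidean analogue via a localization argument, and to prove the Euclidean analogue by iterating the weighted classical Hardy inequality. By density of $C^\infty(M)$ in $H_k^2(M)$ on a compact boundaryless manifold, it suffices to prove \eqref{ineq:hardy:app} for $u\in C^\infty(M)$.

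First, I would establish the Euclidean weighted Hardy inequality: for every $v\in C^\infty_c(\rn)$ and every $\alpha\geq 0$ with $2(\alpha+1)<n$,
\begin{equation*}
\left(\frac{n-2\alpha-2}{2}\right)^2 \int_{\rn} \frac{v^2}{|x|^{2\alpha+2}}\, dx \leq \int_{\rn} \frac{|\nabla v|^2}{|x|^{2\alpha}}\, dx.
\end{equation*}
This follows from the pointwise identity $\hbox{div}(x|x|^{-2\alpha-2})=(n-2\alpha-2)|x|^{-2\alpha-2}$, integration by parts against $v^2$, and the Cauchy-Schwarz inequality. Applied coordinate-wise to each $\partial_i v$ and summed, it yields $(\frac{n-2\alpha-2}{2})^2\int |\nabla v|^2|x|^{-2\alpha-2}\leq \int |\nabla^2 v|^2 |x|^{-2\alpha}$. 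Iterating with $\alpha=k-1,k-2,\dots,0$ (each step is admissible because $2j<n$ for $1\leq j\leq k$, which is granted by $2k<n$), we obtain the Euclidean Hardy-Rellich inequality
\begin{equation*}
\int_{\rn}\frac{v^2}{|x|^{2k}}\, dx \leq C(n,k) \int_{\rn}|\nabla^k v|^2\, dx \quad\text{for all } v\in C^\infty_c(\rn),
\end{equation*}
with $C(n,k)=\prod_{j=1}^k \big(\frac{2}{n-2j}\big)^2$, which extends by density to all $v\in H^k(\rn)$ with compact support.

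Second, I would localize on $M$. Fix $r_0\in (0,i_g(M)/4)$ and a cutoff $\eta\in C^\infty_c(B_{2r_0}(x_0))$ with $\eta\equiv 1$ on $B_{r_0}(x_0)$, and split $u=\eta u+(1-\eta)u$. On $\hbox{supp}((1-\eta)u)$ the weight $d_g(\cdot,x_0)^{-2k}$ is bounded by $r_0^{-2k}$, so
\begin{equation*}
\int_M \frac{(1-\eta)^2 u^2}{d_g(x,x_0)^{2k}}\, dv_g \leq r_0^{-2k}\Vert u\Vert_2^2 \leq r_0^{-2k}\Vert u\Vert_{H_k^2}^2.
\end{equation*}
For the localized piece, define $v:=(\eta u)\circ\tilde{\hbox{exp}}_{x_0}$ on $\rn$, extended by zero outside the chart. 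Since $\tilde{\hbox{exp}}_{x_0}$ is a smooth diffeomorphism of $B_{i_g(M)}(0)$ onto its image with uniformly controlled derivatives, there exist $c_1,c_2>0$ depending only on $(M,g)$ such that $c_1|X|\leq d_g(\tilde{\hbox{exp}}_{x_0}(X),x_0)\leq c_2|X|$ and $c_1\, dX\leq dv_g\leq c_2\, dX$ on the support of $v$. Hence
\begin{equation*}
\int_M \frac{\eta^2 u^2}{d_g(x,x_0)^{2k}}\, dv_g \leq C(M,g,k)\int_{\rn}\frac{v^2}{|X|^{2k}}\, dX \leq C'(M,g,k)\int_{\rn}|\nabla^k v|^2\, dX,
\end{equation*}
by the Euclidean Hardy-Rellich inequality. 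Finally, expanding $\nabla^k((\eta u)\circ\tilde{\hbox{exp}}_{x_0})$ by the Leibniz formula and using that $\eta$ has bounded derivatives, together with the uniform equivalence of the Euclidean and pulled-back Riemannian Sobolev norms on the chart, yields $\int_{\rn}|\nabla^k v|^2\, dX\leq C(M,g,k,\eta)\Vert u\Vert_{H_k^2}^2$, which combined with the estimate on the complement gives \eqref{ineq:hardy:app}.

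The main technical point is the bookkeeping in the iterated weighted Hardy inequality: at each step one must verify that the remaining weight exponent stays strictly below the critical threshold $n/2$, which is exactly encoded in the hypothesis $2k<n$, and that each application is to a compactly supported function so the divergence identity can be used without boundary terms. The rest of the argument is a routine partition-of-unity plus diffeomorphism-invariance exercise, and no geometric input beyond compactness and smoothness of $(M,g)$ is needed.
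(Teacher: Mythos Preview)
Your proposal is correct and follows essentially the same route as the paper: localize with a cutoff near $x_0$, pull back to a normal chart, apply a Euclidean Hardy--Rellich inequality to the compactly supported function, and handle the far piece trivially. The only difference is cosmetic: the paper quotes the Euclidean inequality $\int_{\rn}|X|^{-2k}\varphi^2\,dX\le C\int_{\rn}(\Delta_\xi^{k/2}\varphi)^2\,dX$ from Mitidieri and invokes an external norm-equivalence result, whereas you derive the Euclidean bound yourself by iterating the weighted first-order Hardy inequality and sketch the norm equivalence via the chart and Leibniz rule.
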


{\bf Proof of Proposition \ref{prop:hardy}.} The starting point is the following Hardy-type inequality on $\rn$ (see Theorem 3.3 in Mitidieri \cite{mitidieri}): there exists $C_H(n,k)>0$ such that
\begin{equation*}
\int_{\rn}\frac{\varphi^2}{|X|^{2k}}\, dX\leq C_H(n,k)\int_{\rn}(\Delta_\xi^{\frac{k}{2}}\varphi)^2\, dX\hbox{ for all }\varphi\in C^\infty_c(\rn).
\end{equation*}
Let us choose $u\in C^\infty(M)$. Let us fix $\delta\in (0, i_g(M)/3)$. Let us define a cutoff function $\eta\in C^\infty(M)$ be such that $\eta(x)=1$ for $x\in B_\delta(x_0)$ and $\eta(x)=0$ for $x\in M-B_{2\delta}(x_0)$. We then have that
\begin{eqnarray*}
\int_M\frac{(\eta(x) u(x))^2}{d_g(x,x_0)^{2k}}\, dv_g&=&\int_{B_{2\delta}(x_0)}\frac{(\eta u)^2\, dv_g}{d_g(x,x_0)^{2k}}=\int_{B_{2\delta}(0)}\frac{((\eta u)\circ \tilde{\hbox{exp}}_{x_0}(X))^2}{|X|^{2k}}\, dv_{\tilde{\hbox{exp}}_{x_0}^\star g}\\
&\leq& C(M,g)\int_{B_{2\delta}(0)}\frac{((\eta u)\circ \tilde{\hbox{exp}}_{x_0}(X))^2}{|X|^{2k}}\, dX\\
&\leq& C(M,g) C_H(n,k)\int_{B_{2\delta}(0)}(\Delta_\xi^{\frac{k}{2}}((\eta u)\circ \tilde{\hbox{exp}}_{x_0}))^2\, dX\\
&\leq & C(M,g,k)\int_{B_{2\delta}(0)}\sum_{i=0}^k|\nabla^i((\eta u)\circ \tilde{\hbox{exp}}_{x_0})|_\xi^2\, dX
\end{eqnarray*}
Coming back to the manifold $(M,g)$, we get that
\begin{eqnarray*}
\int_M\frac{(\eta(x) u(x))^2}{d_g(x,x_0)^{2k}}\, dv_g&\leq & C(M,g,k)\int_{B_{2\delta}(x_0)}\sum_{i=0}^k|\nabla_g^i(\eta u)|_g^2\, dv_g\\
&\leq& C(M,g,k)\int_{M}\sum_{i=0}^k|\nabla_g^iu|_g^2\, dv_g\end{eqnarray*}
It follows from \cite{robert:ccm} (Proposition 2) that there exists $C(k)>0$ such that
$$\int_{M}\sum_{i=0}^k|\nabla_g^iu|_g^2\, dv_g\leq C_1(M,g,k)\Vert u\Vert_{H_k^2}^2\hbox{ for all }u\in H_k^2(M).$$
We then get that
\begin{equation*}
\int_M\frac{(\eta u)^2}{d_g(x,x_0)^{2k}}\, dv_g \leq C_2(M,g,k)\Vert u\Vert_{H_k^2}^2\hbox{ for all }u\in C^\infty(M)
\end{equation*}
Independently, a straightforward computation yields
\begin{eqnarray*}
\int_M\frac{(1-\eta^2) u^2}{d_g(x,x_0)^{2k}}\, dv_g&=&\int_{M-B_{\delta}(x_0)}\frac{(1-\eta^2) u^2}{d_g(x,x_0)^{2k}}\, dv_g \leq \delta^{-2k}\int_{M } u^2\, dv_g \leq \delta^{-2k}\Vert u\Vert_{H_k^2}^2.
\end{eqnarray*}
Combining this two latest inequality yields \eqref{ineq:hardy:app} for all $u\in C^\infty(M)$. Inequality  \eqref{ineq:hardy:app} follows by density.

\section{Green's function for elliptic operators with bounded coefficients}\label{app:green}
\begin{defi}\label{def:green} Let $(M,g)$ be a compact Riemannian manifold without boundary of dimension $n\geq 2$. Fix $k\in\nn$ such that $n>2k\geq 2$. Let $P$ be an elliptic operator of order $2k$. A \emph{Green's function} for $P$ is a function $(x,y)\mapsto G(x,y)=G_x(y)$ defined for all $x\in M$ and a.e. $y\in M$ such that 

\begin{itemize}
\item[(i)] $G_x\in L^1(M)$ for all $x\in M$,
\item[(ii)] for all $x\in M$ and all $\varphi\in C^{2k}(M)$, we have that
$$\int_M G_xP\varphi\, dv_g=\varphi(x).$$
\end{itemize}
\end{defi}

\begin{theorem}\label{th:green:nonsing} Let $(M,g)$ be a compact Riemannian manifold without boundary of dimension $n\geq 2$. Fix $k\in\nn$ such that $n>2k\geq 2$. Let $P$ be an elliptic operator of type $O_{k,L}$. Then there exists a unique Green's function for $P$. Moreover,

\begin{itemize}
\item $G$ extends to $M\times M\setminus\{(x,x)/x\in M\}$ and for any $x\in M$, $G_x\in H_{2k,loc}^p(M-\{x\})$ for all $p>1$;
\item $G$ is symmetric;
\item $P G_x=0$ weakly in $M-\{x\}$ for all $x\in M$
\item For all $f\in L^p(M)$, $p>\frac{n}{2k}$, and $\varphi\in H_{2k}^p(M)$ such that $P\varphi=f$, then 
$$\varphi(x)=\int_M G_xP\varphi\, dv_g\hbox{ for all }x\in M.$$
\item There exists $C(k,L)>0$ such that
$$|G_x(y)|\leq C(k,L)\cdot d_g(x,y)^{2k-n}\hbox{ for all }x,y\in M, \, x\neq y;$$
\item For all $l=1,...,2k-1$, there exists $C_l(k,L)>0$ such that
$$|\nabla^l G_x(y)|\leq C_l(k,L)\cdot d_g(x,y)^{2k-n-l}\hbox{ for all }x,y\in M, \, x\neq y;$$
\end{itemize}
The same conclusion holds if in Definition \ref{def:p} of $P\in O_{k,L}$, the $0^{th}-$order potential $A^{(0)}$ is only in $L^\infty(M)$ with $\Vert A^{(0)}\Vert_{L^\infty(M)}\leq L$.
\end{theorem}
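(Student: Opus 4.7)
\section*{Proof proposal for Theorem \ref{th:green:nonsing}}

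\textbf{Uniqueness.} The plan is to argue by duality. Suppose $G$ and $\tilde{G}$ both satisfy (i)--(ii) of Definition \ref{def:green}. Fix $x\in M$ and set $H_x:=G_x-\tilde{G}_x\in L^1(M)$. By (ii), $\int_M H_x P\varphi\, dv_g=0$ for every $\varphi\in C^{2k}(M)$. Given $\psi\in C^\infty(M)$, by coercivity of $P$ and variational methods there is a unique $\varphi\in H_k^2(M)$ with $P\varphi=\psi-\bar{\psi}$ in the weak sense, where $\bar{\psi}$ is chosen in the orthogonal of $\ker P^\ast=\ker P$ (here $P$ is self-adjoint); the regularity Theorems \ref{th:1} and \ref{th:2} then give $\varphi\in C^{2k}(M)$. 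Hence $\int_M H_x\psi\, dv_g=0$ for all $\psi\in C^\infty(M)$, which forces $H_x\equiv 0$.

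\textbf{Existence via parametrix and approximation.} The approach will be to combine a standard parametrix with the coercive inverse of $P$. First build the leading parametrix
\[
\Gamma(x,y):=\eta(d_g(x,y))\,C_{n,k}\,d_g(x,y)^{2k-n},
\]
with $C_{n,k}$ as in \eqref{def:Cnk} and $\eta$ a cutoff supported in a neighborhood of $0$ equal to $1$ near $0$. A Cartan expansion of the metric (Appendix using Proposition \ref{prop:lap} iteratively for the leading term $\Delta_g^k$) gives
\[
P_y\Gamma(x,y)=\delta_x(y)+E_0(x,y),\qquad |E_0(x,y)|\leq C(k,L)\,d_g(x,y)^{2k-n-1}{\bf 1}_{d_g(x,y)\leq r_0}+C,
\]
the term $+C$ coming from the cutoff. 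Iteratively convolving $E_0$ with itself a fixed finite number of times $N$ (chosen so that $N(2k-n-1)+(N-1)n<0$, which is possible since $2k-1<n$) produces an improved parametrix $H(x,y)=\Gamma(x,y)+\Psi(x,y)$ with $P_y H(x,y)=\delta_x(y)+K(x,y)$ where $K$ is continuous on $M\times M$. For each $x\in M$, apply coercivity of $P$ to obtain the unique $u_x\in H_k^2(M)\cap C^{2k}(M)$ (the $C^{2k}$ regularity follows from Theorems \ref{th:1}, \ref{th:2}, and Sobolev embedding, since $K(x,\cdot)\in C^0(M)$ and the $A^{(i)}$'s have the requisite regularity) solving $Pu_x=-K(x,\cdot)$; then set
\[
G(x,y):=H(x,y)+u_x(y).
\]
By construction $P_y G(x,\cdot)=\delta_x$ in the distributional sense. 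Unwinding the parametrix identity against an arbitrary test function $\varphi\in C^{2k}(M)$ and integrating by parts (using symmetry of $P$ and the form of the $A^{(i)}$'s), we get $\int_M G_x P\varphi\, dv_g=\varphi(x)$, verifying (ii). The $L^1$ bound in (i) follows from the pointwise control on $\Gamma$ below and boundedness of $u_x$.

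\textbf{Symmetry, regularity, and pointwise bounds.} Symmetry $G(x,y)=G(y,x)$ will follow from the identity
\[
\int_M G_x P\varphi\,dv_g=\varphi(x)\quad\text{with}\quad\varphi=G_y\ast\text{(mollifier)}
\]
combined with the self-adjointness of $P$; more rigorously, approximate $G_x$ and $G_y$ by smoothed versions and pass to the limit. Since $PG_x=0$ weakly away from $x$, the regularity Theorems \ref{th:1} and \ref{th:2} yield $G_x\in H_{2k,\mathrm{loc}}^p(M\setminus\{x\})$ for all $p>1$. The pointwise bound $|G(x,y)|\leq C\,d_g(x,y)^{2k-n}$ comes from the decomposition $G(x,y)=\Gamma(x,y)+\Psi(x,y)+u_x(y)$ together with the fact that $\Psi$ is less singular than $\Gamma$ by construction and $u_x$ is bounded uniformly in $x$ (the $L^\infty$ norm of $u_x$ is controlled by $\|K(x,\cdot)\|_{L^q}$ via the uniform estimate in Theorem \ref{th:2}, which is uniform in $x$ by continuity of $K$). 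The derivative bounds $|\nabla^l G_x(y)|\leq C\,d_g(x,y)^{2k-n-l}$ for $l\leq 2k-1$ follow from the same decomposition and the standard rescaling argument: apply interior Schauder estimates (Theorem \ref{th:2}) on balls $B_{d_g(x,y)/2}(y)$ to the equation $PG_x=0$, using the already-proven pointwise $L^\infty$ bound on $G_x$. The representation formula for $f\in L^p(M)$, $p>n/2k$, solved by $\varphi\in H_{2k}^p(M)$, is obtained by choosing a sequence $f_i\in C^0(M)$ with $f_i\to f$ in $L^p$, applying (ii) with the corresponding $\varphi_i$, and passing to the limit using $G_x\in L^{p'}$ (which holds since $p'<n/(n-2k)$).

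\textbf{Main obstacle and extension to $A^{(0)}\in L^\infty$.} The genuinely delicate step is controlling the parametrix error so that $K\in C^0(M\times M)$; for general $A^{(i)}\in C^i$, this uses the standard fact that $\Delta_g^k\Gamma$ produces a kernel of order $d_g(x,y)^{2k-n-1}$ near the diagonal, while the lower-order terms $\nabla^i(A^{(i)}\nabla^i)$ applied to $\Gamma$ yield kernels of order at most $d_g(x,y)^{-n+1}$, compatible with iterative convolution. For the extension to $A^{(0)}\in L^\infty$, the only adjustment is that $A^{(0)}$ now enters only as a bounded multiplication operator; the parametrix construction is unchanged (since the singular part comes from $\Delta_g^k$ and the higher derivative terms), and the regularity theorems used for $u_x$ remain applicable as elliptic operators with $L^\infty$ zeroth-order coefficients still satisfy the required $H_{2k}^p$ estimates.
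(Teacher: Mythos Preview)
Your approach is essentially the paper's: build the parametrix $\Gamma$, compute the defect $P\Gamma_x=\delta_x+E_0$, iterate to make the remainder continuous, then correct with $u_x$ solving $Pu_x=-K(x,\cdot)$ via coercivity. The paper organizes this as a Neumann series with iterated kernels $\Gamma_i$ controlled by Giraud's lemma, but the content is the same.

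Two technical slips are worth flagging. First, your claimed error order $|E_0(x,y)|\leq C\,d_g(x,y)^{2k-n-1}$ is not correct. In normal coordinates the metric deviates from Euclidean at order $d^2$, so $(\Delta_g^k-\Delta_\xi^k)\Gamma$ contributes terms like $|x|^2\nabla^{2k}\Gamma\sim d^{2-n}$; and the lower-order part $Q=\sum(-1)^i\nabla^i(A^{(i)}\nabla^i)$ has order $2k-2$, so $Q\Gamma\sim d^{2-n}$ as well. The paper records exactly $|f_x(y)|\leq C\,d_g(x,y)^{2-n}$. For $k\geq 2$ your bound is \emph{stronger} than what actually holds, so as written the iteration step is unjustified; with the correct exponent $2-n$ the Giraud iteration still terminates (after $N>n/2$ steps), so the fix is painless but necessary. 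Second, your termination condition ``$N(2k-n-1)+(N-1)n<0$'' is backwards: you need the iterated exponent to be \emph{nonnegative} to obtain a continuous kernel, i.e.\ $N(2k-1)-n\geq 0$ (or $2N-n\geq 0$ with the correct starting exponent). Also, in uniqueness there is no need to project onto the complement of $\ker P$: coercivity forces $\ker P=\{0\}$.
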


\smallskip\noindent{\bf Remark:} Note that if the coefficients of $P$ are in $C^{p,\theta}(M)$, $p\in\nn$ and $0<\theta<1$, then $G\in C^{p+2k,\theta}$ and similar upper bounds hold for $|\nabla^l G_x|$ with $l\leq p+2k$.

We build the Green's function via the classical Neumann series. We follow the proof in Robert \cite{robert:green}. We write $P=\Delta_g^k+Q$ where $Q$ is a differential operator of order at most $2k-2$. Since $P$ is self-adjoint, then $Q$ is also self-adjoint. Let $\eta\in C^\infty(\rr)$ be such that $\eta(t)=1$ if $t\leq i_g(M)/4$ and $\eta(t)=0$ if $t\geq i_g(M)/2$. We then define
\begin{equation*}
\Gamma_x(y)=\Gamma(x,y):=\eta(d_g(x,y)) C_{n,k}d_g(x,y)^{2k-n}\hbox{ for all }x,y\in M,\; x\neq y.
\end{equation*}
where $C_{n,k}$ is defined in \eqref{def:Cnk}.

\medskip\noindent{\bf Step 1:} We claim that for all $x\in M$, there exists $f_x\in L^1(M)$ such that
\begin{equation}\label{lap:gamma}
\left\{\begin{array}{cc}
P\Gamma_x=\delta_x-f_x&\hbox{ weakly in  }M\\
|f_x(y)|\leq C(k,L) d_g(x,y)^{2-n}&\hbox{ for all }x,y\in M,\, x\neq y,
\end{array}\right\}
\end{equation} 
Where the equality is to be taken in the distribution sense, that is 
$$\int_M \Gamma_x P\varphi\, dv_g=\varphi(x)-\int_M f_x\varphi\, dv_g\hbox{ for all }\varphi\in C^{2k}(M).$$
We prove the claim. We fix $\varphi\in C^{2k}(M)$ and $x\in M$. Since $\Gamma_x\in L^1(M)$, we have that
\begin{eqnarray}
\int_M \Gamma_x P\varphi\, dv_g&=& \lim_{\eps\to 0}\int_{M-B_{\eps}(x)} \Gamma_x P\varphi\, dv_g\nonumber \\
&=&\lim_{\eps\to 0}\int_{M-B_{\eps}(x)} \Gamma_x \Delta_g^k\varphi\, dv_g + \lim_{\eps\to 0}\int_{M-B_{\eps}(x)} \Gamma_x Q\varphi\, dv_g \label{eq:gamma:2}
\end{eqnarray}
Since $Q$ is a self-adjoint differential operator, integrating by parts, we get that 
\begin{eqnarray*}
\int_{M-B_{\eps}(x)} \Gamma_x Q\varphi\, dv_g &=& \int_{M-B_{\eps}(x)} (Q\Gamma_x)\varphi\, dv_g +\int_{\partial (M-B_{\eps}(x))}\sum_{i+j<2k-2}\nabla^i\Gamma_x\star\nabla^j\varphi\, dv_g\\
&=& \int_{M-B_{\eps}(x)} (Q\Gamma_x)\varphi\, dv_g +\int_{ \partial B_{\eps}(x)}\sum_{i+j<2k-2}\nabla^i\Gamma_x\star\nabla^j\varphi\, dv_g
\end{eqnarray*}
As one checks, 
\begin{equation}\label{bnd:der:gamma}
|\nabla^i\Gamma_x(y)|\leq C(k,i) d_g(x,y)^{2k-n-i}\hbox{ for }i\leq 2k\hbox{ and }x,y\in M,\, x\neq y.
\end{equation}
Therefore, for $i+j<2k-2$, we have that
\begin{eqnarray*}
\left|\int_{ \partial B_{\eps}(x)} \nabla^i\Gamma_x\star\nabla^j\varphi\, dv_g\right| &\leq & C \eps^{n-1}\eps^{2k-n-i}=C \eps^{2k-1-i}=o(1)
\end{eqnarray*}
as $\eps\to 0$ since $i<2k-2$. Therefore
\begin{equation}\label{eq:gamma:3}
\lim_{\eps\to 0}\int_{M-B_{\eps}(x)} \Gamma_x Q\varphi\, dv_g = \int_{M} (Q\Gamma_x) \varphi\, dv_g 
\end{equation}
Concerning the first term of \eqref{eq:gamma:2}, we argue as in the proof of Step 1.3, see formula \eqref{ipp:gamma}. Given $\Omega\subset M$ a smooth domain, we have that  
\begin{equation}\label{ipp:gamma:g}
\int_{\Omega }(\Delta_g^kU)V\, dv_g=\int_{\Omega}U(\Delta_g^kV)\, dv_g+\int_{\partial\Omega}\sum_{i=0}^{k-1}\tilde{B}_{i,g}(U,V)\, d\sigma_g
\end{equation}
for all $U,V\in C^{2k}(\overline{\Omega})$, where for any $i=1...,k-1$, we have that
$$\tilde{B}_{i,g}(U,V):=-\partial_\nu\Delta_g^{k-i-1}U\Delta_g^iV+\Delta_g^{k-i-1}U\partial_\nu\Delta_g^iV.$$
Therefore 
\begin{eqnarray*}
\int_{M-B_{\eps}(x)} \Gamma_x \Delta_g^k\varphi\, dv_g &=& \int_{M-B_{\eps}(x)}\varphi (\Delta_g^k\Gamma_x)\, dv_g+\int_{\partial (M-B_{\eps}(x))}\sum_{i=0}^{k-1}\tilde{B}_{i,g}(\varphi,\Gamma_x)\, d\sigma_g\\
&=& \int_{M-B_{\eps}(x)}\varphi (\Delta_g^k\Gamma_x)\, dv_g-\sum_{i=0}^{k-1}\int_{\partial  B_{\eps}(x)}\tilde{B}_{i,g}(\varphi,\Gamma_x)\, d\sigma_g
\end{eqnarray*}
Given $i\in \{0, ...,k-1\}$ and using  \eqref{bnd:der:gamma} we have that
\begin{equation*}
\left|\int_{\partial  B_{\eps}(x)}\tilde{B}_{i,g}(\varphi,\Gamma_x)\, d\sigma_g\right|\leq C \eps^{n-1}\eps^{2k-n-(2i+1)}=C\eps^{2(k-1-i)}=o(1)\hbox{ if }i<k-1,
\end{equation*}
as $\eps \to 0$. Similarly,
\begin{equation*}
\left|\int_{\partial  B_{\eps}(x)}\partial_\nu\varphi \Delta_g^{k-1}\Gamma_x\, d\sigma_g\right|\leq C \eps^{n-1}\eps^{2k-n-2(k-1)}=C\eps =o(1),
\end{equation*}
as $\eps\to 0$. Therefore
\begin{eqnarray}\label{eq:gamma:1}
\int_{M- B_{\eps}(x)} \Gamma_x \Delta_g^k\varphi\, dv_g &=&   \int_{M- B_{\eps}(x)}\varphi (\Delta_g^k\Gamma_x)\, dv_g\\
&&- \int_{\partial  B_{\eps}(x)}\varphi \partial_\nu\Delta_g^{k-1}\Gamma_x\, d\sigma_g+o(1)
\end{eqnarray}
as $\eps\to 0$. For any radial function $W\in C^2(M)$ we have that in radial coordinates
$$\Delta_g W=-\frac{\partial_r(\sqrt{|g|}r^{n-1}\partial_r W)}{\sqrt{|g|}r^{n-1}}$$
with $|g|=1+O(r^2)$ being the determinant of the metric in radial coordinates. Note that with this expression, we get that for any $\alpha>0$, 
\begin{equation}\label{calc:lap}
\Delta_g r^{-\alpha}=\alpha(n-2-\alpha)r^{-\alpha-2}+O(r^{-\alpha})\hbox{ with }r:=d_g(x,\cdot).
\end{equation}
So as for \eqref{calc:delta:gamma}, we get that
\begin{equation}
-\partial_\nu\Delta_g^{k-1}\Gamma_x(y)=\frac{1}{\omega_{n-1}}d_g(x,y)^{1-n}+O(d_g(x,y)^{2-n})\hbox{ for }y\in M-\{x_0\}.
\end{equation}
So that, with a change of variable, we get that
$$\lim_{\eps\to 0}\int_{\partial  B_{\eps}(x)}\left(- \partial_\nu\Delta_g^{k-1}\Gamma_x\right)\varphi\, d\sigma_g=\varphi(x).$$
Plugging this latest identity into \eqref{eq:gamma:1}, \eqref{eq:gamma:2} and \eqref{eq:gamma:3}, we get 
\begin{equation*}
\int_M \Gamma_x P\varphi\, dv_g=\varphi(x)+\lim_{\eps\to 0}\int_{M-B_{\eps}(x)} (\Delta_g^k\Gamma_x +Q\Gamma_x)\varphi\, dv_g
\end{equation*}
Iterating \eqref{calc:lap} and using \eqref{bnd:der:gamma}, we get that 
$$f_x:=-(\Delta_g^k\Gamma_x +Q\Gamma_x)\, ;\, |f_x(y)|\leq C(k,L)d_g(x,y)^{2-n}\hbox{ for }x,y\in M,\, x\neq y.$$
We then get that $f_x\in L^1(M)$ and  \eqref{lap:gamma} with the above definition of $f$. This proves the claim and ends Step 1.

\medskip\noindent{\bf Step 2:} We are now in position to define the Green's function $G$. We define
\begin{equation*}
\left\{\begin{array}{cc}
\Gamma_1(x,y):=f_x(y) &\hbox{ for }x,y\in M,\, x\neq y,\\
\Gamma_{i+1}(x,y):=\int_M \Gamma_i(x,z)f_z(y)\, dv_g(z)&\hbox{ for }x,y\in M,\, x\neq y,\, i\in\nn
\end{array}\right\}
\end{equation*}
Giraud's Lemma \cite{giraud}, as stated in \cite{DHR} for instance, is
\begin{lemma} Let $(M,g)$ be a compact Riemannian manifold of dimension $n\geq 2$. Let $\alpha,\beta\in (0,n)$ be two real number and let $X,Y:M\times M\to\rr$ measurable be such that there exists $C_X, C_Y>0$ such that
$$|X(x,y)|\leq C_X d_g(x,y)^{\alpha-n}\hbox{ and }|Y(x,y)|\leq C_Y d_g(x,y)^{\beta-n}\hbox{ for all }x,y\in M,\, x\neq y.$$
Define $Z(x,y):=\int_M X(x,z)Y(z,y)\, dv_g(z)$ for $x,y\in M$ when this makes sense. Then
\begin{itemize}
\item $Z(x,y)$ is defined for all $x,y\in M$, $x\neq y$ and $Z$ is measurable;
\item There exists $C=C(\alpha,\beta)$ be such that for all $x\neq y$ in $M$,
\end{itemize} 
\begin{equation*}
|Z(x,y)|\leq C\cdot C_X\cdot C_Y \left\{\begin{array}{cc}
d_g(x,y)^{\alpha+\beta-n} &\hbox{ if }\alpha+\beta<n;\\
1+|\ln d_g(x,y)|&\hbox{ if }\alpha+\beta=n;\\
1 &\hbox{ if }\alpha+\beta>n.\end{array}\right.
\end{equation*}
\end{lemma}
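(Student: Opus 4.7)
\medskip\noindent\textbf{Proof proposal.} The plan is to reduce the claim to a bound on the model integral
\[
\mathcal{I}(x,y) := \int_M d_g(x,z)^{\alpha-n}\, d_g(y,z)^{\beta-n}\, dv_g(z),
\]
since $|Z(x,y)|\leq C_X C_Y\,\mathcal{I}(x,y)$. Measurability of $Z$ follows at once from Fubini once the integrability is established pointwise, so the real content is the pointwise bound on $\mathcal{I}$. I would set $d:=d_g(x,y)$ and split $M$ into three standard regions:
\[
A_1:=\{z:d_g(x,z)\leq d/2\},\quad A_2:=\{z:d_g(y,z)\leq d/2\},\quad A_3:=M\setminus(A_1\cup A_2).
\]

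On $A_1$ the triangle inequality gives $d_g(y,z)\geq d/2$, so $d_g(y,z)^{\beta-n}\leq (d/2)^{\beta-n}$. Using normal coordinates centered at $x$ to bound the volume element, the integral $\int_{A_1} d_g(x,z)^{\alpha-n}\, dv_g \leq C\int_0^{d/2} r^{\alpha-1}\,dr = C\alpha^{-1}d^{\alpha}$ (since $\alpha>0$). This gives a contribution $\leq C\, d^{\alpha+\beta-n}$. By symmetry the same bound holds on $A_2$.

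The key remaining step is $A_3$. The observation I would exploit is that on $A_3$ we have $d_g(x,z),d_g(y,z)\geq d/2$, and therefore
\[
d_g(y,z)\leq d + d_g(x,z)\leq 2 d_g(x,z)+d_g(x,z)=3 d_g(x,z),
\]
and symmetrically, so $d_g(x,z)\asymp d_g(y,z)$ on $A_3$. Hence
\[
\int_{A_3} d_g(x,z)^{\alpha-n}d_g(y,z)^{\beta-n}\,dv_g
\leq C\int_{A_3}d_g(x,z)^{\alpha+\beta-2n}\,dv_g(z)
\leq C\int_{d/2}^{\mathrm{diam}(M)} r^{\alpha+\beta-n-1}\,dr.
\]
Evaluating this radial integral yields exactly the trichotomy: it is $\leq C d^{\alpha+\beta-n}$ when $\alpha+\beta<n$, $\leq C(1+|\ln d|)$ when $\alpha+\beta=n$, and $\leq C$ when $\alpha+\beta>n$. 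Combining with the contributions from $A_1,A_2$ (which in the last two cases are dominated by the $A_3$ contribution) gives the stated bound.

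The main subtlety — and the only nontrivial step — is precisely the comparability $d_g(x,z)\asymp d_g(y,z)$ on $A_3$, which reduces a two-point integral to a one-point radial integral; the rest is careful bookkeeping of exponents. Two technical points worth checking are: (i) the volume comparison $dv_g(z)\lesssim dz$ in geodesic normal coordinates, which is uniform on the compact manifold $M$; and (ii) for the "global" radial integral one uses the compactness of $M$ to truncate at $\mathrm{diam}(M)$, which produces the universal constants in the $\alpha+\beta\geq n$ cases.
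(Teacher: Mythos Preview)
Your proof is correct and is the standard argument for Giraud's Lemma. Note, however, that the paper does not actually prove this statement: it is simply quoted as a classical result, with references to Giraud \cite{giraud} and to \cite{DHR} for the formulation used here. So there is no ``paper's own proof'' to compare against. Your three-region decomposition $A_1\cup A_2\cup A_3$ together with the comparability $d_g(x,z)\asymp d_g(y,z)$ on $A_3$ is exactly how this lemma is proved in the cited references; the only technical point worth tightening is the passage from $\int_{A_3} d_g(x,z)^{\alpha+\beta-2n}\,dv_g$ to the radial integral when $d_g(x,z)$ may exceed the injectivity radius, but on a compact manifold a uniform bound $\mathrm{Vol}(B_r(x))\leq Cr^n$ (or a dyadic shell argument) handles this without difficulty.
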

Iterating Giraud's Lemma and using the definition of $\Gamma$ and \eqref{lap:gamma}, we get that for all $i\in\nn$, there exists $C_i(k,L)>0$ such that
\begin{equation}\label{ctrl:gamma:i}
|\Gamma_i(x,y)|\leq C_i(k,L) \left\{\begin{array}{cc}
d_g(x,y)^{2i-n} &\hbox{ if }2i<n;\\
1+|\ln d_g(x,y)|&\hbox{ if }2i=n;\\
1 &\hbox{ if }2i>n.\end{array}\right.
\end{equation}
We then get that $\Gamma_i\in L^\infty(M\times M)$ for all $i>\frac{n}{2}$. We fix $p>n$. For $x\in M$, we take $u_x\in H_{2k}^2(M)$ that will be fixed later, and we define
\begin{equation}\label{def:G}
G_x(y):=\Gamma_x(y)+\sum_{i=1}^p\int_M\Gamma_i(x,z)\Gamma(z,y)\, dv_g(z)+ u_x(y)\hbox{ for a.e }y\in M.
\end{equation} We fix $\varphi\in C^{2k}(M)$. Via Fubini's theorem, using  the definition of the $\Gamma_i$'s, the self-adjointness of $P$ and \eqref{lap:gamma}, we get that
\begin{eqnarray*}
\int_M G_x P\varphi\, dv_g &=& \int_M \Gamma_x P\varphi\, dv_g +\sum_{i=1}^p\int_{M\times M} \Gamma_i(x,z)\Gamma(z,y)P\varphi(y)\, dv_g(z)dv_g(y)\\
&& +\int_M Pu_x\varphi\, dv_g\\
&=& \varphi(x)-\int_M f_x\varphi\, dv_g +\sum_{i=1}^p\int_{M } \Gamma_i(x,z)\left(\varphi(z)-\int_M f_z\varphi\, dv_g\right)\,dv_g(z)\\
&& +\int_M Pu_x\varphi\, dv_g\\
&=& \varphi(x)-\int_M \Gamma_1(x,\cdot)\varphi\, dv_g +\sum_{i=1}^p\int_{M } \Gamma_i(x,z) \varphi(z) \,dv_g(z)\\
&&-\sum_{i=1}^p\int_{M }\left(\int_M \Gamma_i(x,z)   f_z(y)\, dv_g(z)\right)\varphi(y)\,  dv_g(y)+\int_M Pu_x\varphi\, dv_g\\
&=& \varphi(x) +\sum_{i=2}^p\int_{M } \Gamma_i(x,z) \varphi(z) \,dv_g(z)\\
&&-\sum_{i=1}^p\int_{M }\Gamma_{i+1}(x,y)\varphi\,  dv_g(y)+\int_M Pu_x\varphi\, dv_g\\
&=& \varphi(x)  +\int_{M }(Pu_x-\Gamma_{p+1}(x,\cdot))\varphi\,  dv_g(y)
\end{eqnarray*}
Since $\Gamma_{p+1}(x,\cdot)\in L^\infty(M)$, we choose $u_x\in \cap_{q>1}H_{2k}^q(M)$  such that $Pu_x=\Gamma_{p+1}(x,\cdot)$ weakly in $M$. The existence follows from Theorem \ref{th:3}. Sobolev's embedding theorem yields $u_x\in C^{2k-1}(M)$ and Theorem \ref{th:3} yields  $ C( k,L,p)>0$ such that
\begin{equation}\label{ctrl:u}
|u_x(y)|\leq C(k,L,p)\hbox{ for all }x,y\in M.
\end{equation}
Finally, we get that 
\begin{equation}\label{id:green}
\int_M G_x P\varphi\, dv_g=\varphi(x)\hbox{ for all }\varphi\in C^{2k}(M).
\end{equation}
It follows from the pointwise controls \eqref{ctrl:gamma:i}, \eqref{ctrl:u}, the definition \eqref{def:G} and Giraud's Lemma that
\begin{equation}\label{ctrl:G}
\left\{\begin{array}{c}
|G_x(y)|\leq C( k, L)d_g(x,y)^{2k-n}\\
\left|G_x(y)- C_{n,k}d_g(x,y)^{2k-n}\right|\leq C( k, L)d_g(x,y)^{2k+1-n}
\end{array}\right\} \hbox{ for all }x,y\in M, \, x\neq y.
\end{equation}
This proves the existence of a Green's function for $P$. Moreover, the construction yields $G_x\in H_{2k,loc}^p(M-\{x\})$ for all $p>1$. 

\medskip\noindent We briefly conclude the proof of Theorem \ref{th:green:nonsing}.  Given $x\in M$, we have that $P G_x=0$ in the distributional sense in $M-\{x\}$. The validity of \eqref{id:green} for $u\in H_{2k}^p(M)$ and $f\in L^p(M)$ such that $Pu=f$ and $p>n/(2k)$ follows by density of $C^\infty(M)$ in $L^p(M)$ and the regularity Theorem \ref{th:3}. The symmetry of $G$ follows from the self-adjointness of the operator $P$. The uniqueness is as the proof of uniqueness of Theorem \ref{th:Green:main}. The pointwise control for $|G_x(y)|$ is exactly \eqref{ctrl:G}. The control of the gradient of $G_x$ is a consequence of elliptic theory. Since the details of these points are exactly the same as in the case of a second-order operator $\Delta_g+h$, we refer to the detailed construction \cite{robert:green}.

\section{Regularity theorems}\label{sec:regul:adn}
The following theorems are reformulations or consequences of Agmon-Douglis-Nirenberg \cite{ADN}.%\theorem 15.1 essentiellement
\begin{theorem}\label{th:1} Let $(M,g)$ be a  Riemannian manifold of dimension $n$ without boundary. Fix $k\in\nn$ be such that $2\leq 2k<n$ , $L>0$ and $\delta>0$ and $x_0\in M$. Let $P$ be a differential operator of type  $O_{k,L}(B_\delta(x_0))$ (see Definition \ref{def:P:loc}). Let $u\in H_{2k}^p(B_\delta(x_0))$ and $f\in L^q(B_\delta(x_0))$, $p,q\in (1,+\infty)$ be such that $Pu=f$. Then $u\in H_{2k}^q(B_{\delta'}(x_0))$ for all $\delta'<\delta$.
\end{theorem}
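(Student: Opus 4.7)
The plan is a bootstrap argument based on the Agmon-Douglis-Nirenberg interior estimates. First I would reduce to the nontrivial case $q > p$: indeed, if $q \le p$, then $L^p(B_\delta(x_0)) \hookrightarrow L^q(B_\delta(x_0))$ on the bounded ball, so trivially $u \in H_{2k}^p(B_\delta(x_0)) \subset H_{2k}^q(B_{\delta'}(x_0))$ for any $\delta' < \delta$ and there is nothing to prove.

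The essential tool is the following interior estimate for operators of type $O_{k,L}$: for any concentric radii $0 < \rho_1 < \rho_2 < \delta$ and any $r \in (1,\infty)$, there exists $C = C(r, k, L, \rho_1, \rho_2)>0$ such that every $v \in H_{2k}^r(B_{\rho_2}(x_0))$ with $Pv \in L^r(B_{\rho_2}(x_0))$ satisfies
\begin{equation*}
\|v\|_{H_{2k}^r(B_{\rho_1}(x_0))} \le C \bigl( \|Pv\|_{L^r(B_{\rho_2}(x_0))} + \|v\|_{L^r(B_{\rho_2}(x_0))} \bigr).
\end{equation*}
This is standard from \cite{ADN}: in a normal chart one freezes the coefficients of the principal part $\Delta_g^k$ to reduce, up to an arbitrarily small perturbation, to the constant-coefficient $L^r$-estimate for $\Delta_\xi^k$; the lower-order terms $(-1)^i \nabla^i(A^{(i)} \nabla^i\cdot)$ are controlled by interpolation since $\Vert A^{(i)}\Vert_{C^i}\leq L$ and absorbed on the left after a cutoff and a standard localization argument.

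The iteration proceeds as follows. Fix a decreasing sequence $\delta > \rho_0 > \rho_1 > \cdots > \rho_N = \delta'$ (with $\rho_0$ chosen close to $\delta$) and set $p_0 := p$. At step $j \ge 0$, assuming $u \in H_{2k}^{p_j}(B_{\rho_j}(x_0))$, I would apply Sobolev's embedding theorem on $B_{\rho_j}(x_0)$: if $2kp_j < n$ then $u \in L^{p_j^\star}(B_{\rho_j}(x_0))$ with $p_j^\star := \tfrac{np_j}{n-2kp_j} > p_j$; if $2kp_j = n$ then $u \in L^s(B_{\rho_j}(x_0))$ for every $s<\infty$; if $2kp_j > n$ then $u \in L^\infty(B_{\rho_j}(x_0))$. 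Define $p_{j+1} := \min\{q, p_j^\star\}$, with the obvious convention $p_{j+1}:=q$ in the last two cases. Since $u \in L^{p_{j+1}}(B_{\rho_j}(x_0))$ and $Pu = f \in L^q(B_{\rho_j}(x_0)) \hookrightarrow L^{p_{j+1}}(B_{\rho_j}(x_0))$, the interior estimate applied between $B_{\rho_{j+1}}$ and $B_{\rho_j}$ with exponent $r=p_{j+1}$ yields $u \in H_{2k}^{p_{j+1}}(B_{\rho_{j+1}}(x_0))$. Since $p_j^\star/p_j = n/(n-2kp_j)$ is bounded away from $1$ as long as $2kp_j$ stays away from $n$, after finitely many steps $p_j$ saturates at $q$, and the final application of the interior estimate delivers the conclusion.

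The only delicate point is the interior $L^r$ estimate above; once it is in hand, the bootstrap itself is entirely routine. Since this estimate is the standard consequence of \cite{ADN} used throughout Sections \ref{sec:green1}--\ref{sec:lemma}, the theorem is effectively a packaged interior statement and no new analysis is required beyond this citation.
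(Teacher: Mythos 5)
The paper offers no proof of Theorem \ref{th:1} beyond the blanket citation of \cite{ADN}, and your bootstrap (Sobolev embedding to raise the integrability of $u$, then an interior estimate on a slightly smaller ball, iterated finitely many times on a decreasing chain of radii) is exactly the standard argument the authors have in mind; the reduction to $q>p$, the handling of the three Sobolev regimes, and the observation that $p_j^\star/p_j=n/(n-2kp_j)$ stays bounded away from $1$ are all correct, as is the treatment of the lower-order terms (expanding $\nabla^i(A^{(i)}\nabla^i\cdot)$ uses at most $i$ derivatives of $A^{(i)}\in C^i$, so all coefficients of order $\leq 2k-2$ remain bounded).

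There is, however, one genuine gap in the key step. Your ``essential tool'' is stated as an \emph{a priori} estimate: it presupposes $v\in H_{2k}^r(B_{\rho_2}(x_0))$ and then bounds $\Vert v\Vert_{H_{2k}^r(B_{\rho_1}(x_0))}$. In the iteration you apply it with $r=p_{j+1}>p_j$ to a function $u$ that is only known to lie in $H_{2k}^{p_j}(B_{\rho_j}(x_0))\cap L^{p_{j+1}}(B_{\rho_j}(x_0))$, and you conclude $u\in H_{2k}^{p_{j+1}}(B_{\rho_{j+1}}(x_0))$ --- which is precisely the hypothesis the estimate requires, not its conclusion. An a priori bound cannot by itself upgrade membership in the function space. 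The standard repair is one of the following: (a) quote the interior \emph{regularity} statement of \cite{ADN} (not merely the estimate), which asserts that a solution in $H_{2k}^{p_j}$ of an equation with $L^{p_{j+1}}$ right-hand side belongs to $H_{2k,loc}^{p_{j+1}}$; or (b) derive it by approximation, e.g.\ mollify $u$, apply the a priori estimate to differences of mollifications and pass to the limit; or (c) solve the Dirichlet problem $Pw=f$ on $B_{\rho_j}(x_0)$ with boundary data taken from $u$ in the $H_{2k}^{p_{j+1}}$ framework (as in Theorem \ref{th:4}) and invoke uniqueness in $H_{2k}^{p_j}$ to conclude $w=u$. Any of these closes the argument; without one of them the central step of the bootstrap is circular.
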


\begin{theorem}\label{th:2} Let $(M,g)$ be a  Riemannian manifold of dimension $n$ without boundary. Fix $k\in\nn$ be such that $2\leq 2k<n$ , $L>0$ and $\delta>0$ and $x_0\in M$. Let $P$ be a differential operator of type  $O_{k,L}(B_\delta(x_0))$ (see Definition \ref{def:P:loc}). Let $u\in H_{2k}^p(B_\delta(x_0))$ and $f\in L^p(B_\delta(x_0))$, $p\in (1,+\infty)$ be such that $Pu=f$. Then for all $r<\delta$ and for all $q>1$, we have that
$$\Vert u\Vert_{H_{2k}^p(B_r(x_0))}\leq C(M,g,k,L,p,q,\delta,r)\left(\Vert f\Vert_{L^p(B_\delta(x_0))}+\Vert u\Vert_{L^q(B_\delta(x_0))}\right)$$
where $C(M,g,k,L,p,q,\delta,r)$ depends only on $(M,g)$, $k$, $L$, $p$, $q$, $\delta$ and $r$. The same conclusion holds if the $0^{th}-$order potential $A^{(0)}$ is only in $L^\infty(B_\delta(x_0))$ with $\Vert A^{(0)}\Vert_{L^\infty(B_\delta(x_0))}\leq L$.
\end{theorem}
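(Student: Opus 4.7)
The plan is to derive the estimate by combining the classical Agmon--Douglis--Nirenberg interior $L^p$ regularity theory with a finite Sobolev bootstrap argument. First I would pass to a fixed exponential chart around $x_0$: in such a chart the operator $P$ becomes a uniformly elliptic operator of order $2k$ on a Euclidean ball of $\mathbb{R}^n$, whose top-order part is smooth (inherited from the metric) and whose lower-order coefficients, after expanding each $(-1)^i\nabla^i(A^{(i)}\nabla^i)$, lie in $C^i,\,C^{i-1},\ldots,L^\infty$ with $C^{i}$-norm controlled by $L$. This coefficient regularity is exactly what is needed to invoke the ADN calculus.

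The classical ADN $L^p$ interior estimate then gives, for any concentric balls $B_{r'}\subset\subset B_{r''}\subset B_\delta(x_0)$ and any $s\in(1,\infty)$,
\begin{equation*}
\Vert u\Vert_{H_{2k}^s(B_{r'})}\leq C(M,g,k,L,s,r',r'')\bigl(\Vert Pu\Vert_{L^s(B_{r''})}+\Vert u\Vert_{L^s(B_{r''})}\bigr).
\end{equation*}
Applied with $s=p$ on some pair $B_{r}\subset\subset B_{\rho}\subset B_\delta(x_0)$, this reduces Theorem~\ref{th:2} to bounding $\Vert u\Vert_{L^p(B_\rho)}$ by $\Vert f\Vert_{L^p(B_\delta)}+\Vert u\Vert_{L^q(B_\delta)}$. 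When $q\geq p$, the domain being bounded, H\"older's inequality closes the argument immediately.

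The genuine work is the case $q<p$, handled by a finite Sobolev bootstrap. Choose a chain $B_\rho=B_{\rho_0}\subset\subset B_{\rho_1}\subset\subset\cdots\subset\subset B_{\rho_N}\subset B_\delta(x_0)$, and define $q_0:=q$ and $q_{j+1}:=\frac{nq_j}{n-2kq_j}$ as long as $2kq_j<n$ (and otherwise any exponent $\geq p$). At each step $u\in H_{2k}^{q_j}(B_{\rho_{j+1}})$ because $u\in H_{2k}^p$ on a bounded set embeds into $H_{2k}^{q_j}$ for $q_j\leq p$ (this is where Theorem~\ref{th:1} would also apply if needed). The ADN estimate at exponent $q_j$ combined with the Sobolev embedding $H_{2k}^{q_j}\hookrightarrow L^{q_{j+1}}$ then yields
\begin{equation*}
\Vert u\Vert_{L^{q_{j+1}}(B_{\rho_j})}\leq C\bigl(\Vert f\Vert_{L^{q_j}(B_{\rho_{j+1}})}+\Vert u\Vert_{L^{q_j}(B_{\rho_{j+1}})}\bigr)\leq C\bigl(\Vert f\Vert_{L^p(B_\delta)}+\Vert u\Vert_{L^{q_j}(B_{\rho_{j+1}})}\bigr),
\end{equation*}
using that $L^p(B_\delta)\hookrightarrow L^{q_j}(B_\delta)$. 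Since $q_j$ grows by a fixed Sobolev factor at each step, after a finite number $N=N(n,k,p,q)$ of iterations we reach $q_N\geq p$, so chaining these estimates yields $\Vert u\Vert_{L^p(B_\rho)}\leq C(\Vert f\Vert_{L^p(B_\delta)}+\Vert u\Vert_{L^q(B_\delta)})$, and the ADN estimate at exponent $p$ concludes.

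The main obstacle is bookkeeping rather than conceptual: one must choose the nested radii once and for all so that all ADN constants along the chain are finite and depend only on $(M,g,k,L,p,q,\delta,r)$, and one must verify at each stage that the right Sobolev space membership holds in order to legitimately apply the ADN estimate. The coefficient regularity hypothesis on $A^{(i)}\in C^i$ is exactly sufficient for ADN to apply uniformly in the chart (the top-order coefficient is smooth, all lower-order ones are in $L^\infty$), and this is why the same conclusion holds if $A^{(0)}$ is merely in $L^\infty$ with $\Vert A^{(0)}\Vert_\infty\leq L$.
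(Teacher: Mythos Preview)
The paper does not actually prove Theorem~\ref{th:2}: it is stated in an appendix with the preamble ``The following theorems are reformulations or consequences of Agmon--Douglis--Nirenberg \cite{ADN}'' and no proof is given. Your proposal---localizing in an exponential chart, invoking the ADN interior $L^p$ estimate, and then running a finite Sobolev bootstrap to pass from $L^q$ control to $L^p$ control when $q<p$---is the standard derivation and is correct; it is precisely the kind of argument the paper is implicitly deferring to.
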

\begin{theorem}\label{th:2bis} Let $(M,g)$ be a  Riemannian manifold of dimension $n$ without boundary. Fix $k\in\nn$ be such that $2\leq 2k<n$ , $L>0$ and $\delta>0$ and $x_0\in M$. Let $P$ be a differential operator of type  $O_{k,L}(B_\delta(x_0))$ (see Definition \ref{def:P:loc}). We fix $\theta\in (0,1)$. We assume that $A^{(i)}$ in Definition \ref{def:P:loc} has $C^{i,\theta}$-regularity and that $\Vert A^{(i)}\Vert_{C^{i,\theta}}\leq L$ for all $i=0,...,k-1$. Let $u\in H_{2k}^p(B_\delta(x_0))$ and $f\in C^{0,\theta}(B_\delta(x_0))$, $p\in (1,+\infty)$ and $\theta\in (0,1)$ be such that $Pu=f$. Then $u\in C^{2k,\theta}(B_\delta(x_0))$ and for all $r<\delta$ and for all $q>1$, we have that
$$\Vert u\Vert_{C^{2k,\theta}(B_r(x_0))}\leq C(M,g,k,L,q,\theta,\delta,r)\left(\Vert f\Vert_{C^{0,\theta}(B_\delta(x_0))}+\Vert u\Vert_{L^q(B_\delta(x_0))}\right)$$
where $C(M,g,k,L,q,\theta,\delta,r)$ depends only on $(M,g)$, $k$, $L$,  $q$, $\theta$, $\delta$ and $r$.
\end{theorem}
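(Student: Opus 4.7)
The plan is to bootstrap from the $L^p$ Calder\'on--Zygmund estimate of Theorem \ref{th:2} to a Schauder estimate via Morrey embedding and the classical higher-order Schauder theory of Agmon--Douglis--Nirenberg \cite{ADN}. The strategy is in three steps, all quite standard once the right expansion of the operator is performed.

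\textbf{Step 1 (Sobolev bootstrap to $C^{2k-1,\theta}$).} Choose $p > n/(1-\theta)$ so that Morrey's embedding gives $H_{2k}^p \hookrightarrow C^{2k-1,\theta}$ locally. Since $f \in C^{0,\theta}(B_\delta(x_0)) \subset L^\infty \subset L^p$, Theorem \ref{th:2} applied on successive concentric balls yields, for any $r < r'' < \delta$,
$$\Vert u\Vert_{H_{2k}^p(B_{r''}(x_0))} \leq C\bigl(\Vert f\Vert_{C^{0,\theta}(B_\delta(x_0))} + \Vert u\Vert_{L^q(B_\delta(x_0))}\bigr),$$
and hence
$$\Vert u\Vert_{C^{2k-1,\theta}(B_{r''}(x_0))} \leq C'\bigl(\Vert f\Vert_{C^{0,\theta}(B_\delta(x_0))} + \Vert u\Vert_{L^q(B_\delta(x_0))}\bigr).$$

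\textbf{Step 2 (H\"older regularity of the forcing after rewriting).} Expanding each term $(-1)^i\nabla^i(A^{(i)}\nabla^i u)$ by Leibniz's rule produces a finite sum of terms of the form $\nabla^{i-m}A^{(i)}\star\nabla^{i+m}u$ with $0\leq m\leq i\leq k-1$. Since $A^{(i)}\in C^{i,\theta}$, one has $\nabla^{i-m}A^{(i)}\in C^{m,\theta}\subset C^{0,\theta}$; since $i+m\leq 2k-2$ and $u\in C^{2k-1,\theta}(B_{r''}(x_0))$ by Step 1, each factor $\nabla^{i+m}u$ lies in $C^{1,\theta}\subset C^{0,\theta}$. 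Thus, rewriting the equation as
$$\Delta_g^k u = f - \sum_{i=0}^{k-1}(-1)^i\nabla^i\bigl(A^{(i)}\nabla^i u\bigr),$$
the right-hand side belongs to $C^{0,\theta}(B_{r''}(x_0))$ with norm controlled by $\Vert f\Vert_{C^{0,\theta}}+\Vert u\Vert_{L^q}$.

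\textbf{Step 3 (Schauder estimate and conclusion).} The principal operator $\Delta_g^k$ is a $2k$-th order elliptic operator whose coefficients are smooth functions of the metric $g$, and its leading symbol is $|\xi|^{2k}$ in geodesic normal coordinates. The interior Schauder estimate of Agmon--Douglis--Nirenberg \cite{ADN} for higher-order elliptic equations therefore yields, for any $r<r''$,
$$\Vert u\Vert_{C^{2k,\theta}(B_r(x_0))}\leq C''\bigl(\Vert \Delta_g^k u\Vert_{C^{0,\theta}(B_{r''}(x_0))}+\Vert u\Vert_{C^0(B_{r''}(x_0))}\bigr).$$
Combining with Steps 1 and 2, and absorbing $\Vert u\Vert_{C^0}$ into $\Vert u\Vert_{L^q}$ via Morrey (increasing $q$ if necessary), gives the stated estimate. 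The global $C^{2k,\theta}(B_\delta(x_0))$ regularity follows by applying the interior estimate at every point of $B_\delta(x_0)$ using a small enough ball around it that remains inside $B_\delta(x_0)$.

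\textbf{Main obstacle.} Conceptually the argument is routine, and the only delicate point is keeping track of the H\"older regularity of the lower-order terms under the Leibniz expansion. The worst case occurs for $i=m=k-1$, giving $A^{(k-1)}\cdot\nabla^{2k-2}u$; here $A^{(k-1)}\in C^{0,\theta}$ and, thanks to Step~1, $\nabla^{2k-2}u\in C^{1,\theta}$, so the product is in $C^{0,\theta}$. This is precisely at the threshold where the scheme closes up, which is why one first needs to invest in the $L^p$ Calder\'on--Zygmund theory with $p$ large enough to reach $C^{2k-1,\theta}$ before invoking Schauder for the final jump.
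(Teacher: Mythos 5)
Your proof is correct and follows the route the paper intends: the appendix states Theorem \ref{th:2bis} merely as a consequence of Agmon--Douglis--Nirenberg, and your bootstrap through the $L^p$ theory to reach $C^{2k-1,\theta}$, followed by the Leibniz rewriting and the interior Schauder estimate for $\Delta_g^k$, is the standard way to fill in the details. The one point to make explicit is that applying Theorem \ref{th:2} with $p>n/(1-\theta)$ presupposes $u\in H_{2k}^p$ for that large exponent, which must first be supplied by Theorem \ref{th:1} (using $f\in C^{0,\theta}\subset L^q$ for all $q$) on a slightly larger concentric ball before the Morrey embedding can be invoked.
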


\begin{theorem}\label{th:3} Let $(M,g)$ be a compact Riemannian manifold of dimension $n$ without boundary. Fix $k\in\nn$ be such that $2\leq 2k<n$  and $L>0$. Let $P$ be a differential operator of type  $O_{k,L}$  (see Definition \ref{def:p}) and fix $p\in (1,+\infty)$. Then for all $f\in L^p(M)$, there exists $u\in H_{2k}^p(M)$ unique such that $Pu=f$. Moreover, 
$$\Vert u\Vert_{H_{2k}^p(M)}\leq C(M,g,k,L,p ) \Vert f\Vert_{L^p(M)}$$
where $C(M,g,k,L,p)$ depends only on $(M,g)$, $k$, $L$ and $p$. The same conclusion holds if the $0^{th}-$order potential $A^{(0)}$ is only in $L^\infty(M)$ with $\Vert A^{(0)}\Vert_{L^\infty(M)}\leq L$.
\end{theorem}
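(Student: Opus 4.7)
\medskip\noindent\textit{Proof proposal for Theorem \ref{th:3}.} The plan is to realize $P$ as a bounded linear isomorphism $H_{2k}^p(M) \to L^p(M)$ via a Fredholm-type argument relying on three ingredients: the coercivity hypothesis built into the definition of $O_{k,L}$, the interior a priori estimate of Theorem \ref{th:2} promoted to a global estimate via a finite coordinate atlas, and the boot-strap content of Theorem \ref{th:1}. Since $P$ is symmetric in the $L^2$ sense (the $A^{(i)}$'s are symmetric), the adjoint of $P$ acting on $L^{p'}$ in the distributional sense coincides with $P$ itself; this will drive the duality part of the surjectivity argument.

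First I would handle $p=2$. Coercivity together with the Lax-Milgram theorem makes $P:H_k^2(M)\to (H_k^2(M))^*$ an isomorphism. Since $L^2(M)\subset (H_k^2(M))^*$, for any $f\in L^2(M)$ there is a unique weak $u\in H_k^2(M)$ with $Pu=f$. Covering $M$ by finitely many normal coordinate balls and summing the local estimates of Theorem \ref{th:2} yields $\Vert u\Vert_{H_{2k}^2}\leq C(\Vert f\Vert_{L^2}+\Vert u\Vert_{L^2})$; combined with the coercivity-driven bound $\Vert u\Vert_{L^2}\leq C\Vert f\Vert_{L^2}$ this gives the $p=2$ case.

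Next I would prove the global a priori estimate $\Vert u\Vert_{H_{2k}^p(M)}\leq C\Vert Pu\Vert_{L^p(M)}$ for arbitrary $p\in(1,\infty)$ and arbitrary $u\in H_{2k}^p(M)$. The finite-atlas version of Theorem \ref{th:2} gives the weaker bound $\Vert u\Vert_{H_{2k}^p}\leq C(\Vert Pu\Vert_{L^p}+\Vert u\Vert_{L^p})$, and the $L^p$ term is removed by the standard compactness-injectivity trick: any counterexample sequence $(u_n)$ with $\Vert u_n\Vert_{H_{2k}^p}=1$ and $\Vert Pu_n\Vert_{L^p}\to 0$ would, after extraction, converge strongly in $L^p(M)$ to a limit $u_\infty$ satisfying $Pu_\infty=0$ in the distribution sense; Theorem \ref{th:1} then lifts $u_\infty$ to $H_{2k}^q$ for every $q>1$, in particular to $H_k^2$, and coercivity forces $u_\infty\equiv 0$, contradicting the $L^p$-limit and the a priori estimate together. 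Once the a priori estimate is in hand, $P$ has closed range. To obtain surjectivity I would argue by duality: any $v\in L^{p'}(M)$ annihilating $P(H_{2k}^p(M))$ satisfies $\int_M v\,P\varphi\,dv_g=0$ for every $\varphi\in C^\infty(M)$, hence is a distributional solution of $Pv=0$ by symmetry of $P$; iterating Theorem \ref{th:1} places $v$ in $H_{2k}^{p'}$ and then in $H_k^2$, and coercivity yields $v\equiv 0$. Thus the range of $P$ is both closed and dense, hence all of $L^p(M)$, and the a priori estimate gives the quantitative bound on $P^{-1}$.

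The main delicate point will be the duality-plus-bootstrap step that establishes triviality of the cokernel: one must check that the ADN-type regularity Theorems \ref{th:1} and \ref{th:2} apply in the form stated when $A^{(0)}$ is only $L^\infty$ (as emphasized in the final sentence of the statement), and that the divergence form of $P$ preserves the symmetry $\int v\,P\varphi\,dv_g=\int \varphi\,Pv\,dv_g$ in the distributional sense on the full integrability range $p\in(1,\infty)$. Both are built into the statements of the cited regularity theorems, so the scheme should close without further work. Uniqueness in the theorem follows automatically from the coercivity-based injectivity established along the way.
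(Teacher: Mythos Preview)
The paper does not supply a proof of Theorem \ref{th:3}: in Appendix \ref{sec:regul:adn} it is simply listed, together with Theorems \ref{th:1}, \ref{th:2}, \ref{th:2bis} and \ref{th:4}, under the sentence ``The following theorems are reformulations or consequences of Agmon--Douglis--Nirenberg \cite{ADN}'' and is stated without argument. Your Lax--Milgram/Fredholm scheme (solve at $p=2$ by coercivity, globalize the interior ADN estimate of Theorem \ref{th:2} via a finite atlas, absorb the lower-order $\Vert u\Vert_{L^p}$ term by a compactness--injectivity contradiction, and close surjectivity by duality using the symmetry of $P$) is exactly the standard way one extracts such a global statement from the local ADN estimates, and it is correct in outline.

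Two small points are worth tightening. First, the theorem asserts that the constant depends only on $(M,g,k,L,p)$ and not on the particular $P\in O_{k,L}$; your contradiction argument is written for a \emph{fixed} $P$, which a priori only produces $C=C(P)$. The fix is routine: run the contradiction with a varying sequence $P_n\in O_{k,L}$ as well, extract limits of the coefficients using the uniform $C^i$ bounds, and use the uniform coercivity constant $1/L$ to kill the limit. Second, in the duality step you invoke Theorem \ref{th:1} to bootstrap an annihilator $v\in L^{p'}$ with $Pv=0$ into $H_{2k}^{p'}$, but as stated Theorem \ref{th:1} already assumes $u\in H_{2k}^q$ for some $q$; what you actually need here is the (equally standard, and also part of the ADN package) interior regularity that upgrades a distributional solution in $L^{p'}$. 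Neither point threatens the scheme.
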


\begin{theorem}\label{th:4} Let $(M,g)$ be a  Riemannian manifold of dimension $n$ without boundary. Let $x_0\in M$ and $\delta<i_g(M)$ (the injectivity radius) so that $B:=B_\delta(x_0)\subset M$. Fix $k\in\nn$ be such that $2\leq 2k<n$  and $L>0$. Let $P$ be a differential operator such that
\begin{itemize}
\item[(i)] $P:=\Delta_g^k+\sum_{j=0}^{k-1} (-1)^j\nabla^j(B_j\nabla^j)$, where $B_j$ is a symmetric $(0, 2j)-$tensor for all $j=0,...,k-1$, 
\item[(ii)] $B_j\in C^j$ and $\Vert B_j\Vert_{C^{j}(B_\delta(x_0))}\leq L$ for all $j=0,...,k-1$
\item[(iii)] $P$ is symmetric in the $L^2-$sense
\item[(iv)] $P$ is coercive in the sense that  
\end{itemize}
$$\int_M uPu\, dv_g\geq\frac{1}{L}\Vert u\Vert_{H_{2k}^2}^2\hbox{ for all }u\in H_{k,0}^2(B).$$
Fix $p\in (1,+\infty)$. Then for all $f\in L^p(B)$, there exists $u\in H_{2k}^p(B)\cap H_{k,0}^p(B)$ unique such that $Pu=f$. Moreover, 
$$\Vert u\Vert_{H_{2k}^p(B)}\leq C(M,g,x_0,\delta,k,L,p ) \Vert f\Vert_{L^p(B)}$$
where $C(M,g,x_0,\delta,k,L,p )$ depends only on $(M,g)$, $x_0$, $\delta$, $k$, $L$ and $p$.
\end{theorem}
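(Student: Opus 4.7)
The approach is classical: produce a weak solution in $H_{k,0}^2(B)$ by a variational argument, promote it to $H_{2k}^p(B)$ for $p\ge 2$ via Agmon-Douglis-Nirenberg boundary regularity, and treat $1<p<2$ by duality.

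For $u,v\in H_{k,0}^2(B)$ (extended by zero to $M$), integration by parts--whose boundary terms on $\partial B$ all vanish thanks to the Dirichlet traces--together with the symmetry hypothesis (iii), shows that
$$a(u,v):=\int_B (\Delta_g^{k/2}u)(\Delta_g^{k/2}v)\, dv_g+\sum_{j=0}^{k-1}\int_B B_j(\nabla^j u,\nabla^j v)\, dv_g$$
equals $\int_B v\,Pu\, dv_g$. The bilinear form is bounded on $H_{k,0}^2(B)\times H_{k,0}^2(B)$ by (ii) and coercive by (iv), so for every $f\in L^2(B)$ Lax-Milgram produces a unique $u\in H_{k,0}^2(B)$ weakly solving $Pu=f$, with $\|u\|_{H_k^2}\le L\|f\|_{(H_k^2(B))^*}$. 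Combined with the Sobolev embedding $H_k^2\hookrightarrow L^{2n/(n-2k)}$ and its dual $L^{2n/(n+2k)}\hookrightarrow (H_k^2)^*$, this yields $\|u\|_{H_k^2}\le C\|f\|_{L^p}$ for every $p\ge 2$; uniqueness is immediate from coercivity.

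To upgrade to $H_{2k}^p(B)$ for $p\ge 2$, I observe that $P$ is properly elliptic of order $2k$ with principal symbol that of $\Delta_g^k$, and the Dirichlet system $(u,\partial_\nu u,\ldots,\partial_\nu^{k-1}u)=0$ on $\partial B$ satisfies the Lopatinskij-Shapiro complementing condition. Covering $\bar B$ by one interior chart (handled by Theorem \ref{th:2}) and finitely many boundary charts, and applying the boundary $L^p$-estimates of Agmon-Douglis-Nirenberg \cite{ADN} after flattening, I obtain $u\in H_{2k}^p(B)\cap H_{k,0}^p(B)$ with
$$\|u\|_{H_{2k}^p(B)}\le C\big(\|f\|_{L^p(B)}+\|u\|_{L^p(B)}\big)\le C\|f\|_{L^p(B)},$$
where the second inequality uses the $H_k^2$-estimate above together with H\"older.

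For $1<p<2$, I argue by duality. Given $\varphi\in L^{p'}(B)$, the case $p'>2$ just settled provides a unique $v_\varphi\in H_{2k}^{p'}\cap H_{k,0}^{p'}$ with $Pv_\varphi=\varphi$ and $\|v_\varphi\|_{H_{2k}^{p'}}\le C\|\varphi\|_{L^{p'}}$. Approximating $f\in L^p(B)$ by $f_n\in C^\infty_c(B)$ in $L^p$ and letting $u_n$ solve $Pu_n=f_n$ in $H_{2k}^q$ for all $q$, the pairing $\int_B u_n\varphi\,dv_g=\int_B f_n v_\varphi\,dv_g$ (valid by (iii) and vanishing traces) combined with duality give $\|u_n\|_{L^p}\le C\|f_n\|_{L^p}$. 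Applying the ADN boundary $L^p$-estimate $\|w\|_{H_{2k}^p}\le C(\|Pw\|_{L^p}+\|w\|_{L^p})$ (which holds for all $p\in (1,\infty)$) to $u_n-u_m$ produces a Cauchy sequence in $H_{2k}^p\cap H_{k,0}^p$ whose limit is the required solution. The main obstacle is verifying that the minimal coefficient regularity $B_j\in C^j$ is compatible with the Agmon-Douglis-Nirenberg boundary $L^p$-framework; this is fine because only the top-order coefficient (smooth, coming from $\Delta_g^k$) is required to be $C^{0,\theta}$, while the subprincipal coefficients enter as $L^\infty$ perturbations in the ADN estimates.
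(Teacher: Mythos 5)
The paper offers no proof of this statement: the appendix simply declares Theorems \ref{th:1}--\ref{th:4} to be ``reformulations or consequences of Agmon-Douglis-Nirenberg''. Your argument is the standard derivation (Lax--Milgram in $H_{k,0}^2(B)$, ADN boundary estimates for the Dirichlet problem of $\Delta_g^k$ plus lower-order terms, duality for $1<p<2$) and is essentially correct; the verification that the polyharmonic Dirichlet system satisfies the complementing condition and that the $C^j$-coefficients expand into continuous subprincipal terms is exactly the point the paper takes for granted. Two small loose ends are worth tightening. First, for $p>\frac{2n}{n-2k}$ the bound $\Vert u\Vert_{L^p(B)}\leq C\Vert f\Vert_{L^p(B)}$ does not follow from the $H_k^2$-estimate and H\"older in one step; you need a finite Sobolev bootstrap ($u\in H_{2k}^{q_0}\hookrightarrow L^{q_1}$, reapply ADN with $q_1$, etc.) before the final application of the ADN estimate. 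Second, ``uniqueness is immediate from coercivity'' only covers $p\geq \frac{2n}{n+2k}$, since for smaller $p$ a solution in $H_{2k}^p(B)\cap H_{k,0}^p(B)$ need not embed into $H_{k,0}^2(B)$; uniqueness there should instead be read off from the duality pairing $\int_B u\varphi\, dv_g=\int_B (Pu)v_\varphi\, dv_g=0$ for all $\varphi\in C^\infty_c(B)$, which is legitimate because both $u$ and $v_\varphi$ have vanishing Dirichlet traces of order up to $k-1$ and lie in dual Sobolev classes.
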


\end{document}